\newtheorem{theorem}{Theorem}[section]
\newtheorem{definition-theorem}[theorem]{Definition-Theorem}
\newtheorem{lemma}[theorem]{Lemma}
\newtheorem{corollary}[theorem]{Corollary}
\newtheorem{proposition}[theorem]{Proposition}
\newtheorem{thmIntro}{Theorem}
\theoremstyle{definition}
\newtheorem{definition}[theorem]{Definition}
\newtheorem{example}[theorem]{Example}
\newtheorem{remark}[theorem]{Remark}
\newtheorem{notation}[theorem]{Notation}
\newcommand{\lperp}[1]{\prescript{\perp}{}{#1}}
\newcommand{\lperpW}[1]{\prescript{\prescript{}{\W}{\perp}}{}{#1}}
\newcommand{\lperpD}[1]{\prescript{\prescript{}{\mathcal{D}}{\perp}}{}{#1}}
\newcommand{\Filt}{\mathsf{Filt}}
\DeclareMathOperator{\Gen}{\mathsf{Gen}}
\DeclareMathOperator{\Cogen}{\mathsf{Cogen}}
\newcommand{\wide}{\mathsf{wide}}
\newcommand{\brick}{\mathsf{brick}}
\newcommand{\sbrick}{\mathsf{sbrick}}
\newcommand{\tperp}{\tau\text{-}\mathsf{perp}}
\newcommand{\tors}{\mathsf{tors}}
\newcommand{\torf}{\mathsf{torf}}
\newcommand{\tex}{\tau\text{-}\mathsf{ex}}
\newcommand{\tfo}{\mathsf{tfo}}
\newcommand{\stopc}{\mathsf{stop\text{-}chain}}
\newcommand{\mods}{\mathsf{mod}}
\newcommand{\cjrep}{\mathsf{cj\text{-}rep}}
\newcommand{\cmrep}{\mathsf{cm\text{-}rep}}
\newcommand{\Hom}{\mathrm{Hom}}
\newcommand{\Ext}{\mathrm{Ext}}
\newcommand{\im}{\mathrm{im}}
\newcommand{\J}{\mathcal J}
\newcommand{\rk}{\mathrm{rk}}
\newcommand{\pop}{\mathrm{pop}_\uparrow}
\newcommand{\popd}{\mathrm{pop}_\downarrow}
\newcommand{\core}{\mathrm{core}\text{-}\mathrm{lab}_\uparrow}
\newcommand{\cored}{\mathrm{core}\text{-}\mathrm{lab}_\downarrow}
\newcommand{\Core}{\mathrm{core}_\uparrow}
\newcommand{\Cored}{\mathrm{core}_\downarrow}
\newcommand{\clo}{\mathrm{clo}_\uparrow}
\newcommand{\clod}{\mathrm{clo}_\downarrow}
\newcommand{\jlab}{\mathrm{j\text{-}label}}
\newcommand{\mlab}{\mathrm{m\text{-}label}}
\newcommand{\brlab}{\mathrm{br\text{-}label}}
\newcommand{\lab}{\mathrm{label}}
\newcommand{\tr}{\tau\text{-}\mathrm{rigid}}
\newcommand{\atom}{\mathrm{atom}}
\newcommand{\coatom}{\mathrm{coatom}}
\DeclareMathOperator{\CJR}{\mathrm{CJR}}
\DeclareMathOperator{\CMR}{\mathrm{CMR}}
\newcommand{\T}{\mathcal{T}}
\newcommand{\F}{\mathcal{F}}
\newcommand{\W}{\mathcal{W}}
\renewcommand{\F}{\mathcal{F}}
\newcommand{\U}{\mathcal{U}}
\newcommand{\V}{\mathcal{V}}
\newcommand{\E}{\mathcal{E}}
\newcommand{\X}{\mathcal{X}}
\newcommand{\covers}{{\,\,\,\cdot\!\!\!\! >\,\,}}
\newcommand{\covered}{{\,\,<\!\!\!\!\cdot\,\,\,}}
\newcommand{\coveredtau}{{\,\,<\!\!\!\!\cdot\,_\tau\,\,}}
\newcommand{\join}{\vee}
\renewcommand{\Join}{\bigvee}
\newcommand{\meet}{\wedge}
\newcommand{\Meet}{\bigwedge}
\DeclareMathOperator{\cji}{\mathrm{c-jir}}
\DeclareMathOperator{\cmi}{\mathrm{c-mir}}
\title[Exceptional sequences and the poset topology of wide subcategories]{Exceptional sequences in semidistributive lattices and the poset topology of wide subcategories}
\author{Emily Barnard}
\address{Department of Mathematical Sciences, DePaul University, Chicago, IL 60604, USA}
\email{e.barnard@depaul.edu}
\author{Eric J. Hanson}
\address{LACIM, Universit\'e du Qu\'ebec \`a Montr\'eal and Universit\'e de Sherbrooke, Qu\'ebec, CANADA}
\email{hanson.eric@uqam.ca}
\subjclass[2020]{05E10, 06A07, 06D75, 16G20, 18E40.}
\keywords{Lattices of torsion classes, rowmotion, $\tau$-exceptional sequences, canonical join representations, flag simplicial complexes}
\date{23 September, 2022}
\begin{document}
\maketitle

\begin{abstract}
Let $\Lambda$ be a finite-dimensional algebra over a field $K$. We describe how
Buan and Marsh's $\tau$-exceptional sequences can be used to give a ``brick labeling'' of a certain poset of wide subcategories of finitely-generated $\Lambda$-modules. When $\Lambda$ is representation-directed, we prove that there exists a total order on the set of bricks which makes this into an EL-labeling. Motivated by the connection between classical exceptional sequences and noncrossing partitions, we then turn our attention towards the study of (well-separated) completely semidistributive lattices. Such lattices come equipped with a bijection between their completely join-irreducible and completely meet-irreducible elements, known as rowmotion or simply the ``$\kappa$-map''. Generalizing known results for finite semidistributive lattices, we show that the $\kappa$-map  determines exactly when a set of completely join-irreducible elements forms a ``canonical join representation''. A consequence is that the corresponding ``canonical join complex'' is a flag simplicial complex, as has been shown for finite semidistributive lattices and lattices of torsion classes of finite-dimensional algebras. Finally, in the case of lattices of torsion classes of finite-dimensional algebras, we demonstrate how Jasso's $\tau$-tilting reduction can be encoded using the $\kappa$-map. We use this to define \emph{$\kappa^d$-exceptional sequences} for finite semidistributive lattices. These are distinguished sequences of completely join-irreducible elements which we prove specialize to $\tau$-exceptional sequences in the algebra setting.
\end{abstract}

\setcounter{tocdepth}{1}
\tableofcontents

\section{Introduction}

In 2002, Fomin and Zelevinsky initiated the study of cluster algebras as a framework for understanding total positivity in semisimple groups \cite{FZ1}.
In the following year, they classified cluster algebras of finite type by finite Coxeter groups via Cartan matrices \cite{FZ2}, and they showed that the number of clusters is given by the Coxeter-Catalan number \cite{FZ_y}.
Moreover, they constructed a simplicial complex, called the \emph{cluster complex}, consisting of ``compatible'' almost positive roots of the corresponding Cartan type.
The original enumeration was type-by-type, and an open problem was to provide uniform bijections to other Coxeter-Catalan objects.
One well-studied candidate was the $W$-noncrossing partition lattice, which we define below.

Let $W$ be a finite Coxeter group, and denote the set of reflections in $W$ by $T$.
We note that, in particular, $T$ contains the set of simple reflections of $W$, and thus it generates $W$.
As a consequence, we can represent each element $w\in W$ as word $t_1t_2 \ldots t_k$ in $T$.
We say that $t_1t_2 \ldots t_k$ is \emph{reduced} if there does not exist another expression $t'_1t'_2\ldots t'_j$ for $w$ in terms of $T$ with $j<k$.
We define the \emph{absolute order} of $W$ by $x \le y$ if and only if any reduced $T$-word for $x$ occurs as a subword for some reduced $T$-word for $y$.
The $W$-noncrossing partitions lattice, denoted $\mathrm{NC(W,c)}$, is the maximal interval $[e, c]$ from the identity element $e$ to a Coxeter element $c$.
See \cite{Armstrong} for more detailed background.

The classical noncrossing partition lattice (of type A) was studied as poset in 1972 by Kreweras \cite{kreweras}, who also introduced a certain anti-isomorphism of noncrossing partitions called the \emph{Kreweras Complement}.
However, again, there was no uniform (type-free) proof that the $W$-noncrossing partition lattices were in fact lattice-posets. 
In 2007, Brady and Watt provided a uniform proof of this fact, and also constructed the cluster complex in terms of noncrossing partitions \cite{BradyWatt}.
Building on this construction, Athanasiadis, Brady, and Watt gave a uniform EL-labeling of the noncrossing partition lattice in terms of the reflections of $W$ \cite{ABW}.

Independently, Reading introduced a new Coxeter-Catalan family---$c$-Cambrian lattices \cite{cambrian}.
Each $c$-Cambrian lattice of type $W$ is a lattice quotient of the (right) weak order on $W$, and can be realized as a subposet of $c$-sortable elements.
Moreover, in \cite{cambrian2}, Reading constructed two uniformly defined bijections from the set of $c$-sortable elements (of type $W$): one to the facets of the cluster complex; and the other to the noncrossing partition lattice $\mathrm{NC(W,c)}$.
The latter bijection is essentially encoded by a certain minimal join-representation called the \emph{canonical join representation} (see Definition~\ref{def:canonical_join_rep}.
The elements of a canonical join representation are called \emph{join-irreducible} (see Definition~\ref{def: irreducible}).

The main goal of this paper is to extend both the EL-labeling of Athanasiadis, Brady, and Watt and to study the canonical join representations that are central to Reading's bijections in the context of representation theory.
Throughout our paper we take $\Lambda$ to be a finite dimensional algebra over an arbitrary field $K$.
When $\Lambda$ is hereditary of Dynkin type $W$, Ingalls and Thomas showed that the noncrossing partition lattice $\mathrm{NC(W,c)}$ is isomorphic to the lattice of wide subcategories of $\mods \Lambda$ \cite{IT}.
This was later generalized to all hereditary artin algebras in \cite{IS}.
In this context, chains in the lattice of wide subcategories are in bijection with \emph{exceptional sequences}. Using the \emph{$\tau$-exceptional sequences} of \cite{BM_exceptional}, this bijection was generalized to non-hereditary algebras \cite{BM_wide,BuH}.
Our first main result uses these bijections to extend the EL-labeling given by Athanasiadis, Brady, and Watt.
\begin{thmIntro}[Theorem~\ref{thm:mainA}]\label{thm:intro:mainA}
    Let $\Lambda$ be a representation-directed algebra. Then there exists a partial order on the bricks in $\mods\Lambda$ which makes the labeling of $\wide(\mods\Lambda)$ from $\tau$-exceptional sequences into an EL-labeling.
\end{thmIntro}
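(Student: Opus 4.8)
The plan is to verify, interval by interval, the two defining properties of an EL-labeling: every interval of $\wide(\mods\Lambda)$ has a unique increasing maximal chain, and that chain is lexicographically least. I will produce a \emph{total} order on $\brick(\mods\Lambda)$ that does the job (which is a fortiori a partial order, as the statement requires). To build the label poset, observe that because $\Lambda$ is representation-directed there is no cycle $X_0\to X_1\to\cdots\to X_r=X_0$ of nonzero non-isomorphisms between indecomposables; hence the relation on bricks generated by $B\prec B'$ whenever $\Hom_\Lambda(B',B)\neq 0$ and $B\not\cong B'$ is acyclic and generates a partial order, and I fix a linear extension $\leq$. That one must follow the \emph{opposite} of the $\Hom$-support direction is already forced in type $A_2$, where the labels of the three co-atoms impose $S_1<P_1<S_2$, and that some nontrivial order is needed at all is visible from $\Lambda=K\times K$.

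Next I reduce to a single shape of interval. By the interaction of $\tau$-exceptional sequences with $\tau$-tilting reduction, every interval $[\W_1,\W_2]$ of $\wide(\mods\Lambda)$ --- together with the restriction of the brick labeling --- is isomorphic to $\wide(\mathcal C)$ for a length category $\mathcal C$ whose simple objects are bricks of $\mods\Lambda$; since the indecomposables of $\mathcal C$ are indecomposables of $\mods\Lambda$ they again admit no $\Hom$-cycles, and the induced order on $\brick(\mathcal C)$ is the restriction of $\leq$. It therefore suffices to show, for every such $\mathcal C$, that the brick labeling of $\wide(\mathcal C)$ has a unique increasing maximal chain which is moreover lexicographically least; I argue by induction on the number of simple objects of $\mathcal C$, the case of at most one simple being vacuous.

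For the inductive step, recall that a maximal chain of $\wide(\mathcal C)$ is the same datum as a complete $\tau_{\mathcal C}$-exceptional sequence, viewed as a nested sequence of wide subcategories whose successive covers are labeled by the brick attached to the outermost $\tau$-rigid summand, before one passes to the $\tau$-tilting-reduced subcategory it cuts out. Given a co-atom $\W\lessdot\mathcal C$, write $\ell(\W)$ for the label of this top cover and $m(\W)$ for the $\leq$-maximal label occurring anywhere inside $[0,\W]$, which by induction is again a well-behaved $\wide(\mathcal C')$. The heart of the proof --- and the decisive use of representation-directedness --- is the claim that \emph{exactly one co-atom $\W^\ast$ satisfies $m(\W^\ast)<\ell(\W^\ast)$, while every other co-atom $\W$ satisfies $m(\W)\ge\ell(\W)$}; the content is that the maximal label of the interval is realized by a $\tau$-rigid module whose $\tau$-perpendicular subcategory houses only strictly $\leq$-smaller bricks, and that the top label of any other co-atom is ``dominated'' by a label forced to appear below it --- a domination whose existence relies on there being no obstructing $\Hom$-cycle. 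Granting the claim, the restriction of any increasing maximal chain of $\wide(\mathcal C)$ to $[0,\W]$, where $\W$ is its top co-atom, is an increasing maximal chain of $[0,\W]$ whose top label $m(\W)$ must be strictly below $\ell(\W)$, forcing $\W=\W^\ast$; by induction that restriction is unique, and since $m(\W^\ast)<\ell(\W^\ast)$ it does extend over the cover $\W^\ast\lessdot\mathcal C$. This gives the unique increasing maximal chain. Lexicographic minimality follows by comparing any other maximal chain with it at the lowest cover where the two differ: either the discrepancy already lies inside $[0,\W^\ast]$, where the inductive hypothesis applies, or the other chain passes through a different co-atom $\W$, and there the claim together with induction shows its label sequence is lexicographically larger.

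The main obstacle is precisely this dichotomy for co-atoms. Modulo the induction it is equivalent to the uniqueness of increasing chains in the top interval, so it carries the full weight of the theorem; representation-directedness is exactly what forbids the two failure modes --- several co-atoms simultaneously able to top an increasing chain, and a single brick appearing both as the top label and strictly below it on one chain. Establishing it will require unwinding the definition of the brick labeling through $\tau$-tilting reduction and the $\tau$-exceptional-sequence machinery, and in particular controlling, in $\Hom$-directed terms, which brick is produced by each $\tau$-rigid summand along a complete $\tau$-exceptional sequence; without the hypothesis the EL property genuinely fails, so no softer route is available.
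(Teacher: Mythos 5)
Your framework coincides with the paper's up to reversing the sign convention on the order: you fix the same acyclic $\Hom$-direction order on bricks (the paper calls a linear extension of it an \emph{rho order}) and you induct on the rank. But the entire content is delegated to the ``dichotomy'' on co-atoms, which you assert and explicitly leave unproved; as you yourself note, it is equivalent to the theorem, so as written the argument is circular. The paper resolves exactly this point with a different device. It introduces \emph{hom-orthogonal chains}: saturated top chains whose associated $\tau$-exceptional sequence $(U_k,\ldots,U_1)$ satisfies $\Hom_\Lambda(\beta(U_j),\beta(U_i))=0$ for $i<j$. Via the brick--semibrick correspondence it proves that between any $\U\leq_\tau\W$ there is at most one hom-orthogonal chain up to permutation of its labels (Proposition~\ref{prop:uniqueness}), so with a total order on labels there is at most one increasing chain. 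The existence half (Lemma~\ref{lem:existence}) shows the $\preceq_{\mathrm{lex}}$-extremal chain is increasing; the crucial inductive step there, $\beta(M_1)\preceq\beta(M_2)$, is obtained not by a direct comparison of these two bricks but via Theorem~\ref{thm:tf_tau}(3), which supplies a $\tau$-rigid module $N$ with $\J_\W(N\oplus M_1)=\J_{\J_\W(M_1)}(M_2)$ and $M_2\in\Gen_{\J_\W(M_1)}N$, forcing $\Hom(\beta(N),\beta(M_2))\neq 0$, hence $\beta(N)\preceq\beta(M_2)$, while $\beta(M_1)\preceq\beta(N)$ by lex-extremality. That auxiliary module $N$ is precisely the ``$\Hom$-directed control'' you say would be required and that your proposal never produces.

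Two secondary imprecisions. Your lex-minimality step tacitly assumes that the top label of the unique increasing chain of $[\hat{0},\W]$ equals the maximum label occurring anywhere in $[\hat{0},\W]$; even granting the dichotomy, this is a further claim needing an argument, and your phrase ``whose top label $m(\W)$'' conflates the two. And the assertion that \emph{every} interval of $\wide(\mods\Lambda)$, together with its brick labeling, is label-isomorphic to some $\wide(\mathcal{C})$ is not what the paper uses and would itself require justification; the paper only needs top intervals $[\V,\W']$ inside each $\tperp(\W')$, which by Corollary~\ref{cor:iterated_tau_perp} are order ideals of $\tperp(\mods\Lambda)$, and the induction reduces $\W'$ to $\J_{\W'}(M_1)$ rather than passing to an abstract length category.
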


We note that Athanasiadis, Brady, and Watt's EL-labeling comes from a linear ordering of the reflections $T$ called a \emph{reflection order}.
When $\Lambda$ is finite type hereditary, the set of reflections of $W$ are in bijection with the set of bricks in $\mods\Lambda$.
Thus we obtain the original EL-labeling of $\mathrm{NC(W,c)}$ as a special case of our theorem.

Reading's constructions were also translated into the language of quiver representations by Ingalls and Thomas in \cite{IT}, who showed that if $\Lambda$ is representation-finite hereditary, then the lattice of torsion classes for $\mods \Lambda$ is isomorphic to a $c$-Cambrian lattice.
More recently, canonical join-representations in type A were modeled by certain noncrossing arc diagrams \cite{reading_arcs}, from which it was observed that a set of join-irreducible elements is a canonical join representation if and only if each pair of join-irreducible elements if a canonical join representation. 
We call this the \emph{flag property}.

In \cite{emily_canonical}, it was shown that a large class of finite lattices called \emph{semidistributive lattices} have the flag property (and indeed such lattices are characterized by this property).
The class of finite semidistributive lattices includes Reading's $c$-Cambrian lattices as well as the lattices of torsion classes of $\tau$-tilting finite algebras (see Section~\ref{sec:lattice}). The flag property was also shown to hold for (possibly infinite) lattices of torsion classes of arbitrary finite-dimensional algebras in \cite{BCZ}. This was done by establishing a correspondence between canonical join representations and collections of hom-orthogonal bricks, or \emph{semibricks}, in $\mods\Lambda$.
We now extend this result to a larger class of infinite semidistributive lattices.
Recall that an element $j\in L$ is \emph{completely join-irreducible} provided that $j$ covers precisely one element in $L$. 

\begin{thmIntro}[Corollary~\ref{cor:flag}]\label{thm:intro:mainB}
    Let $L$ be a well-separated completely semidistributive lattice.
    Then a collection of completely join-irreducible elements is a canonical join representation if and only if each pair of elements is a canonical join representation.
\end{thmIntro}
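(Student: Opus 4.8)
The plan is to obtain this as a direct consequence of the characterization of canonical join representations in terms of the $\kappa$-map (rowmotion) that precedes it. That characterization theorem states that a set $S$ of completely join-irreducible elements of a well-separated completely semidistributive lattice $L$ is a canonical join representation of $\bigvee S$ (in the sense of Definition~\ref{def:canonical_join_rep}) if and only if $\bigvee(S \setminus \{j\}) \le \kappa_L(j)$ for every $j \in S$. So the first thing I would do is record (or cite) this statement; everything after it is formal.

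Granting it, here is the deduction. Since $L$ is complete, $\bigvee(S \setminus \{j\})$ is the least upper bound of $S \setminus \{j\}$, so for any fixed element $m \in L$ we have $\bigvee(S \setminus \{j\}) \le m$ if and only if $j' \le m$ for every $j' \in S \setminus \{j\}$. Taking $m = \kappa_L(j)$, the characterization says $S$ is a canonical join representation if and only if $j' \le \kappa_L(j)$ for every ordered pair of distinct elements $j, j' \in S$. This condition is visibly a conjunction over the unordered two-element subsets $\{j, j'\} \subseteq S$ of the symmetric condition ``$j' \le \kappa_L(j)$ and $j \le \kappa_L(j')$''. Applying the same characterization to the two-element set $\{j, j'\}$ --- where $\bigvee(\{j,j'\} \setminus \{j\}) = j'$ and $\bigvee(\{j,j'\} \setminus \{j'\}) = j$ --- this symmetric condition is exactly the assertion that $\{j, j'\}$ is a canonical join representation (of $j \join j'$). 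Hence $S$ is a canonical join representation if and only if each pair $\{j, j'\} \subseteq S$ is a canonical join representation, which is the claim. (The cases $|S| \le 1$ are trivial: both sides hold, vacuously or because a completely join-irreducible element is its own canonical join representation. It does no harm that distinct pairs may be canonical join representations of different joins, since the characterization only constrains the pairwise $\kappa_L$-relations.)

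The step carrying all the weight is therefore the $\kappa$-characterization itself, and that is where I expect the difficulty to lie: one must show that $\kappa_L$ detects \emph{canonicity}, not merely irredundancy, for lattices that may be infinite, so that arbitrary joins and meets must be controlled --- and this is precisely where well-separatedness and complete (meet- and join-) semidistributivity are needed. Once that theorem is in place, the flag property is a formal consequence of the universal property of arbitrary joins, exactly as in the finite semidistributive case. Were the characterization not available, my fallback would be to imitate the finite-lattice argument directly: use complete semidistributivity to refine a candidate canonical join representation ``one completely join-irreducible element at a time'', thereby reducing canonicity of $S$ to canonicity of its two-element subsets, with well-separatedness guaranteeing that the relevant suprema and infima behave as in the finite setting and that $\kappa_L$ is a well-defined bijection between the completely join-irreducible and completely meet-irreducible elements of $L$.
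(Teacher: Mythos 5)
Your proposal is correct and follows essentially the same route as the paper. The paper's Corollary~\ref{cor:flag} is exactly your deduction: it cites the $\kappa$-characterization in Theorem~\ref{thm:covers_canonical} (condition~(3) there, which in your phrasing says $\bigvee(A\setminus\{j\})\leq\kappa(j)$ for each $j$, equivalently $i\leq\kappa(j)$ for all distinct $i,j\in A$ by the universal property of the join), and observes that this is manifestly a conjunction of pairwise conditions, each equivalent---by the two-element case of the same theorem---to $\{i,j\}$ joining canonically.
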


Completely semidistributive lattices come equipped with a pair of inverse bijections, which we call $\kappa$ and $\kappa^d$, from the set of completely join-irreducible elements of $L$ to the set of completely meet irreducible elements of $L$:
\begin{eqnarray*}
    \kappa(j) &=& \mlab[j_*,j] =  \max\{y \in L \mid j \meet y = j_*\}\\
    \kappa^d(m) &=& \jlab[m,m^*] = \min\{y \in L \mid m \join y = m^*\}.
\end{eqnarray*}
(See Section~\ref{sec:lattice} for a more detailed explanation of these formulas.) We show that $\kappa$ gives a criterion for testing when join-irreducible elements are ``compatible'' in the sense that they are a canonical join representation. In particular, this criterion implies the flag property in Theorem~\ref{thm:intro:mainB}.

\begin{thmIntro}[Theorem~\ref{thm:covers_canonical}, simplified]\label{thm:intro:mainC}
Let $L$ be a well-separated completely semidistributive lattice.
 For any set $A$ of completely join-irreducible elements of $L$, the following are equivalent.
     \begin{enumerate}
      \item $\Join A$ is a canonical join representation.
     \item For all $i \neq j \in A$ one has $i \leq \kappa(j)$.
      \end{enumerate}
\end{thmIntro}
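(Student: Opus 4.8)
The plan is to prove (1) $\Leftrightarrow$ (2) head‑on, with both implications resting on two facts about a completely join‑irreducible element $j$ (with unique lower cover $j_*$) and its image $m := \kappa(j)$, which is completely meet‑irreducible with unique upper cover $m^*$. First, complete join‑semidistributivity shows that $\{y \in L : j \wedge y = j_*\}$ is closed under arbitrary joins, hence has a maximum, which is $\kappa(j)$ by definition; and $j_* \le y \le \kappa(j)$ forces $j_* = j \wedge j_* \le j \wedge y \le j \wedge \kappa(j) = j_*$, so $\{y : j \wedge y = j_*\} = [j_*, \kappa(j)]$. Second, $j \not\le m$ (since $j \wedge m = j_* < j$), so $j \vee m > m$ and hence $j \vee m \ge m^*$; and $j \wedge m^* \ge j_*$, which by the first fact cannot equal $j_*$ unless $m^* \le \kappa(j) = m$, so $j \wedge m^* = j$, i.e.\ $j \le m^*$ and $j \vee \kappa(j) = \kappa(j)^*$. (All of this, together with the well‑definedness of $\kappa$, is packaged in Section~\ref{sec:lattice}.) Finally I record the trivial reformulation of (2): since $\kappa(j)$ is an upper bound of $A \setminus \{j\}$ exactly when it lies above each of its elements, (2) is equivalent to asking that $\Join(A \setminus \{j\}) \le \kappa(j)$ for every $j \in A$.

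For (1) $\Rightarrow$ (2), fix $j \in A$ and put $w := \Join(A \setminus \{j\})$. By the first fact it suffices to check $j \wedge (w \vee j_*) = j_*$, and since $j \wedge (w \vee j_*) \ge j_*$ and $j$ covers $j_*$, this amounts to $j \not\le w \vee j_*$. If this failed we would have $\Join A = j \vee w = w \vee j_*$, so (writing $j_*$ as a join of completely join‑irreducible elements, which is possible in a well‑separated lattice) $\Join A$ would admit a join representation $B$ by completely join‑irreducibles not containing $j$, in which every element lying below $j$ is in fact $\le j_* < j$. Since $\Join A$ is a canonical join representation, $A$ refines $B$, so $j$ lies below some element of $B$; it cannot lie below an element of $A \setminus \{j\}$ (as $A$ is an antichain), hence it lies below an element $\le j_*$, which is absurd.

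For (2) $\Rightarrow$ (1), note first that (2) forces $A$ to be an antichain: if $i < j$ in $A$ then $i \le j \le \kappa(i)$ gives $i \le i \wedge \kappa(i) = i_*$, a contradiction; and the representation is irredundant, since $j \not\le \kappa(j) \ge \Join(A \setminus \{j\})$. For canonicity, let $\Join B = x := \Join A$ be any join representation and fix $j \in A$; set $m := \kappa(j)$ and $w := \Join(A \setminus \{j\}) \le m$. Then $x = j \vee w \le m^*$ (using $j \le m^*$ and $w \le m \le m^*$), while $x \not\le m$ because $j \le x$ and $j \not\le m$; hence $m < x \vee m \le m^*$, so $x \vee m = m^*$. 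Since $\vee\, m$ preserves joins, $x \vee m = \Join_{b \in B}(b \vee m)$ with each $b \vee m \in [m, m^*] = \{m, m^*\}$, and these cannot all equal $m$; so some $b_0 \in B$ satisfies $b_0 \vee m = m^* = j \vee m$. Meet‑semidistributivity then yields $m \vee (b_0 \wedge j) = m^*$; but if $b_0 \wedge j < j$ then $b_0 \wedge j \le j_* \le m$, forcing $m \vee (b_0 \wedge j) = m$, a contradiction. Hence $j \le b_0 \in B$, so $A$ refines $B$, and (1) follows.

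The crux is this last point. One would like to argue directly that any $b \in B$ with $j \not\le b$ satisfies $b \le \kappa(j)$ --- which would instantly contradict $x = \Join B \ge j$ --- but this is false, because a completely join‑irreducible element need not be join‑prime, so $\{y : j \not\le y\}$ is typically not closed under joins. The fix is to transport the problem through $\kappa$: working with the completely meet‑irreducible element $m = \kappa(j)$ and inside the two‑element interval $[m, m^*]$, meet‑semidistributivity accomplishes what join‑semidistributivity cannot. The remaining mild subtleties --- that $\kappa$ is a well‑defined bijection between completely join‑ and meet‑irreducibles and that elements decompose into completely join‑irreducibles --- are exactly where well‑separatedness is used, and these are handled in the preliminaries.
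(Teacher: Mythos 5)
Your proof is correct, and it takes a genuinely different route from the paper's.

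The paper cycles through four equivalent conditions, proving $(1)\Rightarrow(\text{irredundant-refining})\Rightarrow(\text{your }(2))\Rightarrow(\text{cover-relation condition})\Rightarrow(1)$, and relies on three external ingredients: the identity $i\not\leq\kappa(j)\Rightarrow i\vee j=i\vee j_*$ from Freese--Je\v{z}ek--Nation, Enomoto's lemma producing the cover relation $(x\vee j)\wedge\kappa(j)\covered x\vee j$, and --- crucially in the last step --- the well-separated hypothesis to extract a completely join-irreducible $i\leq b$ with $\kappa(j)\leq\kappa(i)$ and then force $i=j$ via the labeling. Your argument is self-contained at the level of the two identities $\{y:j\wedge y=j_*\}=[j_*,\kappa(j)]$ and $j\vee\kappa(j)=\kappa(j)^*$, and the central move of $(2)\Rightarrow(1)$ --- pushing an arbitrary join representation $B$ through $\vee\,\kappa(j)$ into the two-element interval $[\kappa(j),\kappa(j)^*]$ and then applying the semidistributive law $SD_\join$ to the pair $\{b_0,j\}$ --- replaces the cover-labeling machinery entirely. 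One consequence worth noting: your $(2)\Rightarrow(1)$ does not invoke well-separatedness at all (only complete semidistributivity), and your $(1)\Rightarrow(2)$ doesn't actually need the decomposition of $j_*$ into completely join-irreducibles either, since the competitor $B=(A\setminus\{j\})\cup\{j_*\}$ already suffices for the refinement contradiction. So your proof in fact establishes this equivalence under weaker hypotheses than the paper states. Two small remarks: the identity you apply to conclude $\kappa(j)\vee(b_0\wedge j)=\kappa(j)^*$ is $SD_\join$ in the paper's notation (the $x\join Y=w\Rightarrow x\join\Meet Y=w$ axiom), which you label ``meet-semidistributivity''; and the fact that $\kappa$ is a well-defined bijection $\cji(L)\to\cmi(L)$ already follows from complete semidistributivity alone, so that is not where well-separatedness enters. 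Neither point affects the validity of the argument.
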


The maps $\kappa$ and $\kappa^d$ have recently played an important role in dynamical algebraic combinatorics and in Coxeter-Catalan combinatorics.
See e.g. \cite[Section~1.2]{DW} and the refences therein.

In \cite{BTZ}, explicit formulas for $\kappa$ are given in the case where $L$ is a lattice of torsion classes.
More recently, Enomoto used $\kappa$ to define a new partial order on the elements of a completely semidistributive lattice $L$, which he calls the $\kappa$-order \cite{enomoto}. (This order is also defined for a generalization of finite semidistributive lattices in \cite{DW}.) Enomoto further showed that if $L$ is the lattice of torsion classes of an abelian length category, then the $\kappa$-order is isomorphic to the lattice of wide subcategories.
For our last main result, we use the $\kappa$-map to give a combinatorialization of the $\tau$-exceptional sequences of $\tau$-tilting finite. We first explain how the ``extended $\kappa$-map'' $\overline{\kappa}^d$ is related to the Auslander-Reiten translation in Section~\ref{sec:combinatorial_tau_tilting}. In particular, we show that the $\tau$-tilting reduction of Jasso \cite{jasso} (see Section~\ref{sec:tau_exceptional}), and moreover the ``wide intervals'' of \cite{AP}, are related to the operator $\overline{\kappa^d}$. This motivates the definition of a \emph{$\kappa^d$-exceptional sequence}, given in Definition~\ref{def:kappa_exceptional} of Section~\ref{sec:kappa_exceptional}. Roughly speaking, while a $\tau$-exceptional sequence makes use of $\tau$-rigid modules to interatively move to smaller wide subcategories of a module category, a $\kappa^d$-exceptional sequence uses completely
join-irreducible elements to iteratively move to smaller ``nuclear intervals'' (Definition~\ref{def:nuclear}) of a finite semidistributive lattice. Our last main result shows that, up to the brick-$\tau$-rigid correspondence of \cite{DIJ}, the sets of $\kappa^d$-exceptional sequences and $\tau$-exceptional sequences coincide for $\tau$-tilting finite algebras.

\begin{thmIntro}[Theorem~\ref{thm:mainD}]\label{thm:intro:mainD}
    Let $\Lambda$ be a $\tau$-tilting finite algebra. Then there is a bijection between the set of $\kappa^d$-exceptional sequences in the lattice of torsion classes $\tors\Lambda$ and the set of $\tau$-exceptional sequences in $\mods\Lambda$. In particular, up to the correspondence between indecomposable $\tau$-rigid modules and completely join-irreducible torsion classes, the $\tau$-exceptional sequences of $\mods\Lambda$ are completely determined by the lattice of torsion classes.
\end{thmIntro}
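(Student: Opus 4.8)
The plan is to build the bijection recursively, by induction on the common length of the two families of sequences, with the brick--$\tau$-rigid correspondence of \cite{DIJ} supplying the matching at each step. Recall the two recursive definitions in parallel. A $\tau$-exceptional sequence $(M_1,\dots,M_k)$ in $\mods\Lambda$ has $M_1$ an indecomposable $\tau$-rigid (pair), and $(M_2,\dots,M_k)$ a $\tau$-exceptional sequence in Jasso's $\tau$-perpendicular category $J(M_1)$; by \cite{jasso} this category is equivalent to $\mods\Gamma$ for a uniquely determined algebra $\Gamma$, and since $\tors\Gamma$ is isomorphic to an interval of $\tors\Lambda$ it is again $\tau$-tilting finite. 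A $\kappa^d$-exceptional sequence $(j_1,\dots,j_k)$ in $\tors\Lambda$ has $j_1$ completely join-irreducible, and $(j_2,\dots,j_k)$ a $\kappa^d$-exceptional sequence in the nuclear interval $\overline{\kappa^d}(j_1)$ of Definition~\ref{def:kappa_exceptional}, which is again a finite semidistributive lattice. The candidate bijection sends $M_i$ to the completely join-irreducible torsion class carrying the brick label of $M_i$ (equivalently, the label attached to $M_i$ under \cite{DIJ}), computed inside the $\tau$-perpendicular category reached after the first $i-1$ reductions.

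For the base case $k\le 1$ there is nothing beyond \cite{DIJ}: in a $\tau$-tilting finite module category the indecomposable $\tau$-rigid objects biject with the bricks, and the bricks biject with the completely join-irreducible torsion classes via the brick labeling of the Hasse quiver. For the inductive step, fix a $\tau$-exceptional sequence $(M_1,\dots,M_k)$ with $k\ge 2$, let $B$ be the brick of $M_1$, and let $j_1$ be the corresponding completely join-irreducible torsion class. The heart of the argument is the identification---to be carried out in Sections~\ref{sec:combinatorial_tau_tilting} and~\ref{sec:kappa_exceptional}---of $\tau$-tilting reduction at $M_1$ with passage to the nuclear interval at $j_1$: the equivalence $J(M_1)\simeq\mods\Gamma$ induces a lattice isomorphism $\tors\Gamma\xrightarrow{\ \sim\ }\overline{\kappa^d}(j_1)\subseteq\tors\Lambda$ which intertwines the brick labelings (hence the brick--$\tau$-rigid correspondences) of $\Gamma$ and of $\Lambda$ restricted to that interval. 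Granting this, the inductive hypothesis applied to $\Gamma$ yields a bijection between $\tau$-exceptional sequences $(M_2,\dots,M_k)$ in $J(M_1)\simeq\mods\Gamma$ and $\kappa^d$-exceptional sequences $(j_2,\dots,j_k)$ in $\tors\Gamma\cong\overline{\kappa^d}(j_1)$; prepending $M_1\leftrightarrow j_1$ and comparing the two recursions closes the induction. The final assertion of the theorem is then immediate, since the set of $\kappa^d$-exceptional sequences is defined entirely from the lattice $\tors\Lambda$---its completely join-irreducible elements, its $\kappa^d$-map, and its nuclear intervals.

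The main obstacle is precisely the identification used in the inductive step: $\tau$-tilting reduction is built from Bongartz completions and $\Hom$-functors on module categories (so a priori it only produces an abstract algebra $\Gamma$ together with an abstract lattice $\tors\Gamma$), whereas the nuclear interval $\overline{\kappa^d}(j_1)$ and the $\kappa^d$-map are purely order-theoretic. One must show that these two constructions pick out the \emph{same} sublattice of $\tors\Lambda$, not merely isomorphic ones, and that the equivalence with $\mods\Gamma$ transports brick labels correctly; this is where the relationship between $\overline{\kappa^d}$, the Auslander--Reiten translation, and the wide intervals of \cite{AP}---together with Enomoto's description \cite{enomoto} of wide subcategories via $\kappa$---does the work. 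A secondary point requiring care is the ``support'' component of Buan--Marsh's sequences, namely entries that are projective shifts rather than genuine $\tau$-rigid modules: one must check that passing through such an entry corresponds on the lattice side to the appropriate nuclear interval, so that the two recursions remain in lock-step through those steps as well.
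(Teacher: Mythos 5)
Your outline tracks the paper's own proof closely: both arguments proceed by recursion, matching $M_1$ to the completely join-irreducible torsion class $\T_1 = \Gen_\W M_1$ (equivalently the brick $\beta(M_1)$) via the brick--$\tau$-rigid correspondence, and then reducing to the $\tau$-perpendicular category $\J_\W(M_1)$ on one side and to a nuclear interval on the other. However, the proposal stops short of a proof at exactly the place you correctly flag as the ``main obstacle.'' You write ``Granting this, \dots'' --- but the lattice-theoretic identification $\tors(\W_1) \cong \Core(\T_1)$, with the isomorphism transporting brick labels, is the entire mathematical content of the theorem. The paper supplies it via Proposition~\ref{prop:kappa_tau} (showing ${\pop}_\W(\Gen_\W M) = \lperpW{(\tau_\W M)}$), Corollary~\ref{cor:kappa_indec} (separating the projective and non-projective cases of an indecomposable $\tau$-rigid $M$), and Corollary~\ref{cor:reduction}, which combines these with Asai--Pfeifer's characterization of nuclear intervals (Theorem~\ref{thm:asai_pfeifer_1}) to obtain the label-preserving isomorphism. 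Without proving this, the induction cannot close, so the proposal is a correct road map rather than a proof.

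Two smaller points. First, your ``secondary point requiring care'' about projective shifts and $\tau$-rigid \emph{pairs} does not arise: Definition~\ref{def:tau_seq} in this paper defines $\tau$-exceptional sequences purely in terms of indecomposable modules that are $\tau$-rigid in the ambient category, with no shifted or signed entries (the paper explicitly defers signed/``support'' versions to future work). Second, the notation ``the nuclear interval $\overline{\kappa^d}(j_1)$'' is imprecise: $\overline{\kappa}^d(j_1)$ is an element of the lattice, whereas the interval appearing in Definition~\ref{def:kappa_exceptional} is $\Core(\overline{\kappa}^{-1}(j_1)) = [j_1, \pop(j_1)]$; these are related by $\pop(j_1) = j_1 \join \overline{\kappa}^d(j_1)$ (Proposition~\ref{prop:pop_formula}), but are not the same object.
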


We conclude the paper with detailed examples of $\kappa^d$-exceptional sequences and a brief section on our planned future work.


\subsection*{Acknowledgements}
EJH is grateful to Erlend D. B{\o}rve, Aslak Bakke Buan, H{\aa}vard Utne Terland, and Hugh Thomas for several insightful conversations related to this work. The authors are also thankful to Haruhisa Enomoto for helpful discussions.

\section{Semidistributive lattices and the kappa map}\label{sec:lattice}

In this section we review the fundamental definitions for posets and lattice-posets.
A key concept we use throughout is the notion of an \emph{edge-labeling} for a poset $P$.
Recall that an element $y$ \emph{covers} $x$ in $P$ if and only if $y>x$ and there does not exist $z\in P$ with $y>z>x$.
We write $y\covers x$, and we also say that $x$ is \emph{covered by} $y$, and the pair form a \emph{cover relation}. We use the \emph{Hasse quiver} of $P$ as a visual representation. This is the directed graph $\mathrm{Hasse}(P)$ whose vertices correspond to elements of $P$ such that there is an arrow $x \rightarrow y$ whenever $y \covered x$. We will use the poset whose Hasse quiver is shown in Figure~\ref{fig:run_ex} as a running example. The elements of this poset are drawn as circled nodes. The labeling of each arrow in the Hasse quiver with an elements $j_i$ is an example of the following, where the partial order on $\{j_1,j_2,j_3,j_4\}$ is inherited from $P$. 

   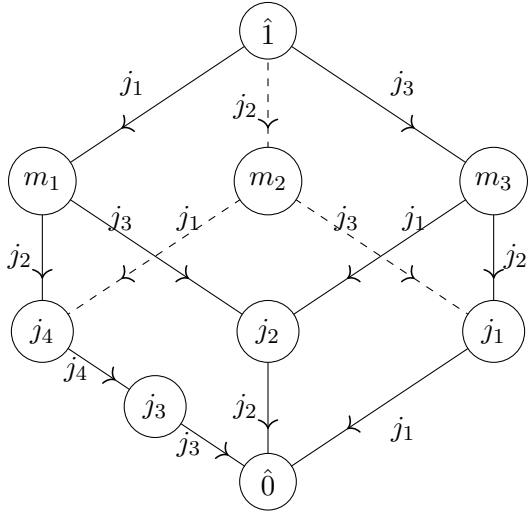
\begin{figure}
   \begin{tikzpicture}
        \begin{scope}[decoration={
	markings,
	mark=at position 0.65 with {\arrow[scale=1.5]{>}}}
	]
        \draw[postaction=decorate] (0,6) -- (-3,4) node [midway,above left] {$j_1$};
        \draw[postaction=decorate,dashed](0,6) -- (0,4) node [midway,left] {$j_2$};
        \draw[postaction=decorate](0,6) -- (3,4) node [midway,above right] {$j_3$};
        \draw[postaction=decorate](-3,4) -- (0,2) node [near start,right] {$j_3$};
        \draw[postaction=decorate,dashed](0,4) -- (-3,2) node [near start,left] {$j_1$};
        \draw[postaction=decorate](3,4) -- (0,2) node [near start, left] {$j_1$};
        \draw[postaction=decorate,dashed](0,4) -- (3,2) node [near start,right] {$j_3$};
        \draw[postaction=decorate](-3,4) -- (-3,2) node [midway, left] {$j_2$};
        \draw[postaction=decorate](3,4) -- (3,2) node [midway, right] {$j_2$};
        \draw[postaction=decorate](-3,2) -- (-1.5,1) node [midway,left] {$j_4$};
        \draw[postaction=decorate](-1.5,1) -- (0,0) node [midway,left] {$j_3$};
        \draw[postaction=decorate](0,2) -- (0,0) node [midway,left] {$j_2$};
        \draw[postaction=decorate](3,2) -- (0,0) node [midway,below right] {$j_1$};
        \end{scope}
		\node[draw,circle,fill=white] at (0,0) {$\hat{0}$};
	    \node[draw,circle,fill=white] at (-3,2) {$j_4$};
	    \node[draw,circle,fill=white] at (0,2) {$j_2$};
	    \node[draw,circle,fill=white] at (3,2) {$j_1$};
	    \node[draw,circle,fill=white] at (-1.5,1) {$j_3$};
	    \node[draw,circle,fill=white] at (3,4) {$m_3$};
	    \node[draw,circle,fill=white] at (0,4) {$m_2$};
	    \node[draw,circle,fill=white] at (-3,4) {$m_1$};
	    \node[draw,circle,fill=white] at (0,6) {$\hat{1}$};
	\end{tikzpicture}
    \caption{Our main running example of a lattice poset.}\label{fig:run_ex}
    \end{figure}
    
       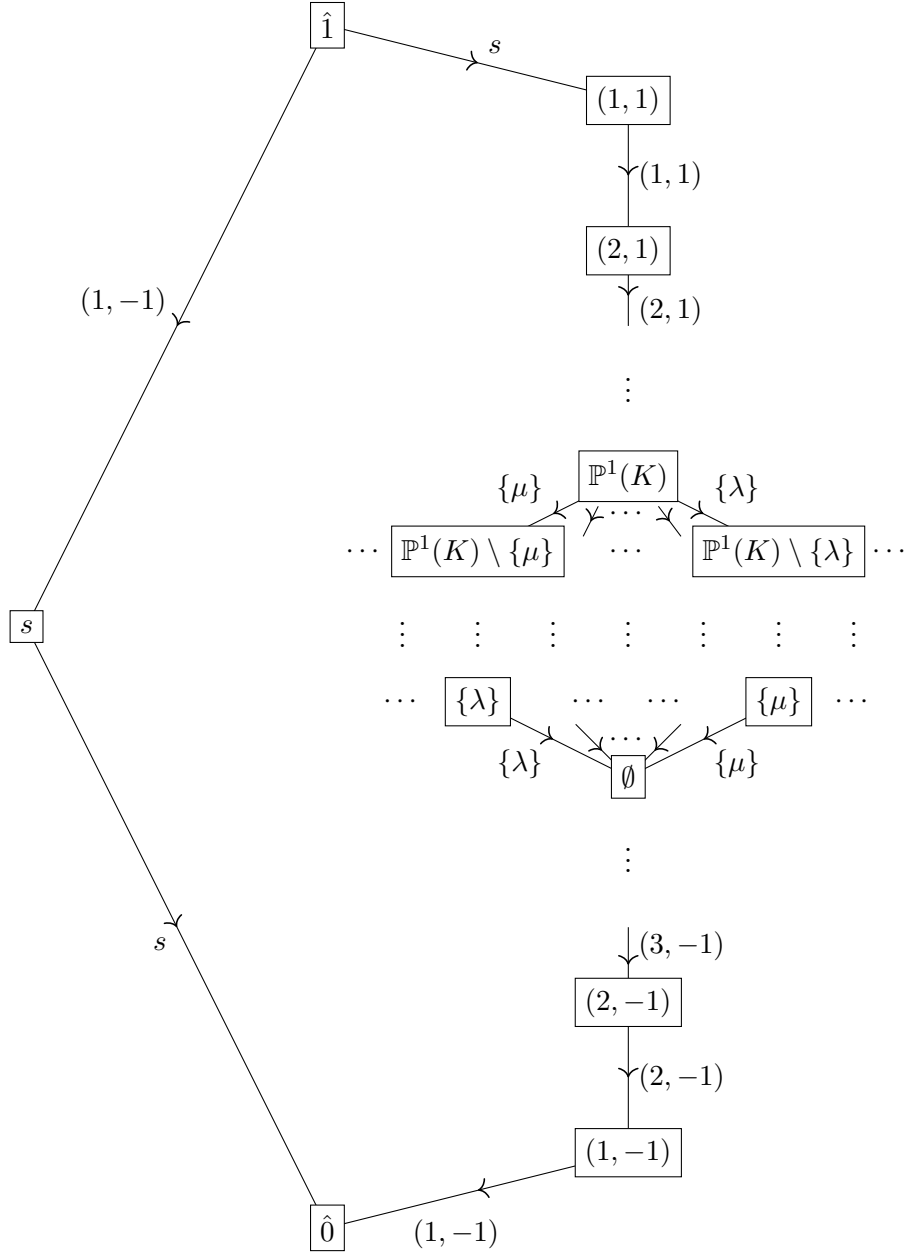
\begin{figure}
   \begin{tikzpicture}
        \begin{scope}[decoration={
	markings,
	mark=at position 0.5 with {\arrow[scale=1.5]{>}}}
	]
        \draw[postaction=decorate] (0,16) -- (-4,8) node [midway,above left] {$(1,-1)$};
        \draw[postaction=decorate] (-4,8) -- (0,0) node [midway,below left] {$s$};
        \draw[postaction=decorate] (4,1) -- (0,0) node [near end,below right] {$(1,-1)$};
        \draw[postaction=decorate] (4,3) -- (4,1) node [midway,right] {$(2,-1)$};
        \draw[postaction=decorate] (4,4) -- (4,3) node [near start,right] {$(3,-1)$};
        \draw[postaction=decorate] (2,7) -- (4,6) node [midway,below left] {$\{\lambda\}$};
        \draw[postaction=decorate] (6,7) -- (4,6) node [midway,below right] {$\{\mu\}$};
        \draw[postaction=decorate] (4.7,6.7) -- (4,6);
        \draw[postaction=decorate] (3.3,6.7) -- (4,6);
        \draw[postaction=decorate] (4,10)--(2,9) node [midway,above left] {$\{\mu\}$};
        \draw[postaction=decorate] (4,10) -- (6,9) node [midway,above right] {$\{\lambda\}$};
        \draw[postaction=decorate] (4.4,9.6)--(4.7,9.2);
        \draw[postaction=decorate] (3.6,9.6)--(3.4,9.2);
        \draw[postaction=decorate] (4,12.8) -- (4,12) node [near end, right] {$(2,1)$};
        \draw[postaction=decorate] (4,15) -- (4,13) node [midway,right] {$(1,1)$};
        \draw[postaction=decorate] (0,16) -- (4,15) node [midway,above right] {$s$};
        \end{scope}
		\node[draw,fill=white] at (0,0) {$\hat{0}$};
	    \node[draw,fill=white] at (0,16) {$\hat{1}$};
	    \node[draw,fill=white] at (4,1) {$(1,-1)$};
	    \node[draw,fill=white] at (4,3) {$(2,-1)$};
	    \node[draw,fill=white] at (4,15) {$(1,1)$};
	    \node[draw,fill=white] at (4,13) {$(2,1)$};
	    \node[draw,fill=white] at (4,6) {$\emptyset$};
	    \node[draw,fill=white] at (2,7) {$\{\lambda\}$};
	    \node[draw,fill=white] at (6,7) {$\{\mu\}$};
	    \node[draw,fill=white] at (2,9) {$\mathbb{P}^1(K) \setminus\{\mu\}$};
	    \node[draw,fill=white] at (6,9) {$\mathbb{P}^1(K)
	    \setminus\{\lambda\}$};
	    \node[draw,fill=white] at (4,10) {$\mathbb{P}^1(K)
	    $};
	    \node[draw,fill=white] at (-4,8) {$s$};
	    \node at (4,5) {$\vdots$};
	    \node at (4,11.25) {$\vdots$};
	    \node at (4,8) {$\vdots$};
	    \node at (3,8) {$\vdots$};
	    \node at (2,8) {$\vdots$};
	    \node at (1,8) {$\vdots$};
	    \node at (5,8) {$\vdots$};
	    \node at (6,8) {$\vdots$};
	    \node at (7,8) {$\vdots$};
	    \node at (3.5,7) {$\cdots$};
	    \node at (4.5,7) {$\cdots$};
	    \node at (1,7) {$\cdots$};
	    \node at (7,7) {$\cdots$};
	    \node at (4,9) {$\cdots$};
	    \node at (0.5,9) {$\cdots$};
	    \node at (7.5,9) {$\cdots$};
	    \node at (4,6.5) {$\cdots$};
	    \node at (4,9.5) {$\cdots$};
	\end{tikzpicture}
    \caption{The Hasse quiver of the lattice described in Example~\ref{ex:lattices}(2).}\label{fig:kronecker}
    \end{figure}

\begin{definition}\label{def:edge-label}
Let $P$ be a poset and let $\E(P) = \{[x,y]:\,x\covered y \text{ in $P$}\}$.
An \emph{edge-labeling} of $P$ is a function $\lab: \E(P) \to Q$, where $Q$ is a poset.
\end{definition}

Recall that a \emph{lattice-poset} $L = (L,\leq)$, or simply a lattice, is a poset satisfying the following two conditions:
First, each pair of elements $x$ and $y$ in $L$ has a unique smallest common upper bound called the \emph{join}, and denoted $x\join y$ or $\Join \{x,y\}$;
second, each pair $x$ and $y$ in $L$ has a unique greatest common lower bound called the \emph{meet}, and denoted $x\meet y$ or $\Meet \{x,y\}$.
Observe that for any finite subset $X\subseteq L$ the join $\Join X$ and the meet $\Meet X$ both exist. If an addition the join $\Join X$ and meet $\Meet X$ exist for \emph{any} subset $X \subseteq L$, then $L$ is called a \emph{complete} lattice. Unless otherwise stated, all of the lattices in this paper will be complete. In particular, each of our lattices has a unique smallest element $\hat{0}$ and a unique largest element $\hat{1}$.
We adopt the convention that $\Join \emptyset = \hat{0}$, and $\Meet \emptyset = \hat{1}$.

\begin{example}\label{ex:lattices}\
    \begin{enumerate}
        \item Our main running example of a lattice is the poset shown in Figure~\ref{fig:run_ex}. For example, in this poset we have $j_3 \join j_2 = m_1$ and $j_3 \meet j_2 = \hat{0}$. The labels on the Hasse arrows of Figure~\ref{fig:run_ex} are explained in Example~\ref{ex:label}.
        \item Let $K$ be an algebraically closed field. We define a lattice $L_{kr}(K)$ as follows. As a set, we have $L_{kr}(K) = \mathbb{N} \times \{-1,1\} \cup 2^{\mathbb{P}^1(K)} \cup \{\hat{0},\hat{1},s\}$, where $\mathbb{P}^1(K) = K \cup \{\infty\}$ is the projective line over $K$ and $\hat{0}, \hat{1}$, and $s$ are formal symbols. The relation $\leq$ is defined as follows.
        \begin{enumerate}
            \item For all $x \in L_{kr}(K)$: $\hat{0} \leq x \leq \hat{1}$.
            \item For all $n, m \in \mathbb{N}$: $(n, -1) \leq (m, -1)$ if and only if $n \leq m$ in the usual sense.
            \item For all $n, m \in \mathbb{N}$: $(n, -1) \leq (m, -1)$ if and only if $n \geq m$ in the usual sense.
            \item For all $S, S' \in 2^{\mathbb{P}^1(K)}$: $S \leq S'$ if and only if $S \subseteq S'$.
            \item For all $n \in \mathbb{N}$ and $S \in 2^{\mathbb{P}^1(K)}$: $(n,-1) \leq S \leq (n,1)$.
        \end{enumerate}
        The Hasse diagram of $L_{kr}(K)$ is shown in Figure~\ref{fig:kronecker} (see also \cite[Example~1.3]{thomas_intro}). The symbols $\lambda$ and $\mu$ denote generic elements of $\mathbb{P}^1(K)$. The labels on the Hasse arrows are explained in Example~\ref{ex:label}. Note that the interval $[\emptyset,\mathbb{P}^1(K)]$ is isomorphic to the boolean lattice on $\mathbb{P}^1(K)$, and that there are no cover relations of the form $u \covered \emptyset$ or $\mathbb{P}^1(K) \covered u$ in $L_{kr}(K)$.
        \item Two other examples of lattices which will feature in this paper are the lattices of torsion classes and of wide subcategories of a finite-dimensional algebra. In particular, $L_{kr}(K)$ is isomorphic to the lattice of torsion classes of the path algebra of the Kronecker quiver, see Example~\ref{ex:kronecker}. These lattices will be formally introduced in Section~\ref{sec:background}.
    \end{enumerate}
\end{example}

As we discuss in Section~\ref{sec:torsion}, the lattice of torsion classes also has a property called \emph{complete semidistributivity}, in which certain elements called \emph{completely join-irreducible} and \emph{completely meet-irreducible} play a role analogous to prime numbers in Number theory.

\begin{definition}\label{def: irreducible}
An element $j$ in a lattice $L$ is \emph{join-irreducible} provided that whenever $j=\Join X$ for a finite subset $X\subseteq L$ we have $j\in X$.
We say that $j$ is \emph{completely join-irreducible} provided that for any subset $X\subseteq L$ one has that $j = \Join X$ if and only if $j \in X$.
Equivalently, $j$ is completely join-irreducible if and only if there is a unique element $j_*$ such that $j\covers j_*$.
The notions of meet-irreducible and completely meet-irreducible are defined dually, by substituting ``$\Meet$'' for ``$\Join$''.
An element $m$ is completely meet-irreducible if and only if there is a unique element $m^*$ such that $m\covered m^*$.
\end{definition}

\begin{notation}
    We denote by $\cji(L)$ and $\cmi(L)$ the sets of completely join-irreducible and completely meet-irreducible elements, respectively.
\end{notation}

\begin{example}\label{ex:jirr}\
    \begin{enumerate}
        \item Note that the convention that $\Join \emptyset = \hat{0}$ means that $\hat{0}$ is not completely join-irreducible, even though it cannot be written as a join of strictly smaller elements. Similarly $\hat{1}$ is never completely meet irreducible.
        \item Let $L$ be the lattice in Figure~\ref{fig:run_ex}. Then $$\cji(L) = \{j_1,j_2,j_3,j_4\},\qquad\qquad\cmi(L) = \{m_1,m_2,m_3,j_3\}.$$
        \item Let $L_{kr}(K)$ be the lattice in Figure~\ref{fig:kronecker}. Then
        \begin{eqnarray*}
            \cji(L_{kr}(K)) &=& \{s\} \cup \mathbb{N} \times \{-1,1\} \cup \{\{\lambda\} \mid  \lambda \in \mathbb{P}^1(K)\},\\
            \cji(L_{kr}(K)) &=& \{s\} \cup \mathbb{N} \times \{-1,1\} \cup \{\mathbb{P}^1(K) \setminus \{\lambda\} \mid  \lambda \in \mathbb{P}^1(K)\}.
        \end{eqnarray*}
        Moreover, the element $\emptyset$ (resp. $\mathbb{P}^1(K)$) is join-irreducible (resp. meet-irreducible), but not completely join-irreducible (resp. completely meet-irreducible). Indeed, we have $\emptyset = \bigvee\{(n,-1)\mid n \in \mathbb{N}\}$, but there is no finite subset $X \subseteq L_{kr}(K) \setminus\{\emptyset\}$ such that $\emptyset = \bigvee X$. 
    \end{enumerate}
\end{example}

A lattice $L$ is \emph{completely semidistributive} if for any subset $Y\subseteq L$, and elements $x$ and $w$ in $L$, both of the following implications are true. (Note that by $x \join Y = w$ we mean $x \join y = w$ for all $y \in Y$, and likewise for $x \meet Y = w$.)
\begin{equation}\label{jsd}
\text{If $x\join Y = w$, then $x\join\left(\Meet Y\right) = w$}\tag{$SD_\join$}
\end{equation}
\begin{equation}\label{msd}
\text{If $x\meet Y = w$, then $x\meet\left(\Join Y\right) = w$}\tag{$SD_\meet$}
\end{equation}

Some important examples of completely semidistributive lattices are lattice of torsion classes of finite-dimensional algebras (see \cite[Theorem~4.5]{GM} and \cite[Theorem~1.3]{DIRRT}) and posets of regions of simplicial hyperplane arrangements (see e.g. \cite[Corollary~9-3.9]{reading_book}).

\begin{example}
    The lattices from Figures~\ref{fig:run_ex} and~\ref{fig:kronecker} are completely semidistributive. For example, in the lattice from Figure~\ref{fig:run_ex}, we have $m_1 \join \{m_2,m_3\} = \hat{1}$ and $m_1 \join (m_2 \meet m_3) = m_1 \join j_1 = \hat{1}$.
\end{example}

As we discuss in Section~\ref{sec:bricks}, cover relations in the lattice of torsion classes have a natural edge-labeling by certain indecomposable modules called bricks. 
This is actually a special case of a more general phenomenon which we recall now.

\begin{definition-theorem}\label{def:j-label}
    Let $L$ be a completely semidistributive lattice and let $u \covered v$ be a cover relation in $L$. By \cite[Lemma~3.11]{RST}, the set $\{y \in L \mid y \join u = v\}$ contains a minimum element $j$ which is completely join-irreducible and satisfies $j_* \leq u$. We call $j$ the \emph{join-irreducible label} of the cover relation $u \covered v$ and denote $j = \jlab[u,v]$.
\end{definition-theorem}

\begin{example}\label{ex:label}
    The cover relations in Figures~\ref{fig:run_ex} and~\ref{fig:kronecker} are decorated by their join-irreducible labels.
\end{example}

\begin{remark}\label{rem:m-label}\
    \begin{enumerate}
        \item Note that $\cji(L)$ inherits a partial order from $L$. Thus the association $[u,v] \mapsto \jlab[u,v]$ can be seen as a special case of Definition~\ref{def:edge-label}.
        \item One can also define a labeling $\mlab$ of cover relations by completely meet-irreducible elements using the dual of Definition-Theorem~\ref{def:j-label}. We note that $\mlab[u,v] = \jlab^d[v,u]$; that is the meet-irreducible label of the cover relation $[u,v]$ is the same as the join-irreducible label of the cover relation $[v,u]$ in the dual lattice $L^d$.
    \end{enumerate}
\end{remark}

\begin{notation}
    We extend Definition-Theorem~\ref{def:j-label} as follows. Given a relation $x \leq y \in L$, we denote
    $$\jlab[x,y] = \left\{\jlab[u,v] \mid x \leq u \covered v \leq y\right\}.$$
    That is, $\jlab[x,y]$ is the set of completely join-irreducible elements which label some cover relation in the interval $[x,y]$. Note that we have made a slight abuse of notation in that if the cover relation $u \covered v$ is labeled by $j$, then we use $\jlab[u,v]$ to denote both $j$ and $\{j\}$.
\end{notation}

Restricting to cover relations involving completely join- and meet-irreducible elements, Definition-Theorem~\ref{def:j-label} and its dual allow one to construct inverse bijections $\kappa: \cji(L) \rightarrow \cmi(L)$ and $\kappa^d: \cmi(L) \rightarrow \cji(L)$ such that
\begin{eqnarray}
    \kappa(j) &=& \mlab[j_*,j] =  \max\{y \in L \mid j \meet y = j_*\}\label{eqn:kappa}\\
    \kappa^d(m) &=& \jlab[m,m^*] = \min\{y \in L \mid m \join y = m^*\}.\label{eqn:kappad}
\end{eqnarray}
The bijections $\kappa$ and $\kappa^d$ are sometimes referred to as either \emph{rowmotion} or the \emph{$\kappa$-maps} of $L$. See e.g. \cite[Theorem~9.3]{thomas_intro} for a proof that these maps are indeed inverse bijections.

\begin{example}\label{ex:kappa}
    \begin{enumerate}
        \item Let $L$ be the lattice in Figure~\ref{fig:run_ex}. Then $\kappa(j_4) = j_3$ and $\kappa(j_i) = m_i$ for $i \in \{1,2,3\}$.
        \item Let $L_{kr}(K)$ be as in Figure~\ref{fig:kronecker}. Then $\kappa(s) = (1,1)$, $\kappa(1, -1) = s$, $\kappa(n, -1) = (n-1, -1)$ for $n > 1$, $\kappa(n, 1) = (n+1, 1)$ for $n \in \mathbb{N}$, and $\kappa(\{\lambda\}) = \mathbb{P}^1(K) \setminus \{\lambda\}$ for $\lambda \in \mathbb{P}^1(K)$.
    \end{enumerate}
\end{example}

\begin{remark}
When $L$ is finite, the existence of the bijections $\kappa$ and $\kappa^d$ is equivalent to (complete) semidistributivity, see \cite[Theorem~2.28]{RST}. On the other hand, there exist infinite lattices which are not completely semidistributive for which these bijections are still well-defined. See \cite[Example~3.18]{RST} for an example.
\end{remark}

The following, most of which is contained in \cite{enomoto}, will be useful in computing the join-irreducible labeling.

\begin{proposition}\label{prop:cover}
    Let $L$ be a completely semidistributive lattice and let $x \leq y \in L$. Then the following hold.
    \begin{enumerate}
        \item $\jlab[x,y] = \{j \in \cji(L) \mid j \leq y \text{ and }\kappa(j) \geq x\}$.
        \item Suppose $x \leq \kappa(j)$. Then there is a cover relation $(x \join j) \meet \kappa(j) \covered x \join j$ with $\jlab[(x \join j) \meet \kappa(j), x \join j] = j$.
        \item Suppose $x \covered y$. Then the following are equivalent.
        \begin{enumerate}
            \item $\jlab[x,y] = j$.
            \item $\mlab[x,y] = \kappa(j)$.
            \item $x \join j = y$ and $x \meet j = j_*$.
            \item $y \meet \kappa(j) = x$ and $y \join \kappa(j) = \kappa(j)^*$.
            \item $x \join j = y$ and $y \meet \kappa(j) = x$.
        \end{enumerate}
    \end{enumerate}
\end{proposition}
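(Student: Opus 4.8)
The plan is to prove part~(3) first and then derive parts~(1) and~(2) from it, using throughout one elementary observation: if $j \in \cji(L)$, then $j_*$ is the \emph{maximum} element of $L$ lying strictly below $j$. Indeed, for $z < j$ we have $j_* \leq z \join j_* \leq j$, and $z \join j_* = j$ is impossible by complete join-irreducibility, so $z \join j_*$ lies in the interval $[j_*,j] = \{j_*, j\}$ and must equal $j_*$, i.e.\ $z \leq j_*$. Dually, $m^*$ is the minimum element strictly above $m \in \cmi(L)$. I will also use repeatedly the $\max$/$\min$ characterizations of $\kappa$ and $\kappa^d$ recorded in \eqref{eqn:kappa} and \eqref{eqn:kappad}, together with the dual statement $\mlab[u,v] = \max\{y : y \meet v = u\}$, noting that complete semidistributivity enters the argument only through Definition-Theorem~\ref{def:j-label}, which guarantees these extrema exist.

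For part~(3) I would first prove (a)$\Leftrightarrow$(c) directly. If $j = \jlab[x,y]$ then Definition-Theorem~\ref{def:j-label} gives $x \join j = y$ and $j_* \leq x$; since $x \join j = y > x$ we get $j \not\leq x$, so $x \meet j < j$, hence $x \meet j \leq j_*$ by the observation, while $j_* \leq x \meet j$ because $j_* \leq x$ and $j_* \leq j$; this is~(c). Conversely, writing $j' = \jlab[x,y]$, the element $j$ lies in $\{z : z \join x = y\}$ and $j'$ is its minimum, so $j' \leq j$; if $j' < j$ then $j' \leq j_* \leq x$ (using $x \meet j = j_*$), forcing $y = j' \join x = x$, a contradiction. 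Next I would prove the lemma (essentially contained in \cite{enomoto}) that $\mlab[u,v] = \kappa(\jlab[u,v])$ for \emph{every} cover relation $u \covered v$: writing $j = \jlab[u,v]$ and $m = \mlab[u,v]$, the equivalence (a)$\Leftrightarrow$(c) and its dual give $u \join j = v$, $u \meet j = j_*$, $v \meet m = u$, $v \join m = m^*$; using $j \leq v$ one gets $j \meet m \leq m \meet v = u$, hence $j \meet m \leq u \meet j = j_*$, and $j_* \leq m$ gives the reverse, so $j \meet m = j_*$ and thus $m \leq \kappa(j)$ by \eqref{eqn:kappa}; moreover $u \leq \kappa(j)$ (from $u \meet j = j_*$), so $v \meet \kappa(j) \in \{u,v\}$ with the value $v$ ruled out (it would give $j \leq v \leq \kappa(j)$, i.e.\ $j \meet \kappa(j) = j \neq j_*$), whence $v \meet \kappa(j) = u$ and therefore $\kappa(j) \leq m$. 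With this lemma, part~(3) closes: (a)$\Leftrightarrow$(b) since $\kappa$ is a bijection and $\mlab[x,y] = \kappa(\jlab[x,y])$; (b)$\Leftrightarrow$(d) by applying (a)$\Leftrightarrow$(c) in the dual lattice $L^d$ together with $\mlab_L[x,y] = \jlab_{L^d}[y,x]$ from Remark~\ref{rem:m-label}, taking $m = \kappa(j) \in \cmi(L)$; (c) and (d) together obviously give (e); and (e)$\Rightarrow$(a) runs as in (c)$\Rightarrow$(a), since (e) yields $x \leq \kappa(j)$, so if $j' := \jlab[x,y] < j$ then $j' \leq j_* \leq \kappa(j)$ and hence $y = j' \join x \leq \kappa(j)$, contradicting $y \meet \kappa(j) = x \neq y$.

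For part~(2), set $v = x \join j$ and $w = v \meet \kappa(j)$. Since $x \leq \kappa(j)$ we have $x \leq w$, hence $w \join j = v$; formula \eqref{eqn:kappa} gives $w \meet j = j_*$ (because $j_* \leq w$ and $w \leq \kappa(j)$) and $w < v$ (else $j \leq v \leq \kappa(j)$ forces $j = j_*$). To see $w \covered v$, take $z$ with $w \leq z \leq v$: then $z \meet j \in [j_*, j] = \{j_*,j\}$; if $z \meet j = j$ then $z \geq x \join j = v$, and if $z \meet j = j_*$ then $z \leq \kappa(j)$ and $z \leq v$, so $z \leq v \meet \kappa(j) = w$. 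Now (a)$\Leftrightarrow$(c) of part~(3) gives $\jlab[w,v] = j$. Finally, part~(1) follows: for $\subseteq$, if $j = \jlab[u,v]$ with $x \leq u \covered v \leq y$ then $j \leq v \leq y$ and, since $u \meet j = j_*$ gives $u \leq \kappa(j)$, also $x \leq u \leq \kappa(j)$; for $\supseteq$, given $j \in \cji(L)$ with $j \leq y$ and $x \leq \kappa(j)$, part~(2) produces the cover relation $(x \join j) \meet \kappa(j) \covered x \join j$ sitting inside $[x,y]$ and labeled by $j$.

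I do not expect a genuine obstacle once the statement is reorganized this way: the entire content is the realization that the $\max$/$\min$ descriptions of $\kappa$, $\kappa^d$, $\jlab$, and $\mlab$ do all the work. The steps requiring the most care are the lemma $\mlab = \kappa \circ \jlab$ for arbitrary cover relations and the ``no intermediate element'' check in part~(2); in both, the chain of inequalities must be directed exactly right, and one must invoke the observation (upgrading ``$z < j$'' to ``$z \leq j_*$'') only when $j$ is genuinely completely join-irreducible. A secondary point of care is the dualization used for (b)$\Leftrightarrow$(d): one must match $\kappa$ with $\kappa^d$ and $j_*$ with $m^*$ correctly when passing to $L^d$.
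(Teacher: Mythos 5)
Your proof is correct, and it takes a genuinely different route from the paper's. The paper proves this proposition largely by citation: items (1), (2), and the equivalence (3a)$\Leftrightarrow$(3c) are all quoted from Enomoto, and the paper's original contribution is just the short chain (3a)$\Rightarrow$(3e)$\Rightarrow$(3c) (together with duals), where the step (3a)$\Rightarrow$(3e) itself \emph{uses} part~(1). You instead give a self-contained argument that inverts the logical dependencies: you prove (3a)$\Leftrightarrow$(3c) from the $\min$/$\max$ characterizations of $\jlab$ and $\kappa$, isolate the key lemma $\mlab[u,v] = \kappa(\jlab[u,v])$ for arbitrary cover relations, derive the remaining equivalences in~(3), and only then deduce~(2) and~(1) as corollaries of~(3). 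What your approach buys is independence from the cited results and an explicit, reusable statement of the relation between the two labelings (your lemma). What the paper's approach buys is brevity and the reuse of already-established machinery, at the cost of (a) requiring the reader to track down three references and (b) a slight inelegance in that the proof of~(3) leans on~(1), which the reader may not have verified. One stylistic point: your closing step (e)$\Rightarrow$(a) is cleaner than the paper's corresponding (3e)$\Rightarrow$(3c), which contains a typo ($x \meet \kappa(j) = x$ where $y \meet \kappa(j) = x$ is meant) and again invokes~(1); your version works directly with the minimality of $\jlab$.
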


\begin{proof}
    Items (1) and (2) are Theorem~3.14 and Lemma~3.15 in \cite{enomoto}, respectively. The equivalence $(3a \iff 3c)$ is also \cite[Lemma~2.7]{enomoto}. This also implies the equivalence $(3b \iff 3d)$ by duality.
    
    $(3a \implies 3e)$: Suppose $\jlab[x,y] = j$. By (1), this means $j \leq y$ and $\kappa(j) \geq x$. Now by the definition of $\kappa$, we have $j \not\leq \kappa(j)$, and so $j \not \leq x$ and $\kappa(j) \not\geq y$. Since $x \covered y$ is a cover relation, it follows that $x \join j = y$ and $y \meet \kappa(j) = x$.
    
    $(3e \implies 3c)$: Suppose $x \join j = y$ and $x \meet \kappa(j) = x$. In particular, this means $j \leq y$ and $\kappa(j) \geq x$. Since $x \covered y$ is a cover relation, it follows from (1) that $\jlab[x,y] = j$.
    
    The proofs of $(3b \implies 3e)$ and $(3e \implies 3d)$ are dual to those above.
\end{proof}

To conclude this section, we recall the definitions of canonical join representations and the extended kappa-map of \cite{BTZ}.

\begin{definition}\label{def:canonical_join_rep}
    Let $L$ be a complete lattice and $x \in L$.
    \begin{enumerate}
        \item A \emph{join representation} of $x$ is an equation $x = \Join A$ where $A \subseteq L$. We say this join representation is \emph{irredundant} if for any $j \in A$ one has $x \neq \Join(A \setminus \{j\})$.
        \item Given two join representations $x = \Join A = \Join B$, we say that $\Join A$ \emph{refines} $\Join B$ if for every $a \in A$ there exists $b \in B$ such that $a \leq b$.
        \item A join representation $x = \Join A$ is called a \emph{canonical join representation} if $A$ is an antichain and $\Join A$ refines every join representation of $x$. In this case, we say that $x$ is \emph{canonically join-representable} and write $A = \CJR(x)$. We call the elements of $A$ the \emph{canonical joinands} of $x$ and say that $A$ \emph{joins canonically}.
        \item \emph{(Canonical) meet representations}, $\CMR(x)$, etc. are all defined dually.
    \end{enumerate}
\end{definition}

\begin{example}\label{ex:can_join_rep}
    Let $L$ be the lattice in Figure~\ref{fig:run_ex}. Then every element of $L$ has both a canonical join representation and a canonical meet representation. As some explicit examples:
    \begin{enumerate}
        \item Then there are exactly three irredundant join representations of $m_1$, namely $m_1 = \Join\{m_1\}$, $m_1 = \Join\{j_2,j_4\}$, and $m_1 = \Join\{j_2,j_3\}$. The canonical join representation of $m_1$ is $m_1 = j_2 \join j_3$. An example of a non-irredundant join representation of $m_1$ is $m_1 = \Join\{j_2,j_3,j_4\}$.
        \item The canonical meet representation of $\hat{0}$ is $\hat{0} = \Meet\{m_1,m_2,m_3\}$.
        \item Each $j_i \in \cji(L)$ has canonical join representation $j_i = \Join\{j_i\}$. Likewise each completely meet irreducible element is its own canonical meet representation.
        \item For all $x \in L$ it follows from \cite[Theorem~5.10]{RST} that  $$\CJR(x) = \{j \in \cjrep(L) \mid \exists u \covered x \text{ s.t. }j = \jlab[u,v] \}.$$
        This relies on the fact that $|L| < \infty$. We will discuss how this generalizes to the infinite case in Theorem~\ref{thm:covers_canonical}.
    \end{enumerate}
\end{example}

\begin{example}\label{ex:no_can_join}
   Let $L_{kr}(K)$ be as in Figure~\ref{fig:kronecker}. Then
   \begin{enumerate}
       \item $x := \emptyset$ does not have a canonical join representation. Indeed, let $x = \Join A$ be a join representation of $x$. If $x \in A$, let $B = \{(n,-1) \mid n \in \mathbb{B}\}$. Then $x = \Join B$ and $\Join A$ does not refine $\Join B$. Indeed, there does not exist $b \in B$ such that $x \leq b$. If, on the other hand, $x \notin A$, then there must be an infinite subset $S \subseteq \mathbb{N}$ such that $\{(n,-1) \mid n \in S\} \subseteq A$. In particular, this means $A$ is not an antichain. Note also that $\mathbb{P}^1(K)$ does not have a canonical meet representation for a similar reason.
       \item $\CJR(S) = \Join\{\{\lambda\} \mid \lambda \in S\}$ for all $\emptyset \neq S \in \mathbb{P}^1(K)$.
   \end{enumerate}
\end{example}

We emphasize that the join representation $\hat{0} = \Join\emptyset$ vacuously satisfies Definition~\ref{def:canonical_join_rep}(3). Likewise, we have a canonical meet representation $\hat{1} = \Meet\emptyset$.

\begin{notation}
    Let $L$ be a complete lattice. We denote by $\cjrep(L)$ (respectively $\cmrep(L)$) the subposet of canonically join-representable (resp. canonically meet-representable) elements.
\end{notation}

Before commenting on these definitions, we recall the following.

\begin{lemma}\cite[Lemma~4.5]{enomoto}\label{lem:canonical_join_rep}
    Suppose that $L$ is a complete lattice and that $x \in \cjrep(L)$. Then $\CJR(x) \subseteq \cji(L)$; that is, every canonical joinand of $x$ is completely join-irreducible.
\end{lemma}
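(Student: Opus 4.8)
The plan is to argue directly from the definition of a canonical join representation (Definition~\ref{def:canonical_join_rep}), using nothing about $L$ beyond completeness. Write $A = \CJR(x)$, so that $A$ is an antichain, $\Join A = x$, and $\Join A$ refines every join representation of $x$; fix $j \in A$, and the goal is to show $j \in \cji(L)$.

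I would first dispose of the degenerate value $j = \hat{0}$. If $j = \hat{0}$, then since $\hat{0} \leq a$ for every $a \in L$ and $A$ is an antichain, we must have $A = \{\hat{0}\}$ and hence $x = \hat{0}$. But $\hat{0} = \Join\emptyset$ is also a join representation of $\hat{0}$, and $\Join\{\hat{0}\}$ fails to refine it, since $\emptyset$ contains no element above $\hat{0}$; this contradicts canonicity. So $j \neq \hat{0}$, which is exactly consistent with the convention built into Definition~\ref{def: irreducible} that $\hat{0}$ is never completely join-irreducible.

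Next I would suppose $j = \Join X$ for some $X \subseteq L$; by Definition~\ref{def: irreducible} it then suffices to prove $j \in X$. Since $\Join\emptyset = \hat{0} \neq j$, the set $X$ is nonempty. Put $B := (A \setminus \{j\}) \cup X$. Using associativity of arbitrary joins in the complete lattice $L$ one has $\Join B = \left(\Join(A \setminus \{j\})\right) \join \left(\Join X\right) = \left(\Join(A \setminus \{j\})\right) \join j = \Join A = x$, so $x = \Join B$ is a join representation of $x$. Because $\Join A$ is canonical, it refines $\Join B$, so the refinement condition applied to $j \in A$ yields some $b \in B$ with $j \leq b$. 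If $b \in A \setminus \{j\}$, then $j \leq b$ with $j \neq b$ contradicts that $A$ is an antichain; hence $b \in X$, and then $j \leq b \leq \Join X = j$ forces $b = j \in X$, which completes the argument.

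I do not expect a real obstacle here: the proof is short and elementary, the crux being the passage to the modified join representation $x = \Join B$ together with the antichain property. The only points that deserve a little care are the edge case $j = \hat{0}$ (so that the conclusion respects the convention in Definition~\ref{def: irreducible}) and the identity $\Join B = x$, which rests solely on the associativity of joins in a complete lattice.
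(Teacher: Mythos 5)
Your proof is correct and is the standard argument for this fact (the paper itself does not prove the lemma---it is cited directly from Enomoto's paper, so there is no in-paper proof to compare against). The key move, replacing $j$ in $A$ by a putative join representation $X$ of $j$ to form $B = (A\setminus\{j\})\cup X$, then invoking the refinement property of the canonical join representation together with the antichain condition, is precisely how one proves this in Freese--Je\v{z}ek--Nation and how Enomoto argues as well.

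Two small remarks on presentation. First, your treatment of the case $j = \hat{0}$ is correct but slightly redundant: if you simply run the main argument with $X = \emptyset$ (so $B = A\setminus\{j\}$), refinement already demands some $b \in B$ with $j \leq b$, and the antichain property rules out every candidate, giving the contradiction directly; the separate paragraph is a clean way to organize it, but not strictly necessary. Second, it is worth being explicit that the conclusion ``$b \notin A\setminus\{j\}$'' is a sub-claim proved by contradiction against the (true) antichain hypothesis, not against the ambient supposition $j = \Join X$; as written this is clear enough, but a reader could momentarily wonder which assumption is being discharged.

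Note also that the argument relies essentially on the paper's convention that a canonical join representation refines \emph{every} join representation of $x$, not merely the irredundant ones (cf.\ Remark~\ref{rem:canonical_join_rep}), since the auxiliary representation $\Join B$ you construct need not be irredundant. You use the stronger form correctly, but it is the reason this lemma holds under this paper's definition and not automatically under the weaker one used in \cite{BCZ,BTZ}.
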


\begin{remark}\label{rem:canonical_join_rep}
    The definition of a canonical join representation we have used in this paper agrees with those in e.g. \cite{emily_canonical,enomoto,gorbunov,reading_book,RST}. This differs slightly from the definition used in \cite{BCZ,BTZ}, where one assumes only that $\Join A$ refines every irredundant join representation of $x$. On the other hand, we will show in Theorem~\ref{thm:covers_canonical} that the definitions coincide under the additional assumption that every element of $A$ is completely join-irreducible. For example, in the setting of Example~\ref{ex:no_can_join}, the only irredundant join representation of $x$ is $\bigvee\{x\}$. This means $x$ has a canonical join representation as defined in \cite{BCZ,BTZ}. On the other hand, we showed in Example~\ref{ex:no_can_join} that $x$ does not have a canonical join representation as defined in the present paper. This is consistent with the fact that $x$ is not compeltely join-irreducible, and therefore $\Join\{x\}$ does not satisfy the hypotheses of our Theorem~\ref{thm:covers_canonical}. See \cite[Remark~4.3]{enomoto}, Theorem~\ref{thm:brick_labeling}, and Theorem~\ref{thm:covers_canonical} for additional discussion.
\end{remark}

In Section~\ref{sec:flag} we will explore the relationship between the existence of canonical join representations and that of cover relations. For now, we recall the following result.

\begin{proposition}\cite[Theorem~1]{gorbunov}\label{prop:covers_canonical_existence}
    Let $L$ be a completely semidistributive lattice and let $x \in L$. Then $x \in \cjrep(L)$ if and only if for all $y < x$ there exists a cover relation $y < z \covered x$. In particular, if $L$ is finite then $\cjrep(L) = L = \cmrep(L)$.
\end{proposition}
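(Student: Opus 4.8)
The plan is to prove the two implications separately. Throughout I write $j_*$ for the unique element covered by a completely join-irreducible element $j$, and I will use one elementary fact repeatedly: if $j\in\cji(L)$ and $v<j$ then $v\le j_*$ (otherwise $j_*<v\join j_*\le j$ and $j\covers j_*$ force $v\join j_*=j$, contradicting complete join-irreducibility of $j$). All of the passage between cover relations and the $\kappa$-map is done through Proposition~\ref{prop:cover}.

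For the forward direction, suppose $x\in\cjrep(L)$ and set $A=\CJR(x)$, whose elements are completely join-irreducible by Lemma~\ref{lem:canonical_join_rep}. Given $y<x$, I would pick $a\in A$ with $a\not\le y$ (possible since $\Join A=x\not\le y$) and consider $z_0=\Join\big((A\setminus\{a\})\cup\{y,a_*\}\big)$. Since $A$ is an antichain and $a_*<a$, the element $a$ is below no member of $(A\setminus\{a\})\cup\{y,a_*\}$, so if this join were $x$ the refinement property of $\CJR(x)$ would fail; hence $z_0<x$. On the other hand $z_0\ge y$, $z_0\ge a_*$, and $z_0\join a\ge \Join(A\setminus\{a\})\join a=x$, so $z_0\join a=x$. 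Then $a_*\le a\meet z_0<a$ together with $a\covers a_*$ gives $a\meet z_0=a_*$, so $z_0\le\kappa(a)$ by the formula \eqref{eqn:kappa}, and in particular $y\le\kappa(a)$. Now Proposition~\ref{prop:cover}(2) applied to the pair $(z_0,a)$ produces the cover relation $x\meet\kappa(a)=(z_0\join a)\meet\kappa(a)\covered z_0\join a=x$, and $z:=x\meet\kappa(a)$ satisfies $y\le z$ because $y\le x$ and $y\le\kappa(a)$. This is the desired cover $y\le z\covered x$.

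For the converse (the case $x=\hat{0}$ being vacuous), I would take $A=\{\jlab[z,x]:z\covered x\}$, a nonempty subset of $\cji(L)$ by Definition-Theorem~\ref{def:j-label}, and argue that $x=\Join A$ is the canonical join representation of $x$. First, $\Join A\le x$; and if $\Join A=y<x$ the hypothesis gives $z\covered x$ with $y\le z$, whence $\jlab[z,x]\le\Join A\le z$, contradicting $\jlab[z,x]\join z=x>z$ from Proposition~\ref{prop:cover}(3). So $\Join A=x$. Next, for refinement: given any join representation $x=\Join C$ and any $j=\jlab[z,x]\in A$, suppose $j\not\le c$ for all $c\in C$; then $\{c\join z:c\in C\}$ joins to $x$ with each element in $\{z,x\}$ (as $z\covered x$), so $c_0\join z=x=j\join z$ for some $c_0$, and $(SD_\join)$ gives $z\join(c_0\meet j)=x$. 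But $c_0\meet j<j$ forces $c_0\meet j\le j_*\le z$ (the last inequality from $z\meet j=j_*$ in Proposition~\ref{prop:cover}(3)), so $x\le z$, a contradiction; hence $\Join A$ refines $\Join C$. Finally, for the antichain property: a comparability $j=\jlab[z,x]<j'=\jlab[z',x]$ would give $j\le(j')_*\le z'$ while $j\not\le z$, which together with $j\join z=x$ forces $z$ and $z'$ to be incomparable and hence $z\join z'=x$; then $z'\join j'=x=z'\join z$ and $(SD_\join)$ yield $z'\join(j'\meet z)=x$, while $j'\not\le z$ gives $j'\meet z\le(j')_*\le z'$, so $x\le z'$, again a contradiction. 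Thus $A$ is an antichain joining to $x$ and refining every join representation, i.e. $x\in\cjrep(L)$. The ``in particular'' statement follows because in a finite lattice the nonempty set $\{w:y\le w<x\}$ always has a maximal element, necessarily covered by $x$, so the criterion holds for every $x$; dually $\cmrep(L)=L$.

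I expect the forward direction to be the main obstacle: the naive construction only yields an element $z_0<x$ with $y\le z_0$, and in an infinite lattice there need not be a maximal such element, so one genuinely needs complete semidistributivity to upgrade $z_0$ to an actual cover of $x$ — this is exactly the role of the $\kappa$-map and Proposition~\ref{prop:cover}(2). Once that device is available, the refinement and antichain claims in the converse reduce to short applications of $(SD_\join)$ and the dictionary in Proposition~\ref{prop:cover}.
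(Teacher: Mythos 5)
The paper gives no argument here; the result is simply cited from Gorbunov, so there is no in-text proof to compare against. Your proof is correct and is a natural way to re-derive the result inside this paper's framework via the $\kappa$-map and Proposition~\ref{prop:cover}. In the forward direction, the choice $z_0=\Join\big((A\setminus\{a\})\cup\{y,a_*\}\big)$ does real work: including $A\setminus\{a\}$ forces $z_0\join a=x$ (hence $a\not\le z_0$, since $z_0<x$), while including $a_*$ forces $a_*\le z_0$; together these pin down $a\meet z_0=a_*$, whence $z_0\le\kappa(a)$ and Proposition~\ref{prop:cover}(2) produces the cover $x\meet\kappa(a)\covered x$ at exactly the right place. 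The more naive choice $z_0=y\join a_*$ would not suffice, since nothing would then force $z_0\join a=x$. In the converse, the applications of \eqref{jsd} in the refinement and antichain steps are correct; the only assertion left without comment is ``$j'\not\le z$'' in the antichain argument, but this is immediate from $j<j'$ together with $j\not\le z$. Your closing diagnosis is also accurate: the substance of the forward direction is that complete semidistributivity, packaged into $\kappa$ and Proposition~\ref{prop:cover}(2), converts the existence of an element $z_0$ with $y\le z_0<x$ into an actual cover of $x$, which is far from automatic in an infinite lattice.
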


We are now prepared to state the final definition of this section, see \cite[Definition~1.0.3]{BTZ}.

\begin{definition}\label{def:kappa_bar}
    Let $L$ be a completely semidistributive lattice and let $x \in \cjrep(L)$. We define
    $$\overline{\kappa}(x) := \Meet \{\kappa(j) \mid j \in \CJR(x)\}.$$
    For $x \in \cmrep(L)$ we define $\overline{\kappa}^d(x)$ dually.
\end{definition}

The maps $\overline{\kappa}: \cjrep(L) \rightarrow L$ and $\overline{\kappa}^d: \cmrep(L) \rightarrow L$ are sometimes referred to as the \emph{extended kappa-maps}.

\begin{example}\label{ex:kappa_bar}
    Let $L$ be the lattice in Figure~\ref{fig:run_ex}. Written as a permutation in cycle notation, we then have
    $\overline{\kappa} = (\hat{0},\hat{1})(j_1,m_1)(j_2,m_2)(j_3,m_3,j_4).$
\end{example}

\begin{remark}
    In \cite[Question~4.13]{enomoto}, Enomoto asks whether the image of $\overline{\kappa}$ lies in $\cmrep(L)$ (as is the case for lattices of torsion classes, see \cite[Theorem~4.19]{enomoto} and \cite[Corollary~4.4.3]{BTZ}). When the answer to Enomoto's question is ``yes'', one can adapt the arguments of \cite[Section~4.1]{enomoto} to conclude that $\overline{\kappa}: \cjrep(L) \rightarrow \cmrep(L)$ is a bijection and moreover that for every $x \in \cjrep(L)$ there is an induced bijection $\kappa|_{\CJR(x)}: \CJR(x) \rightarrow \CMR(\overline{\kappa}(x))$. In Section~\ref{sec:kappa_order}, we show that this is indeed the case under the additional assumption that $L$ is \emph{well-separated}.
\end{remark}


\section{Wide subcategories, torsion classes, and $\tau$-rigid modules}\label{sec:background}

Let $\Lambda$ be a finite-dimensional basic algebra over a field $K$. We denote by $\mods\Lambda$ the category of finitely generated (left) $\Lambda$-modules. When we speak of a subcategory of $\mods\Lambda$, we will always mean a full subcategory which is closed under isomorphisms.

We recall that a subcategory $\W \subseteq \mods\Lambda$ is called \emph{wide} if it is closed under kernels, cokernels, and extensions. Equivalently, $\W$ is an exact-embedded abelian subcategory. Note that it is an immediate consequence of the definitions that a subcategory $\U \subseteq \W$ is a wide subcategory of $\W$ if and only if it is a wide subcategory of $\mods \Lambda$.

Given a fixed wide subcategory $\W \subseteq \mods\Lambda$, we denote by $\wide(\W)$ the poset of wide subcategories of $\W$ under the inclusion order. It is well known that this poset is a (complete) lattice, with the meet operation being the intersection. By the previous paragraph, we can then identify $\wide(\W)$ with the interval $[0,\W] \subseteq \wide(\mods\Lambda)$.

In the remainder of this section, we recall background information about torsion classes and $\tau$-tilting theory. As we will see in the sequel, this paper will be concerned with the $\tau$-tilting theory \emph{within} certain wide subcategories. Thus our convention will be to work within a wide subcategory $\W$ of $\mods\Lambda$ throughout this section. To recover the original constructions, one can take $\W = \mods\Lambda$.

\begin{remark}
    Many of the wide subcategories in this paper satisfy an additional property known as \emph{functorial finiteness}. While the technical definition of functorial finiteness will not be needed in this paper, it is well-known that such subcategories are precisely those which are themselves equivalent to categories of finitely-generated modules over finite-dimensional algebras, see e.g. \cite[Proposition~4.12]{enomoto_ff}. This allows any statement which is known for such module categories to be extended to any functorially finite wide subcategory. More generally, every wide subcategory is an \emph{abelian length category}, and often this is enough to recover results which have been shown for module categories, see e.g. \cite[Section~2.2]{enomoto}. One notable exception is $\tau$-tilting theory (discussed in Section~\ref{sec:tau_tilting}), which is intimately related with the notion of projective objects, and therefore will require the assumption of functorial finiteness.
\end{remark}

\subsection{Torsion pairs}\label{sec:torsion}

We begin by recalling the definition and basic properties of torsion pairs. The material in this section is standard, and can be found for example in \cite[Section~VI.1]{ASS}.

Given two subcategories $\mathcal{C}, \mathcal{D} \subseteq \mods\Lambda$, we denote
\begin{eqnarray*}
    \mathcal{C}^{\perp_\mathcal{D}} &:=& \{X \in \mathcal{D} \mid \Hom_\Lambda(-,X)|_\mathcal{C} = 0\},\\
    \lperpD{\mathcal{C}} &:=& \{X \in \mathcal{D} \mid \Hom_\Lambda(X,-)|_\mathcal{C} = 0\}.
\end{eqnarray*}
That is, $\mathcal{C}^{\perp_\mathcal{D}}$ (resp. $\lperpD{\mathcal{C}}$) denotes the subcategory consisting of those objects in $\mathcal{D}$ which admit no nonzero morphisms from (resp. to) the objects in $\mathcal{C}$.

Now fix a wide subcategory $\W \subseteq \mods\Lambda$. A \emph{torsion pair in $\W$} is a pair $(\T, \F)$ of subcategories of $\W$ such that $\T^{\perp_\W} = \F$ and $\lperpW{\F} = \T$. The subcategory $\T$ is called a \emph{torsion class of $\W$} and the subcategory $\F$ is called a \emph{torsion-free class of $\W$}. 
Given an arbitrary subcategory $\mathcal{C}$, we write $\Gen\mathcal{C}$ (resp. $\Cogen \mathcal{C}$) for the subcategory of quotients (resp. submodules) of direct sums of modules in $\mathcal{C}$. More generally, if $\mathcal{C}$ is contained in some wide subcategory $\mathcal{W} \subseteq \mods\Lambda$, we denote $\Gen_\W\mathcal{C} := \W \cap \Gen\mathcal{C}$ and $\Cogen_\W\mathcal{C} := \W \cap \Cogen\mathcal{C}$.
It is well known that $\mathcal{C} \subseteq \W$, is a torsion class (resp. a torsion-free class) in $\W$ if and only if $\mathcal{C}=\Gen_\W\mathcal{C}$ (resp. $\mathcal{C} = \Cogen_\W(\mathcal{C})$)  and $\mathcal{C}$ is closed under extensions.

Given a torsion pair $(\mathcal{T}, \mathcal{F})$ in $\W$ and a module $M \in \W$, there exist unique modules $t_\T(M) \in \T$ and $f_\F(M) \in \F$ which fit into an exact sequence of the form 
\begin{equation}\label{eqn:canonical} 0 \rightarrow t_\T(M) \xrightarrow{\iota} M \xrightarrow{q} f_\F(M) \rightarrow 0. \end{equation}
This is called the \emph{canonical exact sequence} of $M$ associated to the torsion pair $(\T,\F)$.

We denote by $\tors(\mathcal{W})$ the poset of torsion classes of $\mathcal{W}$ with the inclusion order. As mentioned in Section~\ref{sec:lattice}, it is well known that $\tors(\mathcal{W})$ is a completely semidistributive lattice. We will discuss the semidistributivity property further in Section~\ref{sec:bricks}. 

\begin{example}\label{ex:kronecker}
   Consider the Kronecker quiver $Q = \left(\begin{tikzcd}1 \arrow[r,yshift = 0.1cm]\arrow[r,yshift = -0.1cm] & 2\end{tikzcd}\right)$ and the corresponding path algebra $\Lambda = KQ$. (For simplicity, we will assume that $K$ is algebraically closed when discussing this example.) Then $\tors(\mods\Lambda)$ is isomorphic to the lattice $L_{kr}(K)$ shown in Figure~\ref{fig:kronecker}. See \cite[Example~1.3]{thomas_intro} for a detailed description of this isomorphism.
\end{example}

The following will be useful throughout this paper. Note that given an arbitrary subcategory $\mathcal{C} \subseteq \mods\Lambda$, the notation $\Filt(\mathcal{C})$ refers to the subcategory consisting of those modules $M \in \mods\Lambda$ which admit a finite filtration
$$0 = M_0 \subsetneq M_1 \subsetneq \cdots \subsetneq M_k = M$$
such that $M_j/M_{j-1} \in \mathcal{C}$ for all $j$.

\begin{proposition}\cite[Proposition~2.1]{thomas_intro}
    Let $\mathcal{C} \subseteq \W$ be any subcategory. Then $\Filt(\Gen_\W\mathcal{C})$ is the smallest torsion class in $\W$ which contains $\mathcal{C}$. Dually, $\Filt(\Cogen_\W\mathcal{C})$ is the smallest torsion-free class in $\W$ which contains $\mathcal{C}$.
\end{proposition}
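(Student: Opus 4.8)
The plan is to verify the two halves of the statement separately, using the standard characterization of torsion classes in $\W$ recalled just before the proposition: a subcategory $\mathcal{D}\subseteq\W$ is a torsion class in $\W$ if and only if $\mathcal{D}=\Gen_\W\mathcal{D}$ and $\mathcal{D}$ is closed under extensions. I will treat the torsion-class statement in detail; the torsion-free statement then follows by the dual argument (replacing $\Gen_\W$ by $\Cogen_\W$, kernels by cokernels, etc., inside the abelian length category $\W$).

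First I would show $\Filt(\Gen_\W\mathcal{C})$ is a torsion class in $\W$. It is contained in $\W$ since $\Gen_\W\mathcal{C}\subseteq\W$ and $\W$ is closed under extensions. It is closed under extensions because $\Filt(-)$ of any subcategory is closed under extensions: given $0\to A\to B\to C\to 0$ with $A,C\in\Filt(\Gen_\W\mathcal{C})$, concatenating a filtration of $A$ with (the preimage of) a filtration of $C$ yields a filtration of $B$ with subquotients in $\Gen_\W\mathcal{C}$. For the condition $\Filt(\Gen_\W\mathcal{C})=\Gen_\W\bigl(\Filt(\Gen_\W\mathcal{C})\bigr)$, the inclusion $\subseteq$ is clear (any module is a quotient of itself, and it lies in $\W$); for $\supseteq$, I would argue that a quotient in $\W$ of a module admitting a filtration with subquotients in $\Gen_\W\mathcal{C}$ again admits such a filtration. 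Concretely, if $M$ has a filtration $0=M_0\subsetneq\cdots\subsetneq M_k=M$ with $M_j/M_{j-1}\in\Gen_\W\mathcal{C}$ and $q\colon M\twoheadrightarrow N$ with $N\in\W$, then the images $q(M_j)$ give a (not necessarily strict) filtration of $N$ whose successive quotients $q(M_j)/q(M_{j-1})$ are quotients of $M_j/M_{j-1}$; each such quotient lies in $\W$ (being a subquotient of $N\in\W$, using that $\W$ is wide hence abelian-embedded) and is a quotient of an object of $\Gen_\W\mathcal{C}$, hence lies in $\Gen_\W\mathcal{C}$ since $\Gen_\W\mathcal{C}=\Gen_\W(\Gen_\W\mathcal{C})$. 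Discarding repeats gives a strict filtration, so $N\in\Filt(\Gen_\W\mathcal{C})$. Thus $\Filt(\Gen_\W\mathcal{C})$ is a torsion class in $\W$, and it clearly contains $\mathcal{C}$ since $\mathcal{C}\subseteq\Gen_\W\mathcal{C}\subseteq\Filt(\Gen_\W\mathcal{C})$.

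Next I would show minimality: if $\mathcal{T}$ is any torsion class in $\W$ with $\mathcal{C}\subseteq\mathcal{T}$, then $\Filt(\Gen_\W\mathcal{C})\subseteq\mathcal{T}$. Since $\mathcal{C}\subseteq\mathcal{T}$ and $\mathcal{T}=\Gen_\W\mathcal{T}$, we get $\Gen_\W\mathcal{C}\subseteq\Gen_\W\mathcal{T}=\mathcal{T}$; since $\mathcal{T}$ is closed under extensions, an easy induction on filtration length gives $\Filt(\Gen_\W\mathcal{C})\subseteq\mathcal{T}$. This establishes that $\Filt(\Gen_\W\mathcal{C})$ is the smallest torsion class in $\W$ containing $\mathcal{C}$. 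The dual statement for $\Filt(\Cogen_\W\mathcal{C})$ follows by applying the above to the opposite category, using that torsion-free classes in $\W$ are characterized by $\mathcal{F}=\Cogen_\W\mathcal{F}$ together with closure under extensions, and that $\Cogen_\W$ is idempotent.

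The main obstacle is the verification that $\Gen_\W$ of a $\Filt$-closed-under-$\Gen_\W$ subcategory stays inside it — i.e. the step showing a quotient (inside $\W$) of a filtered object is again filtered with subquotients in $\Gen_\W\mathcal{C}$. The subtlety is keeping all the intermediate modules inside $\W$: one must use that $\W$, being wide, is closed under the relevant subquotients so that the images $q(M_j)$ and their successive quotients genuinely lie in $\W$, allowing the identity $\Gen_\W\mathcal{C}=\Gen_\W(\Gen_\W\mathcal{C})$ to be applied. Everything else is a routine filtration-concatenation argument, and the dual half is a formal transport of the same reasoning.
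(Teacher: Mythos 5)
The paper does not contain its own proof of this proposition; it simply cites \cite[Proposition~2.1]{thomas\_intro}. So your proposal must be judged on its own merits. The overall strategy — verify that $\Filt(\Gen_\W\mathcal{C})$ is extension-closed and $\Gen_\W$-closed (invoking the characterization of torsion classes in $\W$ recalled just before the statement), then get minimality by induction on filtration length — is the standard one and is correct, and the dual half transports formally as you say.

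One justification deserves sharpening. You argue that $q(M_j)/q(M_{j-1})\in\W$ because it is ``a subquotient of $N\in\W$,'' but wide subcategories are not closed under arbitrary subquotients: a random submodule of an object of $\W$ need not lie in $\W$. What makes the step work is a fact you elide: since each $M_j/M_{j-1}\in\Gen_\W\mathcal{C}\subseteq\W$ and $M_0=0$, iterated extension-closure gives $M_j\in\W$ for every $j$. Hence $q|_{M_j}\colon M_j\to N$ is a morphism of $\W$, its image $q(M_j)$ lies in $\W$ because $\W$ is exact-embedded (so the image computed in $\mods\Lambda$ agrees with the image in the abelian category $\W$), and $q(M_j)/q(M_{j-1})$ is then the cokernel in $\W$ of the inclusion $q(M_{j-1})\hookrightarrow q(M_j)$. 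With that replacement for the ``subquotient of $N$'' phrase the proof is complete. (You partially acknowledge this subtlety in your closing paragraph, but the phrase ``$\W$ is closed under the relevant subquotients'' should be made precise along the above lines rather than left as an appeal to subquotients of $N$.)
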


\subsection{$\tau$-rigid modules}\label{sec:tau_tilting}

In this section, we recall the basic notions of $\tau$-tilting theory as constructed in \cite{AIR}. In this section, $\W$ will denote a functorially finite wide subcategory of $\mods\Lambda$.

To make the notion of $\tau$-tilting theory in a wide subcategory precise, we fix the following notation.

\begin{notation}
    Fix a finite dimensional algebra $\Lambda_\W$ and inverse equivalences of categories $F: \W \rightarrow \mods\Lambda_\W$ and $G:\mods\Lambda_W \rightarrow \W$. Let $\tau_{\Lambda_\W}$ denote the Auslander-Reiten translate in $\mods\Lambda_\W$. Then the Auslander-Reiten translate in $\W$ is the assignment $\tau_\W: \W \rightarrow \W$ given by $\tau_\W(M) = G(\tau_{\Lambda_\W}(FM))$. We note that $\tau_\W$ is defined only on objects and is well-defined only up to isomorphism.
\end{notation}

We now recall the definition of a $\tau$-rigid module.

\begin{definition}
    Let $M \in \W$ be a basic module. Then $M$ is \emph{$\tau$-rigid} (in $\W$), in symbols $M \in \tr(\W)$, if $\Hom_\Lambda(M, \tau_\W M) = 0$. 
    Let $\rk(\W)$ denote the number of non-isomorphic modules which are simple in $\W$.
    If $M\in \tr(\W)$ and $\rk(M) = \rk(\W)$, then $M$ is \emph{$\tau$-tilting} (in $\W$).
\end{definition}

\begin{remark}
    Note that we have adopted the convention of \cite{BM_exceptional} in using the term ``$\tau$-rigid (in $\W$)'' rather than ``$\tau_\W$-rigid''. Nevertheless we caution that there may be modules $M \in \W$ which are $\tau$-rigid in $\W$, but not $\tau$-rigid in $\mods\Lambda$.
\end{remark}

We also fix the following definition for use in Section~\ref{sec:kappa_tau}.

\begin{notation}\label{not:tau_bar}
    Let $M \in \W$. Let $\tau_\W$ and $\nu_\W$ denote the Auslander-Reiten translation and Nakayama functor in $\W$. (Recall that $\nu_\W$ sends the indecomposable projective $P(i)$ to the corresponding indecomposable injective $I(i)$.) If $M$ is indecomposable, we denote
    $$\overline{\tau_\W}M = \begin{cases} \tau_\W M & M \text{ is not projective in }\W\\
    \nu_\W M & M \text{ is projective in }\W.\end{cases}$$
    We then extend this definition additively to define $\overline{\tau_\W} M$ when $M$ is not indecomposable.
\end{notation}

Before continuing, we recall the following characterization of Auslander and Smal\o, which will be useful in several of our proofs.
\begin{proposition}\cite[Proposition~5.8]{AS} \label{prop:AStau}
	Let $M, N \in \W$. Then $\Hom(N,\tau_\W M) = 0$ if and only if $\Ext^1_\Lambda(M,N') = 0$ for every $N' \in \Gen_\W(N)$.
\end{proposition}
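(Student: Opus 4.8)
The plan is to deduce this from the Auslander--Reiten formula. Since $\W$ is functorially finite, it is equivalent (via $F\colon \W \to \mods\Lambda_\W$) to the module category of a finite-dimensional algebra, so the classical Auslander--Reiten machinery---injective envelopes, the transpose, and the AR formula---transports to $\W$, with $\tau_\W$ behaving as the ordinary AR translate. Writing $\overline{\Hom}_\W(-,-)$ for $\Hom_\W$ modulo morphisms factoring through injective objects of $\W$, the key input is the AR formula, which in particular gives $\Ext^1_\Lambda(M,N)=0$ if and only if $\overline{\Hom}_\W(N,\tau_\W M)=0$ (note $\Ext^1_\Lambda=\Ext^1_\W$ on objects of the extension-closed subcategory $\W$). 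I will also use the structural fact that $\tau_\W M$ has no nonzero injective (in $\W$) direct summand---because it is the $K$-dual of a module with no projective summands---and the trivial observation that $\Gen_\W(N)$ is closed under taking quotients in $\W$.

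The forward direction should be routine. If $\Hom(N,\tau_\W M)=0$ and $N'\in\Gen_\W(N)$, pick an epimorphism $N^k\twoheadrightarrow N'$ in $\W$; then any $g\colon N'\to\tau_\W M$ pulls back to an element of $\Hom(N^k,\tau_\W M)\cong\Hom(N,\tau_\W M)^k=0$, so $g=0$ since the epimorphism is monic on $\Hom$-sets. Hence $\Hom(N',\tau_\W M)=0$, so $\overline{\Hom}_\W(N',\tau_\W M)=0$, and the AR formula gives $\Ext^1_\Lambda(M,N')=0$.

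For the converse I would argue by induction on length: assuming $\Ext^1_\Lambda(M,N')=0$ for all $N'\in\Gen_\W(N)$, I would prove $\Hom(N',\tau_\W M)=0$ for every $N'\in\Gen_\W(N)$, the case $N'=N$ being the one wanted. Given a hypothetical nonzero $f\colon N'\to\tau_\W M$, I would first pass to the image $L=\im f$, which lies in $\Gen_\W(N)$: if $\ell(L)<\ell(N')$ the inductive hypothesis forces $\Hom(L,\tau_\W M)=0$, contradicting the nonzero inclusion $L\hookrightarrow\tau_\W M$, so $f$ must in fact be a monomorphism. Then $\Ext^1_\Lambda(M,N')=0$ forces, via the AR formula, that $f$ factors through an injective object of $\W$; writing $f=h\circ g$ with $g\colon N'\hookrightarrow E(N')$ the injective envelope and $h\colon E(N')\to\tau_\W M$, the essentiality of $g(N')$ in $E(N')$ together with $g(N')\cap\ker h=0$ (which holds since $f=hg$ is monic) forces $\ker h=0$. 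Thus $E(N')$ embeds into $\tau_\W M$; being injective and nonzero, it would be a nonzero injective direct summand of $\tau_\W M$, contradicting the structural fact above.

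The step I expect to be the main obstacle is precisely the reduction, inside the converse, to the case where $f$ is a monomorphism: this ``pass to the image + length induction'' move is what lets the essential-extension property of the injective envelope upgrade the weak conclusion ``$f$ factors through an injective'' into the strong conclusion ``$\tau_\W M$ has a nonzero injective summand''. The rest is bookkeeping---chiefly verifying that the AR formula and the ``no injective summands'' statement really do hold inside the functorially finite wide subcategory $\W$ rather than only in $\mods\Lambda$, which is routine via the equivalence $\W\simeq\mods\Lambda_\W$ built into the definition of $\tau_\W$.
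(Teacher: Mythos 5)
The paper cites this result to Auslander--Smal{\o} without giving a proof of its own, so there is no internal argument to compare against. Your proposal is correct: since $\W$ is equivalent to $\mods\Lambda_\W$ and is extension-closed in $\mods\Lambda$, the AR formula $\Ext^1_\Lambda(M,N')=\Ext^1_\W(M,N')\cong D\overline{\Hom}_\W(N',\tau_\W M)$ holds, and the structural fact that $\tau_\W M$ has no nonzero injective-in-$\W$ summand completes the argument exactly as you describe. One small remark on the converse: the induction on length is superfluous. Given any nonzero $f\colon N\to\tau_\W M$, pass directly to $L=\im f$; this is already a nonzero object of $\Gen_\W(N)$ equipped with a monomorphism $L\hookrightarrow\tau_\W M$, and the vanishing of $\Ext^1_\Lambda(M,L)$ together with the injective-envelope and essential-extension argument produces the forbidden injective summand of $\tau_\W M$ in one step, with no need to first argue that $f$ itself must be monic.
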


For torsion classes, we take the following as our definition of functorial finiteness. See \cite[Sections~2.2-2.3]{AIR} for justification for doing so.

\begin{definition-theorem}\cite[Sections~2.2-2.3]{AIR}\label{thm:functorially_finite}
    Let $\T \subseteq \W$ be a torsion class of $\W$. Then $\T$ is \emph{functorially finite in $\W$} if and only if there exists $M \in \tr(\W)$ such that $\T = \Gen_\W M$.
\end{definition-theorem}

As we have stated Theorem~\ref{thm:functorially_finite}, the module $M$ satisfying $\T = \Gen_\W M$ may not be unique. We use the following to formulate the uniqueness result we will use in this paper.

\begin{definition}\
    \begin{enumerate}
        \item Let $M\in\tr(\W)$. Write $M = \bigoplus_{i = 1}^k M_i$ as a direct sum of indecomposable modules. We say that $M$ is \emph{gen-minimal} if for all $j$ we have that $M_j \notin \Gen_\W\left(\bigoplus_{i \neq j} M_i\right)$. Equivalently, for all $j$ we have that $\Gen_\W M \neq \Gen_\W\left(\bigoplus_{i \neq j} M_i\right)$.
        \item Let $\mathcal{C} \subseteq \W$ be a subcategory, and let $M \in \mathcal{C}$ be indecomposable. We say that $M$ is (indecomposable) \emph{split projective} in $\mathcal{C}$ if every epimorphism in $\mathcal{C}$ with target $M$ is split.
    \end{enumerate}
\end{definition}

By combining \cite[Lemma~2.8]{IT} with \cite[Theorem~2.7]{AIR}, we obtain the following.

\begin{proposition}\label{prop:functorially_finite}
    The association $M \mapsto \Gen_\W M$ is a bijection between the set of gen-minimal $\tau$-rigid modules in $\W$ and the set of functorially finite torsion classes in $\W$. The inverse sends a (functorially finite) torsion class $\T \subseteq \W$ to the direct sum of the indecomposable modules which are split projective in $\T$.
\end{proposition}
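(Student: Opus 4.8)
The plan is to reduce to the case of an honest module category and then assemble the two cited inputs.

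First I would invoke the fixed inverse equivalences $F: \W \to \mods\Lambda_\W$ and $G: \mods\Lambda_\W \to \W$. By the very definition of $\tau_\W$ one has $\tau_\W = G\circ\tau_{\Lambda_\W}\circ F$ on objects, and an equivalence of abelian categories preserves every notion occurring in the statement: it carries (functorially finite) torsion classes to (functorially finite) torsion classes, commutes with $\Gen$, preserves $\tau$-rigidity, preserves indecomposability, preserves split-projectivity of an object inside a subcategory, and preserves gen-minimality of a module. Hence $M\mapsto\Gen_\W M$ is a bijection of the claimed form for $\W$ if and only if the analogous map for $\mods\Lambda_\W$ is, and the two candidate inverses correspond under $F$ and $G$. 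So it suffices to treat the case $\W=\mods\Gamma$ for a finite-dimensional algebra $\Gamma$, with $\tau_\W$ the ordinary Auslander-Reiten translate.

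Assume now $\W=\mods\Gamma$. Definition-Theorem~\ref{thm:functorially_finite} already gives that $M\mapsto\Gen_\W M$ is well defined on $\tau$-rigid modules and surjective onto functorially finite torsion classes: $\Gen_\W M$ is a functorially finite torsion class for every $M\in\tr(\W)$ (closure under extensions via Proposition~\ref{prop:AStau} with $N=M$), and conversely every functorially finite torsion class arises this way. What remains is to show $\Gen_\W$ is injective on \emph{gen-minimal} $\tau$-rigid modules --- equivalently, that each functorially finite torsion class $\T$ has a unique gen-minimal $\tau$-rigid generator --- and to identify that generator with the direct sum of the indecomposable split-projective objects of $\T$. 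I would obtain both by combining \cite[Theorem~2.7]{AIR}, which restricts $M\mapsto\Gen_\W M$ to a bijection from basic support $\tau$-tilting modules to functorially finite torsion classes (with inverse $\T\mapsto$ its Ext-projective generator), with \cite[Lemma~2.8]{IT}, which for any $\tau$-rigid $M=\bigoplus_i M_i$ generating $\T$ identifies the indecomposable split-projective objects of $\T$ with those of the \emph{gen-minimal part} of $M$ --- the summand obtained by iteratively discarding any $M_i$ that lies in $\Gen_\W$ of the sum of the remaining summands. This gen-minimal part is a summand of $M$ (hence basic and $\tau$-rigid), is gen-minimal by construction, depends only on $\T=\Gen_\W M$, and is the unique gen-minimal $\tau$-rigid module generating $\T$; it is therefore precisely the value of the desired inverse on $\T$.

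I expect the main obstacle to be bookkeeping rather than anything conceptual: one must verify that $F$ transports the less standard notions ``split projective in a subcategory'' and ``gen-minimal $\tau$-rigid'' without change, and must reconcile the formulations of \cite[Theorem~2.7]{AIR} and \cite[Lemma~2.8]{IT} --- phrased in terms of support $\tau$-tilting modules and Ext-projectives, and in \cite{IT} in a hereditary setting --- with the present formulation in terms of gen-minimal $\tau$-rigid modules and split-projective objects over an arbitrary finite-dimensional algebra. Granting that dictionary, the proposition follows formally.
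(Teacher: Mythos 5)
Your proposal is correct and takes essentially the same route as the paper, whose entire proof is the single sentence ``By combining \cite[Lemma~2.8]{IT} with \cite[Theorem~2.7]{AIR}, we obtain the following.'' You are simply filling in the two steps the paper leaves implicit: the transport from $\W$ to $\mods\Lambda_\W$ via the equivalences $F,G$ (which the paper fixes at the start of Section~\ref{sec:tau_tilting} precisely so that such transports can be used silently), and the bookkeeping that combines \cite[Theorem~2.7]{AIR} (support $\tau$-tilting modules $\leftrightarrow$ functorially finite torsion classes via Ext-projectives) with \cite[Lemma~2.8]{IT} (split projectives as the gen-minimal summands).
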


We conclude this section with the following.

\begin{definition-theorem}\label{def:tau_tilting_finite}
    We say the algebra $\Lambda$ is \emph{$\tau$-tilting finite} if the following equivalent conditions hold.
    \begin{enumerate}
        \item The lattice $\tors(\mods\Lambda)$ is finite.
        \item There are finitely many functorially finite torsion classes in $\mods\Lambda$.
        \item Every torsion class of $\mods\Lambda$ is functorially finite.
        \item The lattice $\wide(\mods\Lambda)$ is finite.
        \item There are finitely many functorially finite wide subcategories in $\mods\Lambda$.
        \item There are finitely many $\tau$-rigid modules in $\mods\Lambda$.
        \item There are finitely many $\tau$-tilting modules in $\mods\Lambda$.
    \end{enumerate}
\end{definition-theorem}

\begin{proof}
    The equivalence of (1), (2), (3), (6), and (7) can be found in \cite{DIJ} and the implication $(4 \implies 5)$ is trivial. The implications $(1 \implies 4)$ and $(5 \implies 2)$ follow from the fact that there is an injective map $\wide(\mods\Lambda) \rightarrow \tors(\mods\Lambda)$, that every functorially finite torsion class is in the image of this map, and that the preimage of any functorially finite torsion class must be functorially finite, see \cite{IT,MS}. 
\end{proof}

\subsection{Bricks and semibricks}\label{sec:bricks}

Another important class of modules in $\tau$-tilting theory are the \emph{bricks}, defined as follows.

\begin{definition}
    We say a module $B \in \mods\Lambda$ is a \emph{brick} if $\mathrm{End}_\Lambda(B)$ is a division algebra. We say a set $\mathcal{S}$ of bricks is a \emph{semibrick} if for all $B \neq C \in \mathcal{S}$ we have $\Hom_\Lambda(B,C) = 0 = \Hom_\Lambda(C,B)$.
\end{definition}

\begin{notation}
    Let $\W \subseteq \mods\Lambda$ be a wide subcategory. We denote by $\brick(\W)$ and $\sbrick(\W)$ the set of bricks and semibricks in $\W$, respectively.
\end{notation}

\begin{remark}\label{rem:brick}
    We note that the property of being a brick does not change when one replaces $\mods\Lambda$ with a subcategory. In other words, given a wide subcategory $\W \subseteq \mods\Lambda$, one has $\brick(\W) = \W \cap \brick(\mods\Lambda)$ and $\sbrick(\W) = \W \cap \sbrick(\mods\Lambda)$.
\end{remark}

In light of the previous remark, it is sometimes more intuitive to work with (semi)bricks than with $\tau$-rigid modules. The ``brick-$\tau$-rigid correspondence'' of Demonet-Iyama-Reiten \cite{DIJ}, and its generalization to semibricks due to Asai \cite{asai}, offers further motivation for the study of bricks and semibricks:

\begin{theorem}\label{thm:DIJ}
    Let $\W \subseteq \mods\Lambda$ be a  functorially finite wide subcategory.
    \begin{enumerate}
        \item \cite[Lemma~4.3]{DIJ} Let $M\in \tr(\W)$ be indecomposable, and denote $$r(M) = \{g \in \Hom_\Lambda(M,M) \mid 0 \neq g(M) \neq M\}.$$ Then $$\beta(M):=M/\left(\sum_{g \in r(M)} \im g\right)$$
        is a brick which satisfies $\Filt\Gen_\W(\beta(M)) = \Gen_\W(M)$.
        \item \cite[Theorem~4.1]{DIJ} The association $\beta$ is an injection from the set of indecomposable modules which are $\tau$-rigid in $\W$ to the set of bricks in $\W$. The image of this injection is the set of bricks $B \in \W$ for which the torsion class $\Filt\Gen_\W(B)$ is functorially finite in $\W$.
        \item \cite[Theorem~1.3]{asai} Let $M\in \tr(\W)$ be gen-minimal and decompose $M = \bigoplus_{i = 1}^k M_i$ as a direct sum of indecomposable modules. For $j \in \{1,\ldots,k\}$, denote $\mathcal{F}_j:= \left(\bigoplus_{i \neq j} M_i\right)^{\perp_\W}$. Then
        $\mathcal{X}(M) := \left\{\beta\left(f_{\mathcal{F}_j}M_j\right)\right\}$
        is a semibrick which satisfies $\Filt\Gen_\W(\mathcal{X}(M)) = \Gen_\W(M)$.
        \item \cite[Theorem~1.3]{asai} The association $\mathcal{X}$ is a injection from the set of gen-minimal $\tau$-rigid modules in $\W$ to the set of semibricks in $\W$. The image of this injection is the set of semibricks $\mathcal{X} \subseteq \W$ for which the torsion class $\Filt(\Gen_\W \mathcal{X})$ is functorially finite in $\W$.
    \end{enumerate}
\end{theorem}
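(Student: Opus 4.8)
The plan is to deduce all four parts from the already-known statements for ordinary module categories --- \cite[Lemma~4.3]{DIJ} and \cite[Theorem~4.1]{DIJ} for (1) and (2), and \cite[Theorem~1.3]{asai} for (3) and (4) --- by transporting them along the equivalence of abelian categories $F\colon\W\to\mods\Lambda_\W$ fixed at the start of Section~\ref{sec:tau_tilting}. Such an $F$ exists precisely because $\W$ is functorially finite (this is the content of the Remark citing \cite[Proposition~4.12]{enomoto_ff}), and, being an equivalence of abelian categories, $F$ is automatically exact; this exactness is what makes the transport argument run. Throughout, I will write $\beta$ and $\mathcal{X}$ for the constructions of the theorem carried out in whichever ambient abelian category is relevant.

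First I would assemble the transport-of-structure dictionary. Being a brick is an intrinsic property of the endomorphism ring, preserved by any equivalence, so $F$ restricts to bijections $\brick(\W)\to\brick(\mods\Lambda_\W)$ and $\sbrick(\W)\to\sbrick(\mods\Lambda_\W)$ (compatibly with Remark~\ref{rem:brick}). By the very definition of $\tau_\W$ one has $F\circ\tau_\W\cong\tau_{\Lambda_\W}\circ F$, so $F$ induces a bijection $\tr(\W)\to\tr(\mods\Lambda_\W)$ preserving indecomposability, the invariant $\rk(-)$, and hence the classes of gen-minimal $\tau$-rigid and of $\tau$-tilting modules. Because $F$ is exact and additive it commutes with the formation of images, finite direct sums, kernels, cokernels, and extensions, so $F(\Gen_\W\mathcal{C})=\Gen(F\mathcal{C})$, $F(\Cogen_\W\mathcal{C})=\Cogen(F\mathcal{C})$, $F(\mathcal{C}^{\perp_\W})=(F\mathcal{C})^{\perp}$, and $F(\Filt\,\mathcal{C})=\Filt(F\mathcal{C})$; in particular $F$ gives a lattice isomorphism $\tors(\W)\to\tors(\mods\Lambda_\W)$ carrying functorially finite torsion classes to functorially finite torsion classes (Definition-Theorem~\ref{thm:functorially_finite}) and split-projective indecomposable summands to split-projective indecomposable summands (Proposition~\ref{prop:functorially_finite}). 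Finally, since all of these notions and all the assertions of the theorem are isomorphism-invariant, the fact that $\tau_\W$ is defined only on objects and only up to isomorphism creates no ambiguity.

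Second I would chase the two brick functors through the dictionary. For indecomposable $M\in\tr(\W)$, the set $r(M)$ and the submodule $\sum_{g\in r(M)}\im g$ are built entirely from endomorphisms and images inside $\W$, so $F$ sends them to the corresponding data for $FM$ inside $\mods\Lambda_\W$; hence $F(\beta(M))=\beta(FM)$ and $F\big(\Filt\Gen_\W(\beta(M))\big)=\Filt\Gen(\beta(FM))$, which by \cite[Lemma~4.3]{DIJ} applied to $FM$ equals $\Gen(FM)=F(\Gen_\W M)$. Parts (1) and (2) now follow by pulling \cite[Lemma~4.3]{DIJ} and \cite[Theorem~4.1]{DIJ} back through $F$, the image description in (2) transferring because $F$ preserves ``brick in $\W$'' and ``$\Filt\Gen_\W(B)$ functorially finite in $\W$''. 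Likewise, for gen-minimal $M=\bigoplus_{i=1}^{k}M_i$, the module $f_{\mathcal{F}_j}M_j$ is the torsion-free part of $M_j$ relative to the torsion pair with torsion-free class $\mathcal{F}_j=\big(\bigoplus_{i\neq j}M_i\big)^{\perp_\W}$, produced functorially by the canonical exact sequence~\eqref{eqn:canonical}, so $F(f_{\mathcal{F}_j}M_j)=f_{F\mathcal{F}_j}(FM_j)$ and therefore $F(\mathcal{X}(M))=\mathcal{X}(FM)$; parts (3) and (4) then follow from \cite[Theorem~1.3]{asai} in exactly the same manner.

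All of the substantive content sits in the cited theorems of \cite{DIJ} and \cite{asai}; what remains is purely transport-of-structure bookkeeping. The one place that deserves genuine care --- and which I expect to be the main obstacle --- is the dictionary of the second paragraph, specifically checking that $F$ matches not only individual torsion classes but the property of \emph{functorial finiteness} and the operation of extracting split-projective summands, since the descriptions of the images of $\beta$ and $\mathcal{X}$ in parts (2) and (4) depend precisely on those compatibilities.
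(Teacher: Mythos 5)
Your proposal is correct and matches what the paper intends: the theorem is stated with citations to \cite[Lemma~4.3, Theorem~4.1]{DIJ} and \cite[Theorem~1.3]{asai}, and the remark at the start of Section~\ref{sec:background} (citing \cite[Proposition~4.12]{enomoto_ff}) explicitly flags the equivalence $\W \simeq \mods\Lambda_\W$ as the mechanism for transporting such statements from module categories to functorially finite wide subcategories. You have simply spelled out the transport dictionary — exactness of $F$, the definitional compatibility $F\circ\tau_\W \cong \tau_{\Lambda_\W}\circ F$, and preservation of $\Gen_\W$, $\Filt$, perpendicular categories, and functorial finiteness — that the paper leaves implicit.
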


\begin{remark}\label{rem:sbrick_unique}
    Note in particular that Theorem~\ref{thm:DIJ}(4) implies that if $\mathcal{S}, \mathcal{S}' \subseteq \W$ are semibricks which satisfy $\Filt(\Gen_\W\mathcal{S}) = \Filt(\Gen_\W\mathcal{S}')$ (and this torsion class is functorially finite), then $\mathcal{S} = \mathcal{S}'$. It is also shown in \cite{asai} that this remains true without the assumption that $\Filt(\Gen_\W\mathcal{S})$ be functorially finite, but this generalization no longer follows from Theorem~\ref{thm:DIJ} in its stated form.
\end{remark}

We refer to $\beta(M)$ (resp. $\mathcal{X}(M)$) as the \emph{brick corresponding to $M$} (resp. \emph{semibrick corresponding to $M$}). We note that if $M$ is (indecomposable) $\tau$-rigid in both $\W$ and $\V$, then $\beta(M)$ and $\mathcal{X}(M)$ remain the same whether computed in $\W$ or $\V$. That is, the formulas for $\beta(M)$ and $\mathcal{X}(M)$ do not depend on the wide subcategory $\W$. Both the domains and images of $\beta$ and $\mathcal{X}$, on the other hand, do. We explain this further in Remark~\ref{rem:beta_domain} and Proposition~\ref{prop:sf-brick}.

\begin{notation}
    Let $\W$ be a functorially finite wide subcategory, let $B \in \W$ be a brick, and suppose $\Filt\Gen_\W(B)$ is functorially finite in $\W$. We denote by $\beta^{-1}_\W(B)$ the unique indecomposable module $M \in \tr(\W)$ and satisfies $\beta(M) = B$. We define $\mathcal{X}^{-1}_\W(\mathcal{S})$ analogously.
\end{notation}

\begin{remark}\label{rem:beta_domain}
    It is important to specify $\W$ in the notation $\beta^{-1}_\W$. Indeed, there are examples where $\beta_\W^{-1}(B) \neq \beta_{\U}^{-1}(B)$. For example, let $\Lambda$ be the preprojective algebra of type $A_3$, take $\W = \mods\Lambda$, and take $\U = \Filt(P_2/S_2)$. Then $\beta_\W^{-1}(P_2/S_2) = P_2$ and $\beta_\U^{-1}(P_2/S_2) = P_2/S_2$.
\end{remark}

Another important use of bricks is in interpreting the join-irreducible labeling of the lattice of torsion classes. In particular, we consider the following definition.

\begin{definition}\label{def:min_extending}\cite[Definition~1.1 and~2.10]{BCZ}
    Let $\W$ be a wide subcategory and let $\T \in \tors\W$ be a torsion class.
    \begin{enumerate}
        \item A module $M \in \W$ is called a \emph{minimal extending module} for $\T$ if the following hold:
        \begin{enumerate}
            \item If $X$ is a proper factor of $M$, then either $X \in \T$ or $X \notin \W$.
            \item For every $X \in \T$ and every nonsplit exact sequence $0 \rightarrow M \rightarrow E \rightarrow X \rightarrow 0$, one has $E \in \T$.
            \item $M \in \T^{\perp_\W}$.
        \end{enumerate}
        \item Denote $\F = \T^{\perp_\W}$. A module $M \in \W$ is called a \emph{minimal co-extending module} for $\F$ if the following hold:
        \begin{enumerate}
            \item If $X$ is a proper submodule of $M$ then either $X \in \F$ or $X \notin \W$.
            \item For every $X \in \F$ and every nonsplit exact sequence $0 \rightarrow X\rightarrow E \rightarrow M \rightarrow 0$ one has $E \in \F$.
            \item $M \in \T = \lperpW{\F}$.
        \end{enumerate}
    \end{enumerate}
\end{definition}

We also need the following notation.

\begin{notation}\label{not:kappaW}
    Let $\W \subseteq \mods\Lambda$ be a wide subcategory. We denote by $\kappa_\W$ and $\overline{\kappa_\W}$ the kappa-map of the lattice $\tors(\W)$.
\end{notation}

We conclude by recalling some of the main results of the papers \cite{BCZ,BTZ,DIRRT}. See also \cite[Theorem~2.15]{enomoto} for the explicit extension of Theorem~\ref{thm:min_extending} to arbitrary abelian length categories (and thus arbitrary wide subcategories).

\begin{theorem}\label{thm:min_extending}
    Let $\W \subseteq \mods\Lambda$ be a wide subcategory.
    \begin{enumerate}
        \item \cite[Proposition~3.1]{BCZ}\cite[Theorem~3.3(c)]{DIRRT} There is a bijection $\brick(\W) \rightarrow \cji(\tors\W)$ given by $B \mapsto \Filt(\Gen_\W B)$.
        \item \cite[Theorem~A]{BTZ} Let $B \in \brick(\W)$. Then $\kappa_\W(\Filt(\Gen_\W B)) = \lperpW{B}$. In particular, there is a bijection $\brick\W \rightarrow \cmi(\tors(\W))$ given by $B \mapsto \lperpW{B}$.
    \end{enumerate}
\end{theorem}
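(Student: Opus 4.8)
The plan is to deduce Part~(2) from Part~(1), so I would establish Part~(1) first. The map $B \mapsto \Filt(\Gen_\W B)$ lands in $\tors\W$ because $\Filt(\Gen_\W B)$ is the smallest torsion class of $\W$ containing $B$ (\cite[Proposition~2.1]{thomas_intro}). Fix $B \in \brick(\W)$ and write $j := \Filt(\Gen_\W B)$. A routine check with the long exact sequence of $\Hom_\Lambda(-,B)$ shows that both $\lperpW{B} := \{X \in \W \mid \Hom_\Lambda(X,B) = 0\}$ and $j_* := \{X \in j \mid \Hom_\Lambda(X,B) = 0\}$ are torsion classes of $\W$, and that $j_* = j \meet \lperpW{B}$ (the meet in $\tors\W$ being intersection). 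Since $\mathrm{End}_\Lambda(B) \neq 0$ we have $B \notin j_*$, hence $j_* \subsetneq j$. I claim $j$ covers $j_*$. The brick hypothesis enters here: because every endomorphism of $B$ is $0$ or invertible, a nonzero map $B/N \to B$ with $0 \neq N \subseteq B$ would compose with the projection $B \twoheadrightarrow B/N$ to an isomorphism of $B$, forcing $N = 0$; thus $B/N \in j_*$ for every nonzero submodule $N$ of $B$. Now if $\V$ is a torsion class with $j_* \subsetneq \V \subseteq j$, choose $X \in \V \setminus j_*$ and a nonzero $f\colon X \to B$; then $\im f \in \V$ and $B/\im f \in j_* \subseteq \V$, so $B \in \V$ by closure under extensions, whence $\V = \Filt(\Gen_\W B) = j$. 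Therefore $j_* \covered j$.

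To upgrade this to the assertion that $j_*$ is the \emph{unique} element covered by $j$ (so that $j \in \cji(\tors\W)$), and to prove $B \mapsto \Filt(\Gen_\W B)$ is injective and surjective, I would invoke the brick labeling of the Hasse quiver of $\tors\W$ from \cite{DIRRT}, together with the theory of minimal extending modules (Definition~\ref{def:min_extending}) from \cite{BCZ}: the cover relation $j_* \covered j$ is labeled by $B$, and $B$ is the unique minimal extending module for $j_*$ producing $j$. Injectivity then follows since a brick $B'$ with $\Filt(\Gen_\W B') = j$ must be the brick labeling the unique cover relation into $j$, so $B' \cong B$; and surjectivity follows since, given $\T \in \cji(\tors\W)$ with unique covered element $\T_*$, the brick $B$ labeling $\T_* \covered \T$ is a minimal extending module for $\T_*$, whence $\T$ is the smallest torsion class containing $\T_*$ and $B$, which one checks equals $\Filt(\Gen_\W B)$. (Alternatively, the whole argument may be carried out inside the abelian length category $\W$ directly, following \cite[Section~2.2]{enomoto}.)

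For Part~(2), keep $B$, $j = \Filt(\Gen_\W B)$, $j_*$, and $\lperpW{B}$ as above. I would verify the two conditions characterizing $\kappa_\W(j)$ via formula~(\ref{eqn:kappa}). First, $j \meet \lperpW{B} = j_*$, which we already have. Second, for maximality: suppose $\U \in \tors\W$ satisfies $j \meet \U = j_*$ but $\U \not\subseteq \lperpW{B}$, and pick $X \in \U$ with a nonzero $f\colon X \to B$. Then $\im f \in \U$, and by the brick property exactly as above $B/\im f \in j_* = j \meet \U \subseteq \U$; the exact sequence $0 \to \im f \to B \to B/\im f \to 0$ then forces $B \in \U$, so $B \in j \meet \U = j_*$, contradicting $\mathrm{End}_\Lambda(B) \neq 0$. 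Hence $\lperpW{B} = \max\{\U \in \tors\W \mid j \meet \U = j_*\} = \kappa_\W(j)$. Since $\kappa_\W\colon \cji(\tors\W) \to \cmi(\tors\W)$ is a bijection (Section~\ref{sec:lattice}) and $B \mapsto \Filt(\Gen_\W B)$ is a bijection by Part~(1), the composite $B \mapsto \lperpW{B}$ is a bijection $\brick(\W) \to \cmi(\tors\W)$.

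The main obstacle is the uniqueness claim in Part~(1)---equivalently, surjectivity of the map onto $\cji(\tors\W)$. One cannot shortcut it via the brick-$\tau$-rigid correspondence of Theorem~\ref{thm:DIJ}, because $B$ need not be $\tau$-rigid and $\Filt(\Gen_\W B)$ need not be functorially finite, so the Hasse-quiver brick labeling and minimal-extending-module machinery of \cite{DIRRT,BCZ} is genuinely needed. Everything else---that $j_*$ and $\lperpW{B}$ are torsion classes, the meet computations, and deducing the bijection in Part~(2) from that of $\kappa_\W$---is elementary once Part~(1) is available.
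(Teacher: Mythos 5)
The paper does not prove Theorem~\ref{thm:min_extending}: it is introduced with the sentence ``We conclude by recalling some of the main results of the papers \cite{BCZ,BTZ,DIRRT},'' so the paper's ``proof'' is the citation itself (plus a pointer to \cite{enomoto} for the abelian-length-category version). Your proposal therefore supplies content beyond what the paper does, and the self-contained parts of it are sound: the checks that $\lperpW{B}$ and $j_* = j \meet \lperpW{B}$ are torsion classes, the brick argument giving $B/N \in j_*$ for every $0 \neq N \subseteq B$, the verification that $j_* \covered j$, and the maximality computation $\lperpW{B} = \max\{\U \mid j \meet \U = j_*\}$ are all correct. One small omission in the Part~(2) argument: you should note that if $\im f = B$ then $B \in \U$ directly and the same contradiction results; as written you implicitly assume $\im f$ is a proper nonzero submodule.

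You also correctly identify where a genuinely new idea is needed, namely that $j_*$ is the \emph{unique} lower cover of $j$ (so $j \in \cji(\tors\W)$) and that the map is a bijection; your cover-relation argument only handles $\V$ with $j_* \subsetneq \V \subseteq j$, not arbitrary $\V \subsetneq j$. Deferring this to \cite[Proposition~3.1]{BCZ} and \cite[Theorem~3.3]{DIRRT} is the reasonable thing to do and matches the paper's own treatment. Two clarifications are worth recording. First, uniqueness of the lower cover, injectivity, and surjectivity are three separate claims, not a single one — you conflate the first two under ``surjectivity of the map onto $\cji(\tors\W)$.'' Second, formula~(\ref{eqn:kappa}) for $\kappa_\W(j)$ only makes sense once $j$ is known to be completely join-irreducible, so your Part~(2) argument — not merely its final bijection line — depends on the full strength of Part~(1), not just on the cover relation $j_* \covered j$.
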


\begin{notation}\label{not:brick_lab}
    Let $\U \subseteq \T$ be torsion classes in $\tors(\W)$. We denote $$\brlab[\U,\T] := \{B \mid \Filt(\Gen_\W B) \in \jlab[\U,\T]\}.$$
\end{notation}

\begin{remark}\label{rem:brick_lab}\
    \begin{enumerate}
        \item The association of a cover relation $\U \covered \T$ to the brick $\brlab[\U,\T]$ is sometimes called the \emph{brick labeling} of $\tors(\W)$. Asai introduced this labeling for the subposet of functorially finite torsion classes in \cite{asai} (see also \cite{BST} for a geometric interpretation). For non-functorially finite torsion class, the brick labeling was introduced independently in \cite{BCZ,DIRRT}.
        \item While it is often unimportant in practice, to satisfy Definition~\ref{def:edge-label}, one should place a partial order on the set of bricks, see Definition~\ref{def:edge-label}. One possibility is to set $C \preceq B$ whenever $\Filt(\Gen_\W B) \subseteq \Filt(\Gen_\W C)$. Indeed, we will use a refinement of this partial order to construct an EL-labeling in Section~\ref{sec:el}, see Remark~\ref{rem: rho}.
    \end{enumerate}
\end{remark}

Our next result is essentially contained in \cite[Section~4.3]{BTZ}, albeit with a different definition of canonical join representation. We thus provide an outline of a proof for the convenience of the reader. See also \cite[Section~4.2]{enomoto}.

\begin{theorem}\label{thm:brick_labeling}
    Let $\W \subseteq \mods\Lambda$ be a wide subcategory and let $\T \in \tors(\W)$. Then
    \begin{enumerate}
        \item $\T \in \cjrep(\tors(\W))$ if and only if there exists a semibrick $\mathcal{S} \in \sbrick(\W)$ such that $\T = \Filt(\Gen_\W \mathcal{S})$. Moreover, if such a semibrick exists then $\mathcal{S}$ is the set of minimal co-extending modules for $\T^{\perp_\W}$ and $\CJR(\T) = \{\Filt(\Gen_\W B) \mid B \in \mathcal{S}\}$.
        \item The map $\overline{\kappa_\W}$ induces a bijection $\cjrep(\tors(\W)) \rightarrow \cmrep(\tors\W)$. Moreover, for $\T \in \cjrep(\W)$, one has $\CMR(\overline{\kappa}_\W(\T)) = \{\lperpW{B} \mid \Filt(\Gen_\W B) \in \CJR(\T)\}$.
    \end{enumerate}
\end{theorem}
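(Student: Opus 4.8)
The plan is to reduce everything to the known finite-case results of \cite{RST,BCZ,BTZ} together with the structural results on completely semidistributive lattices already recalled in Section~\ref{sec:lattice}, especially Proposition~\ref{prop:covers_canonical_existence}, Theorem~\ref{thm:min_extending}, and Proposition~\ref{prop:cover}. For part (1), the first step is to unwind what a canonical join representation of $\T$ means in terms of cover relations: by Proposition~\ref{prop:covers_canonical_existence}, $\T \in \cjrep(\tors(\W))$ if and only if for every $\mathcal{U} < \T$ there is a cover relation $\mathcal{U} < \mathcal{V} \covered \T$; moreover, by the analysis surrounding \cite[Theorem~5.10]{RST} (adapted to the complete setting, cf.\ Example~\ref{ex:can_join_rep}(4) and Theorem~\ref{thm:covers_canonical}), when this holds one has $\CJR(\T) = \{\jlab[\mathcal{U},\mathcal{V}] \mid \mathcal{U} \covered \T\}$. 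The second step is to translate the join-irreducible labels into bricks using Notation~\ref{not:brick_lab} and Theorem~\ref{thm:min_extending}(1): each cover relation $\mathcal{U} \covered \T$ is labeled by a brick $B = \brlab[\mathcal{U},\T]$, and by Theorem~\ref{thm:min_extending}(1) the completely join-irreducible torsion class $\jlab[\mathcal{U},\T]$ equals $\Filt(\Gen_\W B)$. So I need to identify exactly when the set $\mathcal{S}$ of all such bricks $B$ is a \emph{semibrick} and when $\T = \Filt(\Gen_\W\mathcal{S})$. The characterization of $\mathcal{S}$ as the set of minimal co-extending modules for $\T^{\perp_\W}$ is precisely \cite[Theorem~1.5 / Section~4]{BCZ} (see Definition~\ref{def:min_extending}), which gives both that these bricks form a semibrick and that $\T = \Filt(\Gen_\W\mathcal{S})$ when $\T \in \cjrep$; conversely, if some semibrick $\mathcal{S}$ satisfies $\T = \Filt(\Gen_\W\mathcal{S})$, then hom-orthogonality of $\mathcal{S}$ forces the corresponding torsion classes $\Filt(\Gen_\W B)$ to be pairwise ``compatible'' (join canonically) by the brick-theoretic criterion of \cite{BCZ}, giving $\T \in \cjrep$. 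The main point is to carefully match the definition of canonical join representation used here (refining \emph{all} join representations) with the one in \cite{BCZ,BTZ} (refining only \emph{irredundant} ones), as flagged in Remark~\ref{rem:canonical_join_rep}; since all canonical joinands here are completely join-irreducible (Lemma~\ref{lem:canonical_join_rep}), and the brick labels are completely join-irreducible by Theorem~\ref{thm:min_extending}(1), the two notions coincide on the relevant sets, so the \cite{BCZ} results apply verbatim.

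For part (2), the strategy is to apply the dual of part (1) together with the extended-$\kappa$ formalism. By part (1), $\T \in \cjrep(\tors\W)$ corresponds to a semibrick $\mathcal{S}$ with $\CJR(\T) = \{\Filt(\Gen_\W B) \mid B \in \mathcal{S}\}$. Applying $\kappa_\W$ termwise and using Theorem~\ref{thm:min_extending}(2), which says $\kappa_\W(\Filt(\Gen_\W B)) = \lperpW{B}$, I get that $\overline{\kappa_\W}(\T) = \Meet\{\lperpW{B} \mid B \in \mathcal{S}\} = \bigcap_{B \in \mathcal{S}} \lperpW{B} = \lperpW{\mathcal{S}}$ (the torsion-free class left-perp to $\mathcal{S}$, i.e.\ the torsion class $\lperpW{(\mathcal{S}^{\perp_\W})}$ — here one must be slightly careful about which perp is which, but the computation is the dual of part (1)). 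The dual of part (1) then identifies $\overline{\kappa_\W}(\T)$ as a canonically \emph{meet}-representable torsion class with $\CMR(\overline{\kappa_\W}(\T)) = \{\lperpW{B} \mid B \in \mathcal{S}\} = \{\lperpW{B} \mid \Filt(\Gen_\W B) \in \CJR(\T)\}$, which is the claimed formula. Bijectivity of the induced map $\cjrep(\tors\W) \to \cmrep(\tors\W)$ then follows because $\kappa_\W$ and $\kappa_\W^d$ are mutually inverse on $\cji$ and $\cmi$ (see the discussion after equations \eqref{eqn:kappa}--\eqref{eqn:kappad}), so $\overline{\kappa_\W^d}$ applied to $\CMR(\overline{\kappa_\W}(\T))$ recovers $\CJR(\T)$ and hence $\T$; one also needs that $\tors\W$ is \emph{well-separated} (true for lattices of torsion classes) to invoke the general fact that $\overline{\kappa}$ is a bijection onto $\cmrep$, as promised in the Remark after Example~\ref{ex:kappa_bar} — alternatively, for lattices of torsion classes this bijectivity is \cite[Corollary~4.4.3]{BTZ}, which I would cite directly.

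The step I expect to be the main obstacle is the careful reconciliation of the two definitions of canonical join representation and, relatedly, making sure the finite-lattice arguments of \cite{RST,BCZ,BTZ} genuinely transfer to the possibly-infinite lattice $\tors\W$. The subtlety is that $\tors\W$ need not be $\tau$-tilting finite, so not every torsion class is canonically join-representable (cf.\ Example~\ref{ex:no_can_join}), and one cannot simply quote ``$\cjrep(L) = L$''. The safe route is to phrase all arguments in terms of cover relations and the join-irreducible labeling $\jlab$, whose good behavior in arbitrary completely semidistributive lattices is guaranteed by Definition-Theorem~\ref{def:j-label} and Proposition~\ref{prop:cover}, and to invoke \cite{BCZ}, which was explicitly written for arbitrary finite-dimensional algebras (hence arbitrary, possibly infinite, $\tors\W$). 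Since the theorem statement itself says the proof is only an outline for the reader's convenience, I would keep the write-up at the level of ``combine Proposition~\ref{prop:covers_canonical_existence}, Theorem~\ref{thm:min_extending}, and \cite[\S4]{BCZ}, using Lemma~\ref{lem:canonical_join_rep} to see the two notions of canonical join representation agree on completely-join-irreducible antichains; then dualize and apply Theorem~\ref{thm:min_extending}(2) for part (2)'' — deferring the fully general cover-relation characterization of $\CJR$ to Theorem~\ref{thm:covers_canonical}, which is proved later and of which this theorem is morally a special case.
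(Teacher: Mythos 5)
Your proposal matches the paper's argument closely: both routes start from Proposition~\ref{prop:covers_canonical_existence}, invoke the brick-theoretic characterization of minimal co-extending modules from \cite{BCZ} (the paper cites Corollary~3.9 and Propositions~3.5, 3.7 specifically), use Theorem~\ref{thm:min_extending} for complete join-irreducibility, defer the reconciliation of the ``refines all'' versus ``refines all irredundant'' notions of canonical join representation to the forward-referenced Theorem~\ref{thm:covers_canonical}, and for part (2) read off $\overline{\kappa_\W}(\T)$ from \cite[Corollary~4.4.3]{BTZ} (with the paper additionally citing \cite[Proposition~4.4.5]{BTZ} to identify the meetands as the minimal extending modules) and then dualize part (1). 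The only cosmetic difference is the order in which you apply Theorem~\ref{thm:covers_canonical}; the key ingredients and the deferral to the later Section~\ref{sec:flag} are identical to the paper's.
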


\begin{proof}
    Suppose that $\T\in \cjrep(\tors(\W))$.
    Then by Proposition~\ref{prop:covers_canonical_existence}, for any any torsion class $\U\lneq \T$ there exists $\T'$ such that $\U\le \T' \covered \T$.
    By \cite[Corollary~3.9]{BCZ}, for $\mathcal{S}$ the set of minimal co-extending modules for $\T^{\perp_\W}$, the join $\Join \{\Filt(\Gen_\W B) \mid B \in \mathcal{S}\}$ is an irredundant join-representation of $\T$ and it refines all other irredundant join-representations of $\T$.
    Moreover, by Theorem~\ref{thm:min_extending}, $\Filt(\Gen_\W B)$ is completely join-irreducible, for each $B\in \mathcal{S}$, and $\mathcal{S}$ is a semibrick by \cite[Proposition~3.5]{BCZ}.
    We will show in Theorem~\ref{thm:covers_canonical} that this implies $\CJR(\T) =  \{\Filt(\Gen_\W B) \mid B \in \mathcal{S}\}$.
    
    For the reverse implication, assume there exists such a semibrick $\mathcal{S} \in \sbrick(\W)$ such that $\T = \Filt(\Gen_\W \mathcal{S})$.
    By \cite[Proposition~3.7]{BCZ}, $\T= \Join \{\Filt(\Gen_\W B) \mid B \in \mathcal{S}\}$ is irredundant, and refines all other irredundant join-representations of $\T$.
    As before, Theorem~\ref{thm:min_extending} and Theorem~\ref{thm:covers_canonical} implies that $\CJR(\T) =  \{\Filt(\Gen_\W B) \mid B \in \mathcal{S}\}$.
    Thus $\T\in \cjrep(\tors(\W))$.
    
    (2) The fact that $\overline{\kappa_\W} = \Join \{\lperpW B \mid \Filt(\Gen_\W B) \in \CJR(\T)\}$ is \cite[Corollary~4.4.3]{BTZ}. Moreover, by \cite[Proposition~4.4.5]{BTZ}, we have that $\{B \mid \Filt(\Gen_\W B)\}$ is the set of minimal extending modules for the torsion class $\overline{\kappa_\W}(\T)$. The fact that $\overline{\kappa_\W}$ is a bijection and that $\overline{\kappa_\W}(\T)$ has the desired canonical meet representation thus follows from the dual of (1).
\end{proof}

To conclude this section, we briefly recall the connection between torsion classes and wide subcategories in terms of the so-called ``Ingalls-Thomas correspondences'' of \cite{IT,MS}. This result is explicit as \cite[Theorem~4.17]{enomoto}, but also appears implicitly in \cite[Section~3.2 and Corollary~5.1.8]{BTZ}.

\begin{theorem}\label{thm:IT}
    Let $\W \subseteq \mods\Lambda$ be a wide subcategory. Then the association $\V \mapsto \Filt(\Gen_\W \V)$ is a bijection $\wide(\W) \rightarrow \cjrep(\tors(\W))$. The inverse sends $\T \in \cjrep(\tors(\W))$ to
        $$\alpha(\T) := \{X \in \T \mid \ker(f) \in \T \text{ for all } f: X \rightarrow Y \text{ with } Y \in \T\}.$$
\end{theorem}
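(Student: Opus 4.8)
The plan is to reduce this to Theorem~\ref{thm:brick_labeling}(1) by routing everything through semibricks, and then to recognize the resulting inverse map as $\alpha$ by appealing to the (by now classical) Ingalls--Thomas-type correspondence between wide subcategories and torsion classes. The genuinely new content, beyond prior work, is the identification of the image of $\V \mapsto \Filt(\Gen_\W\V)$ with $\cjrep(\tors(\W))$ rather than with the functorially finite torsion classes.

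First I would set up the bijection $\wide(\W) \leftrightarrow \sbrick(\W)$. Any $\V \in \wide(\W)$ is an abelian length category, so it has only finitely many isomorphism classes of simple objects; by Schur's lemma together with Remark~\ref{rem:brick}, the set $\mathcal{S}_\V$ of these is a semibrick in $\W$, and since $\V$ is extension-closed one has $\V = \Filt(\mathcal{S}_\V)$. Conversely, for any semibrick $\mathcal{S} \subseteq \W$ the subcategory $\Filt(\mathcal{S})$ is wide with simple objects exactly $\mathcal{S}$, so $\V \mapsto \mathcal{S}_\V$ and $\mathcal{S} \mapsto \Filt(\mathcal{S})$ are mutually inverse; this is standard (see e.g.\ \cite{MS, asai, enomoto}). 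I would also record the elementary identity $\Filt(\Gen_\W \V) = \Filt(\Gen_\W \mathcal{S}_\V)$: the inclusion $\supseteq$ is immediate from $\mathcal{S}_\V \subseteq \V$, while for $\subseteq$ one notes that, by \cite[Proposition~2.1]{thomas_intro}, $\Filt(\Gen_\W \mathcal{S}_\V)$ is the smallest torsion class of $\W$ containing $\mathcal{S}_\V$, so it is closed under $\Gen_\W$ and under $\Filt$ and contains $\V = \Filt(\mathcal{S}_\V)$, hence contains $\Filt(\Gen_\W \V)$.

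Next I would invoke Theorem~\ref{thm:brick_labeling}(1): it asserts precisely that $\mathcal{S} \mapsto \Filt(\Gen_\W \mathcal{S})$ maps $\sbrick(\W)$ onto $\cjrep(\tors(\W))$, and that from $\T \in \cjrep(\tors(\W))$ one recovers the associated semibrick $\mathcal{S}_\T$ as the set of minimal co-extending modules for $\T^{\perp_\W}$. Combined with the uniqueness of the associated semibrick (Remark~\ref{rem:sbrick_unique}, which holds in general by \cite{asai}), this makes $\mathcal{S} \mapsto \Filt(\Gen_\W \mathcal{S})$ a bijection $\sbrick(\W) \to \cjrep(\tors(\W))$. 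Composing it with the bijection $\V \mapsto \mathcal{S}_\V$ of the previous paragraph and using $\Filt(\Gen_\W \V) = \Filt(\Gen_\W \mathcal{S}_\V)$ shows that $\V \mapsto \Filt(\Gen_\W \V)$ is a bijection $\wide(\W) \to \cjrep(\tors(\W))$, whose inverse sends $\T$ to $\Filt(\mathcal{S}_\T)$.

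The remaining step, which I expect to be the crux, is to identify this inverse with $\alpha$: concretely, to show $\alpha(\T) = \Filt(\mathcal{S}_\T)$ for every $\T \in \cjrep(\tors(\W))$, equivalently $\alpha(\Filt(\Gen_\W \V)) = \V$ for every $\V \in \wide(\W)$. This is exactly the Ingalls--Thomas correspondence in the generality of abelian length categories, so I would cite \cite[Theorem~4.17]{enomoto} (with \cite{IT, MS} for the hereditary and module-category prototypes). For a self-contained argument the steps would be: verify that $\alpha(\T)$ is a wide subcategory of $\W$ for every $\T \in \tors(\W)$; verify the inclusion $\V \subseteq \alpha(\Filt(\Gen_\W \V))$ by a short diagram chase using that $\V$ is closed under kernels while $\Filt(\Gen_\W \V)$ is closed under quotients; and identify the simple objects of $\alpha(\T)$ with the minimal co-extending modules $\mathcal{S}_\T$. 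Since $\alpha(\T)$ is then an abelian length category recovered from its simple objects, it equals $\Filt(\mathcal{S}_\T)$, which equals $\V$; this closes the loop.
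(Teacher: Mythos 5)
The paper does not give a proof of Theorem~\ref{thm:IT}; it simply cites \cite[Theorem~4.17]{enomoto} and points to \cite{BTZ}. Your proposal instead derives most of the content from Theorem~\ref{thm:brick_labeling}(1) via the Ringel bijection $\wide(\W) \leftrightarrow \sbrick(\W)$, leaving only the identification $\alpha(\T) = \Filt(\mathcal{S}_\T)$ to the same external reference. This is a sound and somewhat more self-contained factorization of the argument; the composition and the injectivity/surjectivity bookkeeping you lay out are correct, and the identity $\Filt(\Gen_\W\V) = \Filt(\Gen_\W\mathcal{S}_\V)$ is argued cleanly.

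Two points deserve correction, neither of which invalidates the proposal. First, your assertion that any $\V \in \wide(\W)$ ``has only finitely many isomorphism classes of simple objects'' because it is an abelian length category is false in general (consider the wide subcategory generated by all regular simples over the Kronecker algebra). Fortunately semibricks in this paper are allowed to be infinite sets, so the Ringel correspondence $\V \mapsto \mathcal{S}_\V$, $\mathcal{S} \mapsto \Filt(\mathcal{S})$ goes through unchanged and your argument is unaffected. Second, the claimed ``short diagram chase'' for the inclusion $\V \subseteq \alpha(\Filt(\Gen_\W\V))$ is not substantiated as stated: closure of $\V$ under kernels applies to morphisms between objects of $\V$, but the morphisms $f\colon X \to Y$ in the definition of $\alpha$ have target merely in $\T = \Filt(\Gen_\W\V)$, which need not lie in $\V$, and it is not clear how closure of $\T$ under quotients rescues this in one step. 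The actual argument (filter $Y$ and induct, or use the semibrick description of $\alpha(\T)$) requires more care. Since you also cite \cite[Theorem~4.17]{enomoto} for this step---which is exactly what the paper does---the proposal as a whole remains correct.
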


\section{$\tau$-perpendicular subcategories and $\tau$-exceptional sequences}\label{sec:tex}

In this section, we briefly review the \emph{$\tau$-perpendicular subcategories} of Jasso \cite{jasso} and the \emph{$\tau$-exceptional sequences} of Buan and Marsh \cite{BM_exceptional}. We then describe how the brick-$\tau$-rigid correspondence can be used to interpret $\tau$-exceptional sequences as a ``brick labeling'' of the lattice of wide subcategories.

\subsection{$\tau$-perpendicular subcategories}\label{sec:tau_perp}

\begin{definition}\label{def:tau_perp}
Let $\W \subseteq \mods\Lambda$ be a functorially finite wide subcategory.
\begin{enumerate}
    \item Let $M \in \W$ be $\tau$-rigid in $\W$. The \emph{$\tau$-perpendicular subcategory} of $M$ (in $\W$) is
    $$\J_\W(M) := (M^{\perp_\W}) \cap (\lperpW{(\tau_\W M)}).$$
    \item Let $\U \subseteq \W$ be a (functorially finite) wide subcategory. We say that $\U$ is a \emph{$\tau$-perpendicular subcategory} of $\W$ if there exists a $\tau$-rigid module $M \in \W$ such that $\U = \J_\W(M)$.
\end{enumerate}
\end{definition}

\begin{example}\label{ex:tau_perp}\
    \begin{enumerate}
        \item By taking $M = \Lambda$ and $M = 0$, respectively, we can see both the 0-subcategory and $\mods\Lambda$ as $\tau$-perpendicular subcategories of $\mods\Lambda$. Moreover, all ``right finite'' and ``left finite'' wide subcategories are $\tau$-perpendicular. See \cite[Lemma~4.3]{BuH}.
        \item In general, there are functorially finite wide subcategories which are not $\tau$-perpendicular. See \cite[Example~3.13]{asai}.
        \item If $\Lambda$ is $\tau$-tilting finite, then every wide subcategory is a $\tau$-perpendicular subcategory. This is shown explicitly in \cite[Theorem~4.18]{DIRRT}, and also follows from the fact that if $\Lambda$ is $\tau$-tilting finite, then every wide subcategory of $\mods\Lambda$ is both left and right finite.
    \end{enumerate}
\end{example}

\begin{remark}
    We note that the definition of a $\tau$-perpendicular category given in \cite{BuH} is given in terms of $\tau$-rigid \emph{pairs} rather than $\tau$-tilting modules. It turns out, however, that the two definitions are equivalent. This fact is implicit in many places, including the seminal work \cite{jasso}. For convenience, we give a brief explanation here. Our argument is based off of \cite[Theorem~6.4]{BuH}, which is an extension of \cite[Theorem~4.3]{BM_wide}.
    
    Let $\U \subseteq \W$. We suppose that $\U$ satisfies the definition of a $\tau$-perpendicular subcategory given in \cite{BuH}; that is, that there exists a module $M$ which is $\tau$-rigid in $\W$ and a module $P$ which is projective in $\W$ such that $\Hom_\Lambda(P,M) = 0$ and $\U = \J_\W(M) \cap P^{\perp_\W}$. Since projective modules are $\tau$-rigid, it follows that $\V := P^{\perp_\W} = \J_\W(P)$ is a $\tau$-perpendicular category. Moreover, by \cite[Theorem~6.4]{BuH} (see also \cite[Proposition~2.3]{AIR} and \cite[Lemma~3.8]{BM_wide}) we have that $M$ is $\tau$-rigid in $\V$ and satisfies $\J_\V(M) = \J_\W(M)$. Applying Theorem~\cite[Theorem~6.4]{BuH} once again, we conclude that there exists a module $M'$ such that $P\oplus M'$ is $\tau$-rigid in $\W$ and $\J_\W(P\oplus M') = \U$. This shows that any subcategory satisfying the definition given in \cite{BuH} also satisfies Definition~\ref{def:tau_perp}. Conversely, it is clear that any subcategory of the form $\J_\W(M)$ can also be written in the form $\J_\W(M) \cap P^{\perp_\W}$ by taking $P = 0$.
\end{remark}

Implicit in Definition~\ref{def:tau_perp}(2) is the fact that every $\tau$-perpendicular subcategory is a functorially finite wide subcategory. This follows from the work of Jasso \cite{jasso}, where it is also shown that if $M \in \W$ is $\tau$-rigid, then \begin{equation}\label{eqn:jasso}\rk(\J_\W(M)) + \rk(M) = \rk(\W).\end{equation}

The following is critical, and ultimately allows us to refer to the poset of $\tau$-perpendicular subcategories of $\mods\Lambda$.

\begin{theorem}\cite[Corollary~6.7]{BuH}\label{thm:iterated_tau_perp} Let $\U \subseteq \V \subseteq \W$ be a chain of subcategories. If $\U$ is $\tau$-perpendicular in $\V$ and $\V$ is $\tau$-perpendicular in $\W$, then $\U$ is $\tau$-perpendicular in $\W$.
\end{theorem}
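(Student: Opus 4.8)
The plan is to reduce the statement to a single-step form of Jasso's $\tau$-tilting reduction, after which transitivity is immediate. Write $\V=\J_\W(M)$ with $M\in\tr(\W)$ and $\U=\J_\V(N)$ with $N\in\tr(\V)$. The claim to isolate is: \emph{for every $M\in\tr(\W)$ and every $N\in\tr(\J_\W(M))$ there is a module $N'$ with $M\oplus N'\in\tr(\W)$ and $\J_\W(M\oplus N')=\J_{\J_\W(M)}(N)$}; call this $(\star)$. Granting $(\star)$ for our $M$ and $N$ gives $\U=\J_\V(N)=\J_{\J_\W(M)}(N)=\J_\W(M\oplus N')$ with $M\oplus N'\in\tr(\W)$, which by Definition~\ref{def:tau_perp} says precisely that $\U$ is $\tau$-perpendicular in $\W$; no further argument, and in particular no induction on $\rk(M)$, is needed.

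To establish $(\star)$ I would run Jasso's reduction \cite{jasso}. Bongartz-complete $M$ to a $\tau$-tilting module of $\W$, so that $\T_{\max}:=\lperpW{(\tau_\W M)}$ is the functorially finite torsion class generated (under $\Gen_\W$) by that completion; Proposition~\ref{prop:AStau} gives $\Gen_\W M\subseteq\T_{\max}$ and $\Ext^1_\Lambda(M,\T_{\max})=0$. Jasso's theorem supplies an order isomorphism $\tors(\V)=\tors(\J_\W(M))\xrightarrow{\ \sim\ }[\Gen_\W M,\T_{\max}]\subseteq\tors(\W)$ together with a compatible bijection on $\tau$-rigid modules: the functorially finite torsion class $\Gen_\V N$ of $\V$ is sent to a functorially finite $\T$ with $\Gen_\W M\subseteq\T\subseteq\T_{\max}$, and $N$ is matched with a $\tau$-rigid module $M\oplus N'$ of $\W$. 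One must be careful here, since $\J_\W(-)$ depends on the $\tau$-rigid module itself and not merely on the torsion class it generates---already for $M=S_1\in\mods K A_2$ the modules $P_1$ and $S_1\oplus P_1$ generate the same torsion class but $\J_\W(P_1)\neq\J_\W(S_1\oplus P_1)$---so $M\oplus N'$ is \emph{not} simply the gen-minimal generator of $\T$. Granting the matching, it remains to verify $\J_\W(M\oplus N')=\J_{\J_\W(M)}(N)$: expanding the left-hand side and using $\V=M^{\perp_\W}\cap\lperpW{(\tau_\W M)}$ rewrites it as $\V\cap (N')^{\perp_\W}\cap\lperpW{(\tau_\W N')}$, which must be matched against the defining expression for $\J_{\J_\W(M)}(N)$---in which both orthogonals are formed \emph{inside} $\V$ and the relevant translate is $\tau_\V$ rather than $\tau_\W$. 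The rank identity \eqref{eqn:jasso} is a useful consistency check: $\rk(\J_\W(M\oplus N'))=\rk(\W)-\rk(M)-\rk(N)=\rk(\V)-\rk(N)=\rk(\J_\V(N))=\rk(\U)$.

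The main obstacle---where essentially all the work lies---is this last equality, i.e. the compatibility of the $\J$-construction with reduction. The difficulty is that $\tau_\V$ is, by construction, the Auslander-Reiten translate of an abstract algebra $\Lambda_\V$ with $\mods\Lambda_\V\simeq\V$, and a priori is unrelated to $\tau_\W$; in particular $\V\cap\lperpW{(\tau_\W N')}$ is not visibly the left-perpendicular category of $\tau_\V N$ formed inside $\V$. To bridge this one identifies the indecomposable projectives of $\V$ with the relative Ext-projective objects of the torsion class $\T_{\max}$ (equivalently, with the Bongartz complement of $M$), computes $\tau_\V$ from an Auslander-Reiten formula in that description, and then uses Proposition~\ref{prop:AStau} to rewrite the membership conditions on both sides as vanishing of $\Ext^1_\Lambda$ against $\Gen$-closures, matching them term by term. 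An alternative, closer in spirit to the rest of this paper, avoids the translates altogether by arguing on the lattice side: one recognizes $\J_\W(-)$ as the ``wide subcategory of an interval'' of $\tors(\W)$ in the sense of \cite{AP} (see also \cite{enomoto}), under which $\J_{\J_\W(M)}(N)$ is the wide subcategory of a subinterval of $[\Gen_\W M,\T_{\max}]$, hence of a subinterval of $[\hat{0},\hat{1}]$ in $\tors(\W)$; as ``being a subinterval'' is transitive, the desired equality---and with it Theorem~\ref{thm:iterated_tau_perp}---becomes an order-theoretic statement about nested intervals. Either way, once this compatibility is established the transitivity of $\tau$-perpendicularity follows at once.
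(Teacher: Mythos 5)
The paper itself does not prove this theorem: it is imported verbatim as \cite[Corollary~6.7]{BuH}, so there is no in-paper argument to compare against. What I can assess is whether your plan is sound and where its weight falls.

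Your reduction to the claim $(\star)$ is exactly right, and your observation that transitivity then follows in one step (no induction on $\rk(M)$) is also right. In fact $(\star)$ is precisely the content of \cite[Theorem~6.4]{BuH}, which this paper also invokes in the remark following Definition~\ref{def:tau_perp}, and it is essentially the same statement as Theorem~\ref{thm:tf_tau}(3) (= \cite[Theorem~4.3]{BM_wide}) once one unwinds the bijections $\chi$ and $\psi$: given a TF-admissible ordered $\tau$-rigid module $(N',M)$ one has both $\W_2=\J_\W(M\oplus N')$ and $\W_2=\J_{\J_\W(M)}(N)$ with $N=f_{M^{\perp_\W}}(N')$. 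So you have correctly located the crux. You are also right to flag the subtlety that $\J_\W(-)$ depends on the $\tau$-rigid module and not merely on the torsion class it generates, and that the bijection of Jasso's reduction must therefore be taken at the level of $\tau$-rigid modules.

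The gap is that $(\star)$ itself is sketched rather than proved, and the sketch stops precisely at the hard part: relating $\tau_\V$ (defined through an abstract equivalence $\V\simeq\mods\Lambda_\V$) to $\tau_\W$, so as to identify $\V\cap(N')^{\perp_\W}\cap\lperpW{(\tau_\W N')}$ with $N^{\perp_\V}\cap\lperpV{(\tau_\V N)}$. Your outline of the bridge (identify the projectives of $\V$ with the Ext-projectives of $\T_{\max}$, i.e.\ with the Bongartz complement of $M$, compute $\tau_\V$ from an AR-type formula there, and match $\Ext^1$-vanishing conditions via Proposition~\ref{prop:AStau}) is the standard route and is where the substance of Jasso's and Buan--Marsh's work lies, but as written it is a promissory note rather than an argument. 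The alternative lattice-theoretic route via nuclear intervals in the sense of \cite{AP} has the same status: transitivity of ``being a subinterval'' is trivial, but one still needs the non-trivial input that the interval $[\Gen_\W M,\,\lperpW{(\tau_\W M)}]$ is nuclear and that its wide-subcategory correspondent is $\J_\W(M)$, which is again Jasso/Asai--Pfeifer, and one would further need a functorial-finiteness argument outside the $\tau$-tilting finite case. In short: the skeleton is correct and matches the cited source, but neither of the two proposed fleshings-out is carried far enough to constitute a proof of $(\star)$; if you want a self-contained argument you should either prove $\J_\V(N)=\J_\W(M\oplus N')$ directly via the Ext-projective description of $\tau_\V$, or cite \cite[Theorem~6.4]{BuH} (equivalently \cite[Theorem~4.3]{BM_wide}) outright, as this paper in fact does.
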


We are now ready to formally introduce the poset of $\tau$-perpendicular subcategories.

\begin{proposition}\label{prop:poset}
    Let $\W \subseteq \mods\Lambda$ be a functorially finite wide subcategory, and let $\tperp(\W)$ be the set of $\tau$-perpendicular subcategories of $\W$. Define a relation $\leq_\tau$ on $\tperp(\W)$ so that $\U \leq_\tau \V$ if and only if $\U$ is a $\tau$-perpendicular subcategory of $\V$. Then $\leq_\tau$ is a partial order on $\tperp(\W)$.
\end{proposition}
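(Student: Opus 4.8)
The plan is to verify the three axioms of a partial order for $\leq_\tau$ on $\tperp(\W)$: reflexivity, antisymmetry, and transitivity.

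\textit{Reflexivity.} For any $\U \in \tperp(\W)$, I need $\U \leq_\tau \U$, i.e. $\U$ is $\tau$-perpendicular in $\U$ itself. This follows from Example~\ref{ex:tau_perp}(1) applied to $\U$: taking $M = 0$, we have $\J_\U(0) = (0^{\perp_\U}) \cap (\lperp{(\tau_\U 0)}) = \U$. (Here I am implicitly using that $\U$, being $\tau$-perpendicular in $\W$, is itself a functorially finite wide subcategory, so the constructions of Section~\ref{sec:tau_perp} apply inside $\U$.)

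\textit{Transitivity.} Suppose $\U \leq_\tau \V$ and $\V \leq_\tau \W'$ with $\U, \V, \W' \in \tperp(\W)$. Then $\U$ is $\tau$-perpendicular in $\V$ and $\V$ is $\tau$-perpendicular in $\W'$, so by Theorem~\ref{thm:iterated_tau_perp} (applied with the chain $\U \subseteq \V \subseteq \W'$), $\U$ is $\tau$-perpendicular in $\W'$; that is, $\U \leq_\tau \W'$. This is the step that genuinely requires the main external input, but since Theorem~\ref{thm:iterated_tau_perp} is quoted it is immediate.

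\textit{Antisymmetry.} This is the step I expect to be the main obstacle, since it is the only one not handed to us directly. Suppose $\U \leq_\tau \V$ and $\V \leq_\tau \U$. Then $\U \subseteq \V$ and $\V \subseteq \U$ (because $\tau$-perpendicular subcategories are in particular subcategories, and $\J_\V(M) \subseteq \V$ always), hence $\U = \V$. So the key observation is simply that $\U \leq_\tau \V$ forces the containment $\U \subseteq \V$ of underlying categories, and antisymmetry then reduces to the (trivially antisymmetric) inclusion order. I should be slightly careful to note that $\leq_\tau$ is genuinely a relation on the set $\tperp(\W)$ — well-definedness is not an issue since we are just asserting, for each ordered pair, whether one is $\tau$-perpendicular in the other.

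Putting these together establishes that $(\tperp(\W), \leq_\tau)$ is a poset. I would write this up in a few short lines, citing Example~\ref{ex:tau_perp}(1) for reflexivity, the elementary containment $\J_\V(M) \subseteq \V$ for antisymmetry, and Theorem~\ref{thm:iterated_tau_perp} for transitivity.
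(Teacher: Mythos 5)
Your proof is correct, and reflexivity and transitivity are handled exactly as in the paper, via $\V = \J_\V(0)$ and Theorem~\ref{thm:iterated_tau_perp} respectively. The one place you diverge is antisymmetry: you observe that $\J_\V(M) \subseteq \V$ by construction (it is an intersection of subcategories of $\V$), so $\leq_\tau$ refines the inclusion order and antisymmetry is inherited from it. The paper instead invokes the rank formula $\rk(\J_\V(M)) + \rk(M) = \rk(\V)$ from Equation~\ref{eqn:jasso} to conclude that $\U \lneq_\tau \V$ forces $\rk(\U) < \rk(\V)$, from which antisymmetry follows numerically. Both arguments are valid. Yours is the more elementary of the two, requiring only the shape of Definition~\ref{def:tau_perp}(1), whereas the paper's choice makes explicit the rank decrement along $\lneq_\tau$, which is a fact worth recording in its own right (it drives later inductions, e.g.\ in the proof of Theorem~\ref{thm:mainD}).
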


\begin{proof}
    Reflexivity follows from the fact that $\V = \J_\V(0)$ for all $\V \in \tperp(\W)$. Transitivity is shown in Theorem~\ref{thm:iterated_tau_perp}. It thus remains only to show that $\leq_\tau$ is antisymmetric. To see this, we note that if $\U \lneq_\tau \V$, then $\rk(\U) < \rk(\V)$ by Equation~\ref{eqn:jasso}.
\end{proof}

\begin{remark}
    If $\Lambda$ is $\tau$-tilting finite, then every wide subcategory is a $\tau$-perpendicular subcategory and $\leq_\tau$ coincides with the inclusion order. (This follows from Example~\ref{ex:tau_perp}(3).) We conjecture that $\leq_\tau$ concides with the inclusion order more generally, even though $\tperp(\mods\Lambda) \neq \wide(\mods\Lambda)$ outside of the functorially finite case. See \cite[Conjecture~6.8]{BuH}.
\end{remark}

From now on, we use $\tperp(\W)$ to refer to the poset $(\tperp(\W),\leq_\tau)$. An example for the Kronecker path algebra will be discussed in Example~\ref{ex:kronecker_kappa} after we introduce the ``$\kappa$-order'' in Section~\ref{sec:kappa_order}. The following is direct consequence of Theorem~\ref{thm:iterated_tau_perp}.

\begin{corollary}\label{cor:iterated_tau_perp}
    Let $\V \subseteq \W$ be a chain of $\tau$-perpendicular subcategories. Then
    $$\tperp(\V) = \{\U \in \tperp(\W) \mid \U \leq_\tau \V\}$$
    as partially ordered sets.
\end{corollary}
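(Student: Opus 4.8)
The plan is to establish the equality of underlying sets by two inclusions, and then observe that the two copies of $\leq_\tau$ agree tautologically. Throughout I read the hypothesis "$\V \subseteq \W$ is a chain of $\tau$-perpendicular subcategories'' as asserting in particular that $\V \leq_\tau \W$, i.e. that $\V$ is a $\tau$-perpendicular subcategory of $\W$.

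For the inclusion $\tperp(\V) \subseteq \{\U \in \tperp(\W) \mid \U \leq_\tau \V\}$, I would take $\U \in \tperp(\V)$, so $\U$ is $\tau$-perpendicular in $\V$. Since $\V$ is $\tau$-perpendicular in $\W$, Theorem~\ref{thm:iterated_tau_perp} yields that $\U$ is $\tau$-perpendicular in $\W$; hence $\U \in \tperp(\W)$, and then $\U \leq_\tau \V$ holds directly by the definition of $\leq_\tau$. For the reverse inclusion, if $\U \in \tperp(\W)$ satisfies $\U \leq_\tau \V$, then by definition of $\leq_\tau$ this says exactly that $\U$ is a $\tau$-perpendicular subcategory of $\V$, i.e. $\U \in \tperp(\V)$. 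This gives the equality of sets.

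It remains to match the poset structures. The key observation is that $\leq_\tau$, whether formed on $\tperp(\V)$ or on $\tperp(\W)$, is defined by the same recipe: for two members $\U_1, \U_2$ of the poset in question, $\U_1 \leq_\tau \U_2$ iff $\U_1$ is a $\tau$-perpendicular subcategory of $\U_2$, a condition that makes no reference to the ambient category. Given $\U_1, \U_2 \in \tperp(\V)$ — which by the set equality just proved are also elements of $\tperp(\W)$ — the relation inherited from $\tperp(\V)$ therefore coincides with the one inherited from $\tperp(\W)$, and the displayed identity holds as an equality of posets (both sides being genuine posets by Proposition~\ref{prop:poset}).

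I do not expect a real obstacle here: the corollary is essentially a restatement of the transitivity in Theorem~\ref{thm:iterated_tau_perp} together with the ``ambient-free'' nature of the relation $\leq_\tau$. The only point needing a little care is that the order comparison inside $\tperp(\W)$ only makes sense once one knows $\tperp(\V) \subseteq \tperp(\W)$, which is precisely why the set equality is proved first.
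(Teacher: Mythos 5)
Your proof is correct and follows exactly the route the paper intends: the forward inclusion is the content of Theorem~\ref{thm:iterated_tau_perp}, the reverse inclusion is immediate from the definition of $\leq_\tau$, and the agreement of the two order relations is automatic because $\leq_\tau$ is defined pairwise and makes no reference to the ambient category. The paper simply asserts the corollary as a ``direct consequence'' of Theorem~\ref{thm:iterated_tau_perp} without spelling out these steps, so your write-up is the expected proof.
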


In particular, Corollary~\ref{cor:iterated_tau_perp} justifies writing $\V \leq_\tau \W$ without specifying the ambient category.

\subsection{$\tau$-exceptional sequences}\label{sec:tau_exceptional}

We are now prepared to recall the definition of a $\tau$-exceptional sequence from \cite{BM_exceptional}

\begin{definition}\label{def:tau_seq}
    Let $\W \subseteq \mods\Lambda$ be a functorially finite wide subcategory. An ordered sequence $(M_k,\ldots,M_1)$ of indecomposable modules is a \emph{$\tau$-exceptional sequence} in $\W$ if $M_1$ is $\tau$-rigid in $\W$ and $(M_k,\ldots,M_2)$ is a $\tau$-exceptional sequence in $\J_\W(M_1)$.
\end{definition}

\begin{remark}\
    \begin{enumerate}
        \item In order to clarify the recursive definition, we note that $(M_k,\ldots,M_1)$ is a $\tau$-exceptional sequence if and only if there exists a sequence $\W_{k-1} \subseteq \cdots \subseteq\W_0$ of (functorially finite) wide subcategories such that (i) $\W_0 = \W$, and (ii) For all $i$ we have that $M_i$ is $\tau$-rigid in $\W_{i-1}$ and that $\W_i = \J_{\W_{i-1}}(M_i)$.
    In particular, if $(M_k,\ldots,M_1)$ is a $\tau$-exceptional sequence, then for $i < j$ we have $\Hom_\Lambda(M_i,M_j) = 0$ because $M_j\in M_i^\perp$ and $\Ext^1_\Lambda(M_i,M_j) = 0$ since $M_j \in \prescript{\prescript{}{(\W_{i-1})}{\perp}}{}{\tau_{\W_{i-1}} M_i}$ (see also Proposition~\ref{prop:AStau}). Moreover, for all $i$ we have $\Ext^1_\Lambda(M_i,M_i) = 0$ since $M_i$ is $\tau$-rigid in $\W_{i-1}$.
    \item When $\Lambda$ is hereditary, a sequence $(M_k,\ldots,M_1)$ of indecomposables is a $\tau$-exceptional sequence if and only if it is an \emph{exceptional sequence}; i.e., if and only if each $M_i$ is a brick, $\Hom(M_i,M_j) = 0$ for $i < j$, and $\mathrm{Ext}^n(M_i,M_j) = 0$ for $i \leq j$ and $n > 0$. Exceptional sequences over hereditary algebras are a classical object of study, but the condition on Ext vanishing is often quite restrictive for non-hereditary algebras. Since this condition is void for $n > 1$ in the hereditary case, one could also restrict the vanishing condition on Ext to only $n = 1$ to obtain {\c S}en's ``weak exceptional sequences'' \cite{sen}. (These are themselves a subclass of the well-studied ``stratifying systems'', see e.g. the introduction of \cite{MT} and the references therein.) In case the bricks and $\tau$-rigid modules of an algebra coincide, every $\tau$-exceptional sequence will be a weak exceptional sequence, but one consequence of {\c S}en's work is that the converse does not hold in general.
    \end{enumerate}
\end{remark}

We denote by $\tex(\W)$ the set of $\tau$-exceptional sequences in $\W$.

\begin{definition}\cite[Definition~3.1]{MT}\label{def:TF_adm}
    Let $\W \subseteq \mods\Lambda$ be a functorially finite wide subcategory. An ordered sequence $(M_k,\ldots,M_1)$ of indecomposable modules is a \emph{TF-admissible ordered $\tau$-rigid module} in $\W$ if $\bigoplus_{j = 1}^k M_j$ is $\tau$-rigid in $\W$ and for all $j \in \{2,\ldots,k\}$, one has
    $$M_j \notin \Gen_\W\left(\bigoplus_{i = 1}^{j-1} M_i\right).$$
\end{definition}

We denote by $\tfo\W$ the set of TF-admissible ordered $\tau$-rigid modules in $\W$.

\begin{remark}
    If $\bigoplus_{j = 1}^k M_j$ is gen-minimal, then $(M_k,\ldots,M_1)$ is a TF-admissible ordered $\tau$-rigid module. The converse, however, is not true. For example, let $\Lambda = K(1 \rightarrow 2 \rightarrow 3)$ and take $\W = \mods\Lambda$. Then $P_1 \oplus (P_1/P_3)$ is a $\tau$-rigid module which is not gen-minimal. We then have that $(P_1,P_1/P_3)$ is a TF-admissible ordered $\tau$-rigid module in $\mods\Lambda$, but that $(P_1/P_3,P_1)$ is not. In particular, we emphasize that the property of being gen-minimal does not depend on how the indecomposable direct summands are ordered, while the property of being TF admissible does.
\end{remark}

\begin{definition}
    Let $\mathcal{P}$ be a poset. We say that a chain $(x_0 \covered x_1 \covered \cdots \covered x_m)$ is a \emph{saturated top chain} of $\mathcal{P}$ if $x_m$ is maximal in $\mathcal{P}$ and each relation $x_i \covered x_{i + 1}$ is a cover relation. 
\end{definition}

We denote by $\stopc(\mathcal{P})$ the set of saturated top chains of $\mathcal{P}$. We conclude this section by describing bijections between the sets $\tfo(\W)$, $\tex(\W)$, and $\stopc(\tperp(\W))$ which have appeared in the literature. We note that all three of the bijections in the following theorem are built upon Jasso's seminal work on reduction of $\tau$-rigid modules \cite{jasso}.

\begin{theorem}\label{thm:tf_tau}
    Let $\W \subseteq \mods\Lambda$ be a wide subcategory.
    \begin{enumerate}
        \item \cite[Theorem~5.1]{MT} There is a bijection $\chi: \tfo(\W) \rightarrow \tex(\W)$ given as follows. Let $(M_k,\ldots,M_1) \in \tfo(\W)$. For $j \in \{1,\ldots,k\}$, let $\mathcal{F}_j := \left(\bigoplus_{i < j} M_i\right)^{\perp_\W}$ and let $N_j := f_{\mathcal{F}_j}(M_j)$. (Note that $N_1 = M_1$.) Then $\chi(M_k,\ldots,M_1) := (N_k,\ldots,N_1)$.
        \item \cite[Theorem~10.1]{BM_wide} There is a bijection $\psi: \tex(\W) \rightarrow \stopc(\tperp(\W))$ given as follows. Let $(M_k,\ldots,M_1) \in \tex(\W)$, and denote $\W_0 := \W$. For $j \in \{1,\ldots,k\}$, iteratively define $\W_j = \J_{\W_{j-1}}(M_j)$. Then $\psi(M_k,\ldots,M_1) := (\W_k \coveredtau \W_{k-1} \coveredtau \cdots \coveredtau \W_0)$.
        \item \cite[Theorem~4.3]{BM_wide} Let $(M_k,\ldots,M_1) \in \tfo(\W)$. For $j \in \{1,\ldots,k\}$, let $\W_j = \J_\W\left(\bigoplus_{i \leq j} M_i\right)$. Then $\psi \circ \chi(M_k,\ldots,M_1) = (\W_k \coveredtau \W_{k-1} \coveredtau \cdots \coveredtau \W_1 \coveredtau \W)$.
    \end{enumerate}
\end{theorem}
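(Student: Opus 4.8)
The plan is to obtain all three statements by reducing to the corresponding results already in the literature, with the only real work being the translation from module categories to arbitrary functorially finite wide subcategories and the check that the several formulations of the reduction process agree. Note first that $\W$ is necessarily functorially finite here, since otherwise $\tfo(\W)$, $\tex(\W)$, and $\tperp(\W)$ are undefined. Fixing inverse equivalences $F\colon\W\to\mods\Lambda_\W$ and $G\colon\mods\Lambda_\W\to\W$ as above, I would first verify that $F$ carries $\tau_\W$ to $\tau_{\Lambda_\W}$, $\Gen_\W$ to $\Gen$, and the perpendicular operations $(-)^{\perp_\W}$ and $\lperpW{(-)}$ to their counterparts in $\mods\Lambda_\W$, hence $\J_\W(-)$ to $\J_{\mods\Lambda_\W}(-)$. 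This yields bijections $\tfo(\W)\cong\tfo(\mods\Lambda_\W)$ and $\tex(\W)\cong\tex(\mods\Lambda_\W)$, and a poset isomorphism $\tperp(\W)\cong\tperp(\mods\Lambda_\W)$ (using Corollary~\ref{cor:iterated_tau_perp}), all compatible with $\chi$, $\psi$, and $\psi\circ\chi$. So it suffices to prove each item for $\W=\mods\Lambda$.

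With that reduction in hand, statement (1) is exactly \cite[Theorem~5.1]{MT} and statement (2) is \cite[Theorem~10.1]{BM_wide}; I would simply cite these. For (2) I would also record two points. First, that the notion of $\tau$-perpendicular subcategory used in \cite{BM_wide} matches Definition~\ref{def:tau_perp}, which is the content of the remark following that definition. Second, that $\psi$ genuinely lands in $\stopc(\tperp(\W))$, i.e. that each $\W_j\coveredtau\W_{j-1}$ is a cover relation; this I would deduce from Equation~\ref{eqn:jasso} — which forces $\rk(\W_j)=\rk(\W_{j-1})-1$ because each $M_j$ is indecomposable and $\tau$-rigid — together with Corollary~\ref{cor:iterated_tau_perp}, which rules out any $\tau$-perpendicular subcategory strictly between $\W_j$ and $\W_{j-1}$.

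For (3), I would invoke \cite[Theorem~4.3]{BM_wide}; the remaining content is the identity $\J_{\W_{j-1}}(N_j)=\J_\W\bigl(\bigoplus_{i\le j}M_i\bigr)$, where $\W_{j-1}=\J_\W\bigl(\bigoplus_{i<j}M_i\bigr)$ and $N_j=f_{\mathcal{F}_j}(M_j)$ is the torsion-free part from (1), together with the fact that $N_j$ is $\tau$-rigid in $\W_{j-1}$. This is the compatibility of iterated $\tau$-rigid reduction with reduction along a direct sum; I would extract it from Jasso's work \cite{jasso}, or alternatively from \cite[Theorem~6.4]{BuH} (the result already used in the remark after Definition~\ref{def:tau_perp}), using the TF-admissibility of $(M_k,\dots,M_1)$ to guarantee $N_j\neq 0$ so that each step is a genuine cover relation. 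Combining this identity with (1) and (2) then gives the displayed composite.

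I expect the only genuine obstacle to be the bookkeeping in the last step: one must track carefully that the module reduced at stage $j$ is the torsion-free part $N_j=f_{\mathcal{F}_j}(M_j)$ rather than $M_j$ itself, and confirm that reducing by $N_j$ inside $\W_{j-1}$ recovers precisely $\J_\W$ of the partial direct sum $\bigoplus_{i\le j}M_i$. Everything else is either a direct citation or the routine transport of structure along the equivalence $F$.
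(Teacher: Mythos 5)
Your proposal is in essentially the same spirit as the paper's treatment: the paper offers no proof of Theorem~\ref{thm:tf_tau} at all, presenting it as a collection of three results imported from \cite{MT} and \cite{BM_wide} (with the explicit remark that all three are built on Jasso's reduction machinery). What you have done is fill in the details that the paper leaves implicit, and your filling-in is correct. In particular, your observation that $\W$ must be functorially finite for $\tfo(\W)$, $\tex(\W)$, $\tperp(\W)$ to make sense is a genuine (if minor) gap in the theorem's hypotheses as stated, which the paper silently assumes; and your transport along the equivalence $F\colon\W\to\mods\Lambda_\W$ is exactly the right mechanism, since $\tau_\W$ is defined in the paper by conjugating $\tau_{\Lambda_\W}$ by $F$ and $G$, so compatibility of $\Gen_\W$, $(-)^{\perp_\W}$, $\lperpW{(-)}$, and hence $\J_\W$, with the corresponding operations in $\mods\Lambda_\W$ is immediate.

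The two verifications you highlight are the right ones. For (2), the argument that each $\W_j \coveredtau \W_{j-1}$ is a cover relation — rank drops by exactly one via Equation~\ref{eqn:jasso} since $M_j$ is indecomposable $\tau$-rigid, and no intermediate $\tau$-perpendicular subcategory can exist because $\leq_\tau$ strictly decreases rank (Proposition~\ref{prop:poset}/Corollary~\ref{cor:iterated_tau_perp}) — is sound. For (3), the key bookkeeping identity $\J_{\W_{j-1}}(N_j) = \J_\W\bigl(\bigoplus_{i\le j} M_i\bigr)$ with $N_j = f_{\mathcal{F}_j}(M_j)$ is indeed the content of iterated $\tau$-tilting reduction from \cite{jasso} (and its packaging in \cite[Theorem~6.4]{BuH}), and TF-admissibility is precisely what keeps $N_j$ nonzero at each step. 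The one thing you might make more explicit is that $N_j$ is $\tau$-rigid \emph{in $\W_{j-1}$} rather than merely nonzero — this is again part of Jasso's reduction theorem and is needed for the step to be a valid $\tau$-perpendicular reduction — but you do allude to this. Since the paper treats the whole theorem as a citation, your proposal is an acceptable expansion of it and there is no substantive disagreement with any argument the paper actually makes.
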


As a special case, note that $\tau$-exceptional sequences of length one are precisely the indecomposable $\tau$-rigid modules. In particular, suppose
$\U \coveredtau \W$ is a cover relation in $\tperp(\mods\Lambda)$, or equivalently that $(\U \coveredtau \W) \in \stopc(\tperp(\W))$. Then by Theorem~\ref{thm:tf_tau}(2), there exists a unique indecomposable module $M \in \W$ which is $\tau$-rigid and satisfies $\U = \J_\W(M)$. This leads to the following definition.

\begin{definition}\label{def:brick_label}
    Let $\U \coveredtau \W$ be a cover relation in $\tperp(\mods\Lambda)$, and let $M \in \W$ be the unique indecomposable $\tau$-rigid module which satisfies $\U = \J_\W(M)$. We refer to $\beta(M)$ as the \emph{brick label} of $U \coveredtau \W$.
\end{definition}

\begin{remark}\label{rem:brick_label}
    We have chosen to label $\U \coveredtau \W$ with $\beta(M)$ rather than with $M$ for two reasons. First, as discussed in Remark~\ref{rem:brick}, the property of being a brick is stable under passing to a wide subcategory, while the property of being $\tau$-rigid may not be. Second, we have by construction that $\Gen_\W M = \Filt(\Gen_\W M)$ is completely join-irreducible. Thus one can read $\beta(M)$ from the brick labeling of $\tors(\W)$, while in general finding $M$ requires more information. This is the approach we will use to construct ``$\kappa$-exceptional sequences'' in Section~\ref{sec:kappa_exceptional}.
\end{remark}

The brick-labeling of $\tperp(\mods\Lambda)$ will play a major role in the remainder of this paper. Indeed, we will show in Section~\ref{sec:el} that for many algebras, this labeling can be made into an ``EL-labeling''. In order to do so, it is useful to understand which bricks can appear as brick labels in $\tperp(\mods\Lambda)$. To that end, we have the following definition.

\begin{definition}\label{def:sfbrick}
    Let $B \in \W$ be a brick. We say that $B$ is an \emph{sf-brick} if there exists a semibrick $\mathcal{X}$ with $B \in \mathcal{X}$ such that $\Filt(\Gen_\W \mathcal{X})$ is functorially finite.
\end{definition}

We conclude this section with the following.

\begin{proposition}\label{prop:sf-brick}
    Let $B \in \W$ be a brick. Then there exists a cover relation $U \coveredtau \V$ in $\tperp(\W)$ with label $B$ if and only if $B$ is an sf-brick.
\end{proposition}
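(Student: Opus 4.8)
The plan is to prove both implications by unravelling the definitions and invoking the brick-$\tau$-rigid correspondence (Theorem~\ref{thm:DIJ}), Asai's semibrick correspondence, and the bijection $\psi$ between $\tau$-exceptional sequences and saturated top chains (Theorem~\ref{thm:tf_tau}(2)). First I would handle the ``only if'' direction. Suppose $U \coveredtau \V$ is a cover relation in $\tperp(\W)$ with label $B$, so by Definition~\ref{def:brick_label} there is a unique indecomposable $\tau$-rigid module $M \in \V$ with $U = \J_\V(M)$ and $B = \beta(M)$. Since $M$ is an indecomposable $\tau$-rigid module in $\V$, one can complete $M$ to a $\tau$-tilting object of $\V$; more economically, there is a gen-minimal $\tau$-rigid module $M'$ in $\V$ having $M$ as a direct summand with $\Gen_\V M' = \Filt(\Gen_\V M)$ (using Proposition~\ref{prop:functorially_finite}). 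Applying Theorem~\ref{thm:DIJ}(3)--(4) to $M'$ produces a semibrick $\mathcal{X}(M') \subseteq \V \subseteq \W$ with $\Filt(\Gen_\V \mathcal{X}(M'))$ functorially finite; the key point is that the joinand of $\mathcal{X}(M')$ corresponding to the summand $M$ is exactly $\beta(M) = B$, because $M$ being split projective with $f_{\mathcal{F}}M = M$ (as $M$ is itself $\tau$-rigid, not merely a quotient) forces $\beta(f_{\mathcal{F}}M) = \beta(M)$. Here one must be a little careful: $\Gen_\W \mathcal{X}$ need not equal $\Gen_\V \mathcal{X}$ a priori, but functorial finiteness in $\W$ follows because $\V$ is functorially finite in $\W$ and $\Filt(\Gen_\W \mathcal{X}) = \Filt(\Gen_\V \mathcal{X})$ since $\mathcal{X} \subseteq \V$ and $\V$ is closed under the relevant operations (this is the content of the ``identify $\wide(\V)$ with an interval'' remark in Section~\ref{sec:background}). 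Hence $B$ is an sf-brick.

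For the ``if'' direction, suppose $B$ is an sf-brick, witnessed by a semibrick $\mathcal{X}$ with $B \in \mathcal{X}$ and $\T := \Filt(\Gen_\W \mathcal{X})$ functorially finite in $\W$. By Theorem~\ref{thm:DIJ}(4), $\mathcal{X} = \mathcal{X}(M)$ for a (unique) gen-minimal $\tau$-rigid module $M = \bigoplus_{i=1}^k M_i$ in $\W$ with $\Gen_\W M = \T$, and $B = \beta(f_{\mathcal{F}_j} M_j)$ for some index $j$, where $\mathcal{F}_j = (\bigoplus_{i \neq j} M_i)^{\perp_\W}$. Now reorder the summands so that $M_j$ comes first; that is, consider the ordered tuple $(M_{\sigma(k)}, \ldots, M_{\sigma(2)}, M_j)$ for a suitable permutation $\sigma$. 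Since $M$ is gen-minimal, \emph{any} ordering of its summands is TF-admissible (by the remark following Definition~\ref{def:TF_adm}), so this tuple lies in $\tfo(\W)$. Apply the bijection $\chi$ of Theorem~\ref{thm:tf_tau}(1): the resulting $\tau$-exceptional sequence has \emph{first} (rightmost) entry $N_1 = f_{\mathcal{F}_1}(M_j) = f_{(\bigoplus_{i \neq j} M_i)^{\perp_\W}}(M_j)$ — wait, here I must be careful that $\mathcal{F}_1$ in $\chi$ is $(\text{empty sum})^{\perp_\W} = \W$, so $N_1 = M_j$; the module $f_{\mathcal{F}_j}M_j$ from Asai's construction appears instead as a later $N$-entry. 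So the cleaner route is: $(M_{\sigma(k)}, \ldots, M_{\sigma(2)}, M_j) \in \tfo(\W)$, and by Theorem~\ref{thm:tf_tau}(3), $\psi \circ \chi$ sends it to the saturated top chain $\W_k \coveredtau \cdots \coveredtau \W_1 \coveredtau \W$ with $\W_1 = \J_\W(M_j)$. The \emph{last} (bottom) cover relation of this chain is $\W_k \coveredtau \W_{k-1}$; to get a cover relation whose label is exactly $B = \beta(f_{\mathcal{F}_j}M_j)$, I instead run the argument ``from the top'': the tuple $(M_j, M_{\sigma(2)}, \ldots, M_{\sigma(k)})$ (i.e. $M_j$ \emph{last}) is also TF-admissible, $\chi$ of it has last entry $N_k$ with the $j$-th factor now playing the role where $\mathcal{F}_k = (\bigoplus_{i \neq j} M_i)^{\perp_\W} = \mathcal{F}_j$, so $N_k = f_{\mathcal{F}_j}(M_j)$, and $\psi \circ \chi$ gives a chain whose \emph{bottom} cover relation $\W_k \coveredtau \W_{k-1}$ is exactly $\J_{\W_{k-1}}(N_k)$ with $N_k$ indecomposable $\tau$-rigid in $\W_{k-1}$; by Definition~\ref{def:brick_label} its label is $\beta(N_k) = \beta(f_{\mathcal{F}_j}M_j) = B$, using that $\beta$ does not depend on the ambient wide subcategory (the remark after Theorem~\ref{thm:DIJ}).

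The remaining routine verification is that $N_k = f_{\mathcal{F}_j}(M_j)$ is genuinely indecomposable and $\tau$-rigid in $\W_{k-1} = \J_\W(\bigoplus_{i \neq j} M_i)$, which is precisely the content of Asai's Theorem~\ref{thm:DIJ}(3) combined with Jasso's reduction (the summands of $\mathcal{X}(M)$ are bricks, hence indecomposable, and $\tau$-rigid in the appropriate $\tau$-perpendicular subcategory); and that $\beta(N_k) = \beta(f_{\mathcal{F}_j}M_j)$ is literally $B$ by the defining formula of $\mathcal{X}(M)$. I expect \textbf{the main obstacle} to be bookkeeping the two ``ends'' of the exceptional sequence correctly — matching up Definition~\ref{def:brick_label}'s convention (which reads the label off the \emph{bottom} cover relation $\J_\W(M)$ of a length-one piece) with the indexing conventions in Theorem~\ref{thm:tf_tau}, and checking that the brick $\beta$ computed inside the nested wide subcategory $\W_{k-1}$ agrees with $\beta(f_{\mathcal{F}_j}M_j)$ computed in $\W$. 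Both are resolved by the wide-subcategory-independence of $\beta$ and by choosing the ordering of the summands of $M$ so that $M_j$ is processed last, but stating it cleanly requires care.
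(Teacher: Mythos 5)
Your reverse ("if") direction is essentially the paper's argument: use Asai's correspondence to write $B = \beta\left(f_{\mathcal{F}_j}M_j\right)$ for a gen-minimal $M = \bigoplus M_i$, reorder so $M_j$ is processed last (placed at index $k$), and read $B$ off the bottom cover relation of the resulting chain via $\psi\circ\chi$. The indexing bookkeeping you flagged is the same bookkeeping the paper performs, and your instinct to resolve it by choosing the ordering of summands is exactly right.

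Your forward ("only if") direction, however, has a genuine gap. You take $M$ indecomposable $\tau$-rigid in $\V$ with $\beta(M)=B$, observe $M$ is itself gen-minimal in $\V$ so that $\mathcal{X}(M)=\{B\}$, and then claim $\Filt(\Gen_\W\{B\}) = \Filt(\Gen_\V\{B\})$ so that the left-hand side is functorially finite. This equality is false in general: $\Gen_\W B$ consists of quotients in $\W$, which form a strictly larger class than quotients in $\V$, and passing to $\Filt$ only amplifies the discrepancy. In fact the remark immediately following this proposition in the paper records that there exist sf-bricks $B$ for which $\Filt(\Gen_\W B)$ is \emph{not} functorially finite --- and such a $B$ can well label a cover relation $\U\coveredtau\V$ with $\V\subsetneq\W$, so your argument would yield a contradiction. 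The point of the ``sf'' definition is precisely that the witness $\mathcal{X}$ may need to be strictly larger than $\{B\}$. What the paper does instead is lift the data from $\V$ back to $\W$: extend the cover relation to a $\tau$-exceptional sequence $(\beta^{-1}_\V(B), M_{k-1},\dots,M_1)$ in $\W$, pass through $\chi^{-1}$ to a TF-admissible ordered $\tau$-rigid module $(N_k,\dots,N_1)$ whose direct sum $N$ is $\tau$-rigid \emph{in $\W$}, replace $N$ by the gen-minimal $N'$ with $\Gen_\W N' = \Gen_\W N$, note that $N_k$ remains a summand of $N'$, and then apply Theorem~\ref{thm:DIJ}(3) to $N'$ to produce the witnessing semibrick $\mathcal{X}(N')\ni B$ whose $\Filt\Gen_\W$ equals $\Gen_\W N'$, which is functorially finite. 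Your proof would need to follow this ``ascend to $\W$ before applying Asai'' strategy rather than trying to compare $\Filt\Gen$ across the two ambient categories.
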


\begin{proof}
    Suppose first that $B$ labels a cover relation $\U \coveredtau \V$ in $\tperp(\W)$. If $\V = \W$, this means $\Filt(\Gen_\W B) = \Gen_\W(\beta^{-1}_\W(B))$ is a functorially finite torsion class of $\W$ and we are done. Otherwise, there exists a $\tau$-exceptional sequence $\sigma \in \tex(\W)$ of the form $\sigma = (\beta^{-1}_\V(B), M_{k-1},\ldots,M_1)$. Let $\chi^{-1} (\sigma) = (N_k,\ldots,N_1) \in \tfo(\W)$ be the corresponding TF-admissible ordered $\tau$-rigid module in $\W$, and denote $N := \bigoplus_{i = 1}^k N_i$. By Theorem~\ref{thm:tf_tau}, it follows that $\U = \J_\W(V)$. Now let $N' \in \tr(\W)$ be the unique gen-minimal $\tau$-rigid module which satisfies $\Gen_\W(N') = \Gen_\W(N)$. Then $N_k$ must be a direct summand of $N'$ by the assumption that $(N_k,\ldots,N_1) \in \tfo(\W)$. It then follows from Theorem~\ref{thm:DIJ}(3) that $B \in \X(N')$ and that $\Filt(\Gen_\W(\X(N'))) = \Gen_\W(N')$ is functorially finite in $\W$.
    
    Now suppose that $B$ is an sf-brick and let $\mathcal{S} \in \sbrick(\W)$ be a semibrick with $B \in \mathcal{S}$ such that $\Filt(\Gen_\W \mathcal{S})$ is functorially finite. By Theorem~\ref{thm:DIJ}(3), there exists $M \in \tr(\W)$ which is gen-minimal and satisfies $\Gen_\W(M) = \Filt(\Gen_\W \mathcal{S})$. Since $M$ is gen-minimal, any ordering of its direct summands gives a TF-admissible ordered $\tau$-rigid module. Thus we can choose some $\rho = (M_k,\ldots,M_1) \in \tfo(\W)$ such that $M = \bigoplus_{i = 1}^k M_i$ and $B = \beta\left(f_{\mathcal{F}_k} M_k\right)$ in the notation of Theorem~\ref{thm:DIJ}(3). By Theorem~\ref{thm:tf_tau}, this means that $B$ is the brick label of the cover relation $\J_\W(M) \covered \J_\W\left(\bigoplus_{i < k} M_i\right)$ in $\tperp(\W)$.
\end{proof}

\begin{remark}
    We note that there may exist sf-bricks $B$ for which the torsion class $\Filt(\Gen_\W B)$ is not functorially finite. See \cite[Example~3.13]{asai} for an example.
\end{remark}



\section{An EL-labeling for the lattice of wide subcategories}\label{sec:el}

In this section, we consider a special type of $\tau$-exceptional sequence, which we call \emph{hom-orthogonal chains}. We show that (up to permutation) at most one hom-orthogonal chain can exist between two $\tau$-perpendicular subcategories $\U \leq_\tau \W$. We then give examples of algebras where hom-orthogonal chains always exist. As a consequence, we conclude the the lattices of wide subcategories of these algebras admit EL-labelings.
We begin by recalling the definition of an edge-lexicographic (EL) labeling from \cite[Definition~3.2.1]{wachs}.

Let $P$ be a poset with smallest element $\hat{0}$ and largest element $\hat{1}$.
We let $\lab: P\to Q$ be an  edge-labeling, where $Q$ is a poset.
Frequently, $Q$ is taken to be the integers with their usual total order.
We will take $Q$ to be the set of bricks of $\Lambda$, which we will totally order (see Definition~\ref{def: rho}).

To each maximal chain $\sigma = (\hat{0}=x_{k+1} \covered x_k \covered x_{k-1}\covered \cdots \covered x_1 \covered x_0= \hat{1})$, we associate a tuple
$(\lab(x_{k}, x_{k-1}]), \lab([x_{k-1} x_{k-2}]), \ldots, \lab([x_1, x_0]))$.
We totally order the set of all such tuples in reflected lexicographic order (reading left-to-right).
Equivalently, this is the usual lexicographic order, reading tuples from right-to-left.
We say a $\sigma$ is \emph{increasing} if $\lab([x_{i+1}, x_{i}]) > \lab([x_{i}, x_{i-1}])$ for each $i\in [1,k]$.

\begin{definition}\label{def:EL-label}
We say that $\lab:P\to Q$ is an edge-lexicographic (EL) labeling for $P$ provided that for each interval $[x,y]$ in $P$ there is a unique increasing maximal chain, and furthermore that this increasing chain is smallest in the reflected lexicographic order.
\end{definition}

We now recall the definitions of $\beta, \mathcal{X}, \chi$, and $\psi$ from Theorems~\ref{thm:DIJ} and~\ref{thm:tf_tau}.

\begin{definition}
    Let $\W \in \tperp(\mods\Lambda)$, and let $\sigma \in \stopc(\tperp\W)$. Consider the corresponding $\tau$-exceptional sequence $\psi^{-1}(\sigma) =:(U_k,\ldots,U_1)$. We say that $\sigma$ is a \emph{hom-orthogonal chain} if $\Hom_\Lambda(\beta(U_j),\beta(U_i)) = 0$ for all $1 \leq i < j \leq k$.
\end{definition}

The following technical lemma is the basis of showing that, up to permutation, there can exist at most one hom-orthogonal chain with a designated target category.

\begin{lemma}\label{lem:unique_hom_ortho_chain}
    Let $\W \in \tperp(\mods\Lambda)$, let $\sigma = (\W_k \covered \cdots \covered \W_0 = \W) \in \stopc(\tperp\W)$, and suppose that $\sigma$ is a hom-orthogonal chain. Let $\psi^{-1}(\sigma)=:(U_k,\ldots,U_1)$ and $(\psi\circ\chi)^{-1}(\sigma)=:(M_k,\ldots,M_1)$ be the $\tau$-exceptional sequence and TF-admissible ordered $\tau$-rigid module corresponding to $\sigma$. Denote $M := \bigoplus_{i = 1}^k M_i$. Then $\mathcal{S} := \{\beta(U_k),\ldots,\beta(U_1)\}$ is the unique semibrick which satisfies $\Filt(\Gen_\W\mathcal{S}) = \Gen_\W M$.
\end{lemma}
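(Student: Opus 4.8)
The plan is to establish three facts in sequence: (i) $\mathcal{S}$ is a semibrick; (ii) $\Filt(\Gen_\W\mathcal{S}) = \Gen_\W M$; and (iii) these two together force the asserted uniqueness. Step (iii) is the quick one: because $M$ is $\tau$-rigid in $\W$, the torsion class $\Gen_\W M$ is functorially finite (Definition-Theorem~\ref{thm:functorially_finite}), so once (i) and (ii) are in hand, Theorem~\ref{thm:DIJ}(4) --- equivalently Remark~\ref{rem:sbrick_unique} --- immediately gives that $\mathcal{S}$ is the \emph{only} semibrick $\mathcal{S}'$ with $\Filt(\Gen_\W\mathcal{S}') = \Gen_\W M$. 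So the work is in (i) and (ii).

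For (i), write $\W = \W_0 \supseteq \W_1 \supseteq \cdots \supseteq \W_k$ for the chain of wide subcategories with $\W_l = \J_{\W_{l-1}}(U_l)$, so that $U_l$ is $\tau$-rigid in $\W_{l-1}$ and $\beta(U_l)$ is the associated brick (computed inside $\W_{l-1}$, but a brick in $\W$ by Remark~\ref{rem:brick}). Each $\beta(U_l)$ is a brick by Theorem~\ref{thm:DIJ}(1), so it suffices to prove $\Hom_\Lambda(\beta(U_i),\beta(U_j)) = 0$ whenever $i \neq j$; taking $i<j$, the vanishing of $\Hom_\Lambda(\beta(U_j),\beta(U_i))$ is exactly the hom-orthogonality hypothesis on $\sigma$, so the real content is the vanishing of $\Hom_\Lambda(\beta(U_i),\beta(U_j))$. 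Here I would argue as follows: from $U_j \in \W_i = \J_{\W_{i-1}}(U_i)$ we get both $\Hom_\Lambda(U_i,U_j) = 0$ and $\Hom_\Lambda(U_j,\tau_{\W_{i-1}} U_i) = 0$; the kernel $K$ of the canonical epimorphism $U_j \twoheadrightarrow \beta(U_j)$ lies in $\Gen_{\W_{i-1}}(U_j)$ by construction, hence $\Hom_\Lambda(K',\tau_{\W_{i-1}} U_i)=0$ for every $K' \in \Gen_{\W_{i-1}}(K)$, and Auslander--Smal\o{} (Proposition~\ref{prop:AStau}, applied with ambient category $\W_{i-1}$ and the $\tau$-rigid module $U_i$) yields $\Ext^1_\Lambda(U_i,K) = 0$. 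Feeding $0 \to K \to U_j \to \beta(U_j) \to 0$ into $\Hom_\Lambda(U_i,-)$ then forces $\Hom_\Lambda(U_i,\beta(U_j)) = 0$, and since $\beta(U_i)$ is a quotient of $U_i$ we conclude $\Hom_\Lambda(\beta(U_i),\beta(U_j)) = 0$. (Pairwise non-isomorphism of the $\beta(U_l)$ is then automatic, since an isomorphism would violate hom-orthogonality.)

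For (ii), one inclusion is easy: $\beta(U_i)$ is a quotient of $U_i = f_{\mathcal{F}_i}(M_i)$, itself a quotient of $M_i$, so $\mathcal{S} \subseteq \Gen_\W M$ and hence $\Filt(\Gen_\W\mathcal{S}) \subseteq \Gen_\W M$ since $\Gen_\W M$ is a torsion class. For the reverse inclusion it is enough to show $M \in \Filt(\Gen_\W\mathcal{S})$; by Theorem~\ref{thm:DIJ}(1) applied inside $\W_{i-1}$ we have $U_i \in \Gen_{\W_{i-1}}(U_i) = \Filt(\Gen_{\W_{i-1}}(\beta(U_i))) \subseteq \Filt(\Gen_\W\mathcal{S})$, so it suffices to prove $M_i \in \Filt(\Gen_\W\{U_1,\dots,U_i\})$ for all $i$. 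I would do this by induction on $i$: the base case is $U_1 = M_1$, and for the step one uses the canonical sequence $0 \to t(M_i) \to M_i \to U_i \to 0$ of $M_i$ for the torsion pair in $\W$ whose torsion-free class is $\mathcal{F}_i = (\bigoplus_{l<i}M_l)^{\perp_\W}$ and whose torsion class is $\Gen_\W(\bigoplus_{l<i}M_l)$ --- this is a torsion pair because $\bigoplus_{l<i}M_l$ is $\tau$-rigid, and $f_{\mathcal{F}_i}(M_i) = U_i$ by definition of $\chi$. The torsion part $t(M_i)$ lies in $\Gen_\W(\bigoplus_{l<i}M_l) \subseteq \Filt(\Gen_\W\{U_1,\dots,U_{i-1}\})$ by the inductive hypothesis, $U_i$ lies in $\Gen_\W\{U_i\}$, and $\Filt(\Gen_\W\{U_1,\dots,U_i\})$ is closed under extensions, so $M_i$ lies in it as well.

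The main obstacle is the direction $\Hom_\Lambda(\beta(U_i),\beta(U_j)) = 0$ in step (i): passing from $\Hom$/$\Ext$ vanishing among the $\tau$-rigid modules $U_l$ to vanishing among their brick quotients $\beta(U_l)$ needs the $\Ext^1$-control above, and throughout one must keep careful track of which ambient wide subcategory each module and each $\tau$-rigidity statement lives in, since $\tau$-rigidity --- unlike the property of being a brick (cf.\ Remark~\ref{rem:brick}) --- is not preserved under shrinking the ambient category. Everything else is routine bookkeeping with torsion pairs, $\Gen$, and $\Filt$.
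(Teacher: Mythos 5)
Your proof is correct, and its overall architecture coincides with the paper's: establish that $\mathcal{S}$ is a semibrick, prove $\Filt(\Gen_\W\mathcal{S}) = \Gen_\W M$ by induction on $i$ using the canonical exact sequences $0 \to t(M_i) \to M_i \to U_i \to 0$, and conclude uniqueness from Asai's theorem via Remark~\ref{rem:sbrick_unique}. Your steps (ii) and (iii) match the paper's; the genuine difference is in step (i). The paper reduces the semibrick check to showing $\Hom(U_i,U_j)=0$ for all $i\neq j$, claiming the case $i>j$ follows from hom-orthogonality. But hom-orthogonality is a condition on the bricks $\beta(U_l)$, not on the $U_l$, and $\Hom(U_i,U_j)$ can in fact be nonzero for $i>j$ (e.g.\ over $K(1\rightarrow 2)$ the chain with $(U_2,U_1)=(S_2,P_1)$ is hom-orthogonal since $\Hom(S_2,S_1)=0$, yet $\Hom_\Lambda(S_2,P_1)\neq 0$); conversely, even the easily established $\Hom(U_i,U_j)=0$ for $i<j$ does not directly yield $\Hom(\beta(U_i),\beta(U_j))=0$, because $\beta(U_j)$ is a quotient rather than a submodule of $U_j$. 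Your derivation --- pushing down to $\Hom(U_i,\beta(U_j))=0$ via $\Ext^1_\Lambda(U_i,K)=0$, which follows from $\Hom(K,\tau_{\W_{i-1}}U_i)=0$ and Proposition~\ref{prop:AStau}, where $K=\ker(U_j\twoheadrightarrow\beta(U_j))\in\Gen_{\W_{i-1}}(U_j)$, then descending along the surjection $U_i\twoheadrightarrow\beta(U_i)$ --- supplies exactly the missing control, and your care over which ambient wide subcategory each module and each $\tau$-rigidity claim lives in is the right concern. Your write-up is thus complete where the published proof of this step is terse and, as literally stated, does not go through.
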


\begin{proof}
    To show that $\mathcal{S}$ is a semibrick, it suffices to show that $\Hom(U_i, U_j) = 0$ for $i \neq j$. If $i < j$, this follows from the fact that $U_j \in \J_{\W_{i-1}}(U_i) \subseteq U_i^\perp$. If $i > j$, this follows from the assumption that $\sigma$ is hom-orthogonal.
    
    It remains to show that $\Filt(\Gen_\W\mathcal{S}) = \Gen_\W\left(\bigoplus_{i = 1}^k M_i\right)$. (The uniqueness of $\mathcal{S}$ then follows from Remark~\ref{rem:sbrick_unique}.) We first note that, by the definition of $\chi$, we have $U_i \in \Gen_\W(M_i)$ for each $i$. Since torsion classes are closed under quotients in $\W$, this implies that $\Filt\Gen_\W(\mathcal{S}) \subseteq \Gen_\W\left(\bigoplus_{i = 1}^k M_i\right)$. It thus suffices to show that $M_i \in \Filt\Gen_\W(\mathcal{S})$ for all $i$. We proceed by induction on $i$. For $i = 1$, we have $M_i = U_i$, so there is nothing to show. For the inductive step, denote $M_{<i}:= \bigoplus_{j = 1}^{i-1} M_i$. we consider the canonical exact sequence
    $$0 \rightarrow \ker(q) \rightarrow M_i \xrightarrow{q} U_i \rightarrow 0$$
    associated to the torsion pair $(\Gen_\W M_{<i} ,(M_{<i})^{\perp_\W})$. Then $\ker(q) \in \Gen_\W M_{<i} \subseteq \Filt(\Gen_\W\mathcal{S})$ by the induction hypothesis. Since torsion classes are closed under extensions, this implies that $M_i \in \Filt(\Gen_\W\mathcal{S})$, as desired.
\end{proof}

\begin{proposition}\label{prop:uniqueness}
    Let $\U \leq_\tau \W$ be $\tau$-perpendicular subcategories, and suppose there exist hom-orthogonal chains
    $\sigma = (\U = \W_k \covered \cdots \covered \W_0 = \W)$ and $\rho = (\U = \V_k \covered \cdots \covered \V_0 = \W)$. Denote $\psi^{-1}(\sigma) =: (U_k,\ldots,U_1)$ and $\psi^{-1}(\rho) =: (V_k,\ldots,V_1)$ the $\tau$-exceptional sequences corresponding to $\sigma$ and $\rho$, respectively. Then there exists a bijection $f: \{1,\ldots,k\} \rightarrow \{1,\ldots,k\}$ such that $\beta(U_i) = \beta(V_{f(i)})$ for all $i$.
\end{proposition}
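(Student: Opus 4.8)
The plan is to show that both hom-orthogonal chains determine one and the same semibrick in $\W$; the bijection $f$ then falls out as the matching of the two orderings.

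\emph{Step 1: reduction to a statement about torsion classes.} Using the bijections of Theorem~\ref{thm:tf_tau}, write $(\psi\circ\chi)^{-1}(\sigma) = (M_k,\ldots,M_1)$ and $(\psi\circ\chi)^{-1}(\rho) = (M'_k,\ldots,M'_1)$ for the corresponding TF-admissible ordered $\tau$-rigid modules, and set $M := \bigoplus_{i=1}^{k} M_i$ and $M' := \bigoplus_{i=1}^{k} M'_i$. By Lemma~\ref{lem:unique_hom_ortho_chain}, $\mathcal{S} := \{\beta(U_1),\ldots,\beta(U_k)\}$ is the unique semibrick in $\W$ with $\Filt(\Gen_\W\mathcal{S}) = \Gen_\W M$, and likewise $\mathcal{S}' := \{\beta(V_1),\ldots,\beta(V_k)\}$ is the unique semibrick with $\Filt(\Gen_\W\mathcal{S}') = \Gen_\W M'$. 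In view of Remark~\ref{rem:sbrick_unique}, it therefore suffices to prove $\Gen_\W M = \Gen_\W M'$: this forces $\mathcal{S} = \mathcal{S}'$, and then, since $\beta$ is injective (Theorem~\ref{thm:DIJ}(2)) and each $\beta(U_i)$, $\beta(V_j)$ is a brick, the desired bijection is the unique $f$ with $\beta(U_i) = \beta(V_{f(i)})$.

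\emph{Step 2: the $\tau$-perpendicular category, and gen-minimality.} By Theorem~\ref{thm:tf_tau}(3) the bottom term of the chain $\psi\circ\chi(M_k,\ldots,M_1)$ is $\J_\W\big(\bigoplus_{i\le k}M_i\big) = \J_\W(M)$; since this chain is $\sigma$, whose bottom is $\U$, we get $\J_\W(M) = \U$, and the same argument gives $\J_\W(M') = \U$. Next, the hom-orthogonality of $\sigma$ forces the bricks $\beta(U_1),\ldots,\beta(U_k)$ to be pairwise distinct: if $\beta(U_i) = \beta(U_j)$ for some $i<j$, then $\Hom_\Lambda(\beta(U_j),\beta(U_i)) = \mathrm{End}_\Lambda(\beta(U_i)) \neq 0$, contradicting the definition of a hom-orthogonal chain. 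Hence $|\mathcal{S}| = k$, so by Theorem~\ref{thm:brick_labeling}(1) the torsion class $\Gen_\W M = \Filt(\Gen_\W\mathcal{S})$ has exactly $k$ canonical joinands, which by Proposition~\ref{prop:functorially_finite} (together with Theorem~\ref{thm:DIJ}(3)) is the number of indecomposable summands of its gen-minimal $\tau$-rigid representative. Since $M$ itself has exactly $k$ indecomposable summands and a non-gen-minimal module has strictly more summands than that representative, $M$ is gen-minimal; likewise $M'$.

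\emph{Step 3: the main obstacle.} It remains to deduce $\Gen_\W M = \Gen_\W M'$ from the facts that $M$ and $M'$ are gen-minimal $\tau$-rigid modules in $\W$ with $\J_\W(M) = \U = \J_\W(M')$. Since a torsion class is recovered from its torsion-free class, it is enough to prove $M^{\perp_\W} = M'^{\perp_\W}$, and for this I would show that for \emph{any} gen-minimal $\tau$-rigid module $N$ in $\W$ the torsion-free class $N^{\perp_\W} = (\Gen_\W N)^{\perp_\W}$ is the smallest torsion-free class of $\W$ containing $\J_\W(N)$. One inclusion is immediate from $\J_\W(N) \subseteq N^{\perp_\W}$. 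The reverse inclusion — that every object of $N^{\perp_\W}$ is filtered by objects cogenerated in $\W$ by $\J_\W(N)$ — is the technical heart; it is exactly here that the gen-minimality of $N$ is used (the statement fails for arbitrary $\tau$-rigid $N$, for instance for the non-gen-minimal module $P_1\oplus S_1$ over $K(1\to 2)$, where $(P_1\oplus S_1)^{\perp}$ is nonzero but $\J(P_1\oplus S_1)=0$), and where Jasso's $\tau$-tilting reduction \cite{jasso} enters, since it realizes $\J_\W(N)$ as a torsion-free class inside $N^{\perp_\W}$. Granting this, $M^{\perp_\W}$ and $M'^{\perp_\W}$ both coincide with the smallest torsion-free class of $\W$ containing $\U$, hence are equal, so $\Gen_\W M = \Gen_\W M'$, whence $\mathcal{S} = \mathcal{S}'$ and the proof is complete.
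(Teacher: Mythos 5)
Your proof follows the same overall strategy as the paper's: reduce, via Lemma~\ref{lem:unique_hom_ortho_chain} and Remark~\ref{rem:sbrick_unique}, to showing that the two TF-admissible modules $M$ and $M'$ determine the same torsion class of $\W$. Step~2 (gen-minimality by counting indecomposable summands against canonical joinands) is correct and a worthwhile explicit supplement. The published proof sidesteps it by taking its $M$ to be the split projectives of $\lperpW{\U}$, which is gen-minimal by Proposition~\ref{prop:functorially_finite}, but invoking Lemma~\ref{lem:unique_hom_ortho_chain} for that $M$ requires knowing $\Gen_\W(\bigoplus M_i) = \lperpW{\U}$ — which is precisely the claim you isolate in Step~3.

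Step~3 correctly identifies the crux, namely that $\Gen_\W N = \lperpW{\J_\W(N)}$ for a gen-minimal $\tau$-rigid $N$ in $\W$, but you stop at ``granting this''. That is a genuine gap, though one the paper shares by silently assuming the same equality. The statement is true, and it can be closed with ingredients the paper develops. One inclusion, $\Gen_\W N \subseteq \lperpW{\J_\W(N)}$, is immediate from $\J_\W(N) \subseteq N^{\perp_\W}$. If the inclusion were strict, then since $\Gen_\W N$ is functorially finite, \cite[Theorem~3.1]{DIJ} (quoted in the proof of Theorem~\ref{thm:regular}) yields a cover $\Gen_\W N \covered \T'$ with $\T' \subseteq \lperpW{\J_\W(N)}$. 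As $\T' \subseteq {\pop}_\W(\Gen_\W N) = \lperpW{(\tau_\W N)}$ by Proposition~\ref{prop:kappa_tau}, the brick label $B$ of this cover is a simple object of $\J_\W(N)$ by Theorem~\ref{thm:asai_pfeifer_1}(2b); on the other hand $B \in \T' \subseteq \lperpW{\J_\W(N)}$ gives $\Hom_\Lambda(B,B)=0$, a contradiction. (Alternatively, injectivity of $N \mapsto \J_\W(N)$ on gen-minimal $\tau$-rigid modules follows from $\overline{\kappa_\W}^d(\Gen_\W N) = \Filt(\Gen_\W\J_\W(N))$ together with the bijectivity of $\overline{\kappa_\W}^d$ from Corollary~\ref{cor:extended_kappa_bij}.) One small redundancy in Step~1: you do not need injectivity of $\beta$ — once $\mathcal S = \mathcal S'$ with all $\beta(U_i)$ and all $\beta(V_j)$ pairwise distinct, the matching $f$ is forced.
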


\begin{proof}
    Let $M$ be the direct sum of the modules which are indecomposable split projective in $\lperpW{U}$. Then by Lemma~\ref{lem:unique_hom_ortho_chain} we have that $\{\beta(U_k),\ldots,\beta(U_1)\} = \X(M) = \{\beta(V_k),\ldots,\beta(V_1)\}$.
\end{proof}

We now establish sufficient criteria for the existence of hom-orthogonal chains.

\begin{definition}\label{def: rho}
    We say a total order $\preceq$ on the set of sf-bricks in $\mods\Lambda$ is \emph{reverse-hom-orthogonal} or \emph{rho} if for all sf-bricks $B \neq C \in \mods\Lambda$ we have that $B \preceq C$ implies $\Hom_\Lambda(C,B) = 0$.
\end{definition}

\begin{remark}\label{rem: rho}
    Let $B$ and $C$ be sf-bricks, and suppose that $\Filt(\Gen B) \subseteq \Filt(\Gen C)$. Then in particular $B \in \Filt(\Gen C)$, and so $\Hom_\Lambda(C,B) \neq 0$. Thus one must have $C \preceq B$ in any rho order. In particular, by identifying bricks with completely join-irreducible objects, any rho order must refine the reverse of the partial order inherited from $\tors(\mods\Lambda)$. See Remark~\ref{rem:brick_lab}(2).
\end{remark}

The main example of algebras admitting an rho order are the so-called \emph{representation-directed} algebras. These algebras are characterized by having no cycles in their Auslander-Reiten quivers. As a consequence of this definition, it turns out that every representation-directed algebra is representation finite (and thus $\tau$-tilting finite), and that every indecomposable module over a representation-directed algebra is a $\tau$-rigid sf-brick. See \cite[Corollary~IX.3.4]{ASS} and
\cite[Proposition~7.1]{TW2}. Some examples of representation-directed algebras are hereditary and tilted algebras of Dynkin type. See \cite[Proposition~IX.6.5]{ASS}. More generally, we have that an algebra $\Lambda$ admits an rho order if and only if the lattice $\tors\Lambda$ is \emph{extremal}. See \cite[Appendix~A]{keller}.

We now show that the existence of an rho order guarantees the existence of hom-orthogonal chains. To be precise, we consider the following definitions. Note that the following notations are well-defined by Proposition~\ref{prop:sf-brick}.

\begin{definition} Let $\preceq$ be an rho order on the set of sf-bricks in $\mods\Lambda$, and let $\V \leq_\tau \W$ be $\tau$-perpendicular subcategories.
    \begin{enumerate}
        \item Let $\sigma = (\V = \W_k \coveredtau \cdots \coveredtau \W_0 = \W)$ and $\rho = (\V = \V_k \coveredtau \cdots \coveredtau \V_0 = \W)$ be chains in $\stopc(\tperp\W)$ which end in $\V$. Denote $\psi^{-1}(\sigma) =: (M_k,\ldots,M_1)$ and $\psi^{-1}(\rho) =: (N_k,\ldots,N_1)$ the $\tau$-exceptional sequences corresponding to $\sigma$ and $\rho$. We write $\sigma \preceq_{lex} \rho$ if $(\beta(M_k),\ldots,\beta(M_1)) \preceq_{lex} (\beta(N_k),\ldots,\beta(N_1))$, where $\preceq_{lex}$ is the (reflected) lexicographic order on $k$-tuples of bricks.
        \item Let $\sigma = (\V = \W_k \coveredtau \cdots \coveredtau \W_0 = \W)$ be a chain in $\stopc(\tperp\W)$ which ends in $\V$. We say $\sigma$ is \emph{increasing} if $\beta(M_j) \preceq \beta(M_i)$ for all $1 \leq i < j \leq k$.
    \end{enumerate}
\end{definition}

\begin{lemma}\label{lem:existence}
    Let $\preceq$ be an rho order on the set of sf-bricks in $\mods\Lambda$, and let $\V \leq_\tau \W$ be wide subcategories. Let $\sigma = (\V = \W_k \coveredtau \cdots \coveredtau \W_0 = \W)$ be smallest with respect to $\preceq_{lex}$ amongst the chains in $\stopc(\tperp\W)$ which end in $\V$. Then
    \begin{enumerate}
        \item $\sigma$ is increasing.
        \item $\sigma$ is a hom-orthogonal chain.
        \item $\sigma$ is the unique increasing chain in $\stopc(\tperp\W)$ which ends in $\V$.
    \end{enumerate}
\end{lemma}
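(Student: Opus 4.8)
The plan is to prove the three claims more or less simultaneously, using the minimality of $\sigma$ with respect to $\preceq_{lex}$ as the central tool and induction on $k = \rk(\W) - \rk(\V)$.

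\textbf{Step 1: Reduce to the top cover relation.} Write $\psi^{-1}(\sigma) = (M_k,\ldots,M_1)$. First I would observe that the chain $\sigma' = (\V = \W_k \coveredtau \cdots \coveredtau \W_1)$ inside $\tperp(\W_1)$ (using Corollary~\ref{cor:iterated_tau_perp}) must itself be smallest with respect to $\preceq_{lex}$ among chains in $\stopc(\tperp\W_1)$ ending in $\V$: any strictly smaller such chain could be prepended with the cover relation $\W_1 \coveredtau \W$, producing a chain in $\stopc(\tperp\W)$ ending in $\V$ whose label tuple agrees with $\sigma$ in the last coordinate $\beta(M_1)$ but is smaller in the earlier coordinates, contradicting minimality of $\sigma$. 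By the inductive hypothesis applied to $\V \leq_\tau \W_1$, the chain $\sigma'$ is increasing, is a hom-orthogonal chain, and is the unique increasing chain in $\stopc(\tperp\W_1)$ ending in $\V$. So it remains to control the interaction between $\beta(M_1)$ and the bricks $\beta(M_2),\ldots,\beta(M_k)$, and to prove uniqueness for the full chain.

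\textbf{Step 2: The top label is $\preceq$-minimal, giving (1).} The key claim is that $\beta(M_1)$ is $\preceq$-smallest among $\{\beta(M_1),\ldots,\beta(M_k)\}$; combined with Step 1 this gives that $\sigma$ is increasing. Suppose not, so some $\beta(M_j)$ with $j > 1$ satisfies $\beta(M_j) \prec \beta(M_1)$. I would like to reorder $\sigma$ so that the cover relation labeled $\beta(M_j)$ (or rather, \emph{some} cover relation with an even smaller label) appears at the top, producing a chain in $\stopc(\tperp\W)$ ending in $\V$ whose label tuple is $\preceq_{lex}$-smaller than that of $\sigma$. The cleanest way to do this is to pass to the TF-admissible ordered $\tau$-rigid module $(\psi \circ \chi)^{-1}(\sigma) = (N_k,\ldots,N_1)$ with $N := \bigoplus N_i$; by Theorem~\ref{thm:tf_tau}(3), $\W_i = \J_\W(\bigoplus_{\ell \le i} N_\ell)$, so the chain $\sigma$ is determined by the torsion class $\Gen_\W N$ together with the ordering. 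Let $N'$ be the gen-minimal $\tau$-rigid module with $\Gen_\W N' = \Gen_\W N$; by Theorem~\ref{thm:DIJ}(3), $\X(N') = \{\beta(f_{\mathcal{F}_i} N_i)\}_{i}$ is a semibrick, and one checks (using that each $\beta(M_i) = \beta(f_{\mathcal{F}_i}N_i)$ up to the relation between $\chi$ and $\beta$) that the set of brick labels appearing along $\sigma$ is exactly $\X(N')$. Now reorder the indecomposable summands of $N'$ so that the summand whose $\X$-brick is $\preceq$-minimal comes first; any ordering of a gen-minimal $\tau$-rigid module is TF-admissible, so this produces $\rho \in \tfo(\W)$ with $\psi\circ\chi(\rho) \in \stopc(\tperp\W)$, ending in $\J_\W(N') = \J_\W(N)$. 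But I need the endpoint to be exactly $\V$: since $\sigma$ was an arbitrary minimal chain, I should instead argue directly that all chains in $\stopc(\tperp\W)$ ending in $\V$ have the same multiset of brick labels — which is precisely the content of Proposition~\ref{prop:uniqueness} once we know $\sigma$ is hom-orthogonal, so there is a mild circularity to untangle. The honest route is: prove (2) first for the \emph{bottom} of the chain and bootstrap.

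\textbf{Step 3: Hom-orthogonality (2) and uniqueness (3).} Having established (1), hom-orthogonality is almost immediate: for $i < j$ we have $M_j \in \J_{\W_{i-1}}(M_i) \subseteq M_i^{\perp_\W}$, so $\Hom_\Lambda(\beta(M_i), \beta(M_j)) = 0$ after noting $\beta(M_j)$ is a subquotient-type invariant lying in $\J_{\W_{i-1}}(M_i)$; and for $i > j$, since $\sigma$ is increasing we have $\beta(M_i) \preceq \beta(M_j)$, whence $\Hom_\Lambda(\beta(M_j), \beta(M_i)) = 0$ by the defining property of an rho order. This gives (2). For (3), suppose $\tau = (\V = \V_k \coveredtau \cdots \coveredtau \V_0 = \W)$ is another increasing chain in $\stopc(\tperp\W)$ ending in $\V$; I would show $\tau$ is hom-orthogonal (the same argument as (2), since ``increasing'' is all that was used), then apply Proposition~\ref{prop:uniqueness} to conclude $\{\beta(U'_i)\} = \{\beta(U_i)\}$ as multisets, and finally argue that an increasing chain is determined by its multiset of labels together with the rho order: the top label must be the $\preceq$-minimal element of the multiset, which forces $\V_1$ via Theorem~\ref{thm:tf_tau}(2)–(3) and Theorem~\ref{thm:DIJ}, and then induction on $k$ inside $\tperp(\V_1)$ finishes. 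This also retroactively shows $\sigma = \tau$, i.e. the minimal chain is increasing-unique, closing the loop in Step 2.

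\textbf{Main obstacle.} The delicate point is Step 2: extracting a contradiction from a non-minimal top label requires reordering the underlying TF-admissible $\tau$-rigid module while keeping the \emph{endpoint} fixed at $\V$, not merely at $\J_\W N$. The resolution is to prove that every chain in $\stopc(\tperp\W)$ ending in $\V$ comes from orderings of one fixed gen-minimal $\tau$-rigid module $M_\V$ — namely the split-projective generator of $\lperpW{\U}$-type object attached to $\V$ as in the proof of Proposition~\ref{prop:uniqueness} — \emph{before} invoking hom-orthogonality; this is essentially Theorem~\ref{thm:functorially_finite} combined with Proposition~\ref{prop:functorially_finite} and the fact that $\J_\W(M_\V) = \V$ pins down $\Gen_\W M_\V$. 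Once that identification is in hand, reordering to put the $\preceq$-minimal brick label on top is legitimate and the minimality of $\sigma$ is contradicted, so (1) holds and (2), (3) follow as above.
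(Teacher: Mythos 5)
Your Steps~1 and~3 match the paper's argument essentially verbatim: the subchain $(\V = \W_k \coveredtau \cdots \coveredtau \W_1)$ is $\preceq_{lex}$-minimal in $\stopc(\tperp\W_1)$ and hence increasing by induction; increasing implies hom-orthogonal directly from the rho axiom; and uniqueness then follows from Proposition~\ref{prop:uniqueness} since two increasing hom-orthogonal chains with the same multiset of labels must coincide. The problem is in Step~2, where you correctly identify that the remaining task is to compare $\beta(M_1)$ against $\beta(M_2)$, notice the circularity in the naive reordering argument, and then propose a fix that does not work.

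Specifically, the claimed ``resolution'' --- that every chain in $\stopc(\tperp\W)$ ending in $\V$ arises by reordering the summands of \emph{one} fixed gen-minimal $\tau$-rigid module $M_\V$ --- is false. Distinct gen-minimal $\tau$-rigid modules can have the same $\tau$-perpendicular category. Already for $\Lambda = K(1 \to 2)$, both $P_1 \oplus P_2$ and $P_1 \oplus S_1$ are gen-minimal $\tau$-tilting with $\J(P_1\oplus P_2) = 0 = \J(P_1\oplus S_1)$, but $\Gen(P_1\oplus P_2) = \mods\Lambda$ while $\Gen(P_1 \oplus S_1) = \add(P_1\oplus S_1)$; the corresponding saturated top chains ending at $0$ pass through different wide subcategories $(\add(S_1),\ \add(S_2),\ \add(P_1))$ and have different brick-label \emph{multisets}. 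The identification of label multisets that you want holds only for hom-orthogonal chains (that is the content of Lemma~\ref{lem:unique_hom_ortho_chain} and Proposition~\ref{prop:uniqueness}), so invoking it here to prove~(1) is exactly the circularity you flagged. Theorem~\ref{thm:functorially_finite} and Proposition~\ref{prop:functorially_finite} give uniqueness of the gen-minimal module attached to a fixed \emph{torsion class}, not to a fixed $\tau$-perpendicular subcategory, which is why the pin-down attempt fails.

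The paper avoids the reordering entirely by an indirect comparison through the second entry $N_2$ of the TF-admissible sequence $(N_k,\dots,N_1) = (\psi\circ\chi)^{-1}(\sigma)$. Since $M_1 \oplus N_2$ is $\tau$-rigid in $\W$ with $\J_\W(M_1\oplus N_2) = \W_2$, one has $\V \leq_\tau \W_2 \leq_\tau \J_\W(N_2)$, so by minimality of the top label $\beta(M_1) \preceq \beta(N_2)$; and since $M_2 = f_{\mathcal{F}_2}(N_2)$ is a quotient of $N_2$, one gets $\beta(M_2) \in \Filt(\Gen_\W\beta(N_2))$, hence $\Hom_\Lambda(\beta(N_2),\beta(M_2)) \neq 0$ and the rho axiom forces $\beta(N_2) \preceq \beta(M_2)$. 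Chaining gives $\beta(M_1) \preceq \beta(M_2)$ without ever needing all relevant chains to share an underlying module. If you replace Step~2 with this argument, the rest of your outline goes through unchanged.
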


\begin{proof}
    (1) Denote by $(M_k,\ldots,M_1):= \psi^{-1}(\sigma)$ the $\tau$-exceptional sequence corresponding to $\sigma$. We prove the result by
	induction on $k:= \rk(\W) - \rk(\V)$.
	
	There is nothing to prove if $k = 1$, so suppose $k > 1$. By assumption, $\beta(M_1)$ is minimal amongst those bricks $B$ for which $\mathcal{V} \subseteq \J_\W(\beta_\W^{-1}B)$. 	It follows that $\psi(M_k,\ldots,M_2)$ is minimal amongst the chains in $\stopc(\J(M_1))$ which end at $\mathcal{V}$. This means $\psi(M_k,\ldots,M_2)$ is increasing by the induction hypothesis. It remains only to show that $\beta(M_1) \preceq \beta(M_2)$.
	
	By Theorem~\ref{thm:tf_tau}, there exists $N \in \W$ such that $N\oplus M_1$ is $\tau$-rigid in $\W$ and satisfies $\J_\W(N \oplus M_1) = \J_{\J_\W(M_1)}(M_2)$ and $M_2 \in \Gen_{\J_\W(M_1)}N$. In particular, $\beta(M_2) \in \Filt(\Gen_\W\beta(N))$, and so $\Hom(\beta(N),\beta(M_2)) \neq 0$. It follows that $\beta(N) \preceq \beta(M_2)$. Finally, we have that $\mathcal{V} \leq_\tau \J_\W(N)$, and so $\beta(M_1) \preceq \beta(N)$ by the minimality of $\beta(M_1)$. This concludes the proof.
	
	(2) It is immediate from the definitions that any increasing chain must be hom-orthogonal.
	
	(3) Let $\rho$ be an increasing chain in $\stopc(\tperp\W)$ which ends in $\V$. We will show that $\rho = \sigma$. Let $(N_k,\ldots,N_1) = \psi^{-1}(\rho)$ the the $\tau$-exceptional sequence corresponding to $\rho$. Note that by (1), we have that both $\sigma$ and $\rho$ are hom-orthogonal chains. Thus by Proposition~\ref{prop:uniqueness} there exists a bijection $f: \{1,\ldots,k\} \rightarrow \{1,\ldots,k\}$ such that $\beta(M_i) = \beta(N_{f(i)})$ for all $i$. Since both $\sigma$ and $\rho$ are increasing, this is only possible if $f$ is the identity. We conclude that $\psi^{-1}(\sigma) = \psi^{-1}(\rho)$, which implies the result.
\end{proof}

We now conclude our first main theorem.

\begin{theorem}[Theorem~\ref{thm:intro:mainA}]\label{thm:mainA}
    Suppose the set of sf-bricks in $\mods\Lambda$ admits an rho order $\preceq$. Then the brick-labeling of $\tperp(\mods\Lambda)$ is an EL-labeling with respect to the total order $\preceq$.
\end{theorem}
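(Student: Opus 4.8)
The plan is to verify the definition of an EL-labeling (Definition~\ref{def:EL-label}) one interval at a time, observing that essentially all of the work has already been isolated in Lemma~\ref{lem:existence}. First I would record that the brick-labeling really is an edge-labeling into the totally ordered set of sf-bricks: by Proposition~\ref{prop:sf-brick}, every brick that labels a cover relation of $\tperp(\mods\Lambda)$ is an sf-brick, so $\preceq$ restricts to a total order on the labels that actually occur. Next, every interval of $\tperp(\mods\Lambda)$ has the form $[\V,\W]$ with $\V\leq_\tau\W$, and by Corollary~\ref{cor:iterated_tau_perp} a saturated chain from $\V$ to $\W$ in $\tperp(\mods\Lambda)$ is exactly a chain in $\stopc(\tperp(\W))$ ending in $\V$; by Jasso's rank formula~\eqref{eqn:jasso} each such chain has length $\rk(\W)-\rk(\V)$, and through the bijection $\psi$ of Theorem~\ref{thm:tf_tau} it is determined by its tuple of brick labels, so the maximal chains of $[\V,\W]$ carry the reflected lexicographic order of Definition~\ref{def:EL-label}.

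Granting (as holds whenever $\Lambda$ is representation-directed, where $\tperp(\mods\Lambda)=\wide(\mods\Lambda)$ is finite, and more generally whenever the interval $[\V,\W]$ has only finitely many maximal chains) that a $\preceq_{lex}$-smallest maximal chain $\sigma$ of $[\V,\W]$ exists, I would then simply apply Lemma~\ref{lem:existence} to $[\V,\W]$: part~(1) says this smallest chain $\sigma$ is increasing, and part~(3) says $\sigma$ is the only increasing maximal chain of $[\V,\W]$. Hence $[\V,\W]$ has a unique increasing maximal chain, namely $\sigma$, and $\sigma$ is smallest in the reflected lexicographic order --- which is precisely the content of Definition~\ref{def:EL-label}. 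The last small check is that the notion of ``increasing'' in Lemma~\ref{lem:existence}, namely $\beta(M_j)\preceq\beta(M_i)$ for $i<j$, matches the one in Definition~\ref{def:EL-label} after reading the chain off through $\psi$; this is immediate from Definition~\ref{def:brick_label}, since the brick label of the $j$-th cover relation of $\psi(M_k,\dots,M_1)$ is $\beta(M_j)$.

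The main obstacle, by this stage, has already been dealt with in the preceding section rather than here. It is Lemma~\ref{lem:existence}(1), whose inductive step forces the bottom-most brick label of a $\preceq_{lex}$-minimal chain to sit $\preceq$-below the next one, and this is where the reduction identity $\J_\W(N\oplus M_1)=\J_{\J_\W(M_1)}(M_2)$ from Theorem~\ref{thm:tf_tau} and the defining property of an rho order do the real work; it is also Lemma~\ref{lem:existence}(3), which rests on Proposition~\ref{prop:uniqueness} and hence on the uniqueness statement in Lemma~\ref{lem:unique_hom_ortho_chain}. The only genuinely new ingredient at the level of Theorem~\ref{thm:mainA} is therefore the translation between intervals of $\tperp(\mods\Lambda)$ and saturated top chains of $\tperp(\W)$, together with the minor point that a $\preceq_{lex}$-minimal maximal chain exists in each interval.
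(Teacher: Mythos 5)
Your proposal is correct and takes essentially the same route as the paper: the paper's own proof is a one-line appeal to Lemma~\ref{lem:existence}(1) and (3), and you have simply made explicit the bookkeeping steps that make that appeal legitimate. The observations you add — that Proposition~\ref{prop:sf-brick} guarantees the labels lie in the domain of $\preceq$, that Corollary~\ref{cor:iterated_tau_perp} identifies maximal chains of $[\V,\W]$ with $\stopc(\tperp\W)$-chains ending at $\V$ so that Lemma~\ref{lem:existence} applies interval by interval, and that the rank formula (Equation~\ref{eqn:jasso}) forces all maximal chains of an interval to have the same length so the tuples being compared are of equal size — are all things the paper leaves implicit. You are also right to flag the existence of a $\preceq_{lex}$-minimal chain as a standing hypothesis: Lemma~\ref{lem:existence} opens with ``Let $\sigma$ be smallest...'' and so carries the same unstated requirement, which is automatic in the representation-directed (hence $\tau$-tilting finite, hence finite-lattice) case of the Introduction's Theorem~A but is worth noting in the generality of an abstract rho order. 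The compatibility check you close with — that ``increasing'' in Lemma~\ref{lem:existence} ($\beta(M_j)\preceq\beta(M_i)$ for $i<j$) is the same notion as in Definition~\ref{def:EL-label} once one reads the cover labels off $\psi(M_k,\dots,M_1)$ — is indeed the right thing to check; one has to be a little careful with the conventions in Definition~\ref{def:EL-label} (the indexing runs downward from $\hat{0}=x_{k+1}$ and the tuples are compared in reflected lexicographic order), but your reading is the intended one and is the one under which Lemma~\ref{lem:existence} proves the theorem.
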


\begin{proof}
    The existence and uniqueness of a increasing chain are precisely Lemma~\ref{lem:existence} parts (1) and (3), respectively.
\end{proof}

In particular, Theorem~\ref{thm:mainA} implies that hereditary and tilted algebras of Dynkin type have lattices of wide subcategories which admit EL-labelings.



\section{The flag property for canonical join representations}\label{sec:flag}

Let $L$ be a completely semidistributive lattice. Generalizing the finite case from \cite{emily_canonical}, we recall that the \emph{canonical join complex} of $L$ is the simplicial complex with underlying set $\cji(L)$ such that a subset $A \subseteq \cji(L)$ spans a simplex if and only if $\Join A$ is a canonical join representation. The main result of \cite{emily_canonical} is that if $L$ is finite, then the canonical join complex is flag.

Now let us also recall from \cite{BCZ} that when $L = \tors\Lambda$ for some finite-dimensional algebra $\Lambda$, we can identify $\cji(L)$ with the set of bricks in $\mods\Lambda$. It is then shown in \cite[Theorem~1.8]{BCZ} that a set of bricks $A$ joins canonically if and only if $A$ is a semibrick (see Theorem~\ref{thm:brick_labeling}). This is also a flag condition, in the sense that $A$ is a semibrick if and only if every subset $B \subseteq A$ with $|B| = 2$ is a semibrick. In particular, the canonical join complex of $\tors\Lambda$ is also flag.

The purpose of this section is to generalize these results by proving the flag condition for a large family of (infinite) completely semidistributive lattices. In doing so, we also give a positive answer to \cite[Question~4.13]{enomoto} for this family (see Corollary~\ref{cor:extended_kappa_bij}). The lattices we consider are defined as follows.

\begin{definition}\label{def:kappa_lattice}
    Let $L$ be a completely semidistributive lattice. We say that $L$ is a \emph{well-separated completely semidistributive lattice}, abbreviated \emph{ws-csd-lattice} if for any $x \not\leq y \in L$ there exists $j \in \cji(L)$ such that $j \leq x$ and $y \leq \kappa(j)$.
\end{definition}

\begin{remark}\label{rem:kappa_lattice}\
    We refer readers to \cite[Sections~1 and~3.1]{RST} for a detailed discussion of the well-separated property and other conditions on infinite semidistributive lattices. In particular, we point out the following.
    \begin{enumerate}
        \item Every finite (completely) semidistributive lattice is a ws-csd-lattice.
        \item Both lattices of torsion classes are always ws-csd lattices, see \cite[Section~8.2]{RST}.
        \item Suppose $y \lneq x \in L$. Then the element $j$ coming from Definition~\ref{def:kappa_lattice} must lie in $\jlab[y,x]$ by Proposition~\ref{prop:cover}(1). In particular, this means the interval $[y,x]$ contains a cover relation. Lattices with this property are sometimes called ``weakly atomic'', see \cite[Section~3.1]{RST}.
    \end{enumerate}
\end{remark}

We now prove the main result of this section, which generalizes \cite[Theorem~5.13]{RST} and \cite[Proposition~4.9]{enomoto}. 

\begin{theorem}[Theorem~\ref{thm:intro:mainC}]\label{thm:covers_canonical}
    Let $L$ be a ws-csd-lattice. Let $A \subseteq \cji(L)$ and $x = \Join A$. Then the following are equivalent.
    \begin{enumerate}
        \item $A$ joins canonically; i.e., $A = \CJR(x)$.
        \item $\Join A$ is an irredundant join representation of $x$ which refines every other irredundant join representation of $x$.
        \item For all $i\neq j \in A$, one has $i \leq \kappa(j)$.
        \item $A$ is an antichain and every $j \in A$ labels a cover relation of the form $u \covered x$.
    \end{enumerate}
    Moreover, if (1-4) hold, then $A$ is precisely the set of elements which label cover relations of the form $u \covered x$.
\end{theorem}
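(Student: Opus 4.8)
The plan is to establish $(1)\Rightarrow(2)$, $(2)\Rightarrow(3)$, $(3)\Leftrightarrow(4)$, and $(3)\Rightarrow(1)$, and then to deduce the ``moreover'' clause. The workhorse throughout is the elementary fact that if $j\in\cji(L)$ and $z\lneq j$ then $z\le j_*$: we have $z\vee j_*\le j$, and equality would give $j=\Join\{z,j_*\}$, forcing $j\in\{z,j_*\}$ since $j$ is join-irreducible --- impossible --- so $z\vee j_*$ lies strictly below $j$ and above $j_*$, hence equals $j_*$. Two consequences I use repeatedly: a set $A$ satisfying (3) is automatically an antichain (if $i\le j$ with $i\ne j$ in $A$, then $i\le j\le\kappa(i)$, contradicting $i\not\le\kappa(i)$), and if $\Join A$ is irredundant then $A$ is also an antichain.

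For $(1)\Rightarrow(2)$: a canonical join representation refines every join representation, in particular every irredundant one, and it is itself irredundant (dropping a redundant joinand would yield a representation it fails to refine, since $A$ is an antichain). For $(2)\Rightarrow(3)$, fix $i\ne j\in A$ and set $x_j:=\Join(A\setminus\{j\})$, which is strictly below $x$ by irredundancy; since $i\le x_j$ it suffices to prove $x_j\le\kappa(j)$. Assuming not, one checks using the cji fact that $j_*\not\le x_j$, so $x_j\lneq x_j\vee j_*\le x$. If $x_j\vee j_*\lneq x$, then $(x_j\vee j_*)\wedge j\in[j_*,j]$ and cannot equal $j$ (that would force $x=x_j\vee j\le x_j\vee j_*\lneq x$), so it equals $j_*$, placing $x_j\vee j_*$ below $\kappa(j)=\max\{w:w\wedge j=j_*\}$ --- a contradiction. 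If instead $x_j\vee j_*=x$, then $B:=(A\setminus\{j\})\cup\{j_*\}$ is an irredundant join representation of $x$, since removing $j_*$ leaves $x_j\lneq x$ and removing any $a\in A\setminus\{j\}$ leaves $\Join((A\setminus\{j,a\})\cup\{j_*\})\le\Join(A\setminus\{a\})\lneq x$; but $\Join A$ fails to refine $\Join B$ (no element of $B$ lies above $j$), contradicting (2).

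For $(3)\Rightarrow(4)$: since $\Join(A\setminus\{j\})\le\kappa(j)$ and $x=\Join(A\setminus\{j\})\vee j$, Proposition~\ref{prop:cover}(2) produces a cover relation $x\wedge\kappa(j)\covered x$ labeled by $j$; together with the antichain property this gives (4). For $(4)\Rightarrow(3)$: writing $u_j=x\wedge\kappa(j)$ for the bottom of the cover labeled by $j$ (so $j_*\le u_j$ by Definition-Theorem~\ref{def:j-label}), if some $i\in A$ had $i\not\le u_j$ then $u_j\vee i=x=u_j\vee j$, so $(SD_\join)$ gives $u_j\vee(i\wedge j)=x$; but $i\wedge j\le j_*\le u_j$ forces this join to equal $u_j\lneq x$ --- a contradiction. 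Hence $i\le u_j$, i.e.\ $i\le\kappa(j)$. For $(3)\Rightarrow(1)$: $A$ is an antichain, so it remains to show $\Join A$ refines an arbitrary join representation $x=\Join C$. Fixing $a\in A$, Proposition~\ref{prop:cover}(2) gives a cover $u_a:=x\wedge\kappa(a)\covered x$ labeled by $a$, with $a_*\le u_a$. Since $x=\Join C\not\le u_a$, some $c\in C$ has $c\not\le u_a$, so $c\vee u_a=x=a\vee u_a$, and $(SD_\join)$ gives $u_a\vee(c\wedge a)=x$; were $c\wedge a\lneq a$ we would get $c\wedge a\le a_*\le u_a$, making this join $u_a\lneq x$, so in fact $c\wedge a=a$, i.e.\ $a\le c$.

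Finally, $A\subseteq\{\jlab[u,x]:u\covered x\}$ is exactly statement (4). For the reverse inclusion, let $u\covered x$ have label $k$ and choose $a\in A$ with $a\not\le u$ (possible since $\Join A\not\le u$); then $a\vee u=x=k\vee u$, and $(SD_\join)$ with the cji fact forces $k\le a$. If $k\lneq a$ then $k\le a_*\le u_a$, where $u_a=x\wedge\kappa(a)$ is the cover labeled by $a$; a short $(SD_\join)$ argument shows $u_a\le u$ (otherwise $u_a\vee u=x=u_a\vee a$ yields $u_a\vee(u\wedge a)=u_a\lneq x$), so $k\le u$, contradicting $k\vee u=x$. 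Hence $k=a\in A$. The step I expect to demand the most care is the case $x_j\vee j_*=x$ in $(2)\Rightarrow(3)$: verifying that $(A\setminus\{j\})\cup\{j_*\}$ is genuinely irredundant --- so that hypothesis (2) applies to it --- is precisely where the completely-join-irreducible and antichain structure is indispensable, and it is also the content behind the assertion in Remark~\ref{rem:canonical_join_rep} that the two notions of canonical join representation agree for sets of completely join-irreducibles.
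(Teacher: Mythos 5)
Your proof is correct, and it takes a route that is genuinely different from the paper's in the backward direction. The paper closes the cycle with $(4)\implies(1)$ by invoking the well-separated hypothesis (Definition~\ref{def:kappa_lattice}): given $b\not\leq\kappa(j)$ it produces a completely join-irreducible $i\leq b$ with $\kappa(j)\leq\kappa(i)$ and then argues $i=j$ via Proposition~\ref{prop:cover}(3). You instead prove $(4)\Rightarrow(3)$ and $(3)\Rightarrow(1)$ directly from $SD_\join$, the cover structure $j\covers j_*$, and the fact that the join-irreducible label $j$ of a cover $u\covered v$ satisfies $j_*\leq u$; and you prove the ``moreover'' part by the same method, where the paper cites \cite[Lemma~4.10]{enomoto}. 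Similarly, in $(2)\Rightarrow(3)$ the paper reaches $\Join B=\Join A$ via \cite[Lemma~2.57]{FJN}, whereas you handle both cases ($x_j\join j_*\lneq x$ and $x_j\join j_*=x$) by hand, making the step self-contained. What your route buys, beyond avoiding external citations, is that none of your steps appear to use the well-separated hypothesis at all --- only complete semidistributivity together with Proposition~\ref{prop:cover}, which the paper states for completely semidistributive lattices. If Proposition~\ref{prop:cover}(2) really does hold without well-separation, your argument proves the theorem for all completely semidistributive lattices, a strict strengthening of the statement; this would be worth flagging explicitly. The trade-off is that the paper's proof, by leaning on well-separation and the prior lemmas, is shorter and makes the dependency on that hypothesis visible, which is relevant elsewhere in the paper (e.g.\ Corollary~\ref{cor:extended_kappa_bij}).
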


\begin{proof}
    Note that the moreover part can be found as \cite[Lemma~4.10]{enomoto}.

    $(1 \implies 2)$: Suppose $A$ join canonically and suppose for a contradiction that $\Join A$ is not irredundant. Thus there exists $j \in A$ such that $x = \Join (A\setminus\{j\})$. By definition, $\Join A$ must refine $\Join (A\setminus\{j\})$, so there exists $i \in A \setminus \{j\}$ such that $j \leq i$. This means that $A$ is not an antichain, which contradicts Definition~\ref{def:canonical_join_rep}(2).
    
    $(2 \implies 3)$: This follows using an argument nearly identical to that of \cite[Lemma~4.8]{enomoto}. Indeed, suppose (2) holds and that there exist $i, j \in A$ such that $i \not\leq \kappa(j)$. As in the proof of \cite[Lemma~4.8]{enomoto}, this means that $i \join j = i \join j_*$ by \cite[Lemma~2.57]{FJN}. Denoting $B:= (A \setminus\{j\}) \cup \{j_*\})$, we thus have a join representation $x = \Join B$. 
    We will show that $\Join B$ is irredundant.
    Observe that $\Join (B\setminus\{j_*\}) = \Join A\setminus \{j\} \lneq \Join A$ because $\Join A$ is irredundant.
    Now, consider $\Join (B\setminus \{i\})$ for some $i\neq j_*$.
    Then $\Join (B\setminus \{i\}) \leq \Join (A\setminus \{i\}) \lneq \Join A$ because $\Join A$ is irredundant.
    Therefore $\Join B$ is irredundant. 
    By assumption we have that $\Join A$ refines $\Join B$. Since $j \not\leq j_*$, this means there exists $j \neq i \in A$ such that $j \leq i$, which contradicts that $\Join A$ is irredundant.
    
    $(3 \implies 4)$: Suppose (2) holds. The fact that $A$ is an antichain is immediate from the fact that $i \not \leq \kappa(j)$. It then follows from Proposition~\ref{prop:cover}(2) that every $j \in A$ labels a cover relation of the form $u \covered x$.
    
    $(4 \implies 1)$: The proof is similar to that of \cite[Proposition~3.7]{BCZ}. Let $x = \Join B$ be a join representation. We will show that $\Join A$ refines $\Join B$.
    
    Let $j \in A$, and let $u \covered x$ be labeled by $j$. By Proposition~\ref{prop:cover}(1), this means $\kappa(j) \geq u$. Now there exists $b \in B$ such that $b \not \leq u$ (otherwise we would have $\Join B = u$).
    By Proposition~\ref{prop:cover}(3), we have $u= \kappa(j)\meet x$ and $b\le x$.
    Thus also we have $b \not \leq \kappa(j)$. Definition~\ref{def:kappa_lattice} then says that there exists $i \in \cji(L)$ such that $i \leq b$ and $\kappa(j) \leq \kappa(i)$. We will show that $i = j$, which will imply that $\Join A$ refines $\Join B$. 
    
    First note that $x \not \leq \kappa(i)$ since $i \leq x$. Thus $x \meet \kappa(j) \leq x \meet \kappa(i) \lneq x$. By Proposition~\ref{prop:cover}(3), this means $u = x \meet \kappa(j) = x \meet \kappa(i)$. It follows that $i \not \leq u$, and so $x = u \join i$. Using Proposition~\ref{prop:cover}(3) once again, we conclude that $i = \jlab[u, x] = j$.
\end{proof}

As a consequence, we obtain the flag condition for the canonical join complex.

\begin{corollary}[Theorem~\ref{thm:intro:mainB}]\label{cor:flag}
    Let $L$ be a ws-csd-lattice. Then the canonical join complex of $L$ is a flag simplicial complex.
\end{corollary}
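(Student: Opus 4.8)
The plan is to read the flag property straight off the equivalence $(1)\Leftrightarrow(3)$ in Theorem~\ref{thm:covers_canonical}. Recall that a simplicial complex is \emph{flag} exactly when a finite set of vertices is a face whenever all of its two-element subsets are faces (together with the convention that the empty set and every singleton vertex are faces). The key observation is purely formal: the condition in Theorem~\ref{thm:covers_canonical}(3), namely that $i \leq \kappa(j)$ for all $i \neq j \in A$, holds for a set $A$ if and only if it holds for every pair $\{i,j\} \subseteq A$, and it is vacuously true when $|A| \leq 1$.

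First I would dispose of the degenerate faces. The empty set is a face since $\hat{0} = \Join \emptyset$ is vacuously a canonical join representation, and each singleton $\{j\}$ with $j \in \cji(L)$ is a face since $j = \Join\{j\}$ is a canonical join representation: any join representation $j = \Join B$ has $b \leq j$ for every $b \in B$ and, $j$ being completely join-irreducible, $j \in B$, so $\{j\}$ refines $\Join B$. (If one wishes to be thorough, the same equivalence also shows the complex is downward closed, hence genuinely a simplicial complex: if $\Join A$ is canonical and $B \subseteq A$, then $i \leq \kappa(j)$ for all $i \neq j \in B$, so $\Join B$ is canonical.)

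Then the main step: fix $A \subseteq \cji(L)$. Applying Theorem~\ref{thm:covers_canonical} to each two-element subset, $\{i,j\}$ spans a simplex if and only if $i \leq \kappa(j)$ and $j \leq \kappa(i)$. Hence ``every pair of elements of $A$ spans a simplex'' is equivalent to ``$i \leq \kappa(j)$ for all $i \neq j \in A$'', which, by Theorem~\ref{thm:covers_canonical} applied now to $A$ itself, is equivalent to $\Join A$ being a canonical join representation, i.e.\ to $A$ spanning a simplex. This is precisely the flag property.

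I do not expect any genuine obstacle here: all the substance is contained in Theorem~\ref{thm:covers_canonical}, and the well-separated hypothesis enters only through that theorem. The only points that require a little care are stating the definition of ``flag'' correctly for a possibly infinite-dimensional complex (faces are \emph{finite} subsets of $\cji(L)$) and remembering to check the empty face and the singleton faces so that ``flag simplicial complex'' holds on the nose.
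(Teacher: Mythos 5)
Your proposal is correct and takes essentially the same route as the paper: both read the flag property directly off the pairwise $\kappa$-criterion in Theorem~\ref{thm:covers_canonical} (condition (3) of that theorem, which the paper's proof refers to—slightly confusingly—as ``Condition~2'', evidently following the numbering of the simplified version in the introduction). The extra care you take with the empty face, singletons, and downward-closure is harmless bookkeeping that the paper leaves implicit; the substance of the argument is identical.
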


\begin{proof}
    Let $A \subseteq \cji(L)$. It is clear from Condition~2 in Theorem~\ref{thm:covers_canonical} that $A$ joins canonically if and only if every subset $B \subseteq A$ with $|B| = 2$ joins canonically. This implies the result.
\end{proof}

\begin{remark}
    Suppose $L = \tors\Lambda$ is the lattice of torsion classes of a finite-dimensional algebra. Consider two distinct bricks $B \neq C \in \mods\Lambda$. Then by \cite[Theorem~8.6]{RST}, we have that $\Filt(\Gen B) \leq \kappa(\Filt(\Gen C))$ if and only if $\Hom_\Lambda(B,C) = 0$. That is, Condition~2 in Theorem~\ref{thm:covers_canonical} is precisely the semibrick condition on bricks used in \cite{BCZ}.
\end{remark}

We conclude this section with the following, which shows that the answer to \cite[Question~4.13]{RST} is ``yes'' when $L$ is a ws-csd lattice.

\begin{corollary}\label{cor:extended_kappa_bij}
    Let $L$ be a ws-csd-lattice and let $x \in \cjrep(L)$. Then $\overline{\kappa}(x) \in \cmrep(L)$ and $\CMR(\overline{\kappa}(x)) = \{\kappa(j) \mid j \in \CJR(x)\}$. In particular, the extended kappa-maps $\overline{\kappa}$ and $\overline{\kappa}^d$ give inverse bijections between $\cjrep(L)$ and $\cmrep(L)$.
\end{corollary}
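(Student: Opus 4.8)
The plan is to derive everything from Theorem~\ref{thm:covers_canonical} and its dual, with the asserted bijectivity following by a short computation with the maps $\kappa$ and $\kappa^d$. The first step I would carry out is to verify that the ws-csd property is self-dual, so that the dual of Theorem~\ref{thm:covers_canonical} is legitimately available. Given $y \not\leq x$ in $L$, apply Definition~\ref{def:kappa_lattice} with the two elements interchanged to obtain $j \in \cji(L)$ with $j \leq y$ and $x \leq \kappa(j)$; then $m := \kappa(j) \in \cmi(L) = \cji(L^d)$ satisfies $x \leq m$ and $\kappa^d(m) = j \leq y$. Since $\kappa_{L^d} = \kappa^d$ and $\leq_{L^d}$ reverses $\leq$, this is exactly the defining condition of Definition~\ref{def:kappa_lattice} for $L^d$; the only input here is that $\kappa$ and $\kappa^d$ are mutually inverse bijections $\cji(L) \leftrightarrow \cmi(L)$.

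Next, fix $x \in \cjrep(L)$, put $A := \CJR(x)$ and $M := \{\kappa(j) \mid j \in A\}$, and transport the characterization across the bijection $A \to M$, $j \mapsto \kappa(j)$ (injective since $\kappa$ is). By the implication $(1 \Rightarrow 3)$ of Theorem~\ref{thm:covers_canonical}, $i \leq \kappa(j)$ for all distinct $i, j \in A$; since this is asserted for every ordered pair of distinct elements, it equivalently says $j \leq \kappa(i)$ for all distinct $i, j \in A$. Using $\kappa^d(\kappa(j)) = j$, this is precisely the statement that $\kappa^d(m') \leq m$ for all distinct $m, m' \in M$, which is condition~(3) of the dual of Theorem~\ref{thm:covers_canonical} applied to $M \subseteq \cmi(L) = \cji(L^d)$. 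That dual theorem then yields $\Meet M \in \cmrep(L)$ and $\CMR(\Meet M) = M$. Since $\Meet M = \overline{\kappa}(x)$ by Definition~\ref{def:kappa_bar}, this establishes $\overline{\kappa}(x) \in \cmrep(L)$ and $\CMR(\overline{\kappa}(x)) = \{\kappa(j) \mid j \in \CJR(x)\}$.

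For the bijectivity statement I would simply compose. The previous paragraph shows $\overline{\kappa}(\cjrep(L)) \subseteq \cmrep(L)$, and dually $\overline{\kappa}^d(\cmrep(L)) \subseteq \cjrep(L)$. Then for $x \in \cjrep(L)$,
$$\overline{\kappa}^d(\overline{\kappa}(x)) = \Join\{\kappa^d(m) \mid m \in \CMR(\overline{\kappa}(x))\} = \Join\{\kappa^d(\kappa(j)) \mid j \in \CJR(x)\} = \Join \CJR(x) = x,$$
using $\kappa^d \circ \kappa = \mathrm{id}$, and symmetrically $\overline{\kappa} \circ \overline{\kappa}^d = \mathrm{id}$ on $\cmrep(L)$, using $\kappa \circ \kappa^d = \mathrm{id}$. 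Hence $\overline{\kappa}$ and $\overline{\kappa}^d$ are mutually inverse bijections between $\cjrep(L)$ and $\cmrep(L)$.

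The one point that genuinely needs care is the self-duality of the ws-csd property in the first step — once that is in place, the dual of Theorem~\ref{thm:covers_canonical} can be quoted directly and the remainder is bookkeeping through the bijection $\kappa$. I would also double-check the degenerate case $x = \hat{0}$ (where $A = \emptyset$, $\overline{\kappa}(\hat{0}) = \Meet\emptyset = \hat{1}$, and $\CMR(\hat{1}) = \emptyset$), but this is consistent with the vacuous instances of both theorems.
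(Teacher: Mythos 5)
Your proposal is correct and follows essentially the same route as the paper: apply condition~(3) of Theorem~\ref{thm:covers_canonical} to $\CJR(x)$, transfer it through $\kappa$ to the dual condition for $\{\kappa(j) \mid j \in \CJR(x)\} \subseteq \cmi(L) = \cji(L^d)$, and invoke the dual of Theorem~\ref{thm:covers_canonical} to obtain the canonical meet representation. The paper's proof is terser, relying on "the dual of Condition~2 [sic]" and "follows from duality," whereas you fill in two worthwhile details that the paper leaves implicit: an explicit check that the ws-csd property is self-dual (so that the dual of Theorem~\ref{thm:covers_canonical} is legitimately available), and the explicit computation of $\overline{\kappa}^d \circ \overline{\kappa} = \mathrm{id}$ via $\kappa^d \circ \kappa = \mathrm{id}$; both are correct and tighten the argument without changing its structure.
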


\begin{proof}
    Let $x \in \cjrep(L)$, and denote $A = \{\kappa(j) \mid j \in \CJR(x)\}$. Then for all $\kappa(i) \neq \kappa(j) \in A$, Condition~2 in Theorem~\ref{thm:covers_canonical} tells us that $\kappa(i) \geq \kappa^d(\kappa(j))$. This precisely says that the dual of Condition~2 holds for $\overline{\kappa}(x) = \Meet A$, and so $\Meet A$ is a canonical meet representation. The fact that $\overline{\kappa}$ and $\overline{\kappa}^d$ are inverse bijections then follows from duality.
\end{proof}


\section{The $\kappa$-order and core label orders}\label{sec:kappa_order}

In this section, we recall three additional partial orders which can be placed on the canonically join-representable elements of a ws-csd lattice $L$: the $\kappa$-order, the upper core label order, and the lower core label order (or shard intersection order). We show that the $\kappa$-order is the common refinement of the two core label orders and give necessary and sufficient conditions for these orders to coincide.

\subsection{Labels of intervals}\label{sec:intervals}

Fix a ws-csd lattice $L$. We note that not all of the definitions in this section will require the assumption that $L$ be well-separated, but we keep this assumption on $L$ for readability.

As a starting point, we need the following result, which essentially follows from \cite[Theorem~4.3]{RST}.

\begin{lemma}\label{prop:intervals}
    Let $x \leq y \in L$. Then the interval $[x,y]$ is also a ws-csd-lattice with the join and meet operations inherited from $L$. Moreover:
    \begin{enumerate}
        \item There is a bijection $\jlab[x,y] \rightarrow \cji[x,y]$ given by $j \mapsto x \join j$.
        \item Identifying each $j$ with its image under this bijection, the label of any cover relation in the interval $[x,y]$ is the same whether computed in $L$ or in the sublattice $[x,y]$.
    \end{enumerate}
\end{lemma}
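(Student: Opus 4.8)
The plan is to verify each of the two numbered claims by reducing to the corresponding statements for the original lattice $L$, using that $[x,y]$ is again a ws-csd-lattice (which itself follows from \cite[Theorem~4.3]{RST}: intervals of completely semidistributive lattices are completely semidistributive, and the well-separated property is also inherited, since a witnessing completely join-irreducible $j$ for $a \not\leq b$ in $[x,y]$ can be produced from one in $L$ after translating by $x$).

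First I would set up the key dictionary between completely join-irreducible elements of $[x,y]$ and elements of $\jlab[x,y]$. Given $j' \in \cji([x,y])$, it covers a unique element $j'_*$ within $[x,y]$, so $j'_* \covered j'$ is a cover relation of $L$ lying in $[x,y]$; let $j := \jlab[j'_*, j']$ be its join-irreducible label in $L$. By Definition-Theorem~\ref{def:j-label}, $j$ is the minimum of $\{z \in L : z \join j'_* = j'\}$ and satisfies $j_* \leq j'_*$, hence $x \leq j'_* $ forces... actually $j$ need not lie above $x$, but $x \join j = j'$ because $j \join j'_* = j'$ and $x \leq j'_*$. Conversely, given $j \in \jlab[x,y]$, so $j = \jlab[u,v]$ for some $x \leq u \covered v \leq y$, Proposition~\ref{prop:cover}(3) gives $u \join j = v$ and $u \meet j = j_*$; I would then show $x \join j$ is completely join-irreducible in $[x,y]$ with $(x\join j)_* = (x \join j)\meet ??? $. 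The cleanest route is Proposition~\ref{prop:cover}(2): since $j \leq y \leq \kappa(j)$ would be false, instead I use that $x \leq \kappa(j)$ (because $\kappa(j) \geq u \geq x$ by Proposition~\ref{prop:cover}(1)), so Proposition~\ref{prop:cover}(2) produces a cover relation $(x \join j)\meet\kappa(j) \covered x \join j$ with label $j$; and $x \join j \leq v \leq y$, so $x \join j \in \cji([x,y])$ with lower cover $(x\join j)\meet\kappa(j)$, which one checks is $\geq x$. These two constructions are mutually inverse because the join-irreducible label of a cover relation determines and is determined by the cover relation (Proposition~\ref{prop:cover}(3), clauses (3a), (3c), (3e)), giving the bijection of (1).

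For (2), I would argue that a cover relation $u \covered v$ inside $[x,y]$ has the same join-irreducible label whether computed in $L$ or in $[x,y]$. Computing in $L$, the label $j = \jlab[u,v]$ satisfies $u \join j = v$, $u \meet j = j_*$ by Proposition~\ref{prop:cover}(3). Under the identification from (1), $j$ corresponds to $x \join j \in \cji([x,y])$; I must check that $x \join j$ is the minimum of $\{z \in [x,y] : z \join u = v\}$ and that its lower cover inside $[x,y]$ lies below $u$. Since $u \geq x$, we have $(x \join j) \join u = j \join u = v$, so $x \join j$ is in that set; minimality follows because any $z \in [x,y]$ with $z \join u = v$ satisfies $z \geq j$ (minimality of $j$ in $L$) hence $z \geq x \join j$. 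The lower-cover condition $(x\join j)\meet\kappa(j) \leq u$ follows from $j_* \leq u$ together with the formula for $\kappa(j)$; alternatively, apply Proposition~\ref{prop:cover}(3)(e) inside the sublattice. So the labels agree.

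The main obstacle I anticipate is bookkeeping around the fact that the label $j \in \cji(L)$ of a cover relation in $[x,y]$ need not itself lie in the interval $[x,y]$ — only its ``shifted'' version $x \join j$ does — so one must be careful to track whether statements are about $j$ or $x \join j$, and to confirm that passing between them is compatible with the meet/join operations of the sublattice (which are inherited, so no new subtlety there) and with the $\kappa$-map of the sublattice versus that of $L$. Establishing that $\kappa_{[x,y]}(x \join j) = y \meet \kappa(j)$ (or the analogous formula) is the technical heart; once that is in hand, all the label computations transfer mechanically via Proposition~\ref{prop:cover}. I would also remark that the well-separated hypothesis on $L$ is only used to guarantee $[x,y]$ is again well-separated (hence the phrase in the statement), and could be dropped if one only wanted the bijection of labels rather than the ws-csd conclusion.
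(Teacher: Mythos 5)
The paper does not actually prove this lemma: the sentence preceding it simply asserts that the result ``essentially follows from [RST, Theorem~4.3]'' and no proof environment is given, so there is nothing to literally compare against. Your proposal is therefore an attempt to supply a self-contained argument, and its overall architecture — identify $\cji([x,y])$ with $\jlab[x,y]$ via $j \mapsto x\join j$, and transfer labels using Proposition~\ref{prop:cover} — is the right one. Part (2) and the verification that $\phi\circ\psi = \mathrm{id}$ (where $\psi$ sends $j'\in\cji([x,y])$ to $\jlab_L[j'_*,j']$) are correctly handled.

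There is, however, a genuine gap in the step where you claim ``$x\join j \in \cji([x,y])$ with lower cover $(x\join j)\meet\kappa(j)$.'' Proposition~\ref{prop:cover}(2) gives you \emph{one} cover relation $(x\join j)\meet\kappa(j)\covered x\join j$ lying inside $[x,y]$, but complete join-irreducibility requires that this be the \emph{unique} element of $[x,y]$ covered by $x\join j$ (equivalently, that every $z\in[x,y]$ with $z < x\join j$ satisfies $z\leq\kappa(j)$). Exhibiting a lower cover does not establish this: elements of a complete semidistributive lattice routinely cover several things (the top element is the obvious example), and it is exactly this uniqueness that makes $x\join j$ a valid target of the bijection. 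Without it, $\phi$ is not even known to land in $\cji([x,y])$, so the claim $\psi\circ\phi = \mathrm{id}$ (and hence the surjectivity of $\psi$) is not yet justified. You rightly flag the formula $\kappa_{[x,y]}(x\join j) = y\meet\kappa(j)$ as ``the technical heart,'' but note that this formula already presupposes $x\join j\in\cji([x,y])$, so it cannot be used to establish that fact. This is precisely the content one should be importing from [RST, Theorem~4.3] rather than asserting; alternatively, one can argue directly using $(SD_\vee)$ and the well-separated hypothesis that any two distinct covers $w_1,w_2$ of $x\join j$ in $[x,y]$ would force a contradiction, but some argument is required.
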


As a consequence of Lemma~\ref{prop:intervals}, every atom of the sublattice $[x,y]$ is of the form $x \join j$ for some $j \in \cji(L)$. Thus the following is well-defined.

\begin{definition}\label{def:atom}
    Let $x \leq y \in L$. We say a completely join-irreducible element $j \in \jlab[x,y]$ is an \emph{atom} of the interval $[x,y]$ if $x \join j$ is an atom in the sublattice $[x,y]$. We denote by $\atom[x,y]$ the set of atoms in $[x,y]$. 
    Dually, we say a completely meet-irreducible element $\kappa(j) \in \mlab[x,y]$ is a \emph{coatom} of $[x,y]$ if $y\meet \kappa(j)$ is a coatom in the sublattice $[x,y]$, and we denote by $\coatom[x,y]$ the set of coatoms in $[x,y]$.
\end{definition}

\begin{example}
    Let $L$ be the lattice in Figure~\ref{fig:run_ex}, and consider the interval $[j_4,\hat{1}]$. The elements which are complely join-irreducible in $[j_4,\hat{1}]$ are then $m_1 = j_4 \join j_2$ and $m_2 = j_4 \join j_1$. By identifying $m_1$ with $j_2$ and $m_2$ with $j_1$, it is then possible to verify (1) and (2) in Lemma~\ref{prop:intervals}. Moreover, we have $\atom[j_4,\hat{1}] = \{j_1,j_2\}$.
\end{example}

We now consider special types of intervals in which the atoms and coatoms are in bijection with one another.

\begin{definition}\label{def:nuclear}
    Let $x \leq y \in L$. We say that $[x,y]$ is a \emph{nuclear interval} if
    $$x = \bigwedge \{y \meet \kappa(j) \mid \kappa(j) \in \coatom[x,y]\}$$
    and for all $x \leq z < y$ there exists $\kappa(j) \in \coatom[\hat{0},\hat{1}]$ such that $z \leq \kappa(j)$.
    Dually we say that $[x,y]$ is a \emph{conuclear interval} if  $$x = \Join \{y \join j \mid j \in \atom[x,y]\}$$
    and for all $x < z \leq y$ there exists $j \in \atom[\hat{0},\hat{1}]$ such that $j \leq z$.
\end{definition}

\begin{lemma}\label{lem:nuclear_canonical_join_rep}\
    \begin{enumerate}
        \item Suppose $[\hat{0},\hat{1}]$ is nuclear. Then $\hat{0} \in \cmrep(L)$ and $\CMR(\hat{0}) = \coatom[\hat{0},\hat{1}]$.
        \item Suppose $[\hat{0},\hat{1}]$ is conuclear. Then $\hat{1} \in \cjrep(L)$ and $\CJR(\hat{1}) = \atom[\hat{0},\hat{1}]$.
    \end{enumerate} 
\end{lemma}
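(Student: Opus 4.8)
The two statements are dual, so I will only prove (1); statement (2) follows by applying (1) to the dual lattice $L^d$ (noting that $L$ is a ws-csd-lattice if and only if $L^d$ is, and that $\cmrep$, $\coatom$, nuclear/conuclear all swap under dualization). The plan is to verify that the set $C := \coatom[\hat 0,\hat 1] = \{\kappa(j) \mid j \text{ is an atom of }[\hat 0,\hat 1]\}$ satisfies the hypotheses of (the dual of) Theorem~\ref{thm:covers_canonical}, namely the dual of Condition~(3): $C$ is a set of completely meet-irreducible elements, $\Meet C = \hat 0$, and for all $\kappa(i)\neq\kappa(j)\in C$ one has $\kappa(i)\geq \kappa^d(\kappa(j))$. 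The first condition is immediate since each $\kappa(j)\in\cmi(L)$ by definition of the $\kappa$-map, and the equality $\Meet C = \hat 0$ is precisely the first defining condition of a nuclear interval. So the content is the pairwise condition, after which Theorem~\ref{thm:covers_canonical} gives both that $\hat 0\in\cmrep(L)$ and (via the ``moreover'' clause, dualized) that $\CMR(\hat 0)$ is exactly the set of elements labeling cover relations of the form $\hat 0 \covered v$. Since the cover relations above $\hat 0$ are precisely the atoms of $[\hat 0,\hat 1]$, and the meet-label of such a cover relation $\hat 0 \covered (\hat 0\join j) = j$ is $\kappa(j)$ by Proposition~\ref{prop:cover}(3) and Remark~\ref{rem:m-label}, this identifies $\CMR(\hat 0)$ with $\coatom[\hat 0,\hat 1]$ as claimed.

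For the pairwise condition, fix two distinct coatoms $j$, $j'$ of $[\hat 0,\hat 1]$, so $\hat 0\join j = j$ and $\hat 0\join j' = j'$ are distinct atoms; I must show $\kappa(j)\geq\kappa^d(\kappa(j'))$, equivalently (dualizing Condition~(3)) that $\kappa(j)\not\geq$ fails the "bad" alternative, i.e. $j\not\leq\kappa(j')$ cannot happen — wait, more carefully: the dual of Condition (3) for the pair says $\kappa(i)\ge\kappa^d(\kappa(j))$, and the equivalence $(2\Leftrightarrow 3)$ in Theorem~\ref{thm:covers_canonical} tells me this is the same as requiring that $\Meet\{\kappa(j),\kappa(j')\}$ together with the two meet-labels forms an irredundant meet-representation refining all others. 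The cleanest route is: the two atoms $j, j'$ of $[\hat 0,\hat 1]$ are distinct, hence incomparable, hence $j\not\leq j'$, so $j$ labels the cover relation $\hat 0 \covered j$ and $j \not\leq j'$ means $j'\ne \hat 0\join j$; I then want to invoke Proposition~\ref{prop:cover}(2) applied with $x = j'$ and the join-irreducible $j$ — but this requires $j'\le\kappa(j)$. This is exactly the pairwise statement I need to prove, so I cannot use it circularly. Instead I will argue directly: if $j\le\kappa(j')$ fails, then by Definition~\ref{def:kappa_lattice} (well-separatedness) applied to $j\not\le\kappa(j')$ there is $\ell\in\cji(L)$ with $\ell\le j$ and $\kappa(j')\le\kappa(\ell)$; since $j$ is an atom of $[\hat 0,\hat 1]$ and $\ell\le j$ forces $\hat 0\join\ell\le j$, minimality of atoms gives $\hat 0\join\ell=j$, and then (using Lemma~\ref{prop:intervals} identifying $\ell$ with $\hat0\join\ell$) $\ell=j$ as labels, whence $\kappa(j')\le\kappa(j)$; symmetrically interchanging roles gives $\kappa(j)\le\kappa(j')$, so $\kappa(j)=\kappa(j')$, contradicting that $\kappa$ is injective and $j\ne j'$. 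Therefore $j\le\kappa(j')$, and by symmetry $j'\le\kappa(j)$; since $\kappa(j')$ is completely meet-irreducible with $j'\not\le\kappa(j')$, the relation $j'\le\kappa(j)$ together with the cover $\hat 0\covered j'$ yields via Proposition~\ref{prop:cover}(3) (equivalence $3d$) exactly $\kappa(j)\ge\kappa^d(\kappa(j'))$, which is the dual Condition~(3).

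The final bookkeeping step is to confirm that the dual of Condition~(3) in Theorem~\ref{thm:covers_canonical}, once verified for every pair in $C$, does deliver $\Meet C = \hat0$ as a \emph{canonical} meet representation; this is immediate since $(3\Rightarrow 1)$ in that theorem is precisely the implication "pairwise condition $\Rightarrow$ canonical", applied in the dual lattice, and I have already recorded $\Meet C=\hat0$. I also need the second clause of the definition of nuclear interval (that for all $\hat0\le z<\hat1$ there is a coatom $\kappa(j)$ with $z\le\kappa(j)$) — but in fact for statement (1) this is subsumed: once $C=\CMR(\hat 0)$ is known, Proposition~\ref{prop:covers_canonical_existence} (dualized) shows the second clause holds automatically, so it plays no role in the proof of (1) and is only needed to make the two clauses of the definition mutually consistent. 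The main obstacle is the circularity trap around the pairwise inequality $j\le\kappa(j')$: one must resist the temptation to use Proposition~\ref{prop:cover}(2) (which would presuppose the conclusion) and instead extract it from well-separatedness plus the \emph{atom}-minimality, as sketched above. Everything else is routine translation between the various equivalent formulations in Proposition~\ref{prop:cover} and Theorem~\ref{thm:covers_canonical}, together with dualization.
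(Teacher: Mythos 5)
Your overall architecture --- reduce to the (dual of the) pairwise condition (3) in Theorem~\ref{thm:covers_canonical} and then apply that theorem, reading $\CMR(\hat 0)$ off the ``moreover'' clause --- is sound and parallels the paper's proof, which instead shows that every pair of atoms joins canonically and invokes Corollary~\ref{cor:flag}; the two routes are equivalent. However, your verification of the pairwise condition has a genuine gap. Assuming $j \not\leq \kappa(j')$, you use well-separatedness to derive $\kappa(j') \leq \kappa(j)$, and then assert that ``symmetrically interchanging roles gives $\kappa(j) \leq \kappa(j')$.'' But the symmetric version of your argument has hypothesis $j' \not\leq \kappa(j)$, which you have not established, and which is not a consequence of $j \not\leq \kappa(j')$. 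So the conclusion $\kappa(j) = \kappa(j')$ and the resulting contradiction are unsupported as written.

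The repair is easy, and in fact shows that the well-separatedness detour is entirely unnecessary. For distinct atoms $j \neq j'$ of $[\hat 0,\hat 1]$, the element $j \meet j'$ lies below both; since $j$ covers $\hat 0$ this forces $j \meet j' \in \{\hat 0, j\}$, and similarly $j \meet j' \in \{\hat 0, j'\}$, so $j \neq j'$ gives $j \meet j' = \hat 0 = j'_*$. The formula $\kappa(j') = \max\{y \in L \mid j' \meet y = j'_*\}$ from Equation~(\ref{eqn:kappa}) then yields $j \leq \kappa(j')$ directly, with no reductio and no appeal to well-separatedness. This is essentially what underlies the paper's one-line observation that the join of two distinct atoms is automatically an irredundant, hence canonical, join representation. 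The remaining bookkeeping in your proposal (the first clause of ``nuclear'' giving $\Meet\coatom[\hat 0,\hat 1]=\hat 0$, the observation that the second clause is subsumed once the canonical meet representation is known, and the reduction by duality) is correct.
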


\begin{proof}
    We prove only (2) since (1) is dual. The assumption that $[\hat{0},\hat{1}]$ is conuclear implies that $\hat{1} = \Join \{j \mid j \in \atom[\hat{0},\hat{1}]\}$. Now for $i \neq j \in \atom[\hat{0},\hat{1}]$, it is clear that $i \join j$ is a canonical join representation (because it is an irredundant join of atoms). The result thus follows from Corollary~\ref{cor:flag}.
\end{proof}

\begin{example}\label{ex:nuclear}
    Let $L$ be the lattice in Figure~\ref{fig:run_ex}. Since $|L| < \infty$, every interval in $[x,y] \subseteq L$  satiisfies that (a) if $x < z \leq y$ then there exists $j \in \atom[x,y]$ such that $x \join j \leq z$, and (b) if $x \leq z < y$ then there exists $\kappa(j) \in \coatom[x,y]$ such that $z \leq y \meet \kappa(j)$. On the other hand, we have that $[\hat{0},m_1]$ is both nuclear and conuclear, while the interval $[j_3, \hat{1}]$ is neither.
\end{example}

As a special case of these constructions, we recall the following from \cite{AP}. See also \cite[Theorem~4.12, Proposition~4.13]{DIRRT} and \cite[Theorem~3.12]{jasso}, which explicitly address the case of functorially finite torsion classes (which will ultimately be all we need in Section~\ref{sec:kappa_tau}). For readability, given a wide subcategory $\W \subseteq \mods\Lambda$, we denote by $\brlab_\W$, $\atom_\W$, etc. the relevant constructions in the lattice $\tors(\W)$.

\begin{theorem}\label{thm:asai_pfeifer_1}
    Let $\Lambda$ be a finite dimensional algebra, let $\W \subseteq \mods\Lambda$ a wide subcategory, and let $\U \subseteq \T$ be torsion classes in $\tors(\W)$. Then the following hold.
    \begin{enumerate}
        \item \cite[Theorem~5.2]{AP} The following are equivalent.
            \begin{enumerate}
                \item The interval $[\U,\T]$ is nuclear.
                \item The interval $[\U,\T]$ is conuclear.
                \item $\U^{\perp_\W} \cap \T \in \wide(\W)$.
            \end{enumerate}
        \item \cite[Theorem~4.2]{AP} Suppose the equivalent conditions from (1) hold. Then there is a label-preserving isomorphism of lattices $[\U,\T] \rightarrow \wide(\U^{\perp_\W} \cap \T)$ given by $\T' \mapsto \T' \cap \U^{\perp_\W}.$ That is, for any cover relation $\U \subseteq \U' \covered \T' \subseteq \T$ in $\tors(\W)$, one has that $\brlab_\W[\U',\T'] = \brlab_{\U^{\perp_\W} \cap \T}[\U' \cap \U^{\perp_\W}, \T' \cap \U^{\perp_\W}]$. In particular: \begin{enumerate}
            \item $\brlab_\W[\U,\T] = \brick(\U^{\perp_\W} \cap \T)$.
            \item $\atom_\W[\U,\T] = \{\Filt(\Gen_\W B) \mid B \text{ is simple in }\U^{\perp_\W} \cap \T\}.$
        \end{enumerate}
    \end{enumerate}
\end{theorem}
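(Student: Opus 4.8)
The plan is to deduce both parts of the theorem from the results of Asai and Pfeifer \cite{AP}, after translating the lattice-theoretic conditions of Definition~\ref{def:nuclear} into the module-theoretic language used there; the work to be done is the translation and the verification that brick labelings are respected.

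First I would assemble a dictionary between the sublattice $[\U,\T]\subseteq\tors(\W)$ and the subcategory $\U^{\perp_\W}\cap\T$. By Lemma~\ref{prop:intervals}, $[\U,\T]$ is again a ws-csd-lattice, its completely join-irreducible elements are the $\U\join\Filt(\Gen_\W B)$ with $B\in\brlab_\W[\U,\T]$, and cover-relation labels agree with those computed in $\tors(\W)$. Together with Theorem~\ref{thm:min_extending} --- the bijection $B\mapsto\Filt(\Gen_\W B)$ from $\brick(\W)$ to $\cji(\tors\W)$, and $\kappa_\W(\Filt(\Gen_\W B))=\lperpW{B}$ --- this identifies the atoms of $[\U,\T]$ with the $\Filt(\Gen_\W S)$ for $S$ simple in $\U^{\perp_\W}\cap\T$, and dually the coatoms with the $\lperpW{S}$. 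Under this dictionary the nuclearity equation for $[\U,\T]$ becomes a statement describing $\U^{\perp_\W}\cap\T$ in terms of its simple objects, and conuclearity becomes the dual statement; comparing with \cite[Theorem~5.2]{AP}, each of these is equivalent to $\U^{\perp_\W}\cap\T$ being wide, so $(a)\Leftrightarrow(c)\Leftrightarrow(b)$. The auxiliary weak-atomicity clauses appearing in Definition~\ref{def:nuclear} are automatic in a lattice of torsion classes by Remark~\ref{rem:kappa_lattice}(3).

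For part~(2), the lattice isomorphism $[\U,\T]\to\wide(\U^{\perp_\W}\cap\T)$, $\T'\mapsto\T'\cap\U^{\perp_\W}$, is \cite[Theorem~4.2]{AP} (compare \cite[Theorem~4.12, Proposition~4.13]{DIRRT} and \cite[Theorem~3.12]{jasso} for the functorially finite case), so it remains to show it is label-preserving. Given a cover relation $\U\subseteq\U'\covered\T'\subseteq\T$, I would compute its brick label inside $[\U,\T]$ --- legitimate by Lemma~\ref{prop:intervals}(2) --- and identify it via Theorem~\ref{thm:brick_labeling} with the corresponding minimal co-extending module, which already lies in $\U^{\perp_\W}\cap\T$. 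The isomorphism of \cite[Theorem~4.2]{AP} is induced on objects by $X\mapsto f_{\U^{\perp_\W}}(X)$, which intertwines the formation of canonical exact sequences and hence of minimal (co-)extending modules; it therefore fixes this brick and sends it to the label of the corresponding cover of $\tors(\U^{\perp_\W}\cap\T)$. Thus $\brlab_\W[\U',\T']=\brlab_{\U^{\perp_\W}\cap\T}[\U'\cap\U^{\perp_\W},\T'\cap\U^{\perp_\W}]$; ranging over all cover relations of $[\U,\T]$ gives (2)(a), and restricting to the cover relations immediately above the bottom of $[\U,\T]$ gives (2)(b), using the identification of atoms with simples of the heart from the previous paragraph.

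The step I expect to be most delicate is this last piece of bookkeeping: \emph{a priori} \cite[Theorem~4.2]{AP} furnishes only a lattice isomorphism, and one must check that $f_{\U^{\perp_\W}}$ genuinely commutes with the constructions producing the brick labels (equivalently, with $\jlab$); it is precisely here that the hypothesis $\U^{\perp_\W}\cap\T\in\wide(\W)$ is used in an essential way. A more routine point is confirming that the global clauses in Definition~\ref{def:nuclear} are vacuous in $\tors(\W)$, which follows from weak atomicity but deserves to be recorded.
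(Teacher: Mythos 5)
The paper does not actually supply a proof of this theorem: it is presented as a recollection of Asai--Pfeifer's results \cite[Theorems 4.2 and 5.2]{AP}, restated in the nuclear/conuclear and brick-labeling vocabulary introduced in Sections~\ref{sec:lattice} and~\ref{sec:background}. Your proposal's overall strategy --- supply the translation layer between that vocabulary and \cite{AP} --- is exactly the right way to justify the theorem if one wished to do so in detail, and you correctly identify the genuine subtlety, namely that \cite[Theorem~4.2]{AP} gives a lattice isomorphism but one must separately verify that $\T'\mapsto\T'\cap\U^{\perp_\W}$ intertwines the brick labelings, which reduces to checking that the torsion-free functor $f_{\U^{\perp_\W}}$ is compatible with forming minimal co-extending modules.

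Two translation steps are imprecise as written. First, the claim that ``the auxiliary weak-atomicity clauses are automatic by Remark~\ref{rem:kappa_lattice}(3)'' conflates two different properties: Remark~\ref{rem:kappa_lattice}(3) gives only that every nontrivial interval contains \emph{some} cover relation, whereas the clause in Definition~\ref{def:nuclear} requires that every proper lower element lies \emph{below a coatom of the interval}. The correct ingredient is Proposition~\ref{prop:covers_canonical_existence}, and via it the coatomic clause is equivalent to $\T\in\cjrep(\tors\W)$ (relative to the interval), which is not vacuous in general but must hold or fail consistently with the other nuclear conditions. Second, your ``dictionary'' entry identifying $\atom_\W[\U,\T]$ with $\{\Filt(\Gen_\W S)\mid S\text{ simple in }\U^{\perp_\W}\cap\T\}$ is precisely conclusion (2)(b); presenting it as a consequence of Lemma~\ref{prop:intervals} and Theorem~\ref{thm:min_extending} alone quietly imports the content of \cite{AP} at exactly the step where it is needed. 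Neither of these is a fatal gap --- they are points where the bookkeeping needs to lean more explicitly on \cite{AP} than the wording suggests --- but they should be tightened if the sketch were to be made into a proof.
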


The following two results extend Theorem~\ref{thm:asai_pfeifer_1}(1) to the generality of ws-csd lattices.

\begin{proposition}\label{prop:nuclear}
    The following are equivalent.
    \begin{enumerate}
        \item $[\hat{0},\hat{1}]$ is a nuclear interval.
        \item $[\hat{0},\hat{1}]$ is a conuclear interval.
        \item $\hat{1} \in \cjrep(L)$ and $\overline{\kappa}(\hat{1}) = \hat{0}$.
        \item $\hat{0} \in \cmrep(L)$ and $\overline{\kappa}^{d}(\hat{0}) = \hat{1}$.
    \end{enumerate}
    Moreover, if (1)-(4) all hold, then $\coatom[\hat{0},\hat{1}] = \{\kappa(j) \mid j \in \atom[\hat{0},\hat{1}]\}$.
\end{proposition}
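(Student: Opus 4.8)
The plan is to establish the cycle of implications by proving $(2)\Leftrightarrow(3)$ in detail, noting that $(1)\Leftrightarrow(4)$ is the statement obtained by passing to the dual lattice (canonical joins $\leftrightarrow$ canonical meets, $\kappa\leftrightarrow\kappa^d$, atoms $\leftrightarrow$ coatoms, nuclear $\leftrightarrow$ conuclear), and then bridging the two pairs with the extended kappa maps. For that bridge I would invoke Corollary~\ref{cor:extended_kappa_bij}: since $\overline{\kappa}$ and $\overline{\kappa}^d$ are mutually inverse bijections $\cjrep(L)\to\cmrep(L)$, the hypotheses $\hat{1}\in\cjrep(L)$ and $\overline{\kappa}(\hat{1})=\hat{0}$ force $\hat{0}=\overline{\kappa}(\hat{1})\in\cmrep(L)$ and $\overline{\kappa}^d(\hat{0})=\overline{\kappa}^d(\overline{\kappa}(\hat{1}))=\hat{1}$, which is $(4)$, and $(4)\Rightarrow(3)$ is the dual of this. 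So the real content is $(2)\Leftrightarrow(3)$.

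For $(2)\Rightarrow(3)$: assume $[\hat{0},\hat{1}]$ is conuclear. Lemma~\ref{lem:nuclear_canonical_join_rep}(2) gives $\hat{1}\in\cjrep(L)$ with $\CJR(\hat{1})=\atom[\hat{0},\hat{1}]$, so $\overline{\kappa}(\hat{1})=\Meet\{\kappa(j)\mid j\in\atom[\hat{0},\hat{1}]\}=:z$. If $z>\hat{0}$, the second clause in the definition of conuclear produces an atom $j\in\atom[\hat{0},\hat{1}]$ with $j\le z\le\kappa(j)$; but $j\meet\kappa(j)=j_*<j$, so $j\not\le\kappa(j)$, a contradiction. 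Hence $z=\hat{0}$, which is $(3)$.

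For $(3)\Rightarrow(2)$: write $A=\CJR(\hat{1})$, so $\hat{1}=\Join A$, $A\subseteq\cji(L)$, and $\overline{\kappa}(\hat{1})=\Meet\{\kappa(i)\mid i\in A\}=\hat{0}$. First I would show every $j\in A$ covers $\hat{0}$: by condition (3) of Theorem~\ref{thm:covers_canonical}, $j\le\kappa(i)$ for every $i\in A\setminus\{j\}$, so $j\le w:=\Meet\{\kappa(i)\mid i\in A\setminus\{j\}\}$; combining $j_*\le j\le w$ with $j_*=j\meet\kappa(j)\le\kappa(j)$ gives $j_*\le w\meet\kappa(j)=\overline{\kappa}(\hat{1})=\hat{0}$, so $j_*=\hat{0}$. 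Since $\jlab[\hat{0},\hat{1}]=\cji(L)$ by Proposition~\ref{prop:cover}(1), this shows $A\subseteq\atom[\hat{0},\hat{1}]$, whence $\hat{1}=\Join A\le\Join\atom[\hat{0},\hat{1}]\le\hat{1}$, the first clause of conuclearity. For the second clause, let $\hat{0}<z\le\hat{1}$; from $\Meet\{\kappa(j)\mid j\in A\}=\hat{0}<z$ pick $j\in A$ with $z\not\le\kappa(j)$. Since $j$ is an atom, $\kappa(j)=\max\{y\mid j\meet y=\hat{0}\}$, so $z\not\le\kappa(j)$ forces $j\meet z>\hat{0}$, and as $j$ covers $\hat{0}$ this gives $j\le z$. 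Thus $z$ lies above the atom $j$, completing $(3)\Rightarrow(2)$.

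Finally, under (1)--(4): Lemma~\ref{lem:nuclear_canonical_join_rep} gives $\CJR(\hat{1})=\atom[\hat{0},\hat{1}]$ (from conuclearity) and $\CMR(\hat{0})=\coatom[\hat{0},\hat{1}]$ (from nuclearity), while Corollary~\ref{cor:extended_kappa_bij} gives $\CMR(\overline{\kappa}(\hat{1}))=\{\kappa(j)\mid j\in\CJR(\hat{1})\}$; since $\overline{\kappa}(\hat{1})=\hat{0}$ these combine to $\coatom[\hat{0},\hat{1}]=\{\kappa(j)\mid j\in\atom[\hat{0},\hat{1}]\}$, the ``moreover'' assertion. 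I expect the one genuinely delicate point to be the atom characterization inside $(3)\Rightarrow(2)$ — deducing that the canonical joinands of $\hat{1}$ are actual atoms of $L$, together with the small observation that for an atom $j$ one has $z\not\le\kappa(j)\iff j\le z$, which is exactly what lets the second clause of ``conuclear'' fall out; everything else is formal manipulation of the cited results and the duality argument.
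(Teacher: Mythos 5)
Your proof is correct, and its overall skeleton diverges mildly but genuinely from the paper's. The paper closes the cycle as $(2)\Rightarrow(3)\Rightarrow(1)$, then gets $(1)\Rightarrow(4)\Rightarrow(2)$ by formal duality, and uses Proposition~\ref{prop:covers_canonical_existence} to supply the second clause of nuclearity in the step $(3)\Rightarrow(1)$. You instead go $(3)\Rightarrow(2)$, which forces you to produce the atoms directly rather than relying on the covering property built into $\cjrep(L)$; the paper's route is one step shorter for exactly this reason. What your argument buys is an explicit and rather clean computation that every $j\in\CJR(\hat 1)$ satisfies $j_*=\hat 0$ (using $j_*\le w\meet\kappa(j)=\overline\kappa(\hat 1)$ with $w$ the meet of the other $\kappa(i)$), together with the useful equivalence $z\not\le\kappa(j)\iff j\le z$ for an atom $j$ — a fact the paper never isolates. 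Your treatment of the ``moreover'' claim is also slightly different: you derive $\coatom[\hat 0,\hat 1]=\{\kappa(j)\mid j\in\atom[\hat 0,\hat 1]\}$ in one line by combining Lemma~\ref{lem:nuclear_canonical_join_rep} with Corollary~\ref{cor:extended_kappa_bij} applied at $x=\hat 1$, whereas the paper extracts only the inclusion $\{\kappa(j)\mid j\in\atom\}\subseteq\coatom$ from its $(2)\Rightarrow(3)$ proof and appeals to self-duality of the four conditions to finish; yours is arguably the tidier of the two. All the cited lemmas are used within their hypotheses, and the duality identification of $(1)\Leftrightarrow(4)$ with $(2)\Leftrightarrow(3)$ in $L^d$ is handled correctly.
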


\begin{proof}
    The equivalence $(3 \iff 4)$ is an immediate consequence of Corollary~\ref{cor:extended_kappa_bij}, so we prove only $(2 \implies 3 \implies 1)$ and the moreover part.
    The direction $(1\implies 4 \implies 2)$ is similar by duality.

    $(2\implies 3)$: Suppose that $[\hat{0},\hat{1}]$ is conuclear. Then $\hat{1} \in \cjrep(L)$ and $\CJR(\hat{1}) = \atom[\hat{0},\hat{1}]$ by Lemma~\ref{lem:nuclear_canonical_join_rep}.
    
    We claim that $\overline{\kappa}(\hat{1}) = \Meet \coatom[\hat{0},\hat{1}]$. Indeed, the moreover part of Theorem~\ref{thm:covers_canonical} says that there is a bijection $\sigma$ between the atoms and coatoms of $[\hat{0},\hat{1}]$ such that for each $j \in \atom[\hat{0},\hat{1}]$ the cover relation $[\sigma(j),\hat{1}]$ is labeled by $j$. Proposition~\ref{prop:cover}(1) then implies that $\sigma(j) = \kappa(j)$, which proves the claim.

    Now suppose for a contradiction that $\hat{0} \neq \overline{\kappa}(\hat{1})$. Then since $[\hat{0},\hat{1}]$ is conuclear, there exists $j \in \atom[\hat{0},\hat{1}]$ such that $j \leq \overline{\kappa}(\hat{1})$. On the other hand, we have $\overline{\kappa}(\hat{1}) \leq \kappa(j)$ by the claim. Since we know $j \not \leq \kappa(j)$, we have reached a contradiction.

    $(3 \implies 1)$: 
    Suppose (3) holds. Then Proposition~\ref{prop:covers_canonical_existence} tells us that for every $\hat{0} \leq z < \hat{1}$ there exists $\kappa(j) \in \coatom[\hat{0},\hat{1}]$ such that $z \leq \kappa(j)$. Moreover, as in the proof of $(2 \implies 3)$, Theorem~\ref{thm:covers_canonical} implies that there is a bijection $\sigma: \CJR(\hat{1}) \rightarrow \coatom[\hat{0},\hat{1}]$ such that for all $j \in \CJR(\hat{1})$ the cover relation $\sigma(j) \covered \hat{1}$ is labeled by $j$. Proposition~\ref{prop:cover} then implies that $\sigma(j) = \kappa(j)$. It follows that $\hat{0} = \overline{\kappa}(\hat{1})$ is the meet of the coatoms of $L$.

    Finally, we prove the moreover part. Suppose (1)-(4) all hold and let $j \in \atom[\hat{0},\hat{1}]$. We showed in the proof of $(2\implies 3)$ that $j$ labels a cover relation $\kappa(j) \covered \hat{1}$, which by definition says that $\kappa(j) \in \coatom[\hat{0},\hat{1}]$. The fact that every coatom is of this form then follows from the fact that conditions (1)-(4) taken together are self-dual.
\end{proof}

\begin{corollary}\label{cor:nuclear}
    Let $x \leq y \in L$. Then $[x,y]$ is a nuclear interval if and only if it is a conuclear interval. Moreover, in this case we have $\coatom[x,y] = \{\kappa(j) \mid j \in \atom[x,y]\}$.
\end{corollary}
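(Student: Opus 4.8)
The plan is to deduce this from Proposition~\ref{prop:nuclear} by applying that result not to $L$ itself but to the sublattice $[x,y]$. By Lemma~\ref{prop:intervals}, $[x,y]$ is again a ws-csd-lattice, with join and meet inherited from $L$, and by Lemma~\ref{prop:intervals}(2) together with its dual the $j$- and $m$-labels of cover relations inside $[x,y]$ are unchanged whether computed in $L$ or in $[x,y]$, once we identify each $j \in \jlab[x,y]$ with $x \join j \in \cji([x,y])$ and each $\kappa(j) \in \mlab[x,y]$ with $y \meet \kappa(j) \in \cmi([x,y])$. Under these identifications $\atom[x,y]$ and $\coatom[x,y]$ become exactly the atoms and coatoms of the lattice $[x,y]$ (this is immediate from Definition~\ref{def:atom}), and the $\kappa$-map $\kappa_{[x,y]}$ of the lattice $[x,y]$ satisfies $\kappa_{[x,y]}(x\join j) = y\meet\kappa(j)$ for every $j \in \jlab[x,y]$: the unique cover relation below $x\join j$ in $[x,y]$ has $j$-label $j$ in both $L$ and $[x,y]$, hence $m$-label $\kappa(j)$ in $L$ and $\kappa_{[x,y]}(x\join j)$ in $[x,y]$, and these two correspond under the identification just described.

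Granting this dictionary, the two defining conditions of ``$[x,y]$ is a nuclear interval of $L$'' translate into ``$\hat 0$ is the meet of the coatoms of the lattice $[x,y]$'' and ``every $z \in [x,y)$ lies below some coatom of $[x,y]$'', i.e.\ into ``$[\hat0,\hat1]$ is a nuclear interval of the lattice $[x,y]$'', and the conuclear conditions translate dually. I would then simply invoke Proposition~\ref{prop:nuclear} for the ws-csd-lattice $[x,y]$: it gives that $[\hat0,\hat1]$ is a nuclear interval of $[x,y]$ if and only if it is a conuclear interval of $[x,y]$, which combined with the two translations is precisely the first assertion of the corollary. For the ``moreover'' part, the moreover clause of Proposition~\ref{prop:nuclear} applied in $[x,y]$ gives $\coatom_{[x,y]}[\hat0,\hat1] = \{\kappa_{[x,y]}(a) \mid a \in \atom_{[x,y]}[\hat0,\hat1]\}$; translating back through the dictionary, this reads $\{y\meet\kappa(j) \mid \kappa(j)\in\coatom[x,y]\} = \{y\meet\kappa(j) \mid j\in\atom[x,y]\}$, and since $\kappa(j)\mapsto y\meet\kappa(j)$ is injective on $\mlab[x,y]$ by the dual of Lemma~\ref{prop:intervals}(1) (and both $\coatom[x,y]$ and $\{\kappa(j) : j\in\atom[x,y]\}$ are subsets of $\mlab[x,y]$), we may cancel to obtain $\coatom[x,y] = \{\kappa(j)\mid j\in\atom[x,y]\}$.

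The one step needing genuine care -- and the step I expect to be the main obstacle -- is establishing the ``dictionary'' of the first paragraph: that passing to the sublattice $[x,y]$ commutes with all the operations in sight, i.e.\ that the induced $\kappa$-map of $[x,y]$ is $x\join j \mapsto y\meet\kappa(j)$ and that the atoms and coatoms of the interval in the sense of Definition~\ref{def:atom} are exactly the atoms and coatoms of the sublattice $[x,y]$. Everything after that is a formal unwinding of Definition~\ref{def:nuclear} plus a citation of Proposition~\ref{prop:nuclear}. This bookkeeping is routine given Lemma~\ref{prop:intervals} and its dual, but it has to be carried out carefully because Definition~\ref{def:nuclear} is phrased in terms of the $\kappa$-data of the ambient lattice $L$, whereas Proposition~\ref{prop:nuclear} is a statement purely internal to whichever lattice it is applied in.
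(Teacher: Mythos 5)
Your proof is correct and is exactly the argument the paper gives: apply Proposition~\ref{prop:nuclear} to the sublattice $[x,y]$, which is itself a ws-csd lattice by Lemma~\ref{prop:intervals}. The paper states this in two sentences and leaves the "dictionary" you carefully spell out (identifying atoms/coatoms of the interval with those of the sublattice, and checking $\kappa_{[x,y]}(x\join j) = y\meet\kappa(j)$) as an implicit consequence of Lemma~\ref{prop:intervals} and its dual; your elaboration is a faithful unwinding of the same reduction rather than a different route.
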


\begin{proof}
    By Lemma~\ref{prop:intervals}, we have that the sublattice $[x,y]$ is a ws-csd lattice. The result thus follows by applying Proposition~\ref{prop:nuclear} to this sublattice.
\end{proof}

We are now prepared for the main definitions of this section.

\begin{definition}\label{def:pop}
    Let $x \in \cjrep(L)$.
    \begin{enumerate}
        \item We denote by $\popd(x)$ the meet of the elements which are covered by $x$; that is, $\popd(x) = x \meet \left(\Meet \coatom[\hat{0},x]\right)$.
        \item We denote by $\pop(x)$ the join of the elements which cover $x$; that is, $\pop(x) = x \join \left(\Join \atom[x,\hat{1}]\right)$.
        \item The \emph{lower core} of $x$ is the interval $\Cored(x) := [\popd(x),x]$. The \emph{lower core label set} of $x$ is $\cored(x) := \jlab(\Cored(x))$. 
        \item The \emph{upper core} of $x$ is the interval $\Core(x) := [\overline{\kappa}(x),\pop(\overline{\kappa}(x)]$. The \emph{upper core label set} of $x$ is $\core(x) := \jlab(\Core(x))$.
        \item We denote $W(x) := \core(x) \cap \cored(x)$.
    \end{enumerate}
\end{definition}

The operators ``$\popd$'' and ``$\pop$'' are sometimes called the \emph{pop-stack sorting operators}, and have been of recent interest in the field of dynamical combinatorics. See e.g. \cite[Section~1.2]{DW} and there references therein. The name ``core'' comes from the relation between these operators and the \emph{core-label order}, which we discuss in Section~\ref{sec:kappa_order}

\begin{remark}\label{rem:atoms} Recall that the labels of the upper covers of $\overline{\kappa}(x)$ coincide with those of the lower covers of $x$. Thus one can think of $\Cored(x)$ as the interval ``spanned'' by the lower covers of $x$. With this perspective, $\Core(x)$ is ``spanned'' by the same join-irreducible elements. Said differently, one has $\atom(\Cored(x)) = \atom(\Core(x))$.
\end{remark}

Note in particular that $\popd(\hat{0}) = \hat{0}$, and so $\cored(\hat{0}) = \emptyset$. Moreover, we have that $\Cored(\hat{1}) = L$ (and thus that $\core(\hat{1}) = \cji(L)$) if and only if $L$ is (co)nuclear

\begin{example}\label{ex:pop}
        Let $L$ be the lattice in Figure~\ref{fig:run_ex} and $x = m_1$. Recall that $\overline{\kappa}(x) = j_1$. Then
    $\popd(x) = \hat{0}$ and $\pop(\overline{\kappa}(x)) = \hat{1}$. (Note in particular that $\pop(\popd(x)) = \hat{1} \neq x$.) We then have
    $$\cored(x) = \{j_2,j_3,j_4\} \qquad\qquad \core(x) = W(x) = \{j_2,j_3\}.$$
\end{example}

We now give an explicit example in the lattice of torsion classes. Recall the notation $\alpha(-)$ from Theorem~\ref{thm:IT}.

\begin{example}\label{ex:asai_pfeifer}
    Let $\Lambda$ be a finite dimensional algebra and $\W \subseteq \mods\Lambda$ a wide subcategory. Let $\T \in \cjrep(\tors(\W))$. Then:
    \begin{enumerate}
        \item By \cite[Theorem~6.7]{AP}, ${\Cored}_\W(\T) = [{\popd}_\W(\T), \T]$ is a nuclear interval which satisfies $\T \cap ({\popd}_\W(\T))^{\perp_\W} = \alpha(\T)$. In particular, ${\brlab}_\W({\Cored}_\W(\T)) = \brick(\alpha(\T))$ by Theorem~\ref{thm:asai_pfeifer_1}(2a).
        \item By construction and Theorem~\ref{thm:asai_pfeifer_1}(2b), we have $$\atom(\Cored(\T)) = \atom(\Core(\T)) = \{\Filt(\Gen_\W B) \mid B \text{ is simple in }\alpha(\T)\}.$$ Thus Theorem~\ref{thm:asai_pfeifer_1}(2) tells us there are (label-preserving) isomorphisms
        $${\Cored}_\W(\T) \cong \tors(\alpha(\T)) \cong {\Core}_\W(\T).$$
        In particular, this means $W_\W(\T) = \{\Filt(\Gen_\W B) \mid B \in \brick(\alpha(\T))\}$. This is our justification for using the symbol $W(x)$ in Definition~\ref{def:pop}(5).
    \end{enumerate}
\end{example}

If $L$ is finite, the following is a straightforward consequence of the definitions. More generally, the difficulty lies in showing that the intervals in question are (co)atom-based.

\begin{proposition}\label{prop:core_atomic}
    Let $x \in \cjrep(L)$. Then both $\Cored(x)$ and $\Core(x)$ are (co)nuclear.
\end{proposition}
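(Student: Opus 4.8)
The plan is to reduce to Proposition~\ref{prop:nuclear} (and its dual) applied to the two intervals, regarded via Lemma~\ref{prop:intervals} as ws-csd lattices in their own right. By Corollary~\ref{cor:nuclear} it suffices to show that $\Cored(x)$ is nuclear and that $\Core(x)$ is conuclear, and by Lemma~\ref{prop:intervals}(2) nuclearity of these sublattices coincides with nuclearity of the corresponding intervals of $L$. The two cases will be dual, so I would first prove the claim: \emph{if $y\in\cjrep(L)$, then $[\popd(y),y]$ is nuclear}, and then obtain the $\Core$ statement by dualizing and specializing to $y=\overline{\kappa}(x)$.

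For the claim, set $w:=\popd(y)=\Meet\{z: z\covered y\}$. The crucial point is that every lower cover of $y$ in $L$ already lies above $w$, so the cover relations of the form $z\covered y$ agree in $L$ and in $[w,y]$. By Proposition~\ref{prop:nuclear}(3) it then suffices to check that (a) $y\in\cjrep([w,y])$ and (b) $\overline{\kappa}_{[w,y]}(y)=w$. For (a), given $w\le z<y$, the hypothesis $y\in\cjrep(L)$ and Proposition~\ref{prop:covers_canonical_existence} yield $z'$ with $z\le z'\covered y$ in $L$; since $z'\ge w$ this is a cover relation in $[w,y]$, so (a) follows from Proposition~\ref{prop:covers_canonical_existence} applied inside $[w,y]$. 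For (b), Theorem~\ref{thm:covers_canonical} identifies $\CJR_{[w,y]}(y)$ with the set of labels $\jlab_{[w,y]}[z,y]$ of the cover relations $z\covered y$; Proposition~\ref{prop:cover}(3), applied in $[w,y]$, identifies $\kappa_{[w,y]}$ of each such label with $\mlab_{[w,y]}[z,y]$ and gives $y\meet\mlab_{[w,y]}[z,y]=z$; since $\meet$ distributes over arbitrary meets and each $\mlab_{[w,y]}[z,y]\le y$, Definition~\ref{def:kappa_bar} gives $\overline{\kappa}_{[w,y]}(y)=\Meet_z\mlab_{[w,y]}[z,y]=\Meet_z z=w$.

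Taking $y=x$ gives that $\Cored(x)$ is nuclear, hence also conuclear by Corollary~\ref{cor:nuclear}. For $\Core(x)$ I would use that the well-separated property is self-dual (immediate from Definition~\ref{def:kappa_lattice} together with the fact that $\kappa$ and $\kappa^d$ are inverse bijections), so the claim holds verbatim in $L^d$; unwinding duals, this says that for $y\in\cmrep(L)$ the interval $\big[y,\Join\{u: y\covered u\}\big]$ is conuclear. Now apply this with $y=\overline{\kappa}(x)$, which lies in $\cmrep(L)$ by Corollary~\ref{cor:extended_kappa_bij}. Since every upper cover of $\overline{\kappa}(x)$ has the form $\overline{\kappa}(x)\join j$ with $j\in\atom[\overline{\kappa}(x),\hat{1}]$, we get $\Join\{u:\overline{\kappa}(x)\covered u\}=\overline{\kappa}(x)\join\Join\atom[\overline{\kappa}(x),\hat{1}]=\pop(\overline{\kappa}(x))$, so this interval is exactly $\Core(x)$. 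Hence $\Core(x)$ is conuclear, and nuclear by Corollary~\ref{cor:nuclear}.

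The main obstacle is handling the infinite case: the ``(co)atomic'' half of (co)nuclearity is automatic when $L$ is finite but in general has to be extracted from $x\in\cjrep(L)$ (resp.\ $\overline{\kappa}(x)\in\cmrep(L)$) through Proposition~\ref{prop:covers_canonical_existence}, and one must check that the cover relations witnessing this never leave the interval --- which is exactly why the endpoint is $\popd(x)$ (resp.\ $\pop(\overline{\kappa}(x))$) and not something smaller (resp.\ larger). The rest is bookkeeping: reconciling $\kappa_{[w,y]}$, $\jlab_{[w,y]}$, and $\mlab_{[w,y]}$ with their ambient counterparts, which Lemma~\ref{prop:intervals} and Proposition~\ref{prop:cover}(3) make routine.
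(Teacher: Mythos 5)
Your proof is correct and follows essentially the same strategy as the paper's: work inside the sublattice $\Cored(x)$ (a ws-csd lattice by Lemma~\ref{prop:intervals}), exploit the fact that every lower cover of $x$ already lies above $\popd(x)$ so that covers of $x$ agree in $L$ and in $\Cored(x)$, invoke Proposition~\ref{prop:covers_canonical_existence}, and then dualize via Corollary~\ref{cor:extended_kappa_bij} to get the statement for $\Core(x)$. The one organizational difference is your entry point: you verify condition (3) of Proposition~\ref{prop:nuclear} directly (i.e., $y\in\cjrep([w,y])$ and $\overline{\kappa}_{[w,y]}(y)=w$), getting $\cjrep$-membership from the equivalence in Proposition~\ref{prop:covers_canonical_existence} applied in both $L$ and $[w,y]$, and computing $\overline{\kappa}_{[w,y]}(y)$ via the meet-irreducible labels. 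The paper instead constructs the canonical join representation $A'=\{\popd(x)\join j : j\in\CJR(x)\}$ of $x$ inside $\Cored(x)$ explicitly, derives $\cjrep$-membership from that, and then verifies Definition~\ref{def:nuclear} directly. Your route is marginally leaner (it sidesteps having to check that $A'$ is an antichain refining every join representation), but the underlying ideas are the same. One small simplification you could make in step (b): since $y$ is the top of $[w,y]$, $\mlab_{[w,y]}[z,y]=z$ outright, making the appeal to distribution of $\meet$ over meets unnecessary.
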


\begin{proof}
    We prove that $\Cored(x)$ is nuclear, as the result for $\Core(x)$ then follows from Corollary~\ref{cor:extended_kappa_bij} and duality. Let $A = \CJR(x)$ be the canonical join representation of $x$ in the lattice $L$, and let $A' = \{\popd(x) \join j \mid j \in A\}$. We claim that $A'$ is the canonical join representation of $x$ in the sublattice $\Cored(x)$. Indeed, we already know that every $\popd(x) \join j \in A'$ is completely join-irreducible in $\cored(x)$ by Lemma~\ref{prop:intervals}, and it is clear that $x = \Join A'$. Thus let $x = \Join B$ be a join representation of $x$ in the sublattice $\Cored(x)$. Then $\Join B$ is also a join representation of $x$ in $L$. Thus for every $j \in A$, there exists $b \in B$ such that $j \leq b$. Since also $\popd(x) \leq b$ by assumption, this means $x \join j \leq b$ and so $A'$ refines $B$. This proves the claim.
    
    It follows that for all $z$ with $\popd(x) \leq z < x$ there exists $\kappa(j) \in \coatom[\popd(x),x]$ such that $z \leq \kappa(j)$ by Proposition~\ref{prop:covers_canonical_existence} (since $x\in \cjrep[\cored(x)]$).
    By definition $\popd(w)$ is equal to the meet of the coatoms of $\Cored(x)$.
    Thus, $\Cored(x)$ is nuclear, hence also conuclear by Proposition~\ref{prop:nuclear}.
\end{proof}

The following can be deduced immediately from Proposition~\ref{prop:cover}.

\begin{proposition}\label{prop:core_formulas}
    Let $x \in \cjrep(L)$. Then
    \begin{enumerate}
        \item $\cored(x) = \{j \in \cji(L) \mid j \leq x \text{ and } \kappa(j) \geq \popd(x)\}.$
        \item $\core(x) = \{j \in \cji(L) \mid j \leq \pop(\overline{\kappa}(x)) \text{ and } \kappa(j) \geq \overline {\kappa}(x)\}$.
        \item $W(x) = \{j \in \cji(L) \mid j \leq x \text{ and } \kappa(j) \geq \overline{\kappa}(x)\}$.
    \end{enumerate}
\end{proposition}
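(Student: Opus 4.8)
The plan is to read (1) and (2) directly off Proposition~\ref{prop:cover}(1), and then to obtain (3) by intersecting those two descriptions once I have expressed the endpoints of the two core intervals in terms of $x$ and $\overline{\kappa}(x)$.

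For (1): by Definition~\ref{def:pop}(3) we have $\cored(x) = \jlab(\Cored(x)) = \jlab[\popd(x),x]$, so Proposition~\ref{prop:cover}(1) immediately gives $\cored(x) = \{j \in \cji(L) \mid j \leq x \text{ and } \kappa(j) \geq \popd(x)\}$. For (2): by Definition~\ref{def:pop}(4) we have $\core(x) = \jlab(\Core(x)) = \jlab[\overline{\kappa}(x),\pop(\overline{\kappa}(x))]$, and Proposition~\ref{prop:cover}(1) rewrites this as $\{j \in \cji(L) \mid j \leq \pop(\overline{\kappa}(x)) \text{ and } \kappa(j) \geq \overline{\kappa}(x)\}$.

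For (3), I would first establish the two identities $\popd(x) = x \meet \overline{\kappa}(x)$ and $\pop(\overline{\kappa}(x)) = x \join \overline{\kappa}(x)$. For the first: by the ``moreover'' clause of Theorem~\ref{thm:covers_canonical} the elements covered by $x$ are exactly the sources of the cover relations below $x$, and by Proposition~\ref{prop:cover}(3) the cover relation below $x$ whose $\jlab$ equals a given $j \in \CJR(x)$ has source $x \meet \kappa(j)$; taking the meet over $j \in \CJR(x)$ and using Definition~\ref{def:kappa_bar} yields $\popd(x) = \Meet\{x \meet \kappa(j) \mid j \in \CJR(x)\} = x \meet \overline{\kappa}(x)$ (this also matches the formula in Definition~\ref{def:pop}(1), since $\Meet\coatom[\hat 0,x] = \Meet\{\kappa(j) \mid j \in \CJR(x)\} = \overline{\kappa}(x)$). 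The second identity is the order-dual statement applied at $\overline{\kappa}(x)$, which lies in $\cmrep(L)$ with $\CMR(\overline{\kappa}(x)) = \{\kappa(j) \mid j \in \CJR(x)\}$ by Corollary~\ref{cor:extended_kappa_bij}: the dual computation shows the elements covering $\overline{\kappa}(x)$ are $\{\overline{\kappa}(x) \join j \mid j \in \CJR(x)\}$, hence $\pop(\overline{\kappa}(x)) = \overline{\kappa}(x) \join \Join\CJR(x) = \overline{\kappa}(x) \join x$ (equivalently, $\pop(\overline{\kappa}(x)) = \overline{\kappa}(x) \join \overline{\kappa}^d(\overline{\kappa}(x))$ and $\overline{\kappa}^d\overline{\kappa} = \mathrm{id}$ by Corollary~\ref{cor:extended_kappa_bij}).

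With these in hand, $W(x) = \cored(x) \cap \core(x)$ is the set of $j \in \cji(L)$ satisfying all four conditions $j \leq x$, $\kappa(j) \geq \popd(x)$, $j \leq \pop(\overline{\kappa}(x))$, and $\kappa(j) \geq \overline{\kappa}(x)$. But $j \leq x$ already forces $j \leq x \join \overline{\kappa}(x) = \pop(\overline{\kappa}(x))$, and $\kappa(j) \geq \overline{\kappa}(x)$ already forces $\kappa(j) \geq x \meet \overline{\kappa}(x) = \popd(x)$, so the first and third conditions are redundant and we are left with $W(x) = \{j \in \cji(L) \mid j \leq x \text{ and } \kappa(j) \geq \overline{\kappa}(x)\}$, as claimed. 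I do not expect a genuine obstacle: everything reduces to Proposition~\ref{prop:cover} and Theorem~\ref{thm:covers_canonical}, and the only point that needs a little care is the bookkeeping identifying the elements covered by $x$ (respectively covering $\overline{\kappa}(x)$) with $\{x \meet \kappa(j) \mid j \in \CJR(x)\}$ (respectively $\{\overline{\kappa}(x) \join j \mid j \in \CJR(x)\}$).
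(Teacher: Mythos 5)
Your proof is correct, and it correctly fleshes out what the paper dismisses as ``deduced immediately from Proposition~\ref{prop:cover}.'' Parts (1) and (2) are indeed direct applications of Proposition~\ref{prop:cover}(1) to the intervals $\Cored(x)=[\popd(x),x]$ and $\Core(x)=[\overline{\kappa}(x),\pop(\overline{\kappa}(x))]$. You correctly notice that part (3) is \emph{not} quite immediate from Proposition~\ref{prop:cover} alone: intersecting the two descriptions leaves four conditions, and eliminating the redundant two requires knowing $\popd(x)\leq\overline{\kappa}(x)$ and $x\leq\pop(\overline{\kappa}(x))$. You obtain these from the identities $\popd(x)=x\meet\overline{\kappa}(x)$ and $\pop(\overline{\kappa}(x))=x\join\overline{\kappa}(x)$, which in the paper are only stated as Proposition~\ref{prop:pop_formula} \emph{after} the present statement. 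Your derivation of these identities (via the moreover clause of Theorem~\ref{thm:covers_canonical}, Proposition~\ref{prop:cover}(3), and Corollary~\ref{cor:extended_kappa_bij}) uses only earlier material, so there is no circularity; you are essentially proving the needed piece of Proposition~\ref{prop:pop_formula} inline. In short: the argument is sound, and it fills a genuine (if small) gap that the paper's ``immediately'' glosses over.
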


As a consequence, we obtain the following relationship between $\popd$ and $\overline{\kappa}$. We note that this result has previously been shown for finite semidistributive lattices in \cite[Theorem~9.1]{DW}.

\begin{proposition}\label{prop:pop_formula}
Let $x \in L$. Then the following hold.
\begin{enumerate}
    \item If $x \in \cjrep(L)$, then $\popd(x) = x \meet \overline{\kappa}(x)$ and $\CMR(\overline{\kappa}(x)) = \coatom(\Cored(x))$. Moreover, if in addition $\popd(x) \in \cmrep(L)$, then $\CMR(\popd(x)) \supseteq \CMR(\overline{\kappa}(x))$, with equality if and only if $\popd(x) = \overline{\kappa}(x)$.
    \item If $x \in \cmrep(L)$, then $\pop(x) = x \join \overline{\kappa}^{d}(x)$ and $\CJR(\overline{\kappa}^d(x)) = \atom(\Core(x))$. Moreover, if in addition $\pop(x) \in \cjrep(L)$, then $\CJR(\pop(x)) \supseteq \CJR(\overline{\kappa}^d(x))$, with equality if and only if $\pop(x) = \overline{\kappa}^d(x)$.
\end{enumerate}
\end{proposition}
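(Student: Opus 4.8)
The plan is to prove (1) directly and to obtain (2) by applying (1) to the dual lattice $L^d$, under the dictionary $\cjrep(L^d) = \cmrep(L)$, $\overline{\kappa}^{L^d} = \overline{\kappa}^d$, $\popd^{L^d} = \pop$, and (for $x \in \cmrep(L)$) $\Cored^{L^d}(x) = \Core(\overline{\kappa}^d(x))$, the last because $\overline{\kappa}$ and $\overline{\kappa}^d$ are inverse bijections by Corollary~\ref{cor:extended_kappa_bij}. So from now on fix $x \in \cjrep(L)$.

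For the two equalities in (1): by the moreover part of Theorem~\ref{thm:covers_canonical}, $\CJR(x)$ is exactly the set of labels $\jlab[u,x]$ of cover relations $u \covered x$, and by Proposition~\ref{prop:cover}(3) the cover relation with label $j$ has lower element $u = x \meet \kappa(j)$; hence the set of elements covered by $x$ is $\{x \meet \kappa(j) : j \in \CJR(x)\}$. Since an arbitrary meet commutes with meeting by the fixed element $x$, Definition~\ref{def:kappa_bar} gives $\popd(x) = \Meet_{j \in \CJR(x)}(x \meet \kappa(j)) = x \meet \Meet_{j \in \CJR(x)}\kappa(j) = x \meet \overline{\kappa}(x)$. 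For the second equality, Corollary~\ref{cor:extended_kappa_bij} gives $\CMR(\overline{\kappa}(x)) = \{\kappa(j) : j \in \CJR(x)\}$, while $\Cored(x) = [\popd(x),x]$ is nuclear by Proposition~\ref{prop:core_atomic}, with atom set $\atom(\Cored(x)) = \CJR(x)$ (read off from the proof of Proposition~\ref{prop:core_atomic} via Lemma~\ref{lem:nuclear_canonical_join_rep}), so Corollary~\ref{cor:nuclear} yields $\coatom(\Cored(x)) = \{\kappa(j) : j \in \atom(\Cored(x))\} = \CMR(\overline{\kappa}(x))$.

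For the ``moreover'' part, assume in addition $\popd(x) \in \cmrep(L)$; note $\overline{\kappa}(x) \in \cmrep(L)$ as well (Corollary~\ref{cor:extended_kappa_bij}) and $\popd(x) = x \meet \overline{\kappa}(x) \le \overline{\kappa}(x)$. By the dual of the moreover part of Theorem~\ref{thm:covers_canonical}, $\CMR(\popd(x))$ is the set of $\mlab$-labels of cover relations $\popd(x) \covered v$, so it suffices to show that each $\kappa(j)$ with $j \in \CJR(x)$ arises in this way. Since $j \in \CJR(x) = \atom(\Cored(x))$, the element $\popd(x) \join j$ is an atom of the sublattice $\Cored(x)$, hence $\popd(x) \covered \popd(x)\join j$ is a cover relation in $L$; this cover has $\jlab$ equal to $j$ (labels of cover relations in an interval agree with those in $L$, Lemma~\ref{prop:intervals}; and $j \in \cored(x) = \jlab(\Cored(x))$ by Proposition~\ref{prop:core_formulas}), so $\mlab[\popd(x),\popd(x)\join j] = \kappa(j)$ by Proposition~\ref{prop:cover}(3), giving $\kappa(j) \in \CMR(\popd(x))$. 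Finally, $\popd(x) = \overline{\kappa}(x)$ trivially forces equality of the two canonical meet representations; conversely, if $\CMR(\popd(x)) = \CMR(\overline{\kappa}(x))$ then, since both are canonical meet representations, $\popd(x) = \Meet\CMR(\popd(x)) = \Meet\CMR(\overline{\kappa}(x)) = \overline{\kappa}(x)$.

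The only genuinely delicate point---which I expect to be the main obstacle---is the bookkeeping relating $L$ to the interval sublattice $\Cored(x)$: identifying $\atom(\Cored(x))$ with the subset $\CJR(x)$ of $\cji(L)$, and verifying that a cover relation of $L$ contained in $\Cored(x)$ keeps its $\jlab$ (hence its $\mlab$) when read inside $\Cored(x)$. These facts are already assembled in Lemma~\ref{prop:intervals}, Lemma~\ref{lem:nuclear_canonical_join_rep}, Proposition~\ref{prop:core_atomic}, Proposition~\ref{prop:core_formulas}, and Corollary~\ref{cor:nuclear}, so the remaining work is careful assembly rather than new ideas; in particular no infiniteness issue intervenes, since every meet and join used here exists in the complete lattice $L$.
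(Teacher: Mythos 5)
Your proof is correct and follows essentially the same route as the paper: both arguments pass to the (co)nuclear interval $\Cored(x) = [\popd(x),x]$ (Proposition~\ref{prop:core_atomic}), identify the coatoms of that interval with $\{\kappa(j) \mid j \in \CJR(x)\}$ via Theorem~\ref{thm:covers_canonical}, Lemma~\ref{lem:nuclear_canonical_join_rep}, and Corollary~\ref{cor:nuclear}, and then deduce $\popd(x) = x \meet \overline{\kappa}(x)$ using the complete-lattice identity $\Meet_j(x \meet \kappa(j)) = x \meet \Meet_j \kappa(j)$. The only cosmetic difference is that you obtain the lower covers of $x$ directly from Proposition~\ref{prop:cover}(3), while the paper first packages them as $\coatom(\Cored(x))$; and you spell out the easy ``equality if and only if'' clause of the moreover part, which the paper leaves implicit.
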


\begin{proof}
    We prove only (1) as the proof of (2) is dual. Consider the interval $\Cored(x) = [\popd(x),x]$, which is (co)nuclear by Proposition~\ref{prop:core_atomic}.
    Hence, by Lemma~\ref{lem:nuclear_canonical_join_rep}, in the sublattice $\Cored(x)$, the canonical meet representation  of $\popd(x)$ is given by the coatoms of $\Cored(x)$.
    Then by the moreover part of Theorem~\ref{thm:covers_canonical}, we have \begin{equation}\label{eqn:coatom}\coatom(\Cored(x)) = \{\kappa(j) \mid j \in \CJR(x)\}.\end{equation}
    Thus $\overline{\kappa}(x)\meet x =  \Meet \{x \meet \kappa(j) \mid \kappa(j) \in \coatom(\Cored(x))\}$, which is equal to $\popd(x)$ by definition. 
    Moreover, Corollary~\ref{cor:extended_kappa_bij} and Equation~\ref{eqn:coatom} imply that $\CMR(\overline{\kappa}(x)) = \coatom(\Cored(x))$.

    It remains to prove the moreover part.
    Suppose $\popd(x) \in \cmrep(L)$ and let $\kappa(j) \in \CMR(\overline{\kappa}(x))$. Then $j \in \atom(\Cored(x))$ by Equation~\ref{eqn:coatom} and the moreover part of Corollary~\ref{cor:nuclear}. By Definition, this means $j$ labels a cover relation of the form $\popd(x) \covered u$, and so $\kappa(j) \in \CMR(\popd(x))$ by the dual of Theorem~\ref{thm:covers_canonical}.
\end{proof}

We conclude this section with two results which will be useful in relating the $\kappa$-order with the core label orders.

\begin{proposition}\label{prop:K_sets_join}
    Let $x \in L$. Then the following hold.\begin{enumerate}
        \item If $x \in \cjrep(L)$, then $\CJR(x) \subseteq W(x)$ and $\Join W(x) = x$.
        \item If $x \in \cmrep(L)$, then
        $\{j \mid \kappa(j) \in \CMR(x)\} \subseteq \W(\overline{\kappa}^d(x))$ and $\Meet \{\kappa(j) \mid j \in \W(\overline{\kappa}^d(x))\} = x$.
        \item If $x \in \cjrep(L)$, then $\Meet \{\kappa(j) \mid j \in W(x)\} = \overline{\kappa}(x)$.
\end{enumerate}
\end{proposition}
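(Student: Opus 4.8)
The plan is to establish (1) directly from the explicit description of $W(x)$ in Proposition~\ref{prop:core_formulas}(3) together with the definition of $\overline{\kappa}$, then to obtain (3) as a short corollary of (1), and finally to deduce (2) from (1) and (3) by transporting along the inverse bijection of Corollary~\ref{cor:extended_kappa_bij}. The only delicate point throughout is bookkeeping the domains---checking each time that the element fed to $\overline{\kappa}$, $\overline{\kappa}^d$, $\CJR$, $\CMR$, or Proposition~\ref{prop:core_formulas} is genuinely canonically (co)representable---which is guaranteed by the stated hypotheses and by $L$ being well-separated; once this is in place the argument is purely order-theoretic.

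For (1), suppose $x \in \cjrep(L)$. By Lemma~\ref{lem:canonical_join_rep} every $j \in \CJR(x)$ lies in $\cji(L)$; moreover $j \leq \Join \CJR(x) = x$, and $\overline{\kappa}(x) = \Meet\{\kappa(j') \mid j' \in \CJR(x)\} \leq \kappa(j)$ by Definition~\ref{def:kappa_bar}. Comparing with Proposition~\ref{prop:core_formulas}(3), which gives $W(x) = \{j \in \cji(L) \mid j \leq x,\ \kappa(j) \geq \overline{\kappa}(x)\}$, we conclude $\CJR(x) \subseteq W(x)$. For the join statement, every $j \in W(x)$ satisfies $j \leq x$, so $\Join W(x) \leq x$; together with $x = \Join\CJR(x) \leq \Join W(x)$ this forces $\Join W(x) = x$.

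For (3), still with $x \in \cjrep(L)$: since $\CJR(x) \subseteq W(x)$ by (1), we get $\Meet\{\kappa(j) \mid j \in W(x)\} \leq \Meet\{\kappa(j) \mid j \in \CJR(x)\} = \overline{\kappa}(x)$; conversely, Proposition~\ref{prop:core_formulas}(3) gives $\kappa(j) \geq \overline{\kappa}(x)$ for every $j \in W(x)$, hence $\Meet\{\kappa(j) \mid j \in W(x)\} \geq \overline{\kappa}(x)$, and equality follows.

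For (2), let $x \in \cmrep(L)$ and put $y := \overline{\kappa}^d(x)$, so that $y \in \cjrep(L)$ and $\overline{\kappa}(y) = x$ by Corollary~\ref{cor:extended_kappa_bij}. That corollary also gives $\CMR(x) = \CMR(\overline{\kappa}(y)) = \{\kappa(j) \mid j \in \CJR(y)\}$; since $\kappa\colon \cji(L) \to \cmi(L)$ is a bijection, this identifies $\{j \mid \kappa(j) \in \CMR(x)\}$ with $\CJR(y)$, which is contained in $W(y) = W(\overline{\kappa}^d(x))$ by (1). Finally, the equality $\Meet\{\kappa(j) \mid j \in W(\overline{\kappa}^d(x))\} = x$ is exactly (3) applied to $y$, using $\overline{\kappa}(y) = x$. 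I do not anticipate any genuine obstacle here beyond this routine transport of structure; the content is entirely carried by Proposition~\ref{prop:core_formulas} and Corollary~\ref{cor:extended_kappa_bij}.
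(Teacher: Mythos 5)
Your proof is correct and relies on exactly the same ingredients as the paper's: the explicit description of $W(x)$ in Proposition~\ref{prop:core_formulas}(3) and the bijectivity of the extended kappa-maps from Corollary~\ref{cor:extended_kappa_bij}. The only difference is the logical ordering: the paper proves (2) directly (by the dual of your argument for (1)) and then obtains (3) by applying (2) to $\overline{\kappa}(x)$, whereas you prove (3) directly as a corollary of (1) and then deduce (2) by transporting along $\overline{\kappa}^d$. Both routes are equally valid and of comparable length; this is a cosmetic reordering rather than a genuinely different proof.
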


\begin{proof}
    (1) The fact that $\CJR(x) \subseteq W(x)$ is an immediate consequence of condition (3) in Theorem~\ref{thm:covers_canonical}. This in particular implies that $\bigvee W(x) \geq x$. The fact that $\bigvee W(x) \leq x$ then follows from the fact that every $j \in W(x)$ satisfies $j \leq x$ by Proposition~\ref{prop:core_formulas}.
    
    (2) Denote $A = \{\kappa(j) \mid j \in \W(\overline{\kappa}^d(x))\}$. Let $\kappa(j) \in \CMR(x)$. Then $j \in \CJR(\overline{\kappa}^d(x)) \subseteq \W(\overline{\kappa}^d(x))$ by Corollary~\ref{cor:extended_kappa_bij} and (1). This in particular implies that $\Meet A \leq x$. The fact that $\Meet A \geq x$ then follows from the fact that every $\kappa(j) \in A$ satisfies $\kappa(j) \geq \overline{\kappa}(\overline{\kappa}^d(x)) = x$ by Proposition~\ref{prop:core_formulas} and Corollary~\ref{cor:extended_kappa_bij}.
    
    (3) By Corollary~\ref{cor:extended_kappa_bij}, this follows by applying (2) to the element $\overline{\kappa}(x)$.
\end{proof}

\begin{remark}
    Proposition~\ref{prop:K_sets_join}(1) can be seen as an extension of Example~\ref{ex:asai_pfeifer} in the following sense. Let $\T \in \cjrep(\tors(\W))$. Then the canonical joinands of $\T$ are precisely the atoms of ${\Cored}_\W(\T)$, which correspond to the simple objects in $\alpha(\T)$ by Example~\ref{ex:asai_pfeifer}(2). Moreover, we have that $\Filt(\Gen_\W (\alpha(\T))) = \Join W_\W(\T) = \T$ by Theorem~\ref{thm:IT}. We also note that the explicit description of the $\kappa$-map for $\tors(\W)$ given in Theorem~\ref{thm:BTZ_A} could also be used to interpret Proposition~\ref{prop:K_sets_join}(2,3) as extending Example~\ref{ex:asai_pfeifer} in similar ways.
\end{remark}

\begin{proposition}\label{prop:join_irreps_cores}
    Let $j \in L$ be completely join-irreducible. Then
    \begin{enumerate}
        \item $\Cored(j) = [j_*,j]$.
        \item $\Core(j) = [\kappa(j),\kappa(j)^*]$.
        \item $\W(j) = \core(j) = \cored(j) = \{j\}.$
    \end{enumerate}
\end{proposition}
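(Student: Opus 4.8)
The plan is to establish the two interval identities (1) and (2) first, and then deduce (3) by reading off labels. The one preliminary fact I need is that every completely join-irreducible $j$ is its own canonical join representation: if $j=\Join X$ then $j\in X$ by Definition~\ref{def: irreducible}, so $\{j\}$ is an antichain that refines every join representation of $j$; hence $j\in\cjrep(L)$ with $\CJR(j)=\{j\}$. Dually $\kappa(j)\in\cmrep(L)$ with $\CMR(\kappa(j))=\{\kappa(j)\}$. Consequently $\overline{\kappa}(j)=\Meet\{\kappa(i)\mid i\in\CJR(j)\}=\kappa(j)$, and since $\kappa$ and $\kappa^d$ are mutually inverse bijections, $\overline{\kappa}^d(\kappa(j))=\kappa^d(\kappa(j))=j$.

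For (1), I apply Proposition~\ref{prop:pop_formula}(1) to $x=j$: this gives $\popd(j)=j\meet\overline{\kappa}(j)=j\meet\kappa(j)$, which equals $j_*$ by the defining formula~\eqref{eqn:kappa} for $\kappa$. Hence $\Cored(j)=[\popd(j),j]=[j_*,j]$. For (2), I apply Proposition~\ref{prop:pop_formula}(2) to $x=\kappa(j)$ (permissible since $\kappa(j)\in\cmrep(L)$): this gives $\pop(\kappa(j))=\kappa(j)\join\overline{\kappa}^d(\kappa(j))=\kappa(j)\join j$, which equals $\kappa(j)^*$ by the defining formula~\eqref{eqn:kappad} for $\kappa^d$ together with $\kappa^d(\kappa(j))=j$. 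Since $\overline{\kappa}(j)=\kappa(j)$, this yields $\Core(j)=[\overline{\kappa}(j),\pop(\overline{\kappa}(j))]=[\kappa(j),\kappa(j)^*]$.

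For (3), note that by Definition~\ref{def: irreducible} both $[j_*,j]$ and $[\kappa(j),\kappa(j)^*]$ are single cover relations, so each contains exactly one cover relation, and $\cored(j)$ and $\core(j)$ are the corresponding singleton label-sets. To identify the labels I use the equivalence $(3a)\Leftrightarrow(3c)$ of Proposition~\ref{prop:cover}: since $j_*\join j=j$ and $j_*\meet j=j_*$, we get $\jlab[j_*,j]=j$, so $\cored(j)=\{j\}$; and since $\kappa(j)\join j=\kappa(j)^*$ (shown while proving (2)) and $\kappa(j)\meet j=j_*$ (again by~\eqref{eqn:kappa}), we get $\jlab[\kappa(j),\kappa(j)^*]=j$, so $\core(j)=\{j\}$. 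Then $W(j)=\core(j)\cap\cored(j)=\{j\}$.

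I do not expect a genuine obstacle: the whole argument is an unwinding of the definitions of $\popd$, $\pop$, $\Cored$, $\Core$, $\overline{\kappa}$, $\overline{\kappa}^d$ once one records that a completely join- or meet-irreducible element has a trivial canonical (co)representation. The only point requiring a little care is checking that the two intervals appearing in (1) and (2) really are honest cover relations, so that computing their label-sets reduces to applying Proposition~\ref{prop:cover}(3) to a single cover relation.
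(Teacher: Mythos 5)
Your proof is correct and matches the paper's approach, which simply observes that ``all three items follow immediately from the definitions''; you have carefully unwound those definitions, using $\CJR(j)=\{j\}$, $\CMR(\kappa(j))=\{\kappa(j)\}$, Proposition~\ref{prop:pop_formula}, and Proposition~\ref{prop:cover}(3) to compute $\popd(j)=j_*$, $\pop(\kappa(j))=\kappa(j)^*$, and both labels.
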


\begin{proof}
    All three items follow immediately from the definitions.
\end{proof}


\subsection{The $\kappa$-order and the core label orders}

We now use the constructions in Section~\ref{sec:intervals} to study three different partial orders on the set $\cjrep(L)$.

\begin{definition}
    Let $L$ be a ws-csd lattice.
    \begin{enumerate}
        \item The \emph{lower core label order} is $(\cjrep(L),\leq_{\clod})$, where $x \leq_{\clod} y$ if and only if $\cored(x) \subseteq \cored(y)$.
        \item The \emph{upper core label order} is $(\cjrep(L),\leq_{\clo})$, where $x \leq_{\clo} y$ if and only if $\core(x) \subseteq \core(y)$.
        \item The \emph{$\kappa$-order} is $(\cjrep(L),\leq_\kappa)$, where $x \leq_\kappa y$ if and only if $x \leq y$ and $\overline{\kappa}(y) \leq \overline{\kappa}(x)$.
    \end{enumerate}
\end{definition}

\begin{example}\label{ex:clo}
    Let $L$ be the lattice from Figure~\ref{fig:run_ex}. The upper and lower core label orders and the $\kappa$-order of $L$ are shown in Figure~\ref{fig:ex_clo}. We note that $(L,\leq_{\clod}) \neq (L,\leq_\kappa)$ as shown in \cite[Example~4.28]{enomoto}. On the other hand, we have that $(L,\leq_{\clo}) = (L,\leq_\kappa)$. Note also that $(L,\leq_{\clod})$ is not a lattice (as is shown in \cite[Figure~7]{muhle}), but that $(L,\leq_{\clod}) = (L,\leq_\kappa)$ is a lattice.
\end{example}

  \begin{figure}
   \begin{tikzpicture}
        \begin{scope}[decoration={
	markings,
	mark=at position 0.7 with {\arrow[scale=1.5]{>}}}
	]
        \draw[postaction=decorate] (0,6) -- (-1.5,4);
        \draw[postaction=decorate,dashed](0,6) -- (0,4);
        \draw[postaction=decorate](0,6) -- (1.5,4);
        \draw[postaction=decorate](-1.5,4) -- (0,2);
        \draw[postaction=decorate,dashed](0,4) -- (-1.5,2);
        \draw[postaction=decorate](1.5,4) -- (0,2);
        \draw[postaction=decorate,dashed](0,4) -- (1.5,2);
        \draw[postaction=decorate](-1.5,4) -- (-1.5,2);
        \draw[postaction=decorate](1.5,4) -- (1.5,2);
        \draw[postaction=decorate](-1.5,2) -- (0,0);
        \draw[postaction=decorate](0,2) -- (0,0);
        \draw[postaction=decorate](1.5,2) -- (0,0);
        \draw[postaction=decorate,bend right,smooth](0,6) to (-3,3);
        \draw[postaction=decorate,bend right,smooth](-3,3) to (0,0);
        \end{scope}
		\node[draw,circle,fill=white] at (0,0) {$\hat{0}$};
		\node [draw,circle,fill=white] at (-3,3) {$j_4$};
	    \node[draw,circle,fill=white] at (-1.5,2) {$j_3$};
	    \node[draw,circle,fill=white] at (0,2) {$j_2$};
	    \node[draw,circle,fill=white] at (1.5,2) {$j_1$};
	    \node[draw,circle,fill=white] at (1.5,4) {$m_3$};
	    \node[draw,circle,fill=white] at (0,4) {$m_2$};
	    \node[draw,circle,fill=white] at (-1.5,4) {$m_1$};
	    \node[draw,circle,fill=white] at (0,6) {$\hat{1}$};
	    
	    \begin{scope}[shift = {(8,0)}]
	            \begin{scope}[decoration={
	markings,
	mark=at position 0.7 with {\arrow[scale=1.5]{>}}}
	]
        \draw[postaction=decorate] (0,6) -- (-1.5,4);
        \draw[postaction=decorate,dashed](0,6) -- (0,4);
        \draw[postaction=decorate](0,6) -- (1.5,4);
        \draw[postaction=decorate](-1.5,4) -- (0,2);
        \draw[postaction=decorate,dashed](0,4) -- (-1.5,2);
        \draw[postaction=decorate](1.5,4) -- (0,2);
        \draw[postaction=decorate,dashed](0,4) -- (1.5,2);
        \draw[postaction=decorate](-1.5,4) -- (-1.5,2);
        \draw[postaction=decorate](1.5,4) -- (1.5,2);
        \draw[postaction=decorate](-1.5,2) -- (0,0);
        \draw[postaction=decorate](0,2) -- (0,0);
        \draw[postaction=decorate](1.5,2) -- (0,0);
        \draw[postaction=decorate](-1.5,4) to (-3,2);
        \draw[postaction=decorate,dashed](0,4) to (-3,2);
        \draw[postaction=decorate](-3,2) to (0,0);
        \end{scope}
		\node[draw,circle,fill=white] at (0,0) {$\hat{0}$};
		\node [draw,circle,fill=white] at (-3,2) {$j_4$};
	    \node[draw,circle,fill=white] at (-1.5,2) {$j_3$};
	    \node[draw,circle,fill=white] at (0,2) {$j_2$};
	    \node[draw,circle,fill=white] at (1.5,2) {$j_1$};
	    \node[draw,circle,fill=white] at (1.5,4) {$m_3$};
	    \node[draw,circle,fill=white] at (0,4) {$m_2$};
	    \node[draw,circle,fill=white] at (-1.5,4) {$m_1$};
	    \node[draw,circle,fill=white] at (0,6) {$\hat{1}$};
	    \end{scope}
	\end{tikzpicture}
    \caption{The core label orders and $\kappa$-order for the lattice in Figure~\ref{fig:run_ex}. The left diagram is $(L,\leq_{\clo}) = (L,\leq_{\kappa})$ and the right diagram is $(L,\leq_{\clod})$}\label{fig:ex_clo}
    \end{figure}
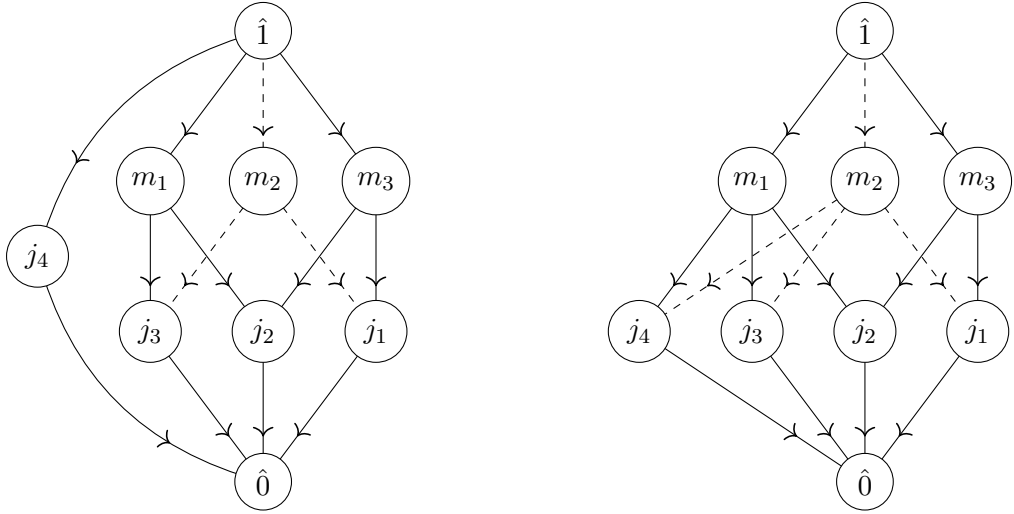

The (reverse of) the lower core label order was first introduced under the same \emph{shard intersection order} by Reading in \cite{reading} in the case where $L$ is the weak order on a finite Coxeter group. Reading then extended to the poset of regions of any simplicial hyperplane arrangement in \cite[Section~9-7.4]{reading_book}, where he also noted that one could make the same construction for any congruence uniform lattice. M\"uhle then went on to characterize precisely for which finite congruence uniform lattices the lower core label order is a lattice in \cite{muhle}. The lower core label order was further generalized to the class of (finite) ``semidistrim" lattices under the name ``row-core label order" by Defant and Williams in \cite[Section~11.4]{DW}. In the same paper, they also introduce the ``pop-core label order", which coincides with the $\kappa$-order of a finite semidistributive lattice. In the infinite case, Enomoto introduced (and named) the $\kappa$-order in \cite{enomoto}. One of his main results is that when $L$ is the lattice of torsion classes, the lower core label order and the ordering of the corresponding wide subcategories all coincide. More precisely, he proved the following.

\begin{theorem}\cite[Corollary~4.26]{enomoto}\label{thm:enomoto}
    Let $\Lambda$ be a finite-dimensional algebra and $\W \subseteq \mods\Lambda$ a wide subcategory. Then $\leq_{\clod}$ and $\leq_\kappa$ coincide on $\cjrep(\tors(\W))$. Moreover, the association $\T \mapsto \alpha(\T)$ induces a lattice isomorphism $(\cjrep(\tors(\W)),\leq_\kappa)\rightarrow \wide(\W)$.
\end{theorem}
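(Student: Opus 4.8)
The plan is to reduce the statement to the single structural fact, contained in Example~\ref{ex:asai_pfeifer}, that in a lattice of torsion classes the lower core label set, the upper core label set, and the set $W_\W(\T)$ all coincide: for every $\T \in \cjrep(\tors(\W))$ one has ${\cored}_\W(\T) = {\core}_\W(\T) = W_\W(\T) = \{\Filt(\Gen_\W B) \mid B \in \brick(\alpha(\T))\}$, where the identification of label sets with sets of bricks uses the bijection $B \mapsto \Filt(\Gen_\W B)$ from $\brick(\W)$ to $\cji(\tors(\W))$ of Theorem~\ref{thm:min_extending}(1). This is where the special geometry of $\tors(\W)$ enters, via the Asai--Pfeifer ``wide interval'' theorem (Theorem~\ref{thm:asai_pfeifer_1}), which exhibits both core intervals ${\Cored}_\W(\T)$ and ${\Core}_\W(\T)$ as label-isomorphic to $\tors(\alpha(\T))$; in a general ws-csd lattice $\cored$ and $\core$ genuinely differ, as Example~\ref{ex:clo} records, so no such collapse holds there.

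Granting this, I would first prove that $\leq_{\clod}$ and $\leq_\kappa$ coincide on $\cjrep(\tors(\W))$. If $\T \leq_\kappa \T'$---that is, $\T \subseteq \T'$ and $\overline{\kappa}(\T') \leq \overline{\kappa}(\T)$---then the formula $W(x) = \{j \in \cji(L) \mid j \leq x,\ \kappa(j) \geq \overline{\kappa}(x)\}$ of Proposition~\ref{prop:core_formulas}(3) gives $W_\W(\T) \subseteq W_\W(\T')$ at once, hence ${\cored}_\W(\T) = W_\W(\T) \subseteq W_\W(\T') = {\cored}_\W(\T')$, so $\T \leq_{\clod} \T'$. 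Conversely, if ${\cored}_\W(\T) \subseteq {\cored}_\W(\T')$ then $W_\W(\T) \subseteq W_\W(\T')$, and applying Proposition~\ref{prop:K_sets_join}(1) and~(3) to $\T$ and to $\T'$ yields $\T = \Join W_\W(\T) \leq \Join W_\W(\T') = \T'$ together with $\overline{\kappa}(\T) = \Meet\{\kappa(j) \mid j \in W_\W(\T)\} \geq \Meet\{\kappa(j) \mid j \in W_\W(\T')\} = \overline{\kappa}(\T')$, so $\T \leq_\kappa \T'$. (The identical argument with $\core$ in place of $\cored$ shows $\leq_{\clo}$ also coincides with $\leq_\kappa$.)

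Next I would upgrade $\alpha$ from a bijection to a lattice isomorphism. By Theorem~\ref{thm:IT}, $\T \mapsto \alpha(\T)$ is a bijection $\cjrep(\tors(\W)) \to \wide(\W)$, so by the previous paragraph it suffices to show $\T \leq_{\clod} \T'$ if and only if $\alpha(\T) \subseteq \alpha(\T')$. Unwinding definitions: $\T \leq_{\clod} \T'$ iff ${\cored}_\W(\T) \subseteq {\cored}_\W(\T')$, which by the bijection $B \mapsto \Filt(\Gen_\W B)$ of Theorem~\ref{thm:min_extending}(1) is equivalent to ${\brlab}_\W({\Cored}_\W(\T)) \subseteq {\brlab}_\W({\Cored}_\W(\T'))$, which by Example~\ref{ex:asai_pfeifer}(1) is $\brick(\alpha(\T)) \subseteq \brick(\alpha(\T'))$, which is in turn equivalent to $\alpha(\T) \subseteq \alpha(\T')$. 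For the last equivalence, one direction is $\brick(\U) = \U \cap \brick(\mods\Lambda)$ (Remark~\ref{rem:brick}), and for the other the simple objects of $\alpha(\T)$ are bricks of $\alpha(\T)$, hence bricks of $\alpha(\T')$, hence objects of $\alpha(\T')$, so $\alpha(\T)$---being the filtration closure of its simple objects---is contained in the extension-closed subcategory $\alpha(\T')$. Since $\wide(\W)$ is a complete lattice, it follows that $(\cjrep(\tors(\W)), \leq_\kappa)$ is a lattice and that $\alpha$ is a lattice isomorphism onto $\wide(\W)$.

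The substantive work is entirely front-loaded into Example~\ref{ex:asai_pfeifer} and the Asai--Pfeifer theorem behind it; once those are in hand the remainder is bookkeeping. The points deserving care are verifying that ``label-preserving'' really matches the completely join-irreducibles---equivalently, the bricks---on both sides of the isomorphisms ${\Cored}_\W(\T) \cong \tors(\alpha(\T)) \cong {\Core}_\W(\T)$, and the elementary observation that $\brick(\U) \subseteq \brick(\U')$ forces $\U \subseteq \U'$ for wide subcategories.
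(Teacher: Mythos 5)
The paper does not prove this result; it cites \cite[Corollary~4.26]{enomoto} as an external theorem. You have instead assembled an independent proof out of the tools developed earlier in the paper, and it is correct.

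Your argument is essentially this: once one knows that in $\tors(\W)$ the sets $\cored_\W(\T)$, $\core_\W(\T)$, and $W_\W(\T)$ all coincide and identify (via $B \mapsto \Filt(\Gen_\W B)$) with $\brick(\alpha(\T))$ --- which the paper records in Example~\ref{ex:asai_pfeifer} as a consequence of the Asai--Pfeifer wide-interval theorem (Theorem~\ref{thm:asai_pfeifer_1}) --- both claims become lattice-theoretic bookkeeping. Your first paragraph effectively re-derives Proposition~\ref{prop:K_sets} (that $\T \leq_\kappa \T'$ iff $W_\W(\T) \subseteq W_\W(\T')$) from Propositions~\ref{prop:core_formulas}(3) and~\ref{prop:K_sets_join}(1),(3); citing Proposition~\ref{prop:K_sets} together with Proposition~\ref{prop:orders_coincide}(1) would get you the coincidence of $\leq_\kappa$ and $\leq_{\clod}$ in one line, but what you wrote is equivalent and correct. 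The second paragraph is also sound: you correctly reduce the order-isomorphism claim to the equivalence $\brick(\alpha(\T)) \subseteq \brick(\alpha(\T'))$ iff $\alpha(\T) \subseteq \alpha(\T')$, with the nontrivial direction handled via the observation that a wide subcategory is the filtration closure of its simples, its simples are bricks, and the target wide subcategory is closed under extensions. Whether this route actually tracks Enomoto's own argument in \cite{enomoto} is not something the present paper reveals, but as a derivation from Asai--Pfeifer plus the paper's Section~\ref{sec:kappa_order} machinery, it stands on its own.

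One bookkeeping detail worth stating explicitly: Theorem~\ref{thm:asai_pfeifer_1}(2) as printed has the codomain $\wide(\U^{\perp_\W} \cap \T)$, but the isomorphism being invoked (and used in Example~\ref{ex:asai_pfeifer}(2)) is to $\tors(\U^{\perp_\W} \cap \T)$; that appears to be a typo in the source, not a gap in your proof, but you are silently relying on the corrected reading.
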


\begin{example}\label{ex:kronecker_kappa}
   Let $L_{kr}(K)$ be as in Figure~\ref{fig:kronecker} and let $KQ$ be the path algebra over the Kronecker quiver as in Example~\ref{ex:kronecker}. Recall from Examples~\ref{ex:no_can_join}(2) and~\ref{ex:jirr} that $\cjrep(L_{kr}(K)) = L_{kr}(K) \setminus \{\emptyset\}$. Moreover, by Example~\ref{ex:kronecker} we have that $L_{kr}(K) \cong \tors(\mods KQ)$, and so the orders $\leq_\kappa$, $\leq_{\clod}$, and $\leq_{\clo}$ coincide by Theorem~\ref{thm:enomoto}. (This fact is also straightforward to verify directly.) These three orders are then given as follows:
   \begin{enumerate}
       \item Restricted to $(2^{\mathbb{P}^1(K)} \setminus \{\emptyset\}) \{\hat{0},\hat{1},s\}$, one has $x \leq_\kappa y$ if and only if $x \leq y$, where $\leq$ is the order in $L_{kr}(K)$.
       \item For $(n,t) \in \mathbb{N} \times \{-1,1\}$, one has $\hat{0} \leq_\kappa (n,t) \leq_\kappa \hat{1}$.
   \end{enumerate}
   In addition, it can be shown that $\tperp(\mods KQ)$ is isomorphic to the restriction of the $\kappa$ order of $L_{kr}(K)$ to $\mathbb{N}\times \{-1,1\} \cup \{\hat{0},\hat{1},s\}$. See \cite[Section~7]{BuH}. 
\end{example}

As a lattice-theoretic analog of Theorem~\ref{thm:enomoto}, we have the following.

\begin{proposition}\label{prop:K_sets}
    Let $x, y \in \cjrep(L)$. Then $x \leq_\kappa y$ if and only if $W(x) \subseteq W(y)$.
\end{proposition}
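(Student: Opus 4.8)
The statement will follow almost immediately once we combine the characterization of $W(x)$ in Proposition~\ref{prop:core_formulas}(3) with the join/meet formulas in Proposition~\ref{prop:K_sets_join}. Recall that Proposition~\ref{prop:core_formulas}(3) gives
$$W(x) = \{j \in \cji(L) \mid j \leq x \text{ and } \kappa(j) \geq \overline{\kappa}(x)\},$$
and that $x \leq_\kappa y$ means precisely $x \leq y$ and $\overline{\kappa}(y) \leq \overline{\kappa}(x)$.

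For the forward implication, I would simply take $j \in W(x)$ and check the two defining conditions for membership in $W(y)$: from $j \leq x \leq y$ we get $j \leq y$, and from $\kappa(j) \geq \overline{\kappa}(x) \geq \overline{\kappa}(y)$ we get $\kappa(j) \geq \overline{\kappa}(y)$. Hence $W(x) \subseteq W(y)$.

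For the reverse implication, suppose $W(x) \subseteq W(y)$. To obtain $x \leq y$, apply Proposition~\ref{prop:K_sets_join}(1), which says $\Join W(x) = x$ and $\Join W(y) = y$; since $W(x) \subseteq W(y)$, we get $x = \Join W(x) \leq \Join W(y) = y$. To obtain $\overline{\kappa}(y) \leq \overline{\kappa}(x)$, apply Proposition~\ref{prop:K_sets_join}(3), which gives $\Meet\{\kappa(j) \mid j \in W(x)\} = \overline{\kappa}(x)$ and likewise for $y$; since $\{\kappa(j) \mid j \in W(x)\} \subseteq \{\kappa(j) \mid j \in W(y)\}$, taking meets reverses the inclusion and yields $\overline{\kappa}(y) \leq \overline{\kappa}(x)$. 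Combining, $x \leq_\kappa y$.

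Since every ingredient (Propositions~\ref{prop:core_formulas} and~\ref{prop:K_sets_join}) is already established, there is no real obstacle here; the only point requiring minor care is making sure the definition of $W$ used (the intersection $\core(x)\cap\cored(x)$) is matched with its closed-form description via Proposition~\ref{prop:core_formulas}(3), and that $x,y \in \cjrep(L)$ so that those propositions apply. This is a straightforward lattice-theoretic analogue of Theorem~\ref{thm:enomoto}, as advertised.
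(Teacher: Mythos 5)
Your proof is correct and is essentially the same as the paper's: the forward direction uses the description of $W(x)$ from Proposition~\ref{prop:core_formulas}(3) together with $x \leq y$ and $\overline{\kappa}(x) \geq \overline{\kappa}(y)$, and the reverse direction uses $\Join W(x) = x$ and $\Meet\{\kappa(j) \mid j \in W(x)\} = \overline{\kappa}(x)$ from Proposition~\ref{prop:K_sets_join}. The only cosmetic difference is that the paper phrases the forward direction as an intersection of two set inclusions, while you argue element by element.
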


\begin{proof}
    Suppose first that $x \leq_\kappa y$. Since $x \leq y$, we have
    $$\{j \mid j \leq x\} \subseteq \{j \mid j \leq y\}.$$
    Likewise since $\overline{\kappa}(x) \geq \overline{\kappa}(y)$, we have
    $$\{j \mid \kappa(j) \geq \overline{\kappa}(x)\} \subseteq \{j \mid \kappa(j) \geq \overline{\kappa}(y)\}.$$
    Taking intersections, Proposition~\ref{prop:core_formulas} thus implies that $W(x) \subseteq W(y)$.
    
    Now assume that $W(x) \subseteq W(y)$. It then follows from Proposition~\ref{prop:K_sets_join} that $x = \Join W(x) \leq \Join W(y) = y$ and that $\overline{\kappa}(x) = \Meet \kappa(W(x)) \geq \Meet \kappa(W(y)) = \overline{\kappa}(y)$. We conclude that $x \leq_\kappa y$.
\end{proof}

The $\kappa$-order also satisfies the following duality.

\begin{proposition}\label{prop:self_dual}
    The map $\overline{\kappa}$ induces a poset isomorphism $(\cjrep(L),\leq_\kappa) \rightarrow (\cjrep(L^d), \leq_{\kappa^d})$ between the $\kappa$-order of $L$ and that of $L^d$.
\end{proposition}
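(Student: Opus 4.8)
The plan is to reduce both $\kappa$-orders to conditions phrased entirely in terms of $\overline{\kappa} = \overline{\kappa}_L$ and $\leq_L$, and then observe that they coincide on the nose. The first step is the duality dictionary. Passing from $L$ to $L^d$ exchanges $\Join$ with $\Meet$, so $\cji(L^d) = \cmi(L)$ and $\cmi(L^d) = \cji(L)$; moreover a (canonical) join representation in $L^d$ is exactly a (canonical) meet representation in $L$, whence $\cjrep(L^d) = \cmrep(L)$ and $\CJR_{L^d}(y) = \CMR_L(y)$. Feeding the formulas \eqref{eqn:kappa}--\eqref{eqn:kappad} through this dictionary (together with Remark~\ref{rem:m-label}) gives $\kappa_{L^d} = \kappa^d_L$, and then Definition~\ref{def:kappa_bar} gives $\overline{\kappa}_{L^d} = \overline{\kappa}^d_L$ as maps on $\cmrep(L) = \cjrep(L^d)$. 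In particular the $\kappa$-order $\leq_{\kappa^d}$ on $L^d$ is defined, since $L^d$ is completely semidistributive ($SD_\join$ and $SD_\meet$ are swapped by duality).

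Next I would invoke Corollary~\ref{cor:extended_kappa_bij}: as $L$ is ws-csd, $\overline{\kappa}_L\colon \cjrep(L) \to \cmrep(L) = \cjrep(L^d)$ is a bijection with inverse $\overline{\kappa}^d_L = \overline{\kappa}_{L^d}$. It therefore remains only to show $\overline{\kappa}_L$ is order-preserving and order-reflecting. Fix $x, y \in \cjrep(L)$ and write $u = \overline{\kappa}_L(x)$, $v = \overline{\kappa}_L(y)$. Unwinding the definition of the $\kappa$-order applied to $L^d$, the relation $u \leq_{\kappa^d} v$ means $u \leq_{L^d} v$ and $\overline{\kappa}_{L^d}(v) \leq_{L^d} \overline{\kappa}_{L^d}(u)$. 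The first condition is $v \leq_L u$, i.e.\ $\overline{\kappa}_L(y) \leq_L \overline{\kappa}_L(x)$; and since $\overline{\kappa}_{L^d} = \overline{\kappa}^d_L$ inverts $\overline{\kappa}_L$, the second condition reads $y \leq_{L^d} x$, i.e.\ $x \leq_L y$. Thus $\overline{\kappa}_L(x) \leq_{\kappa^d} \overline{\kappa}_L(y)$ holds precisely when $x \leq_L y$ and $\overline{\kappa}_L(y) \leq_L \overline{\kappa}_L(x)$ --- which is exactly the definition of $x \leq_\kappa y$. Hence $\overline{\kappa}_L$ is the asserted poset isomorphism.

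The argument is almost entirely bookkeeping; the one step I would treat as substantive is the duality dictionary, specifically checking that $\overline{\kappa}_{L^d}$ (a priori defined only on $\cjrep(L^d)$) really is the dual extended kappa-map $\overline{\kappa}^d_L$ of $L$, so that Corollary~\ref{cor:extended_kappa_bij} may be used to collapse $\overline{\kappa}_{L^d}(\overline{\kappa}_L(y))$ back to $y$. Once that identification is in place, both order relations unwind to literally the same pair of inequalities, and there is nothing further to prove.
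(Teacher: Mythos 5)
Your proof is correct and takes essentially the same route as the paper: both rest on the identification $\overline{\kappa}_{L^d} = \overline{\kappa}^d_L$ together with Corollary~\ref{cor:extended_kappa_bij}, and then unwind the two definitions to the same pair of inequalities. Your write-up is somewhat more explicit about the duality dictionary, but the substance is identical to the paper's argument.
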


\begin{proof}
    This is an immediate consequence of the fact that $\kappa^d$ is precisely the $\kappa$-map for the lattice $L^d$. More explicitly, suppose $x \leq_\kappa y$ in the lattice $L$. That is, $x, y \in \cjrep(L)$, $x \leq_L y$ and $\overline{\kappa}(x) \geq_L \overline{\kappa}(y)$. By Corollary~\ref{cor:extended_kappa_bij}, this means $\overline{\kappa}(x), \overline{\kappa}(y) \in \cmrep(L) = \cjrep(L^d)$, $\overline{\kappa}^d(\overline{\kappa}(x)) \geq_{L^d} \overline{\kappa}^d(\overline{\kappa}(y))$ and $\overline{\kappa}(x) \leq_{L^d} \overline{\kappa}(y)$; i.e., that $\kappa(x) \leq_{\kappa^d} \kappa(y)$ in the lattice $L^d$.
\end{proof}

\begin{remark}
    Let $\Lambda$ be a finite-dimensional algebra and $\W \subseteq \mods\Lambda$ a wide subcategory. It is well known that the torsion-free classes of $\W$ form a lattice $\tors(\W)$ which is dual to $\tors(\W)$. Proposition~\ref{prop:self_dual} then implies that the $\kappa$-orders of $\tors(\W)$ and $\torf(\W)$ are isomorphic. In particular, both will be isomorphic to the lattice $\wide(\W)$ by Theorem~\ref{thm:enomoto}. This is also implicit in \cite[Remark~4.20]{enomoto}.
\end{remark}

We now obtain the following, where the implication $(2a \implies 2b)$ can be found as \cite[Proposition~4.30]{enomoto}.

\begin{proposition}\label{prop:orders_coincide}
    Let $L$ be a ws-csd lattice. Then:
    \begin{enumerate}
        \item The following are equivalent:
            \begin{enumerate}
        \item $(\cjrep(L),\leq_\kappa) = (\cjrep(L),\leq_{\clod})$.
         \item $W(x) = \cored(x)$ for all $x \in \cjrep(L)$.
    \end{enumerate}
    \item The following are equivalent:
            \begin{enumerate}
        \item $(\cjrep(L),\leq_\kappa) = (\cjrep(L),\leq_{\clo})$.
        \item $W(x) = \core(x)$ for all $x \in \cjrep(L)$.
         \end{enumerate}
    \item If $(\cjrep(L),\leq_{\clo}) = (\cjrep(L),\leq_{\clod})$, then the properties (1a), (1b), (2a), and (2b) all hold.
    \end{enumerate}
\end{proposition}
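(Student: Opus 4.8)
The plan is to reduce the whole statement to three facts already established in this section: the definitional identity $W(x) = \core(x) \cap \cored(x)$; the computation $W(j) = \core(j) = \cored(j) = \{j\}$ for every completely join-irreducible $j$ (Proposition~\ref{prop:join_irreps_cores}(3), which applies since every completely join-irreducible element is canonically join-representable with $\CJR(j) = \{j\}$); and the characterization $x \leq_\kappa y \iff W(x) \subseteq W(y)$ of Proposition~\ref{prop:K_sets}. Note that the first fact gives $W(x) \subseteq \cored(x)$ and $W(x) \subseteq \core(x)$ for all $x \in \cjrep(L)$ for free, so in parts (1b) and (2b) only one inclusion ever needs to be proved.

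For part (1), the implication $(\mathrm{1b}) \Rightarrow (\mathrm{1a})$ is a direct substitution: if $W = \cored$ pointwise, then for $x, y \in \cjrep(L)$ we have $x \leq_\kappa y \iff W(x) \subseteq W(y) \iff \cored(x) \subseteq \cored(y) \iff x \leq_{\clod} y$ by Proposition~\ref{prop:K_sets} and the definition of $\leq_{\clod}$. For $(\mathrm{1a}) \Rightarrow (\mathrm{1b})$, I fix $x$ and prove the remaining inclusion $\cored(x) \subseteq W(x)$: given $j \in \cored(x)$, Proposition~\ref{prop:join_irreps_cores}(3) gives $\cored(j) = \{j\} \subseteq \cored(x)$, i.e. $j \leq_{\clod} x$; the assumed equality of orders then yields $j \leq_\kappa x$, so $\{j\} = W(j) \subseteq W(x)$ by Proposition~\ref{prop:K_sets}, i.e. $j \in W(x)$.

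Part (2) is proved verbatim with $\core$ in place of $\cored$, using $W(j) = \core(j) = \{j\}$. For part (3), assume $\leq_{\clo} = \leq_{\clod}$. I would first show $\cored(x) = \core(x)$ for every $x \in \cjrep(L)$: if $j \in \cored(x)$ then $\{j\} = \cored(j) \subseteq \cored(x)$, so $j \leq_{\clod} x$, hence $j \leq_{\clo} x$, hence $\{j\} = \core(j) \subseteq \core(x)$, i.e. $j \in \core(x)$; the reverse inclusion is the mirror argument. Consequently $W(x) = \core(x) \cap \cored(x) = \cored(x) = \core(x)$, which is exactly statements (1b) and (2b); statements (1a) and (2a) then follow from parts (1) and (2).

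I do not expect a serious obstacle here: once Propositions~\ref{prop:K_sets} and~\ref{prop:join_irreps_cores} are available, the entire argument is a bookkeeping exercise about the inclusions among the three label sets $\cored(x)$, $\core(x)$, $W(x)$. The one point that deserves attention is the observation that a completely join-irreducible element ``sees'' only itself in all three label sets; this is precisely what lets comparisons in each of the three orders be detected on singletons, and hence reduces coincidence of the orders to pointwise equality of the corresponding operators.
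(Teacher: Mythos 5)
Your proof is correct and follows the same route as the paper: both arguments use the detection of each order on singletons via Proposition~\ref{prop:join_irreps_cores}, the characterization $x \leq_\kappa y \iff W(x) \subseteq W(y)$ of Proposition~\ref{prop:K_sets}, and the identity $W(x) = \core(x) \cap \cored(x)$ to reduce everything to one-sided inclusions. Your write-up is if anything slightly more explicit than the paper's at the final step of $(\mathrm{1a}) \Rightarrow (\mathrm{1b})$, but the underlying argument is identical.
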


\begin{proof}
    We prove only (1) and (3), since the proof of (2) is analogous to that of (1).

    (1) The fact that $(1b \implies 1a)$ follows from Proposition~\ref{prop:K_sets} and the definition of $W(x)$. To see that $(1a \implies 1b)$, assume (1a) and let $x \in \cjrep(L)$. By the definitions, we need only show that $\cored(x) \subseteq W(x)$. To see this, let $j \in \cored(x)$. Then $j \leq_{\clod} x$ by Proposition~\ref{prop:join_irreps_cores}. Thus by assumption, we have that $j \leq_\kappa x$, and so $j \in W(x)$ by another application of Proposition~\ref{prop:join_irreps_cores}.
    
    (3) Suppose that $(\cjrep(L),\leq_{\clo}) = (\cjrep(L),\leq_{\clod})$ and let $x \in \cjrep(L)$. We will show that $\core(x) = \cored(x)$. By the Definition of $W(x)$, this will imply that properties (1b) and (2b) must hold, thus proving the result.
    
    Let $j \in \cji(L)$. By Propositions~\ref{prop:join_irreps_cores} and the assumption that the two core label orders coincide, we thus have that
    \begin{eqnarray*}
        j \in \cored(x) & \iff & j \leq_{\clod} x\\
        & \iff & j \leq_{\clo} x\\
        &\iff& j \in \core(x).
    \end{eqnarray*}
    This concludes the proof.
\end{proof}

One may hope that Proposition~\ref{prop:orders_coincide} can be modified into a statement about when the three partial orders on $\cjrep(L)$ are isomorphic as abstract posets. The following example shows that this it is not immediately obvious how to do so.

\begin{example}\label{ex:weird_clo}
    Let $L$ be the lattice shown in Figure~\ref{fig:run_ex} with an extra element $j_5$ whose cover relations are $m_3 \covered j_5 \covered \hat{1}$. The lattice $L$, both of the core label orders, and the $\kappa$-order are all shown in Figure~\ref{fig:weird_clo}. We observe that the two core label orders are isomorphic as abstract posets, where the isomorphism exchanges $j_4$ and $j_5$. On the other hand, neither core label order is isomorphic to the $\kappa$-order.
\end{example}

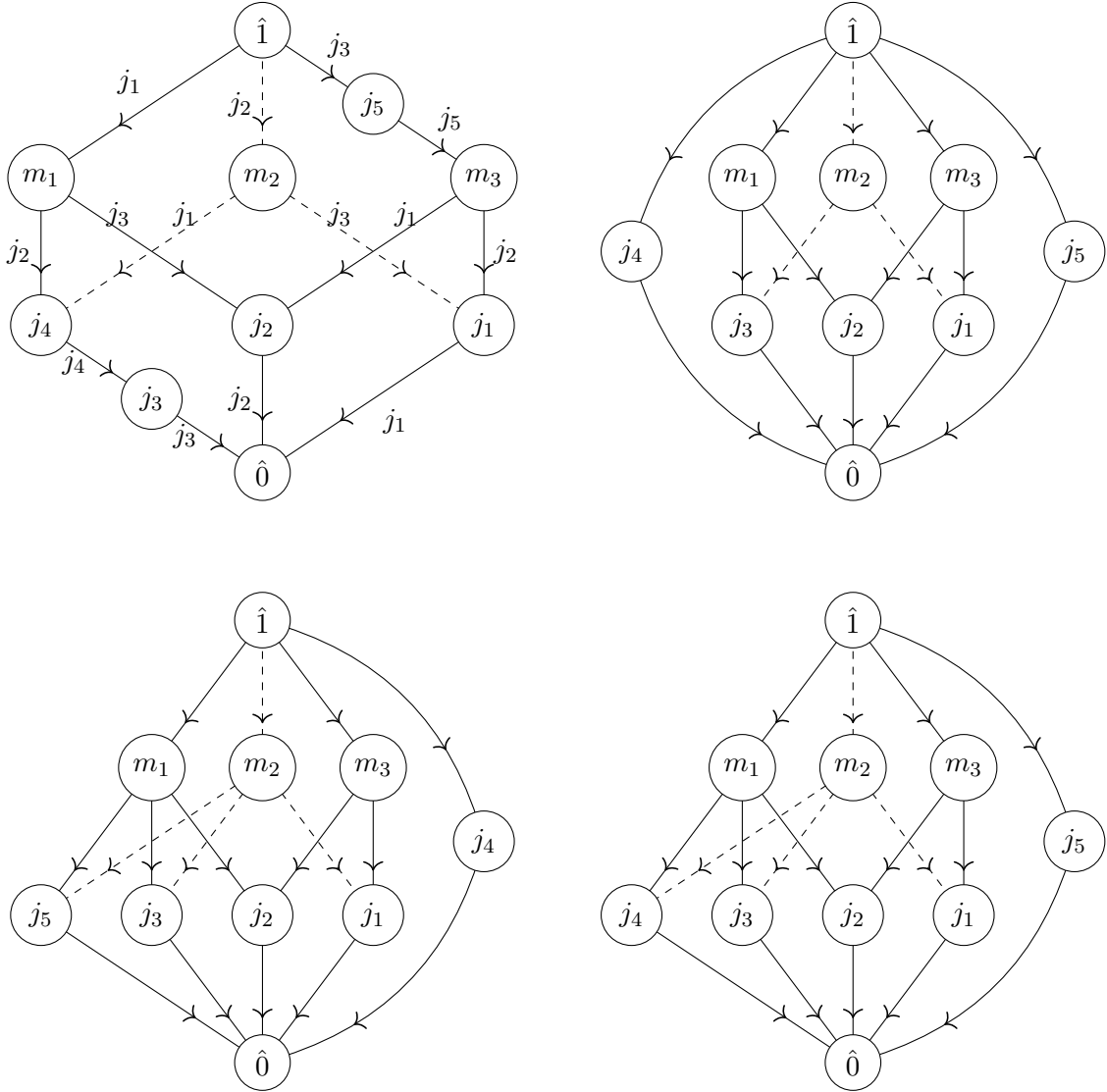
\begin{figure}
   \begin{tikzpicture}
        \begin{scope}[decoration={
	markings,
	mark=at position 0.65 with {\arrow[scale=1.5]{>}}}
	]
        \draw[postaction=decorate] (0,6) -- (-3,4) node [midway,above left] {$j_1$};
        \draw[postaction=decorate,dashed](0,6) -- (0,4) node [midway,left] {$j_2$};
        \draw[postaction=decorate](0,6) -- (1.5,5) node [midway,above right] {$j_3$};
        \draw[postaction=decorate](1.5,5) -- (3,4) node [midway,above right] {$j_5$};
        \draw[postaction=decorate](-3,4) -- (0,2) node [near start,right] {$j_3$};
        \draw[postaction=decorate,dashed](0,4) -- (-3,2) node [near start,left] {$j_1$};
        \draw[postaction=decorate](3,4) -- (0,2) node [near start, left] {$j_1$};
        \draw[postaction=decorate,dashed](0,4) -- (3,2) node [near start,right] {$j_3$};
        \draw[postaction=decorate](-3,4) -- (-3,2) node [midway, left] {$j_2$};
        \draw[postaction=decorate](3,4) -- (3,2) node [midway, right] {$j_2$};
        \draw[postaction=decorate](-3,2) -- (-1.5,1) node [midway,left] {$j_4$};
        \draw[postaction=decorate](-1.5,1) -- (0,0) node [midway,left] {$j_3$};
        \draw[postaction=decorate](0,2) -- (0,0) node [midway,left] {$j_2$};
        \draw[postaction=decorate](3,2) -- (0,0) node [midway,below right] {$j_1$};
        \end{scope}
		\node[draw,circle,fill=white] at (0,0) {$\hat{0}$};
	    \node[draw,circle,fill=white] at (-3,2) {$j_4$};
	    \node[draw,circle,fill=white] at (0,2) {$j_2$};
	    \node[draw,circle,fill=white] at (3,2) {$j_1$};
	    \node[draw,circle,fill=white] at (-1.5,1) {$j_3$};
	    \node[draw,circle,fill=white] at (1.5,5) {$j_5$};
	    \node[draw,circle,fill=white] at (3,4) {$m_3$};
	    \node[draw,circle,fill=white] at (0,4) {$m_2$};
	    \node[draw,circle,fill=white] at (-3,4) {$m_1$};
	    \node[draw,circle,fill=white] at (0,6) {$\hat{1}$};
	    
	    	      \begin{scope}[shift = {(8,-8)}]
	            \begin{scope}[decoration={
	markings,
	mark=at position 0.7 with {\arrow[scale=1.5]{>}}}
	]
        \draw[postaction=decorate] (0,6) -- (-1.5,4);
        \draw[postaction=decorate,dashed](0,6) -- (0,4);
        \draw[postaction=decorate](0,6) -- (1.5,4);
        \draw[postaction=decorate](-1.5,4) -- (0,2);
        \draw[postaction=decorate,dashed](0,4) -- (-1.5,2);
        \draw[postaction=decorate](1.5,4) -- (0,2);
        \draw[postaction=decorate,dashed](0,4) -- (1.5,2);
        \draw[postaction=decorate](-1.5,4) -- (-1.5,2);
        \draw[postaction=decorate](1.5,4) -- (1.5,2);
        \draw[postaction=decorate](-1.5,2) -- (0,0);
        \draw[postaction=decorate](0,2) -- (0,0);
        \draw[postaction=decorate](1.5,2) -- (0,0);
        \draw[postaction=decorate](-1.5,4) to (-3,2);
        \draw[postaction=decorate,dashed](0,4) to (-3,2);
        \draw[postaction=decorate](-3,2) to (0,0);
        \draw[postaction=decorate, bend left](0,6) to (3,3);
        \draw[postaction=decorate, bend left](3,3) to (0,0);
        \end{scope}
		\node[draw,circle,fill=white] at (0,0) {$\hat{0}$};
		\node [draw,circle,fill=white] at (-3,2) {$j_4$};
	    \node[draw,circle,fill=white] at (-1.5,2) {$j_3$};
	    \node[draw,circle,fill=white] at (0,2) {$j_2$};
	    \node[draw,circle,fill=white] at (1.5,2) {$j_1$};
	    \node[draw,circle,fill=white] at (1.5,4) {$m_3$};
	    \node[draw,circle,fill=white] at (0,4) {$m_2$};
	    \node[draw,circle,fill=white] at (-1.5,4) {$m_1$};
	    \node[draw,circle,fill=white] at (0,6) {$\hat{1}$};
	    \node[draw,circle,fill=white] at (3,3) {$j_5$};
	    \end{scope}
	    
	     	    	      \begin{scope}[shift = {(0,-8)}]
	            \begin{scope}[decoration={
	markings,
	mark=at position 0.7 with {\arrow[scale=1.5]{>}}}
	]
        \draw[postaction=decorate] (0,6) -- (-1.5,4);
        \draw[postaction=decorate,dashed](0,6) -- (0,4);
        \draw[postaction=decorate](0,6) -- (1.5,4);
        \draw[postaction=decorate](-1.5,4) -- (0,2);
        \draw[postaction=decorate,dashed](0,4) -- (-1.5,2);
        \draw[postaction=decorate](1.5,4) -- (0,2);
        \draw[postaction=decorate,dashed](0,4) -- (1.5,2);
        \draw[postaction=decorate](-1.5,4) -- (-1.5,2);
        \draw[postaction=decorate](1.5,4) -- (1.5,2);
        \draw[postaction=decorate](-1.5,2) -- (0,0);
        \draw[postaction=decorate](0,2) -- (0,0);
        \draw[postaction=decorate](1.5,2) -- (0,0);
        \draw[postaction=decorate](-1.5,4) to (-3,2);
        \draw[postaction=decorate,dashed](0,4) to (-3,2);
        \draw[postaction=decorate](-3,2) to (0,0);
        \draw[postaction=decorate, bend left](0,6) to (3,3);
        \draw[postaction=decorate, bend left](3,3) to (0,0);
        \end{scope}
		\node[draw,circle,fill=white] at (0,0) {$\hat{0}$};
		\node [draw,circle,fill=white] at (-3,2) {$j_5$};
	    \node[draw,circle,fill=white] at (-1.5,2) {$j_3$};
	    \node[draw,circle,fill=white] at (0,2) {$j_2$};
	    \node[draw,circle,fill=white] at (1.5,2) {$j_1$};
	    \node[draw,circle,fill=white] at (1.5,4) {$m_3$};
	    \node[draw,circle,fill=white] at (0,4) {$m_2$};
	    \node[draw,circle,fill=white] at (-1.5,4) {$m_1$};
	    \node[draw,circle,fill=white] at (0,6) {$\hat{1}$};
	    \node[draw,circle,fill=white] at (3,3) {$j_4$};
	    \end{scope}
	    
	    	      \begin{scope}[shift = {(8,0)}]
	            \begin{scope}[decoration={
	markings,
	mark=at position 0.7 with {\arrow[scale=1.5]{>}}}
	]
        \draw[postaction=decorate] (0,6) -- (-1.5,4);
        \draw[postaction=decorate,dashed](0,6) -- (0,4);
        \draw[postaction=decorate](0,6) -- (1.5,4);
        \draw[postaction=decorate](-1.5,4) -- (0,2);
        \draw[postaction=decorate,dashed](0,4) -- (-1.5,2);
        \draw[postaction=decorate](1.5,4) -- (0,2);
        \draw[postaction=decorate,dashed](0,4) -- (1.5,2);
        \draw[postaction=decorate](-1.5,4) -- (-1.5,2);
        \draw[postaction=decorate](1.5,4) -- (1.5,2);
        \draw[postaction=decorate](-1.5,2) -- (0,0);
        \draw[postaction=decorate](0,2) -- (0,0);
        \draw[postaction=decorate](1.5,2) -- (0,0);
        \draw[postaction=decorate,bend left](0,6) to (3,3);
        \draw[postaction=decorate,bend left](3,3) to (0,0);
        \draw[postaction=decorate, bend right](0,6) to (-3,3);
        \draw[postaction=decorate, bend right](-3,3) to (0,0);
        \end{scope}
		\node[draw,circle,fill=white] at (0,0) {$\hat{0}$};
		\node [draw,circle,fill=white] at (-3,3) {$j_4$};
	    \node[draw,circle,fill=white] at (-1.5,2) {$j_3$};
	    \node[draw,circle,fill=white] at (0,2) {$j_2$};
	    \node[draw,circle,fill=white] at (1.5,2) {$j_1$};
	    \node[draw,circle,fill=white] at (1.5,4) {$m_3$};
	    \node[draw,circle,fill=white] at (0,4) {$m_2$};
	    \node[draw,circle,fill=white] at (-1.5,4) {$m_1$};
	    \node[draw,circle,fill=white] at (0,6) {$\hat{1}$};
	    \node[draw,circle,fill=white] at (3,3) {$j_5$};
	    \end{scope}
	\end{tikzpicture}
	\caption{A semidistributive lattice $L$ with its join-irreducible labeling (top left), the corresponding $\kappa$-order (top right), upper core label order (bottom left), and lower core label order (bottom right). }\label{fig:weird_clo}
\end{figure}


\section{$\kappa^d$-exceptional sequences}\label{sec:kappa_tau}

The purpose of this section is to relate $\tau$-exceptional sequences to the operators $\overline{\kappa}^d$ and $\pop$ on the lattice $\tors\Lambda$. More precisely, let $\W \subseteq \mods\Lambda$ be a functorially finite wide subcategory and suppose $M \in \W$ is $\tau$-rigid (in $\W$). In Section~\ref{sec:combinatorial_tau_tilting}, we describe the torsion classes $\lperpW{(\tau_\W M)}$ and $\lperpW{(\overline{\tau_\W}M)}$ as coming from applying ${\pop}_\W$ and $\overline{\kappa_\W}^d$ to the torsion class $\Gen_\W M$. (Recall the definition of $\overline{\tau_\W}$ from Notation~\ref{not:tau_bar} and that the subscript $\W$ means the lattice being considered is $\tors(\W)$.) In Section~\ref{sec:kappa_exceptional}, we then use this description to explain how all of the $\tau$-exceptional sequences in $\W$ can be read directly from the brick labeling of $\tors(\W)$. The result is a ``combinatorialization'' of the notion of a $\tau$-exceptional sequence given in Definition~\ref{def:kappa_exceptional}.

\subsection{Rowmotion and the AR translate}\label{sec:combinatorial_tau_tilting}

In this section, we describe the combinatorial relationship between the torsion classes $\Gen_\W M$, $\lperpW{(\tau_\W M)}$, and $\lperpW{(\overline{\tau_\W} M)}$. We fix for the duration of this section a functorially finite wide subcategory $\W \subseteq \mods\Lambda$. The starting point is the following, which describes how the map $\overline{\kappa}^{d}$ is computed on the lattice of torsion classes.

\begin{theorem}\cite[Theorem~B]{BTZ}\label{thm:BTZ_A} Let $\T \in \cmrep(\tors(\W))$, so that $\T = \lperpW{\mathcal{S}}$ for some semibrick $\mathcal{S} \in \sbrick(\W)$. Then  $\overline{\kappa_\W}^{d}( \lperpW{\mathcal{S}}) = \Filt(\Gen_\W\mathcal{S})$.
\end{theorem}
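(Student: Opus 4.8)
The plan is to deduce this from the description of $\overline{\kappa_\W}$ on $\cjrep(\tors(\W))$ that we already have, together with the fact that $\overline{\kappa_\W}$ and $\overline{\kappa_\W}^d$ are inverse bijections. Given the semibrick $\mathcal{S}$ with $\T = \lperpW{\mathcal{S}}$, I would set $\T_0 := \Filt(\Gen_\W\mathcal{S})$ and prove the single identity $\overline{\kappa_\W}(\T_0) = \T$; the theorem then follows immediately by applying $\overline{\kappa_\W}^d$ to both sides, using Corollary~\ref{cor:extended_kappa_bij} (which is available since $\tors(\W)$ is a ws-csd lattice by Remark~\ref{rem:kappa_lattice}(2)) to get $\overline{\kappa_\W}^d(\T) = \T_0 = \Filt(\Gen_\W\mathcal{S})$.

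For the identity $\overline{\kappa_\W}(\T_0) = \T$: by Theorem~\ref{thm:brick_labeling}(1), the torsion class $\T_0 = \Filt(\Gen_\W\mathcal{S})$ lies in $\cjrep(\tors(\W))$ and has canonical join representation $\CJR(\T_0) = \{\Filt(\Gen_\W B) \mid B \in \mathcal{S}\}$. Hence, unwinding Definition~\ref{def:kappa_bar} and applying Theorem~\ref{thm:min_extending}(2), which gives $\kappa_\W(\Filt(\Gen_\W B)) = \lperpW{B}$, I compute
\begin{align*}
\overline{\kappa_\W}(\T_0) &= \Meet\{\kappa_\W(j) \mid j \in \CJR(\T_0)\} = \Meet\{\lperpW{B} \mid B \in \mathcal{S}\} \\
&= \bigcap_{B \in \mathcal{S}} \lperpW{B} = \lperpW{\mathcal{S}} = \T,
\end{align*}
where the third equality uses that meets in $\tors(\W)$ are intersections and the fourth is the elementary identity $\lperpW{\mathcal{S}} = \bigcap_{B \in \mathcal{S}} \lperpW{B}$.

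Alternatively, one can argue directly without invoking the inverse-bijection property: by Theorem~\ref{thm:brick_labeling}(2), $\CMR(\T) = \{\lperpW{B} \mid B \in \mathcal{S}\}$, and since $\kappa_\W^d$ is inverse to $\kappa_\W$ we get $\kappa_\W^d(\lperpW{B}) = \Filt(\Gen_\W B)$, so the dual of Definition~\ref{def:kappa_bar} gives $\overline{\kappa_\W}^d(\T) = \Join\{\Filt(\Gen_\W B) \mid B \in \mathcal{S}\}$; this join is the smallest torsion class of $\W$ containing every $\Filt(\Gen_\W B)$, i.e.\ the smallest torsion class containing $\mathcal{S}$, which is $\Filt(\Gen_\W\mathcal{S})$ by \cite[Proposition~2.1]{thomas_intro}. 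I do not expect a serious obstacle here: once Theorem~\ref{thm:brick_labeling} and Theorem~\ref{thm:min_extending} are in hand, the argument is bookkeeping with the $\kappa$-maps and the two translations between lattice operations and categorical operations ($\Meet\{\lperpW{B}\} = \lperpW{\mathcal{S}}$ and $\Join\{\Filt(\Gen_\W B)\} = \Filt(\Gen_\W\mathcal{S})$). The one point that warrants a moment's care is confirming the inverse-bijection statement is legitimately available before using it, which is why I would explicitly cite Remark~\ref{rem:kappa_lattice}(2) and Corollary~\ref{cor:extended_kappa_bij}; the direct route in this paragraph avoids even that.
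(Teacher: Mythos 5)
The paper does not prove this statement at all: it is imported wholesale as a citation, \cite[Theorem~B]{BTZ}. What you have done is show that, within the framework this paper has already assembled, the result can be \emph{re-derived} from Theorem~\ref{thm:min_extending}(2), Theorem~\ref{thm:brick_labeling}(1), and Corollary~\ref{cor:extended_kappa_bij}. Your bookkeeping is correct on both routes. In the main route, Theorem~\ref{thm:brick_labeling}(1) gives $\T_0:=\Filt(\Gen_\W\mathcal{S})\in\cjrep(\tors(\W))$ with $\CJR(\T_0)=\{\Filt(\Gen_\W B)\mid B\in\mathcal{S}\}$; Theorem~\ref{thm:min_extending}(2) then converts each canonical joinand to $\kappa_\W(\Filt(\Gen_\W B))=\lperpW{B}$; and the meet (which is intersection in $\tors(\W)$, even for infinite families, so the case of an infinite semibrick is covered) collapses to $\lperpW{\mathcal{S}}=\T$. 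Corollary~\ref{cor:extended_kappa_bij} is legitimately available at this point — it rests only on the lattice-theoretic Theorem~\ref{thm:covers_canonical} and the fact that $\tors(\W)$ is a ws-csd lattice — so inverting gives the stated formula. The one thing worth making explicit if you wanted to claim this as an actual reproof is a dependency caveat at the level of \cite{BTZ} itself: your argument leans on Theorem~\ref{thm:min_extending}(2), which this paper also takes from \cite{BTZ} (Theorem~A there). If, in \cite{BTZ}, Theorem~A is proved independently of Theorem~B (which is the case), your derivation is non-circular; if not, it would only establish logical equivalence rather than a new proof. Your alternative route via Theorem~\ref{thm:brick_labeling}(2) is more exposed to this concern, since the paper's proof of that part cites \cite[Corollary~4.4.3]{BTZ} directly, so the first route is the one to prefer.
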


We also need the following. This result is well-known, but we include an explanation of how it can be deduced from explicit results in the literature.

\begin{theorem}\label{thm:regular}
    Let $\T \in \tors(\W)$ be a functorially finite torsion class. Then $\T \in \cjrep(\tors\W)\cap \cmrep(\tors\W)$ and $|\mathrm{CJR}(\T)| + |\mathrm{CMR}(\T)| = \rk(\W)$. Moreover, if $\T = \Gen_\W M$ with $M \in \W$ a gen-minimal $\tau$-rigid module, then $|\mathrm{CJR}(\T)| = \rk(M)$.
\end{theorem}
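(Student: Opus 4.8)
The plan is to build the result out of Asai's semibrick description of functorially finite torsion classes together with the mutation theory of support $\tau$-tilting pairs, obtaining the half concerning $\cmrep$ by duality.

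First I would handle the ``moreover'' statement and the membership $\T\in\cjrep(\tors(\W))$ simultaneously. By Proposition~\ref{prop:functorially_finite} we may write $\T = \Gen_\W M$ for a (unique) gen-minimal $\tau$-rigid module $M = \bigoplus_{i=1}^{k} M_i \in \tr(\W)$ with each $M_i$ indecomposable, so $\rk(M)=k$. By Theorem~\ref{thm:DIJ}(3), $\mathcal{X}(M)$ is a semibrick with $\Filt(\Gen_\W\mathcal{X}(M)) = \Gen_\W M = \T$, and, because $M$ is gen-minimal, none of the bricks $\beta(f_{\mathcal{F}_j}M_j)$ is zero, so $|\mathcal{X}(M)| = k$. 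Theorem~\ref{thm:brick_labeling}(1) then gives $\T \in \cjrep(\tors(\W))$ with $\CJR(\T) = \{\Filt(\Gen_\W B) \mid B \in \mathcal{X}(M)\}$; since $B \mapsto \Filt(\Gen_\W B)$ is injective by Theorem~\ref{thm:min_extending}(1), we conclude $|\CJR(\T)| = |\mathcal{X}(M)| = \rk(M)$.

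Next, for $\T \in \cmrep(\tors(\W))$ I would dualize. Since $\T$ is functorially finite, the torsion-free class $\F := \T^{\perp_\W}$ is also functorially finite and $(\T,\F)$ is a functorially finite torsion pair in $\W$ (standard, e.g. \cite[Sections~2.2--2.3]{AIR}). Applying the torsion-free-class analogues of Proposition~\ref{prop:functorially_finite}, Theorem~\ref{thm:DIJ}(3), and Theorem~\ref{thm:brick_labeling}(1) to $\F$ produces a ``co-gen-minimal'' $\tau^{-1}$-rigid module $N$ with $\Cogen_\W N = \F$ and a semibrick $\mathcal{Y}$ with $|\mathcal{Y}| = \rk(N)$ and $\Filt(\Cogen_\W\mathcal{Y}) = \F$. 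Then $\T = \lperpW{\F} = \lperpW{\mathcal{Y}}$, and the dual of Theorem~\ref{thm:brick_labeling}(1) together with Theorem~\ref{thm:min_extending}(2) gives $\T \in \cmrep(\tors(\W))$ with $\CMR(\T) = \{\lperpW{B} \mid B \in \mathcal{Y}\}$, hence $|\CMR(\T)| = \rk(N)$.

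It remains to prove $|\CJR(\T)| + |\CMR(\T)| = \rk(\W)$. By the ``moreover'' part of Theorem~\ref{thm:covers_canonical} and its dual, $\CJR(\T)$ (resp. $\CMR(\T)$) is exactly the set of join-irreducible (resp. meet-irreducible) labels of cover relations of the form $u \covered \T$ (resp. $\T \covered v$), and Proposition~\ref{prop:cover}(3) forces two such covers carrying the same label to coincide; thus $|\CJR(\T)|$ and $|\CMR(\T)|$ count the lower and the upper covers of $\T$ in $\tors(\W)$. Now $\T$, being functorially finite, corresponds under \cite{AIR,DIJ} to a support $\tau$-tilting pair whose mutations are precisely the cover-neighbours of $\T$ in $\tors(\W)$ (so these neighbours are again functorially finite), and there are exactly $\rk(\W)$ of them; hence the sum is $\rk(\W)$. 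Equivalently, one can simply invoke the standard identity $\rk(M) + \rk(N) = \rk(\W)$ for a functorially finite torsion pair $(\Gen_\W M,\Cogen_\W N)$ with $M$ gen-minimal and $N$ co-gen-minimal. The main obstacle is this last step: one must be sure that every cover relation in $\tors(\W)$ incident to a functorially finite torsion class stays within the functorially finite locus and is realized by a mutation, so that the $\rk(\W)$-regularity of the support $\tau$-tilting exchange graph applies (here one uses that $\W$, being functorially finite, is equivalent to a module category); all the remaining steps are routine once the torsion-free-class analogues of the cited theorems are invoked.
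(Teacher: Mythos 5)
Your argument is correct and follows essentially the same strategy as the paper: express $\T=\Gen_\W M$ with $M$ gen-minimal, use Theorem~\ref{thm:DIJ}(3) plus Theorem~\ref{thm:brick_labeling}(1) to identify $\CJR(\T)$ with $\X(M)$ and deduce $|\CJR(\T)|=\rk(M)$, and obtain the count $|\CJR(\T)|+|\CMR(\T)|=\rk(\W)$ by matching canonical joinands/meetands with lower/upper covers of $\T$ and invoking the $\rk(\W)$-regularity of the support $\tau$-tilting exchange graph.

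The one place you diverge is in establishing $\T\in\cmrep(\tors\W)$. You build a dual ``co-gen-minimal'' module $N$ for $\F=\T^{\perp_\W}$ and run the torsion-free analogues of Proposition~\ref{prop:functorially_finite}, Theorem~\ref{thm:DIJ}(3), and Theorem~\ref{thm:brick_labeling}(1). This works, but it is more than is needed: the paper gets $\T\in\cjrep(\tors\W)\cap\cmrep(\tors\W)$ in one stroke from Gorbunov's criterion (Proposition~\ref{prop:covers_canonical_existence} and its dual), using the fact from \cite[Theorem~3.1]{DIJ} that for a functorially finite $\T$ every $\U\subsetneq\T$ sits below a lower cover of $\T$ (and dually). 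Your longer route invokes several dual statements that the paper never states explicitly, so if you keep it you should spell out that, e.g., ``co-gen-minimal'' means the obvious dual of gen-minimal and that \cite[Theorem~1.3]{asai} indeed dualizes to torsion-free classes; the Gorbunov route avoids that overhead entirely. The worry you flag at the end --- that covers incident to a functorially finite torsion class are again functorially finite and realized by mutation --- is precisely what \cite[Theorem~3.1]{DIJ} supplies, so that concern is resolved by the same citation the paper uses.
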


\begin{proof}
    Since $\T$ is functorially finite, we can write $\T = \Gen_\W M$ with $M \in \W$ a gen-minimal $\tau$-rigid module. By \cite[Theorem~3.1]{DIJ}, given an inclusion $\U \subsetneq \T$ there exists a cover relation $\V \covered \T$ such that $\U \subseteq \V$, and the dual property holds as well. By Proposition~\ref{prop:covers_canonical_existence} and its dual, this means that $\T \in \cjrep(\tors\W) \cap \cmrep(\tors\W)$.
    
    Now recall from Theorem~\ref{thm:covers_canonical} that the canonical joinands of $\T$ are in bijection with cover relations of the form $\U \covered \T$ and the canonical meetands of $\T$ are in bijection with cover relations of the form $\T \covered \U$. The fact that $|\mathrm{CJR}(\T)| + |\mathrm{CMR}(\T)| = \rk(\W)$ is thus a consequence of the fact that the mutation graph of support $\tau$-tilting pairs is $\rk(\W)$-regular, see \cite[Section~2]{AIR}.
    
    Finally, we recall from Theorem~\ref{thm:DIJ}(3) that there is a semibrick $\X(M)$ with $|\X(M)| = \rk(M)$ and $\Filt(\Gen_\W\X(M)) = \Gen_\W M$. 
    
    By Theorem~\ref{thm:brick_labeling}(1), this means the canonical join representation of $\T$ is
    $$\T = \Join \{\Filt \Gen_\W(B) \mid B \in \X(M)\},$$
    and so $|\CJR(\T)| = \rk(M)$.
\end{proof}

In particular, Theorem~\ref{thm:regular} implies that both $\overline{\kappa_\W}(\T)$ and $\overline{\kappa_\W}^d(\T)$ are defined when $\T$ is functorially finite.

Together, Theorems~\ref{thm:BTZ_A} and~\ref{thm:regular} allow us to prove the following.

\begin{corollary}\label{cor:kappa_proj}
    Let $P$ be projective in $\W$. Then
    \begin{enumerate}
        \item $\overline{\kappa_\W}^{d}(\Gen_\W P) = P^{\perp_\W} =\lperpW{(\overline{\tau_\W} P)}$, and
        \item ${\pop}_\W(\Gen_\W P) = \W$
    \end{enumerate}
\end{corollary}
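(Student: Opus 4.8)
The plan is to deduce part~(1) from Theorem~\ref{thm:BTZ_A}, and then obtain part~(2) from part~(1) together with Proposition~\ref{prop:pop_formula}. First I would record some properties of $P^{\perp_\W}$. Since $P$ is projective in $\W$, it is $\tau$-rigid with $\tau_\W P = 0$, so $\J_\W(P) = P^{\perp_\W}$, which is a functorially finite wide subcategory of $\W$ (as recalled after Definition~\ref{def:tau_perp}); moreover, because $P$ is projective, $P^{\perp_\W}$ is closed under quotients and extensions, hence is also a torsion class. Using projectivity of $P$ one checks $(\Gen_\W P)^{\perp_\W} = P^{\perp_\W}$, so $(\Gen_\W P, P^{\perp_\W})$ is a torsion pair and $\lperpW{(P^{\perp_\W})} = \Gen_\W P$. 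Finally, $\Gen_\W P$ is a functorially finite torsion class (Definition-Theorem~\ref{thm:functorially_finite}), hence lies in $\cjrep(\tors\W) \cap \cmrep(\tors\W)$ by Theorem~\ref{thm:regular}.

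For the first equality in~(1), let $\mathcal{S}$ be the set of simple objects of the abelian length category $P^{\perp_\W}$; this is a semibrick with $P^{\perp_\W} = \Filt(\mathcal{S}) = \Filt(\Gen_\W \mathcal{S})$, and a filtration argument gives $\lperpW{\mathcal{S}} = \lperpW{(P^{\perp_\W})} = \Gen_\W P$. Since $\Gen_\W P \in \cmrep(\tors\W)$, Theorem~\ref{thm:BTZ_A} applied to $\T = \Gen_\W P = \lperpW{\mathcal{S}}$ yields $\overline{\kappa_\W}^{d}(\Gen_\W P) = \Filt(\Gen_\W \mathcal{S}) = P^{\perp_\W}$. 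For the second equality, recall from Notation~\ref{not:tau_bar} that $\overline{\tau_\W} P = \nu_\W P$ because $P$ is projective in $\W$; transporting along an equivalence $\W \xrightarrow{\sim} \mods \Lambda_\W$ reduces the claim to the standard Nakayama duality that, for the Nakayama functor $\nu$ of $\Lambda_\W$ and $X, \overline{P} \in \mods \Lambda_\W$ with $\overline{P}$ projective, the space $\Hom_{\Lambda_\W}(X, \nu \overline{P})$ is naturally the $K$-linear dual of $\Hom_{\Lambda_\W}(\overline{P}, X)$; hence $\Hom(P, X) = 0$ if and only if $\Hom(X, \nu_\W P) = 0$ for all $X \in \W$.

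For part~(2), since $\Gen_\W P$ lies in $\cjrep(\tors\W) \cap \cmrep(\tors\W)$, Proposition~\ref{prop:pop_formula}(2) gives ${\pop}_\W(\Gen_\W P) = \Gen_\W P \join \overline{\kappa_\W}^{d}(\Gen_\W P) = \Gen_\W P \join P^{\perp_\W}$, using part~(1). Now for every $X \in \W$ the canonical exact sequence for the torsion pair $(\Gen_\W P, P^{\perp_\W})$ exhibits $X$ as an extension of an object of $P^{\perp_\W}$ by an object of $\Gen_\W P$; since any torsion class containing both $\Gen_\W P$ and $P^{\perp_\W}$ is closed under extensions, such a torsion class must be all of $\W$, and therefore $\Gen_\W P \join P^{\perp_\W} = \W$.

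The step I expect to require the most care is checking that the hypotheses of Theorem~\ref{thm:BTZ_A} are met: one must be sure that $\Gen_\W P$ genuinely lies in $\cmrep(\tors\W)$ — which is exactly where functorial finiteness of $\Gen_\W P$ and Theorem~\ref{thm:regular} enter — and that the semibrick appearing in Theorem~\ref{thm:BTZ_A} may be taken to be the set of simple objects of $P^{\perp_\W}$. The remaining identities are routine manipulations of $\Gen_\W$, $\Filt$, and $\lperpW{(-)}$ together with results already recalled in Sections~\ref{sec:background} and~\ref{sec:combinatorial_tau_tilting}.
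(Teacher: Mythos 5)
Your proof is correct and follows essentially the same strategy as the paper: use Theorem~\ref{thm:BTZ_A} to compute $\overline{\kappa_\W}^d(\Gen_\W P)$, then use Proposition~\ref{prop:pop_formula} plus closure under extensions to get part~(2). The only differences are cosmetic: where the paper partitions the simple objects of $\W$ according to whether their projective cover is a summand of $P$ and takes the semibrick to be the second part $\mathcal{S}_2$, you take the simple objects of the length category $P^{\perp_\W}$ — these two sets coincide (since $P$ projective forces $P^{\perp_\W}$ to be a Serre subcategory of $\W$), so this is the same semibrick; and for part~(2) you argue via the canonical exact sequence instead of the paper's argument that every simple of $\W$ lies in the join, but both reduce to closure of torsion classes under extensions. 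You also include a justification, via Nakayama duality, for the second equality $P^{\perp_\W} = \lperpW{(\overline{\tau_\W}P)}$ in part~(1), which the paper leaves implicit as a standard fact.
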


\begin{proof}
 First note that $P$ is $\tau$-rigid in $\W$, and so $\Gen_\W P \in \cjrep(\tors\W)$. Now let $\mathcal{S}$ be the set of modules which are simple in $\W$. We can partition $\mathcal{S} = \mathcal{S}_1 \sqcup \mathcal{S}_2$, where $\mathcal{S}_1$ contains those modules whose ($\W$-)projective covers are direct summands of $P$ and $\mathcal{S}_2 = \mathcal{S} \setminus \mathcal{S}_1$. Then $\mathcal{S}_1 \subseteq \Gen_\W P = \lperpW{\mathcal{S}_2}$ and $P^{\perp_\W} = \Filt\mathcal{S}_2 = \Filt(\Gen_\W \mathcal{S}_2)$. 
 Theorem~\ref{thm:BTZ_A} then implies that $\overline{\kappa_\W}^{d}(\Gen_\W P) = P^{\perp_\W}$. Finally, by Proposition~\ref{prop:pop_formula} we have that $\mathcal{S} \subseteq (\Gen_\W P) \vee (P^{\perp_\W}) = {\pop}_\W(\Gen_\W P)$. Since torsion classes are closed under extensions, this implies that ${\pop}_\W(\Gen_\W P) = \W$.
\end{proof}

The main result of this section is the following.

\begin{proposition}\label{prop:kappa_tau}
    Let $M \in \tr(\W)$ be gen-minimal. Then ${\pop}_\W(\Gen_\W M) = \lperpW{(\tau_\W M)}$.
\end{proposition}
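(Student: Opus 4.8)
The plan is to identify $\pop_\W(\Gen_\W M)$ --- which by Definition~\ref{def:pop}(2) is the join of the torsion classes covering $\Gen_\W M$ in $\tors(\W)$ --- with $\T^+ := \lperpW{(\tau_\W M)}$, by checking two things: (a) every torsion class covering $\Gen_\W M$ in $\tors(\W)$ is already contained in $\T^+$, and (b) the join of those covers equals $\T^+$. As preliminary bookkeeping: $\Gen_\W M$ is a functorially finite torsion class (Proposition~\ref{prop:functorially_finite}), so by Theorem~\ref{thm:regular} it lies in $\cjrep(\tors\W)\cap\cmrep(\tors\W)$ with $|\CMR(\Gen_\W M)| = \rk(\W) - |\CJR(\Gen_\W M)| = \rk(\W) - \rk(M)$, and this number equals $\rk(\J_\W(M))$ by Equation~\ref{eqn:jasso}. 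By the dual of Theorem~\ref{thm:covers_canonical} (its ``moreover'' part) together with Proposition~\ref{prop:cover}(3), the covers of $\Gen_\W M$ in $\tors(\W)$ are in bijection with $\CMR(\Gen_\W M)$; hence $\Gen_\W M$ is covered by exactly $\rk(\J_\W(M))$ torsion classes in $\tors(\W)$. Finally, since $M$ is $\tau$-rigid in $\W$, two applications of Proposition~\ref{prop:AStau} give $\Gen_\W M \subseteq \T^+$, and it is standard that $\T^+$ is a torsion class of $\W$; so once (a) and (b) hold we conclude $\pop_\W(\Gen_\W M) = \Gen_\W M \join \T^+ = \T^+$.

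The structural input I would use is the ``wide interval'' picture. Since every module in $\Gen_\W M$ is a quotient of a power of $M$, one has $(\Gen_\W M)^{\perp_\W} = M^{\perp_\W}$, and therefore
$(\Gen_\W M)^{\perp_\W} \cap \T^+ = M^{\perp_\W} \cap \lperpW{(\tau_\W M)} = \J_\W(M)$,
which is a functorially finite wide subcategory of $\W$ by Definition~\ref{def:tau_perp} and the work of Jasso. Hence Theorem~\ref{thm:asai_pfeifer_1}(1) applies: the interval $[\Gen_\W M, \T^+]$ is (co)nuclear, and by Theorem~\ref{thm:asai_pfeifer_1}(2b) its atoms are exactly the torsion classes $\Filt(\Gen_\W S)$ for $S$ simple in $\J_\W(M)$, of which there are $\rk(\J_\W(M))$.

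This yields (a) by a cardinality count. The atoms of the interval $[\Gen_\W M, \T^+]$ are covers of $\Gen_\W M$ in $\tors(\W)$ (a cover inside an interval is a cover in the ambient lattice), so they form a subset of the set of all covers of $\Gen_\W M$ in $\tors(\W)$; since both sets have cardinality $\rk(\J_\W(M))$, they coincide. Thus every torsion class covering $\Gen_\W M$ is $\leq \T^+$. For (b), I would apply Lemma~\ref{lem:nuclear_canonical_join_rep}(2) inside the sublattice $[\Gen_\W M, \T^+]$, which is itself a ws-csd lattice by Lemma~\ref{prop:intervals} and is conuclear by the previous paragraph: its top element $\T^+$ then lies in $\cjrep([\Gen_\W M,\T^+])$ with canonical join representation equal to its set of atoms, so $\T^+$ is the join of those atoms. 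Since the atoms all lie below $\T^+$, this join agrees with the join taken in $\tors(\W)$. Combining with (a), the join of all torsion classes covering $\Gen_\W M$ in $\tors(\W)$ is $\T^+$, hence $\pop_\W(\Gen_\W M) = \Gen_\W M \join \T^+ = \T^+ = \lperpW{(\tau_\W M)}$. (When $M$ is $\tau$-tilting in $\W$ both sides collapse to $\Gen_\W M$, and when $M$ is projective this recovers Corollary~\ref{cor:kappa_proj}(2).)

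The main obstacle is step (a): a priori a cover of $\Gen_\W M$ need not ``respect $\tau$-rigidity'', i.e.\ need not be of the form $\Gen_\W(M\oplus X)$ with $M\oplus X$ again $\tau$-rigid, so one cannot directly see that it lies below $\lperpW{(\tau_\W M)}$. The point of the argument is to avoid any analysis of $\tau$-tilting mutation: Theorem~\ref{thm:regular} counts the covers of $\Gen_\W M$, the ``wide interval'' theorem of \cite{AP} counts the atoms of $[\Gen_\W M, \lperpW{(\tau_\W M)}]$, and the two counts are equal.
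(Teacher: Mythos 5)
Your proof is correct, and it takes a genuinely different route to the key step. Both you and the paper use Theorem~\ref{thm:asai_pfeifer_1}(1) to establish that $[\Gen_\W M,\lperpW{(\tau_\W M)}]$ is a (co)nuclear interval, so that $\lperpW{(\tau_\W M)}$ is the join of the atoms of that interval. The real work in both proofs is showing that \emph{every} torsion class covering $\Gen_\W M$ in $\tors(\W)$ lies below $\lperpW{(\tau_\W M)}$. The paper argues this representation-theoretically: it identifies the coatoms of $\Gen_\W M$ with minimal extending modules $X$ (via Theorems~\ref{thm:min_extending} and~\ref{thm:BTZ_A}), then shows $\Hom_\Lambda(X,\tau_\W M)=0$ by combining the Auslander--Smal{\o} criterion (Proposition~\ref{prop:AStau}) with a split-projectivity contradiction coming from Proposition~\ref{prop:functorially_finite} and the definition of minimal extending module. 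You instead avoid all analysis of the modules $X$ by counting: Theorem~\ref{thm:regular} forces the number of covers of $\Gen_\W M$ in $\tors(\W)$ to be $\rk(\W)-\rk(M)=\rk(\J_\W(M))$, while Theorem~\ref{thm:asai_pfeifer_1}(2b) gives exactly $\rk(\J_\W(M))$ atoms of the interval, which are automatically covers of $\Gen_\W M$ in the ambient lattice; since both finite sets have the same cardinality and one is contained in the other, they coincide. The paper's argument is more ``constructive'' in that it explains, module by module, why each cover fits inside the interval (and it lets one see exactly where the Ext-vanishing comes from), but it requires careful handling of minimal extending modules. Your cardinality argument is cleaner lattice-theoretically and more readily illustrates the rigidity imposed by the rank formulas; the trade-off is that it is less illuminating about the underlying representation theory and leans on both Theorem~\ref{thm:regular} and the full strength of Theorem~\ref{thm:asai_pfeifer_1}(2b), whereas the paper only needs the (co)nuclearity statement from part (1).
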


We emphasize that we are using $\tau$, not $\overline{\tau}$, in Proposition~\ref{prop:kappa_tau}. In particular, since any basic projective module is also gen-minimal, Proposition~\ref{prop:kappa_tau} gives an alternative proof of the fact that ${\pop}_\W(\Gen_\W P) = \W$ whenever $P$ is projective in $\W$.

\begin{proof}
    Since $\J_\W(M) = (M^{\perp_\W}) \cap (\lperpW{(\tau_\W M)})$ is a wide subcategory, it follows from Theorem~\ref{thm:asai_pfeifer_1}(1) that $[\Gen_\W M\lperpW{(\tau_\W M)}]$ is a nuclear interval. By definition, this means $$\lperpW{(\tau_\W M)} = \Gen_\W M \join \left\{j \ \middle| \ \kappa_\W(j) \in \CMR_\W(\Gen_\W M) \text{ and } (\Gen_\W M) \join j \leq \lperpW{(\tau_\W M)}\right\}.$$
    To prove the result, it therefore suffices to show that $(\Gen_\W M) \join j \leq \lperpW{(\tau_\W M)}$ for every $\kappa_\W(j) \in \CMR_\W(\Gen_\W M)$.
    
    Now recall from Theorems~\ref{thm:min_extending} and~\ref{thm:BTZ_A} that the elements of $\CMR_\W(\Gen_\W M)$ are precisely the torsion classes $\W \cap \lperp{X}$ for $X$ a minimal extending module of $\Gen_\W M$. Let $X$ be such a minimal extending module.
    Now, every proper factor of $X$ which lies in $\W$ must lie in $\Gen_\W M \subseteq  \lperpW{(\tau_\W M)}$.
    By Proposition~\ref{prop:AStau}, this means that $\Hom_\Lambda(X,\tau_\W M) \neq 0$ if and only if $\Ext^1_\Lambda(M,X) = 0$.
    Assume for the sake of contradiction that there is a nonsplit exact sequence $X\hookrightarrow E \twoheadrightarrow M$. Then by the definition of a minimal extending module, we have that $E \in \mathcal{T}$. In particular, we have that $M$ is not split projective in $\mathcal{T}$, which contradicts Proposition~\ref{prop:functorially_finite}.
\end{proof}

Combining the results of this section, we have the following.

\begin{corollary}\label{cor:kappa_indec}
    Let $M \in \tr(\W)$ be indecomposable. Then exactly one of the following holds.
    \begin{enumerate}
        \item $M$ is projective in $\W$, $\overline{\kappa_\W}^{d}(\Gen_\W M) = M^{\perp_\W} =  \lperpW{(\overline{\tau_\W} M)}$, and ${\pop}_\W(\Gen_\W M) = \W = \lperpW{(\tau_\W M)}$.
        \item $M$ is not projective in $\W$ and ${\pop}_\W(\Gen_\W M) = \overline{\kappa_\W}^{d}(\Gen_\W M) = \lperpW{(\tau_\W M)}$.
    \end{enumerate}
\end{corollary}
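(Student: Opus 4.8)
The plan is to handle the dichotomy "projective vs.\ not projective" head-on, and in each case assemble the statement from the three main results of the section. Since $M$ is indecomposable it is automatically gen-minimal, so Proposition~\ref{prop:kappa_tau} applies and gives ${\pop}_\W(\Gen_\W M)=\lperpW{(\tau_\W M)}$ in both cases; the ``exactly one'' is immediate. If $M$ is projective in $\W$ then $\tau_\W M=0$, so $\lperpW{(\tau_\W M)}=\W$, which matches ${\pop}_\W(\Gen_\W M)=\W$ from Proposition~\ref{prop:kappa_tau} (and Corollary~\ref{cor:kappa_proj}(2)). Corollary~\ref{cor:kappa_proj}(1) then yields $\overline{\kappa_\W}^{d}(\Gen_\W M)=M^{\perp_\W}=\lperpW{(\overline{\tau_\W}M)}$ directly (recall $\overline{\tau_\W}M=\nu_\W M$ and $\Hom_\Lambda(X,\nu_\W M)=0\iff\Hom_\Lambda(M,X)=0$, so $\lperpW{(\nu_\W M)}=M^{\perp_\W}$), establishing case (1).

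For the remaining case assume $M$ is not projective in $\W$, so that $\overline{\tau_\W}M=\tau_\W M$; it then suffices to prove $\overline{\kappa_\W}^{d}(\Gen_\W M)=\lperpW{(\tau_\W M)}$, since this combined with Proposition~\ref{prop:kappa_tau} is exactly case (2). Because $\Gen_\W M$ is functorially finite, Theorem~\ref{thm:regular} gives $\Gen_\W M\in\cmrep(\tors\W)$ with $|\CMR_\W(\Gen_\W M)|=\rk(\W)-\rk(M)=\rk(\W)-1$; writing $\Gen_\W M=\Meet\CMR_\W(\Gen_\W M)$ and using Theorem~\ref{thm:min_extending}(2) to express each completely meet-irreducible factor as $\lperpW B$ for a brick $B$, we obtain $\Gen_\W M=\lperpW{\mathcal S}$ for a semibrick $\mathcal S$, so Theorem~\ref{thm:BTZ_A} gives $\overline{\kappa_\W}^{d}(\Gen_\W M)=\Filt(\Gen_\W\mathcal S)$. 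Next, $\J_\W(M)=M^{\perp_\W}\cap\lperpW{(\tau_\W M)}$ is a wide subcategory, so by Theorem~\ref{thm:asai_pfeifer_1}(1) the interval $[\Gen_\W M,\lperpW{(\tau_\W M)}]$ is (co)nuclear; since by Proposition~\ref{prop:kappa_tau} every upper cover of $\Gen_\W M$ lies below $\lperpW{(\tau_\W M)}$, Theorem~\ref{thm:asai_pfeifer_1}(2) identifies $\mathcal S$ with the set of simple objects of $\J_\W(M)$ and shows $\lperpW{(\tau_\W M)}=\Gen_\W M\vee\Filt(\Gen_\W\mathcal S)=\Gen_\W M\vee\overline{\kappa_\W}^{d}(\Gen_\W M)$ (consistent with Proposition~\ref{prop:pop_formula}(2)).

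Thus the desired equality reduces to the inclusion $\Gen_\W M\subseteq\overline{\kappa_\W}^{d}(\Gen_\W M)$, equivalently $M\in\Filt(\Gen_\W\mathcal S)$, equivalently $|\CJR_\W(\lperpW{(\tau_\W M)})|=\rk(\W)-1$ (so that the ``moreover'' clause of Proposition~\ref{prop:pop_formula}(2) forces $\overline{\kappa_\W}^{d}(\Gen_\W M)=\pop_\W(\Gen_\W M)$). \emph{This is the step I expect to be the main obstacle, and it is exactly where non-projectivity of $M$ must be used}: for $M$ projective the analogous inclusion fails (there $\lperpW{(\tau_\W M)}=\W$, whose number of completely join-irreducible covers is $\rk(\W)$, not $\rk(\W)-1$). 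I would dispatch it by showing that $\lperpW{(\tau_\W M)}$ has a unique upper cover in $\tors\W$ — equivalently, that $\lperpW{(\tau_\W M)}=\Gen_\W N$ for a $\tau$-rigid module $N$ of rank $\rk(\W)-1$, i.e.\ that $M$ is the (unique) redundant summand of its Bongartz completion $M\oplus N$, so that $M\in\Gen_\W(N)$. This last statement is a fact about $\tau$-tilting reduction / AIR-mutation: when $M$ is non-projective, $M$ is not split-projective in $\lperpW{(\tau_\W M)}$ (one may produce a non-split epimorphism onto $M$ inside $\lperpW{(\tau_\W M)}$ using the minimal projective presentation of $M$, whose two terms share no summand because $M$ is $\tau$-rigid), whereas for projective $M$ every epimorphism onto $M$ splits. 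Granting this, all the equalities of case (2) follow.
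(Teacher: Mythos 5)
Your proposal reaches the same conclusion and shares the same overall skeleton as the paper's argument: treat the projective case via Corollary~\ref{cor:kappa_proj}, use Proposition~\ref{prop:kappa_tau} in the non-projective case to reduce to showing $\overline{\kappa_\W}^d(\Gen_\W M)=\lperpW{(\tau_\W M)}$, and close the gap with the cardinality constraint coming from Proposition~\ref{prop:pop_formula}(2). The difference is in how you obtain the inequality $|\CJR_\W(\lperpW{(\tau_\W M)})|\le\rk(\W)-1$. The paper does this in one line: since $M$ is non-projective, $\tau_\W M\neq 0$, so $\lperpW{(\tau_\W M)}\neq\W$, hence it has at least one upper cover and the count $|\CJR|+|\CMR|=\rk(\W)$ from Theorem~\ref{thm:regular} gives the bound. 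You instead route through the Bongartz completion $B\oplus M$ of $M$: you argue $M$ is not split-projective in $\lperpW{(\tau_\W M)}$, so the gen-minimal generator has at most $\rk(B)=\rk(\W)-1$ summands. This works, but it imports more $\tau$-tilting machinery than is needed, and a couple of the intermediate claims deserve attention. Showing $M$ is not split-projective only needs the projective cover $P_0\twoheadrightarrow M$ (which lies in every torsion class and is non-split precisely because $M$ is not projective); the clause about the two terms of the minimal projective presentation sharing no summand ``because $M$ is $\tau$-rigid'' is both unneeded and, as stated, not a correct characterization. Also note that you do not need (and do not actually establish) that $M$ is the \emph{unique} redundant summand of $B\oplus M$; what you need is only that $M$ is redundant, and you should make explicit that the gen-minimal generator of $\lperpW{(\tau_\W M)}$ is a direct summand of the Bongartz completion (via the uniqueness in the AIR bijection) before concluding it sits inside $B$. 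Finally, the excursion through Theorem~\ref{thm:asai_pfeifer_1}, the semibrick $\mathcal S$, and the simples of $\J_\W(M)$ is extra baggage: everything you use from it is already packaged in Proposition~\ref{prop:pop_formula}(2) plus Theorem~\ref{thm:regular}. Trimming those detours would bring your argument to essentially the paper's length.
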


\begin{proof}
    If $M$ is projective in $\W$, the result is the special case of Corollary~\ref{cor:kappa_proj} where $P$ is indecomposable. If $M$ is not projective, then by Proposition~\ref{prop:kappa_tau} we have that ${\pop}_\W(\Gen_\W M) =  \lperpW{(\tau_\W M)}$. Thus it remains only to show that $\lperpW{(\tau_\W M)} = \overline{\kappa_\W}^{d}(\Gen_\W M)$.

    First note that $\Gen_\W M \in \cmrep(\tors(\W))$ and that $|\CJR_\W(\overline{\kappa_\W}^d(\Gen_\W M))| = \rk(\W)-1$ by Theorem~\ref{thm:regular} and Corollary~\ref{cor:extended_kappa_bij}. We also know that $\lperpW{(\tau_\W M)} \in \cjrep(\tors(\W))$ because it is functorially finite, see \cite[Theorem~2.10]{AIR}. Since $M$ is not projective, we have $\lperpW{(\tau_\W M)} \neq \W$, and so $|\CJR_\W( \lperpW{(\tau_\W M}))| \leq \rk(\W) - 1$ by Theorem~\ref{thm:regular}. It then follows from Proposition~\ref{prop:pop_formula}(2) that the sets $\CJR_\W(\lperpW{(\tau_\W M)}) = \CJR_\W({\pop}_\W(\Gen_\W M))$ and $\CJR_\W(\overline{\kappa_\W}^d(\Gen_\W M))$ must coincide, and so $\lperpW{(\tau_\W M)} = \overline{\kappa_\W}^{d}(\Gen_\W M)$.
\end{proof}

\begin{remark}
 Corollary~\ref{cor:kappa_indec}(2) is only true under the assumption that $M$ is indecomposable. For example, let $\Lambda = K(1 \rightarrow 2 \rightarrow 3)$ and let $M = I_2 \oplus S_2$. Then $\Gen M = \lperp{P_1}$. Thus Theorem~\ref{thm:BTZ_A} tells us that $\overline{\kappa}^{d}(\Gen M) = \Gen P_1$, while Proposition~\ref{prop:kappa_tau} tells us that $\pop(\Gen M) = \lperp{\tau M} = \lperp{(P_2 \oplus P_3)} = \Gen(P_1 \oplus S_2)$.
\end{remark}


\subsection{$\kappa^d$-exceptional sequences}\label{sec:kappa_exceptional}

We recall that for $\Lambda$ an arbitrary finite-dimensional algebra, understanding the $\tau$-exceptional sequences for $\mods\Lambda$ requires one to understand which wide subcategories are $\tau$-perpendicular subcategories and which torsion classes are functorially finite. Currently, there is no purely combinatorial criteria which allows one to recover these special subcategories directly from the lattice of torsion classes. As such, we restrict to the case where the lattice of torsion classes $\tors(\mods\Lambda)$ is finite; i.e., we assume that $\Lambda$ is a $\tau$-tilting finite algebra (see Definition-Theorem~\ref{def:tau_tilting_finite}). For such algebras, every torsion class is functorially finite and every wide subcategory is a $\tau$-perpendicular subcategory. In particular, Proposition~\ref{prop:kappa_tau} allows one to compute ${\pop}_\W(\T)$ for any wide subcategory $\W \subseteq \mods\Lambda$ and any torsion class $\T\in \tors(\W)$. This is the motivation for the following (recursive) definition.

\begin{definition}\label{def:kappa_exceptional}
    Let $L$ be a finite semidistributive lattice and let $(j_k,\ldots,j_1)$ be a sequence of (completely) join-irreducible elements of $L$. We recursively call $(j_k,\ldots,j_1)$ a \emph{$\kappa^d$-exceptional sequence} if both of the following hold.
    \begin{enumerate}
        \item $j_i \in \core(j_1) = \jlab[j_1,\pop(j_1)]$ for all $i > 1$.
        \item $(j_k \join j_1,\ldots, j_2 \join j_1)$ is a $\kappa^d$-exceptional sequence in the lattice $\Core(\overline{\kappa}^{-1}(j_1)) = [j_1,\pop(j_1)]$.
    \end{enumerate}
\end{definition}

\begin{remark}
    Recall from Lemma~\ref{prop:intervals} that $\Core(j_1) = [j_1,\pop(j_1)]$ is itself a finite semidistributive lattice, and that supposing $j_i \in \core(j_1)$ is equivalent to supposing that $j_i \join j_1 \in \cji(\Core(j_1))$. The recursive part of the definition can then be seen as ``cutting out'' successively smaller (interval) sublattices from $L$.
\end{remark}

We defer giving examples of $\kappa^d$ to exceptional sequences to Section~\ref{sec:kappa_exceptional_ex}. To conclude the present section, we build towards our final main them. In particular, we need the following, which combines Theorem~\ref{thm:asai_pfeifer_1} with Corollary~\ref{cor:kappa_indec}.

\begin{corollary}\label{cor:reduction}
    Suppose $B \in \W$ is a brick and that $\Filt(\Gen_\W B)$ is functorially finite. Then there is a label-preserving bijection $\Core(\Filt(\Gen_\W B)) \cong \J_\W(\beta_\W^{-1}(B)).$
\end{corollary}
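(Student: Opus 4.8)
The plan is to recognise the interval in the statement as $[\Filt(\Gen_\W B),\,{\pop}_\W(\Filt(\Gen_\W B))]$ and then feed the ``wide interval'' theorem of Asai--Pfeifer (Theorem~\ref{thm:asai_pfeifer_1}) the torsion pair produced by Corollary~\ref{cor:kappa_indec}. First I would set $M:=\beta^{-1}_\W(B)$, which is well-defined by Theorem~\ref{thm:DIJ}(2) because $\Filt(\Gen_\W B)$ is assumed functorially finite; it is an indecomposable $\tau$-rigid module in $\W$, hence gen-minimal, and $\Gen_\W M=\Filt(\Gen_\W B)$ by Theorem~\ref{thm:DIJ}(1). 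Thus $\Gen_\W M$ is a functorially finite torsion class, so it lies in $\cjrep(\tors(\W))\cap\cmrep(\tors(\W))$ by Theorem~\ref{thm:regular} and all the operators below are defined on it; moreover $\J_\W(M)$ is a functorially finite wide subcategory, being a $\tau$-perpendicular subcategory, so $\tors(\J_\W(M))$ and its brick labeling are meaningful.

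Next I would pin down the top of the interval. Since $M$ is gen-minimal, Corollary~\ref{cor:kappa_indec} gives ${\pop}_\W(\Gen_\W M)=\lperpW{(\tau_\W M)}$ in both the projective and the non-projective case (when $M$ is projective in $\W$ one has $\tau_\W M=0$ and both sides equal $\W$); this is also Proposition~\ref{prop:kappa_tau}. Hence the interval under consideration is $[\Gen_\W M,\,\lperpW{(\tau_\W M)}]$.

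Then I would identify the target wide subcategory inside it. Because every object of $\Gen_\W M$ is a quotient of a direct sum of copies of $M$, one has $M^{\perp_\W}=(\Gen_\W M)^{\perp_\W}$, and therefore $(\Gen_\W M)^{\perp_\W}\cap\lperpW{(\tau_\W M)}=M^{\perp_\W}\cap\lperpW{(\tau_\W M)}=\J_\W(M)\in\wide(\W)$. So Theorem~\ref{thm:asai_pfeifer_1}(1) (applied with $\U=\Gen_\W M$ and $\T=\lperpW{(\tau_\W M)}$) shows that $[\Gen_\W M,\,\lperpW{(\tau_\W M)}]$ is a nuclear (equivalently conuclear) interval of $\tors(\W)$, and then Theorem~\ref{thm:asai_pfeifer_1}(2) produces a label-preserving lattice isomorphism $[\Gen_\W M,\,\lperpW{(\tau_\W M)}]\to\tors(\J_\W(M))$ given by $\T'\mapsto\T'\cap(\Gen_\W M)^{\perp_\W}=\T'\cap\J_\W(M)$, sending the brick label of a cover relation $\U'\covered\T'$ in the interval to $\brlab_{\J_\W(M)}[\U'\cap\J_\W(M),\,\T'\cap\J_\W(M)]$. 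This is exactly the asserted label-preserving bijection (cf.\ Example~\ref{ex:asai_pfeifer}, where the same identification is used to write the upper and lower cores of a torsion class as lattices of torsion classes).

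I do not expect a genuinely hard step: the content is entirely in assembling Corollary~\ref{cor:kappa_indec} and Theorem~\ref{thm:asai_pfeifer_1} correctly. The two points needing a little care are (i) checking ${\pop}_\W(\Gen_\W M)=\lperpW{(\tau_\W M)}$ uniformly in $M$, which is precisely why Corollary~\ref{cor:kappa_indec} (handling the projective case as well) is the right input rather than just the non-projective statement, and (ii) the elementary identity $M^{\perp_\W}=(\Gen_\W M)^{\perp_\W}$, which lets Theorem~\ref{thm:asai_pfeifer_1} be applied to the torsion-class pair $(\Gen_\W M,\lperpW{(\tau_\W M)})$ and simultaneously identifies the ambient wide subcategory as $\J_\W(M)$.
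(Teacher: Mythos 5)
Your proposal is correct and follows the same route as the paper's proof: identify $M=\beta_\W^{-1}(B)$ via Theorem~\ref{thm:DIJ}, pin down the top of the interval as $\lperpW{(\tau_\W M)}$ via Proposition~\ref{prop:kappa_tau} (equivalently Corollary~\ref{cor:kappa_indec}), and then invoke the Asai--Pfeifer isomorphism from Theorem~\ref{thm:asai_pfeifer_1}(2). The extra verifications you include (gen-minimality of $M$, the identity $M^{\perp_\W}=(\Gen_\W M)^{\perp_\W}$, separate treatment of the projective case) are harmless and are either implicit in, or subsumed by, the paper's cited results.
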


\begin{proof}
    Since $\Filt(\Gen_\W B)$ is functorially finite, it follows from Theorem~\ref{thm:DIJ}(2) that $B$ is in the image of the bijection $\beta_\W$, and thus that $\beta_\W^{-1}(B)$ is an indecomposable $\tau$-rigid module which satisfies $\Gen_\W(\beta_\W^{-1}(B)) = \Filt(\Gen_\W B)$. Moreover, we have $\pop(\Filt(\Gen_\W B)) = \lperpW{(\tau_\W (\beta_\W^{-1}(B)))}$ by Proposition~\ref{prop:kappa_tau}. The result then follows from Theorem~\ref{thm:asai_pfeifer_1}(2)
\end{proof}

We are now prepared to prove our final main theorem.

\begin{theorem}[Theorem~\ref{thm:intro:mainD}]\label{thm:mainD}
    Let $\Lambda$ be a $\tau$-tilting finite algebra, and let $\W \subseteq \mods\Lambda$ be a wide subcategory. Then there is a bijection $\rho$ from the set of $\tau$-exceptional sequences for $\W$ to the set of $\kappa^d$-exceptional sequences for $\tors(\W)$ given as follows. Let $(M_k,\ldots,M_1)\in \tex(\W)$ and let $\psi(M_k,\ldots,M_1) = (\W_k\coveredtau \cdots \coveredtau \W_1 \coveredtau \W_0 = \W)$ be the corresponding saturated top chain in $\tperp(\W)$. (See Theorem~\ref{thm:tf_tau}.) For $1 \leq i \leq k$, denote $\T_i = \Gen_{\W_{i-1}} M_i \in \tors(\W_{i-1})$. Then
    $$\rho(M_k,\ldots,M_1) = (\T_k,\ldots,\T_1).$$
    Moreover, for each $1 \leq i \leq k$, one has $\brlab_{\W_{i-1}}[(\T_i)_*,\T_i] = \beta(M_i)$.
\end{theorem}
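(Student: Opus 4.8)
## Proof Plan

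The plan is to prove this by induction on $k = \rk(\W)$ (or equivalently on the length of the saturated top chain), mirroring the recursive structure of both Definition~\ref{def:kappa_exceptional} and Definition~\ref{def:tau_seq}. The base case $k=1$ is where a $\tau$-exceptional sequence $(M_1)$ is just an indecomposable $\tau$-rigid module $M_1 \in \W$, and a $\kappa^d$-exceptional sequence $(\T_1)$ is a single completely join-irreducible element of $\tors(\W)$. Since $\Lambda$ is $\tau$-tilting finite, $\tors(\W)$ is finite and $\Gen_\W M_1 = \Filt(\Gen_\W \beta(M_1))$ is completely join-irreducible by Theorem~\ref{thm:min_extending}(1); conversely every completely join-irreducible torsion class $\T_1$ is functorially finite, hence of the form $\Gen_\W \beta_\W^{-1}(\brlab_\W[(\T_1)_*,\T_1])$. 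This establishes the bijection and the label-preserving claim $\brlab_\W[(\T_1)_*,\T_1] = \beta(M_1)$ in length one.

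For the inductive step, I would first check that $\rho$ is well-defined, i.e.\ that $(\T_k,\ldots,\T_1)$ really is a $\kappa^d$-exceptional sequence. Set $\W_1 = \J_\W(M_1)$. The key geometric input is Corollary~\ref{cor:reduction}: since $\T_1 = \Gen_\W M_1 = \Filt(\Gen_\W \beta(M_1))$ is functorially finite, there is a \emph{label-preserving} isomorphism $\Core(\T_1) = [\T_1, \pop(\T_1)] \cong \tors(\W_1)$, with $\pop(\T_1) = \lperpW{(\tau_\W M_1)}$ by Proposition~\ref{prop:kappa_tau}, which is precisely $\J_\W(M_1) = \W_1$ up to the identification $M_1^{\perp_\W} \cap \lperpW{(\tau_\W M_1)}$ — here I should be a little careful and invoke Theorem~\ref{thm:asai_pfeifer_1}(2), whose isomorphism $[\T_1,\pop(\T_1)] \to \wide(\T_1^{\perp_\W} \cap \pop(\T_1))$ together with the fact that $\tors$ of that wide subcategory matches $\tors(\J_\W(M_1))$ gives the claim. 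Under this isomorphism, the torsion class $\Gen_{\W_0}M_i$ corresponds (for $i \geq 2$) to $\T_1 \vee \Gen_{\W_1} M_i$ sitting inside $[\T_1,\pop(\T_1)]$, because the torsion class generated inside a $\tau$-perpendicular subcategory, when transported to the interval, joins with $\T_1$. This shows condition (1) of Definition~\ref{def:kappa_exceptional} ($j_i \in \core(\T_1)$) and sets up condition (2): $(\T_k \vee \T_1, \ldots, \T_2 \vee \T_1)$ is the $\rho$-image of $(M_k,\ldots,M_2) \in \tex(\W_1)$, which is a $\kappa^d$-exceptional sequence in $\tors(\W_1) \cong [\T_1,\pop(\T_1)] = \Core(\overline{\kappa}^{-1}(\T_1))$ by the induction hypothesis. (I also need $\overline{\kappa}^{-1}(\T_1) = \overline{\kappa}^d(\kappa(\text{--}))$ makes sense; since $\T_1$ is completely join-irreducible, $\Core(\T_1) = [\kappa(j), \kappa(j)^*]$ by Proposition~\ref{prop:join_irreps_cores}(2), so $\Core(\overline{\kappa}^{-1}(j_1))$ in the definition is just $\Core(j_1)$, consistent with what I used.)

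To see $\rho$ is a bijection, I would construct the inverse by the same recursion: given a $\kappa^d$-exceptional sequence $(\T_k,\ldots,\T_1)$ in $\tors(\W)$, the element $\T_1$ is completely join-irreducible and functorially finite, so $M_1 := \beta_\W^{-1}(\brlab_\W[(\T_1)_*,\T_1])$ is a well-defined indecomposable $\tau$-rigid module with $\Gen_\W M_1 = \T_1$; by Corollary~\ref{cor:reduction} the truncated sequence $(\T_k \vee \T_1, \ldots, \T_2 \vee \T_1)$ — which is $\kappa^d$-exceptional in $[\T_1,\pop(\T_1)] \cong \tors(\J_\W(M_1))$ by condition (2) — pulls back via the induction hypothesis to a $\tau$-exceptional sequence $(M_k,\ldots,M_2) \in \tex(\J_\W(M_1))$, and then $(M_k,\ldots,M_1) \in \tex(\W)$ by Definition~\ref{def:tau_seq}. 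Checking that these two assignments are mutually inverse reduces, at each stage, to the fact that $\beta_\W$ and $M \mapsto \Gen_\W M$ are bijections on the relevant sets (Theorem~\ref{thm:DIJ}(2), Theorem~\ref{thm:min_extending}(1)) together with the induction hypothesis. The ``moreover'' statement $\brlab_{\W_{i-1}}[(\T_i)_*,\T_i] = \beta(M_i)$ then follows for $i=1$ from the base-case computation and for $i \geq 2$ from applying the inductive hypothesis inside $\W_1 = \W_0 \cap \J_\W(M_1)$, using the label-preserving part of Corollary~\ref{cor:reduction}/Theorem~\ref{thm:asai_pfeifer_1}(2) to confirm labels are unchanged under the interval identification.

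The main obstacle I anticipate is bookkeeping around the chain of identifications $[\T_1,\pop(\T_1)] \cong \tors(\J_\W(M_1))$: one must verify that the torsion class $\Gen_{\W_{i-1}} M_i$, defined inside the iterated $\tau$-perpendicular subcategory $\W_{i-1}$, really does correspond under the iterated interval isomorphisms to $\T_i$ as a join-irreducible element of $\tors(\W)$ transported into the nested core interval — in other words, that the two recursions (one on $\tperp(\W)$ via Jasso reduction, one on nuclear intervals of $\tors(\W)$ via $\pop$) stay synchronized at every level. This is exactly the content that Corollary~\ref{cor:reduction} and Theorem~\ref{thm:iterated_tau_perp} are designed to supply, but making the label-preservation explicit at each recursive step (rather than just at the top) requires care, and is where I'd spend most of the write-up.
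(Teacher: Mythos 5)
Your plan follows the same route as the paper: induct on the rank of the wide subcategory, peel off $M_1$ (equivalently $\T_1$), identify $\Core(\T_1)$ with $\tors(\W_1)$ via the label-preserving isomorphism of Corollary~\ref{cor:reduction} (with $\pop_\W(\T_1) = \lperpW{(\tau_\W M_1)}$ supplied by Proposition~\ref{prop:kappa_tau}/Corollary~\ref{cor:kappa_indec}), and apply the induction hypothesis to $(M_k,\ldots,M_2) \in \tex(\W_1)$. Two small wording fixes worth making in the write-up: the induction variable must be $\rk(\W)$ rather than the sequence length $k$ (which varies over the domain; the base case is $\rk(\W)=0$ and the empty sequence), and the element carrying the identification between $\T_i \in \tors(\W_{i-1})$ and a completely join-irreducible element of $\tors(\W)$ should be $\Filt(\Gen_\W\beta(M_i))$ rather than $\Gen_{\W_0}M_i$, since for $i \ge 2$ the module $M_i$ need not be $\tau$-rigid in $\W_0$ -- this is exactly what the ``moreover'' clause about brick labels is tracking.
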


\begin{proof}
    Note that since $\Lambda$ is $\tau$-tilting finite, we have that every torsion class is functorially finite (Definition-Theorem~\ref{def:tau_tilting_finite}), every wide subcategory is a $\tau$-perpendicular subcategory (Example~\ref{ex:tau_perp}), and every brick is an sf-brick. In particular, for every $\V \in \wide(\W)$ there are bijections from the set of indecomposable modules in $\tr(\V)$ to $\cji(\tors(\V))$ and $\brick(\V)$ given by $M \mapsto \Gen_\V M$ and $M \mapsto \beta(M)$. (See Theorem~\ref{thm:regular}.)

    We now prove the result by induction on $\rk(\W)$. The result is trivial for $\rk(\W) = 0$, so suppose that $\rk(\W) > 0$ and that the result holds for all $\V$ with $\rk(\V) < \rk(\W)$.  Now by Definition~\ref{def:brick_label}, $\beta(M_1)$ is the (unique) brick label of the cover relation $\W_1 \coveredtau \W$ in $\tperp(\W)$. In particular, $M_1$ bijectively determines both $\T_1$ and $\W_1$. We also have $\Filt(\Gen_\W \beta(M_1)) = \T_1$ and ${\pop}_\W(\T_1) = \lperpW(\tau_\W M_1)$ by Corollary~\ref{cor:kappa_indec}. This in particular means that ${\brlab}_\W[(\T_1)_*,\T_1] = \beta(M_1)$.
    
    Now by Corollary~\ref{cor:reduction}, we have a label-preserving isomorphism $\Core(\T_1) \cong \tors(\W_1)$. Also, by assumption we have that $(M_k,\ldots,M_2) \in \tex(\W_1)$ and that $\rho(M_k,\ldots,M_2) = (\T_k,\ldots,\T_2)$. As $\rk(\W_1) = \rk(\W) - 1$ by Equation~\ref{eqn:jasso}, the induction hypothesis implies that $\rho$ is a bijection for the wide subcategory $\W_1$ and that $\brlab_{\W_{i-1}}[(\T_i)_*,\T_i] = \beta(M_i)$ for $1 < i \leq k$. Since $\W_1$ was bijectively determined by $M_1$, this implies the result.
\end{proof}

\begin{remark}
In particular, Theorem~\ref{thm:mainD} implies that if one identifies every indecomposable $\tau$-rigid module with the corresponding completely join-irreducible torsion class, then the $\tau$-exceptional sequences of $\mods\Lambda$ can be determined from only from the underlying lattice structure of the lattice of torsion classes.
\end{remark}

\subsection{Examples of $\kappa^d$-exceptional sequences}\label{sec:kappa_exceptional_ex}

In this section, we describe the $\kappa^d$-exceptional of the lattices in Figures~\ref{fig:run_ex} and~\ref{fig:weird_clo}(top left). We then observe to what extent these sequences can be used to label the corresponding core label orders and discuss our work in progress towards formalizing these results.

 \begin{example}\label{ex:detailed_ex_1}
     Let $L$ be as in Figure~\ref{fig:run_ex}. Then $L$ has seven maximal $\kappa^d$-exceptional sequences, in the sense that they cannot be extended by adding more terms on the left. They are:
    $$(j_1,j_2,j_4)\qquad\qquad (j_1,j_3,j_2)\qquad\qquad (j_2,j_1,j_4) \qquad\qquad (j_2,j_1,j_4)$$
    $$(j_3,j_1,j_2) \qquad\qquad (j_3,j_2,j_1) \qquad\qquad (j_4,j_3).$$
    We observe the following:
    \begin{enumerate}
        \item The number of maximal $\kappa^d$ exceptional sequences coincides with the number of maximal chains in the upper core label order $(L,\leq_{\clo})$, which is redrawn in Figure~\ref{fig:labeled_clo}
        \item There is a bijection between $\cji(L)$ and the set of cover relations in $(L,\leq_{\clo})$ of the form $u \covered \hat{1}$ which sends $j_i$ to $\overline{\kappa}^d(j_i)$. This allows us to label each such cover relation in the upper core label order. See Figure~\ref{fig:labeled_clo}.
        \item The pattern in (2) is inherited by each sublattice of the form $\Core(j_i)$. Moreover, passing to such a sublattice respects the upper core label order in the following way. Denote $L' = [j_i,\pop(j_i)]$. Then by identifying $\cji(L')$ with $\core(j_i)$ as in Lemma~\ref{prop:intervals}, we can then identify the upper core label order $(L',\leq_{\clo})$ with the interval $[\hat{0},\overline{\kappa}^d(j_i)] \subseteq (L,\leq_{\clo})$.
        \item Observation (2) allows us to label every cover relation in $(L,\leq_{\clo})$, see Figure~\ref{fig:labeled_clo}. Moreover, the lattice $L$ is extremal (there is a maximal chain which has all of the completely join-irreducible elements as labels). From the maximal chain on the left side of $L$, we define a total order $\preceq$ on $\cji(L)$ by $j_1 \preceq j_2 \preceq j_4 \preceq j_3$. It is straightforward to verify that this total order makes the labeling of $(L,\leq_{\clo})$ into an EL-labeling.
        \item There does not seem to be a natural way to turn the $\kappa^d$-exceptional sequences into an EL-labeling of the lower core label order $(L,\leq_{\clod})$, which coincides with the $\kappa$-order as shown in Figure~\ref{fig:ex_clo}. Indeed, all seven maximal chains of $(L,\leq_{\clod})$ contain three cover relations, while only six of the maximal $\kappa^d$-exceptional sequences contain three elements.
    \end{enumerate}
 \end{example}

  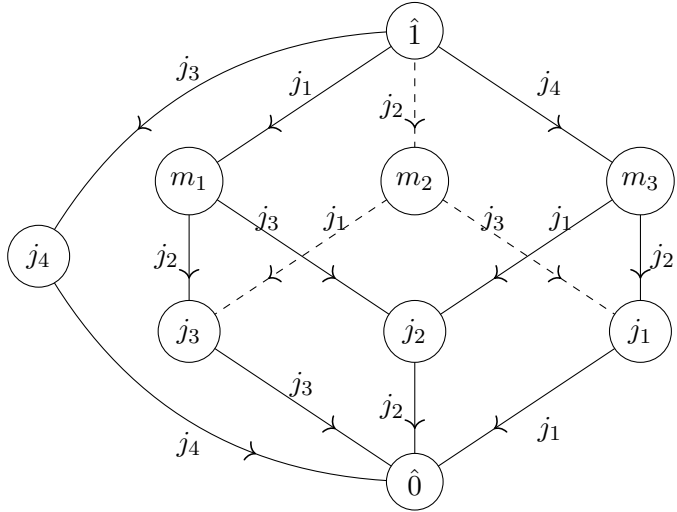
\begin{figure}
   \begin{tikzpicture}
        \begin{scope}[decoration={
	markings,
	mark=at position 0.65 with {\arrow[scale=1.5]{>}}}
	]
        \draw[postaction=decorate] (0,6) -- (-3,4) node[midway, above]{$j_1$};
        \draw[postaction=decorate,dashed](0,6) -- (0,4) node[midway,left]{$j_2$};
        \draw[postaction=decorate](0,6) -- (3,4) node[midway,above right]{$j_4$};
        \draw[postaction=decorate](-3,4) -- (0,2) node[near start, right]{$j_3$};
        \draw[postaction=decorate,dashed](0,4) -- (-3,2) node [near start, left]{$j_1$};
        \draw[postaction=decorate](3,4) -- (0,2) node [near start,left]{$j_1$};
        \draw[postaction=decorate,dashed](0,4) -- (3,2) node [near start,right]{$j_3$};
        \draw[postaction=decorate](-3,4) -- (-3,2) node [midway,left]{$j_2$};
        \draw[postaction=decorate](3,4) -- (3,2) node [midway, right]{$j_2$};
        \draw[postaction=decorate](-3,2) -- (0,0) node[midway,above]{$j_3$};
        \draw[postaction=decorate](0,2) -- (0,0) node [midway,left]{$j_2$};
        \draw[postaction=decorate](3,2) -- (0,0) node[midway,below right]{$j_1$};
        \draw[postaction=decorate,bend right,smooth](0,6) to (-5,3);
        \draw[postaction=decorate,bend right,smooth](-5,3) to (0,0);
        \node at (-3,0.5) {$j_4$};
        \node at (-3,5.5) {$j_3$};
        \end{scope}
		\node[draw,circle,fill=white] at (0,0) {$\hat{0}$};
		\node [draw,circle,fill=white] at (-5,3) {$j_4$};
	    \node[draw,circle,fill=white] at (-3,2) {$j_3$};
	    \node[draw,circle,fill=white] at (0,2) {$j_2$};
	    \node[draw,circle,fill=white] at (3,2) {$j_1$};
	    \node[draw,circle,fill=white] at (3,4) {$m_3$};
	    \node[draw,circle,fill=white] at (0,4) {$m_2$};
	    \node[draw,circle,fill=white] at (-3,4) {$m_1$};
	    \node[draw,circle,fill=white] at (0,6) {$\hat{1}$};
	\end{tikzpicture}
    \caption{A labeling of the upper core label order of the lattice in Figure~\ref{fig:run_ex} by $\kappa^d$-exceptional sequences}\label{fig:labeled_clo}
    \end{figure}
 
 \begin{example}\label{ex:detailed_ex_2}
    Let $L$ be as in the top left of Figure~\ref{fig:weird_clo}. Then $L$ has ten maximal $\kappa^d$-exceptional sequences, in the sense that they cannot be extended by adding more terms on the left. They are:
    $$(j_5,j_2,j_1) \qquad\qquad (j_3,j_5,j_1) \qquad\qquad (j_2,j_3,j_1) \qquad\qquad (j_5,j_1,j_2) \qquad\qquad (j_3,j_5,j_2)$$
    $$(j_1,j_3,j_2) \qquad\qquad (j_4,j_3) \qquad\qquad (j_2,j_1,j_4) \qquad\qquad (j_1,j_2,j_4) \qquad\qquad (j_3,j_5).$$
    We observe the following:
    \begin{enumerate}
        \item There are nine maximal chains in $(L,\leq_{\clo})$ (redrawn in Figure~\ref{fig:labeled_clo_2}), but there are ten maximal $\kappa^d$-exceptional sequences. On the other hand, one could argue that $(j_3,j_5)$ should not be counted as maximal since it can be extended to the right to obtain $(j_3,j_5,j_2)$.
        \item The element $\hat{1}$ covers four elements in $(L,\leq_{\clo})$. These cover relations are of the form $\overline{\kappa}(j_i) \covered \hat{1}$ for $i \neq 5$, see Figure~\ref{fig:labeled_clo_2}. In particular, if we ignore $(j_3,j_5)$ as suggested by (1), then the cover relations are in bijection with those join-irreducible elements which can appear on the left of a maximal $\kappa^d$-exceptional sequence.
        \item As in Example~\ref{ex:detailed_ex_1}(3), for any $i \in \{1,\ldots,5\}$ we can identify the upper core label order of $[j_i,\pop(j_i)]$ with the interval $[\hat{0},\overline{\kappa^d}(j_i)] \subseteq (L,\leq_{\clo})$. We can thus label $(L,\leq_{\clo})$ with the maximal $\kappa^d$-exceptional sequences other than $(j_3,j_5)$, as shown in Figure~\ref{fig:labeled_clo_2}. \item Both the cover relations $[j_3,m_1]$ and $[j_3,m_2]$ in Figure~\ref{fig:labeled_clo_2} are labeled with $j_5$. This seems to indicate that the maximal chains of $(L,\leq_{\clo})$ cannot be build from bottom-up using $\kappa$-exceptional sequences.
        \item The lattice $L$ is not extremal. However, the total order $\preceq$ on $\cji(L)$ given by $j_1 \preceq j_5 \preceq j_2 \preceq j_4 \preceq j_3$ still makes the labeling in Figure~\ref{fig:labeled_clo_2} into an EL-labeling.
    \end{enumerate}
 \end{example}
 
   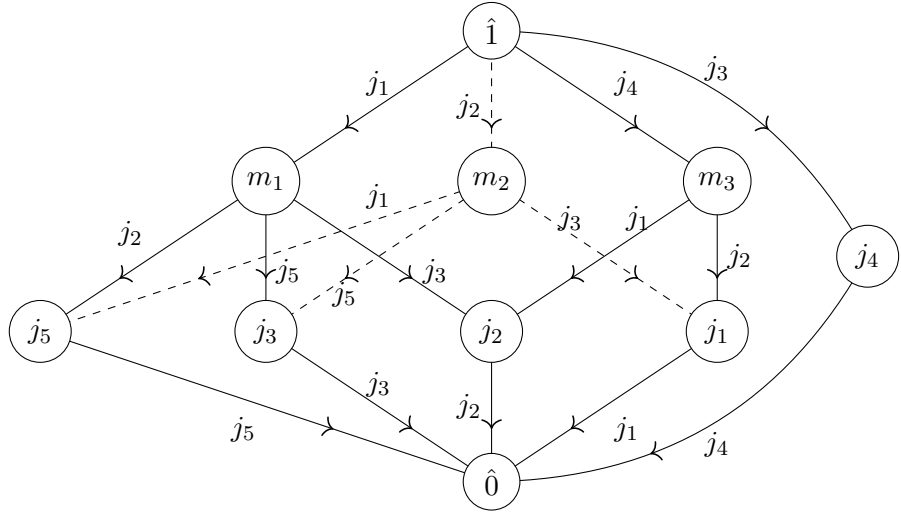
\begin{figure}
   \begin{tikzpicture}
        \begin{scope}[decoration={
	markings,
	mark=at position 0.65 with {\arrow[scale=1.5]{>}}}
	]
        \draw[postaction=decorate] (0,6) -- (-3,4) node[midway, above]{$j_1$};
        \draw[postaction=decorate,dashed](0,6) -- (0,4) node[midway,left]{$j_2$};
        \draw[postaction=decorate](0,6) -- (3,4) node[midway,above right]{$j_4$};
        \draw[postaction=decorate](-3,4) -- (0,2) node[near end, above]{$j_3$};
        \draw[postaction=decorate,dashed](0,4) -- (-3,2) node [near end, right]{$j_5$};
        \draw[postaction=decorate](3,4) -- (0,2) node [near start,left]{$j_1$};
        \draw[postaction=decorate,dashed](0,4) -- (3,2) node [near start,right]{$j_3$};
        \draw[postaction=decorate](-3,4) -- (-3,2) node [near end,above right]{$j_5$};
        \draw[postaction=decorate](3,4) -- (3,2) node [midway, right]{$j_2$};
        \draw[postaction=decorate](-3,2) -- (0,0) node[midway,above]{$j_3$};
        \draw[postaction=decorate](0,2) -- (0,0) node [midway,left]{$j_2$};
        \draw[postaction=decorate](3,2) -- (0,0) node[midway,below right]{$j_1$};
        \draw[postaction=decorate,bend left,smooth](0,6) to (5,3);
        \draw[postaction=decorate,bend left,smooth](5,3) to (0,0);
        \draw[postaction=decorate](-3,4)--(-6,2) node[midway,above left]{$j_2$};
        \draw[postaction=decorate,dashed](0,4)--(-6,2) node[near start,above]{$j_1$};
        \draw[postaction=decorate](-6,2)--(0,0) node[midway,below left]{$j_5$};
        \node at (3,0.5) {$j_4$};
        \node at (3,5.5) {$j_3$};
        \end{scope}
		\node[draw,circle,fill=white] at (0,0) {$\hat{0}$};
		\node [draw,circle,fill=white] at (-6,2) {$j_5$};
	    \node[draw,circle,fill=white] at (-3,2) {$j_3$};
	    \node[draw,circle,fill=white] at (0,2) {$j_2$};
	    \node[draw,circle,fill=white] at (3,2) {$j_1$};
	    \node[draw,circle,fill=white] at (3,4) {$m_3$};
	    \node[draw,circle,fill=white] at (0,4) {$m_2$};
	    \node[draw,circle,fill=white] at (-3,4) {$m_1$};
	    \node[draw,circle,fill=white] at (5,3) {$j_4$};
	    \node[draw,circle,fill=white] at (0,6) {$\hat{1}$};
	\end{tikzpicture}
    \caption{A labeling of the upper core label order of the lattice in Figure~\ref{fig:run_ex} by $\kappa^d$-exceptional sequences}\label{fig:labeled_clo_2}
    \end{figure}
    
\section{Discussion and future work}

Theorem~\ref{thm:mainD} shows that, at least for $\tau$-tilting finite algebras, $\tau$-exceptional sequences can be seen as a special case of a more general combinatorial construction. A natural question is whether other constructions from $\tau$-tilting yield similar combinatorial generalizations. Towards this end, in a future paper we will discuss how to extending Definition~\ref{def:kappa_exceptional} to obtain \emph{signed} $\kappa^d$-exceptional sequences. Alternatively, this amounts to describing when a completely join-irreducible element in a ws-csd lattice is (relatively) ``projective''.

We will also look into extending Theorem~\ref{thm:mainA} in two different ways. On the representation theory side, there exist algebras whose sf-bricks do not admit an rho order, but whose $\tau$-excpetional sequences can still be used to describe an EL-labeling of the poset $\tperp(\mods\Lambda)$. A simple example is the preprojective algebra of type $A_2$, see Figure~\ref{fig:preproj}. On the combinatorics side, let $L$ be a finite semidistributive lattice. In both Examples~\ref{ex:detailed_ex_1} and~\ref{ex:detailed_ex_2} we showed that the maximal chains of $(L,\leq_{\clo})$ can be labeled by $\kappa^d$-exceptional sequences. Moreover, in both cases, there is a total order $\preceq$ on $\cji(L)$ which makes this labeling into an EL-labeling. In our future work, we will examine when these phenomenon hold more generally. In particular, we will discuss how $\kappa^d$-exceptional sequences are related to pulling triangulations of \emph{Newton polytopes} (also known as \emph{Harder-Narasimhan polytopes} or \emph{submodule polytopes}, see e.g. \cite{BKT,fei1}), as well as other convex polytopes coming from hyperplane arrangements.

   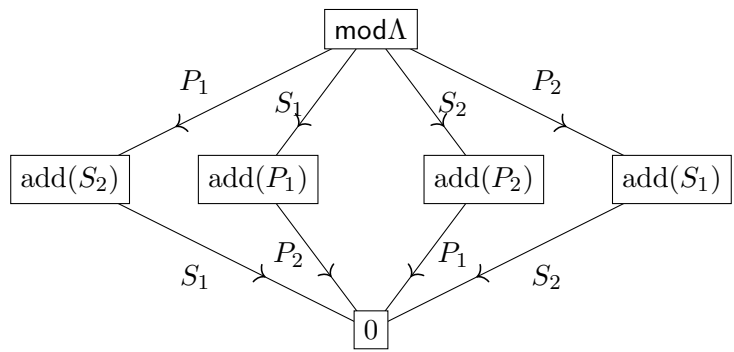
\begin{figure}
   \begin{tikzpicture}
        \begin{scope}[decoration={
	markings,
	mark=at position 0.65 with {\arrow[scale=1.5]{>}}}
	]
        \draw[postaction=decorate] (0,6) -- (-4,4) node [midway,above left] {$P_1$};
        \draw[postaction=decorate](0,6) -- (-1.5,4) node [midway,left] {$S_1$};
        \draw[postaction=decorate](0,6) -- (1.5,4) node [midway,right] {$S_2$};
        \draw[postaction=decorate](0,6) -- (4,4) node [midway,above right] {$P_2$};
        \draw[postaction=decorate](-4,4) -- (0,2) node [midway,below left] {$S_1$};
        
        \draw[postaction=decorate](-1.5,4) -- (0,2) node [midway,left] {$P_2$};
        
        \draw[postaction=decorate](1.5,4) -- (0,2) node [midway, right] {$P_1$};
        
        \draw[postaction=decorate](4,4) -- (0,2) node [midway,below right] {$S_2$};
        \end{scope}
		\node[draw,fill=white] at (0,2) {$0$};
	    \node[draw,fill=white] at (-4,4) {$\mathrm{add}(S_2)$};
	    \node[draw,fill=white] at (-1.5,4) {$\mathrm{add}(P_1)$};
	    \node[draw,fill=white] at (1.5,4) {$\mathrm{add}(P_2)$};
	    \node[draw,fill=white] at (4,4) {$\mathrm{add}(S_1)$};
	    \node[draw,fill=white] at (0,6) {$\mods\Lambda$};
	\end{tikzpicture}
    \caption{The lattice of wide subcategories of the preprojective algebra of type $A_2$ and its labeling by $\tau$-exceptional sequences. As a quotient of a path algebra, we have $\Lambda = (1 \leftrightarrows 2)/R^2$, where $R$ is the ideal generated by the arrows. The total order $P_1 \preceq S_2 \preceq P_2 \preceq S_1$ makes this into an EL-labeling. }\label{fig:preproj}
    \end{figure}

\bibliographystyle{amsplain}
\bibliography{EL_bib.bib}

\end{document}